\renewcommand{\labelenumi}{(\theenumi)}
\renewcommand{\theenumi}{\roman{enumi}}
\newtheorem{definition}{Definition}[section]
\newtheorem{theorem}[definition]{Theorem}
\newtheorem{lemma}[definition]{Lemma}
\newtheorem{corollary}[definition]{Corollary}
\newtheorem{remark}[definition]{Remark}
\newtheorem{example}[definition]{Example}
\newtheorem{note}[definition]{Note}
\newtheorem{assumption}[definition]{Assumption}
\newtheorem{proposition}[definition]{Proposition}
\begin{document}
\title{\bf  
Notes on the Leonard system classification
}
\author{
Paul Terwilliger}
\date{}

\maketitle
\begin{abstract}
Around 2001 we classified the
Leonard systems up to isomorphism. The proof was lengthy 
and involved considerable computation. In this paper
we give a proof that is shorter and involves minimal
 computation. We also give a comprehensive description
 of the intersection numbers of a Leonard system.

\bigskip
\noindent
{\bf Keywords}. 
Association scheme, Leonard pair, Leonard system, $q$-Racah polynomial.
\hfil\break
\noindent {\bf 2010 Mathematics Subject Classification}. 
Primary: 05E30;
Secondary: 15A21. 
 \end{abstract}

\section{Introduction}
In the area of Algebraic Combinatorics there is an object
called a
commutative association scheme 
\cite{bannaiIto},
\cite{mt}.
This is a combinatorial generalization of
a finite group, that retains enough of the group structure so that
one can still speak of the character table.
During the decade 1970--1980 it was realized by
E. Bannai, P. Delsarte, D. Higman and others
that commutative association schemes help to
unify many aspects of
group theory, coding theory,
and design theory.
An early work in this direction was
the 1973 thesis of  Delsarte \cite{delsarte}.
This thesis helped to motivate the work of Bannai \cite[p.~i]{bannaiIto}, 
who taught a series of graduate courses
on commutative association schemes 
during September 1978–December 1982 
at the Ohio State University. 
The lecture notes from those courses, along with
more recent developments, became
a book coauthored with T. Ito 
\cite{bannaiIto}.
The book had a large impact; it is currently
cited 698 times according to MathSciNet.
\medskip

\noindent In the Introduction to
\cite{bannaiIto}, Bannai and Ito describe the goals of their book.
One goal was to summarize what is known
about commutative association schemes up to that
time.
Another goal  was to focus the reader's attention
on two remarkable types of
 schemes, said to be $P$-polynomial and $Q$-polynomial.
A $P$-polynomial scheme is essentially the same thing
as a distance-regular graph, and can be viewed as a finite
analog of a 2-point homogeneous space 
\cite{wang}.
Similarly, a $Q$-polynomial scheme is a finite analog of
a rank 1 symmetric space 
\cite{wang}.
By a
theorem of H.~C.~Wang
\cite{wang},
a compact Riemannian manifold is
2-point homogeneous if and only if it is rank 1 symmetric.
This
result was extended
to the noncompact case by
J.~Tits \cite{tits}
and S.~Helgason 
\cite{helgason}.
Motivated by all this,
Bannai and Ito conjectured  that
a primitive association scheme
is $P$-polynomial if and only if it is $Q$-polynomial,
provided that the diameter is sufficiently large
\cite[p.~312]{bannaiIto}.
They also proposed the classification of
schemes that are both $P$-polynomial and
$Q$-polynomial
\cite[p.~xiii]{bannaiIto}.
\medskip

\noindent
Progress on the proposed classification
was made while the book was still in preparation.
A $P$-polynomial scheme gets its name from the
fact that there exists a sequence of
orthogonal polynomials $\lbrace u_i\rbrace_{i=0}^d$
such that $u_i(A)=A_i/k_i$ for $0 \leq i \leq d$,
where $d$ is the diameter of the scheme,
$A_i$ is the $i$th associate matrix, $A=A_1$,
and $k_i$ is the $i$th valency \cite[pp.~190,~261]{bannaiIto}.
Similarly, for a $Q$-polynomial scheme
there exists a sequence of orthogonal polynomials
$\lbrace u^*_i\rbrace_{i=0}^d$
such that $u^*_i(A^*)=A^*_i/k^*_i$ for $0 \leq i \leq d$,
where $A^*_i$ is the $i$th dual associate matrix,
$A^*=A^*_1$,
and $k^*_i$ is the $i$th  dual valency
\cite[pp.~193,~261]{bannaiIto},
\cite[p.~384]{tersub1}.
For schemes that are 
$P$-polynomial and $Q$-polynomial,
we have $u_i(\theta_j)= 
u^*_j(\theta^*_i)
$ for $0 \leq i,j\leq d$,
where
$\lbrace \theta_i\rbrace_{i=0}^d$
(resp.
$\lbrace \theta^*_i\rbrace_{i=0}^d$
)
are the eigenvalues of $A$ (resp. $A^*$)
\cite[p.~262]{bannaiIto}.
These equations are known as
Delsarte duality
\cite{leonard}
or Askey-Wilson duality
\cite[p.~261]{ter2004}. This duality can be defined for $d=\infty$,
but throughout this paper we assume that $d$ is finite.
\medskip

\noindent 
Askey-Wilson duality comes up naturally
in the area of special functions and orthogonal polynomials.
In this area the classical orthogonal polynomials
are often described using a partially-ordered set  called the Askey-tableau.
The vertices in the poset represent the various families
of classical orthogonal polynomials, 
and the covering relation describes what
happens when a limit is taken.
See
\cite{labelle} for an early version of the tableau,
and
\cite[pp.~183,~413]{koekoek} for a more recent version.
One branch of the tableau, sometimes called the terminating
branch, contains the polynomials that are
orthogonal with respect to a measure that is
nonzero at finitely many arguments.
At the top of this terminating branch sit the
 $q$-Racah polynomials, introduced in 1979 by R. Askey
 and J. Wilson \cite{aw}.
The rest of the terminating branch consists of
the $q$-Hahn, dual $q$-Hahn,
$q$-Krawtchouk, 
dual $q$-Krawtchouk,
quantum $q$-Krawtchouk,
affine $q$-Krawtchouk,
Racah, Hahn, dual-Hahn, and Krawtchouk polynomials.
The above-named polynomials are defined using
hypergeometric series or basic hypergeometric series,
and it is transparent from the definition that they
satisfy 
Askey-Wilson duality.
\medskip

\noindent Back at Ohio State, there was a graduate student
attending Bannai's classes by the name of Douglas Leonard.
With Askey's encouragement, Leonard 
showed in
\cite{leonard}
that the $q$-Racah polynomials give the most general
 orthogonal polynomial system 
that satisfies  Askey-Wilson duality, under the assumption that $d\geq 9$.
In 
\cite[Theorem~5.1]{bannaiIto}
Bannai and Ito give a version of
Leonard's theorem that removes the assumption on $d$ 
and explicitly describes all the limiting cases
that show up.
This version gives a complete classification
of the orthogonal polynomial systems that satisfy
Askey-Wilson duality.
It shows that the orthogonal polynomial systems that satisfy
Askey-Wilson duality are 
from the terminating branch of the Askey-tableau, except for
one family with
 $q=-1$  now called the Bannai-Ito polynomials 
\cite[p.~271]{bannaiIto},
\cite[Example~5.14]{ter2005}.
In our view, the terminating branch of the Askey-tableau
should include
the Bannai-Ito polynomials. Adopting this view,
for the rest of this paper
we include the Bannai-Ito polynomials in the terminating branch
of the Askey-tableau.
\medskip

\noindent The Leonard theorem
\cite[Theorem~5.1]{bannaiIto} is notoriously complicated; the statement
 alone takes 11 pages. In an effort to
simplify and clarify the theorem, the present author
introduced the notion of a Leonard pair
\cite[Definition~1.1]{2lintrans} and Leonard system
\cite[Definition~1.4]{2lintrans}.
 Roughly speaking, a Leonard pair consists
of two diagonalizable linear transformations
on a finite-dimensional
vector space, each acting on the eigenspaces of the other one
in an irreducible tridiagonal fashion; 
see Definition 2.1 below.
A Leonard system is essentially a Leonard pair, 
together with appropriate orderings of their eigenspaces; see 
Definition 2.3 below. 
In 
\cite[Theorem~1.9]{2lintrans} the Leonard systems are classified up
to isomorphism.
This classification is related to Leonard's theorem as follows.
In  \cite[Appendix~A]{2lintrans}
and \cite[Section~19]{ter2004}, a bijection is given
between the isomorphism classes of Leonard systems over $\mathbb R$,
and the orthogonal polynomial systems that satisfy Askey-Wilson duality.
Given the bijection,
the classification \cite[Theorem~1.9]{2lintrans} 
becomes  a `linear-algebraic version' of Leonard's theorem.
This version is conceptually simple and quite elegant in our view.
In \cite{madrid}   
we start with the Leonard pair axiom
and derive, in a uniform and attractive manner,
the polynomials in the terminating branch of the Askey-tableau,
along with their properties 
such as the 3-term recurrence, difference equation, Askey-Wilson
duality, and orthogonality.
\medskip

\noindent We comment on how the
theory of Leonard pairs and Leonard systems depends on
the choice of ground field.
The classification  \cite[Theorem~1.9]{2lintrans}
shows that 
the ground field does not matter in a substantial way, unless 
it has characteristic 2.
In this case, the theory admits an additional family of polynomials
called the orphans.
The orphans have diameter $d=3$ only;
they are described in
\cite[Example~5.15]{ter2005} and Example 20.13 below.
\medskip

\noindent  The book 
\cite{bannaiIto} appeared in 1984 and 
the paper \cite{2lintrans} appeared 
in 2001. It is natural to ask what happened in 
between. 
The concept of a Leonard pair over $\mathbb R$ appears in
\cite[Definitions~1.1,~1.2]{ter1987}, where it is called a thin Leonard pair. 
Also appearing in
\cite{ter1987} 
is the correspondence between
Leonard pairs over $\mathbb R$ and the orthogonal polynomial systems that
satisfy Askey-Wilson duality. In addition
\cite[Definition~3.1]{ter1987} 
describes an algebra called the Leonard algebra,
now known as the Askey-Wilson
algebra.
The paper
\cite{ter1987}  was submitted but never published.
In \cite{zhedhidden} A. Zhedanov  introduced the Askey-Wilson
algebra. This algebra has a presentation involving two
generators $K_1$, $K_2$ that satisfy a pair of quadratic relations.
In
\cite{zhedmut},   Granovskii, Lutzenko, and Zhedanov
consider a finite-dimensional irreducible module for 
the Askey-Wilson algebra, on which each of 
$K_1$, $K_2$ are diagonalizable. 
Under some minor assumptions, 
they show that each of $K_1$, $K_2$ acts in an irreducible tridiagonal
fashion on an eigenbasis for the other one. 
In hindsight, it is fair to say that they constructed an example
of a Leonard pair, although they did not define a Leonard pair
as an independent concept.
The paper \cite[Theorem~2.1]{tersub1} contains
a version of the Leonard pair concept that  is close to
\cite[Definition 1.1]{2lintrans}. 
\medskip

\noindent Turning to the present paper, we obtain two main results:
(i) an improved proof for the classification
of Leonard systems; (ii) a comprehensive description of
the intersection numbers of a Leonard system.
\medskip

\noindent We now describe our main results in detail.
In 
\cite[Theorem 1.9]{2lintrans} the Leonard systems are
classified up to isomorphism, and the given proof is completely correct
as far as we know. However the proof is longer than necessary.
In the roughly two decades since the
paper was published, we have discovered some `shortcuts' 
that simplify the proof and avoid certain tedious calculations.
The shortcuts are summarized as follows.
\begin{enumerate}
\item[$\bullet$]
In 
\cite[Section~3]{2lintrans} we established the split canonical form
for a Leonard system. In the present paper we make use of the
fact that the split
canonical form still exists under weaker assumptions; these are
described in
 Proposition \ref{lem:tighter} below. 
\item[$\bullet$]
The concept of a normalizing idempotent was introduced by
Edward Hanson in 
\cite[Section~6]{hanson}. In the present paper we use this
concept to simplify numerous arguments; see Sections 6, 7, 17 below.
\item[$\bullet$]
 In 
\cite[Theorem~4.8]{2lintrans}
we explicitly gave the matrix entries for a certain matrix representation
of the primitive idempotents for a Leonard system. The computation
of these entries is tedious and takes up most of 
\cite[Section~4]{2lintrans}. In the present paper we replace all
of this by a single identity
(\ref{eq:view2}) that is established in a few lines.
\item[$\bullet$]
 In the present paper we use an antiautomorphism
$\dagger$ to obtain the result
 Proposition
\ref{prop:threefourths}, which is roughly summarized as follows:
as we construct a Leonard system, if we construct three-fourths
of it then the last fourth comes for free.
\item[$\bullet$]
 In  
\cite[Lemma~7.2]{2lintrans} we used a slightly obscure method to
establish the irreducibility of the underlying module for
a Leonard system. In the present paper this lemma is avoided
using the first bullet point above.
\item[$\bullet$]
We replace the slightly technical results
\cite[Lemmas~10.3--10.5]{2lintrans} 
by a more elementary result,  Proposition
\ref{note:varthetacharacconvS99} below.
\item[$\bullet$]
We replace
most of \cite[Section~11]{2lintrans} by
a single result Proposition
\ref{prop:dg2}
 called the wrap-around result. 
The wrap-around result 
was discovered by T. Ito and the present author
during our effort to classify
the tridiagonal pairs; it is the essential idea behind
the proof of
\cite[Lemma~9.9]{tdqrac}. 
\item[$\bullet$]
 Using the  improvements listed above, we replace 
the arguments in \cite[Sections~13,~14]{2lintrans} with 
more efficient arguments in Section 17 below.
\end{enumerate}
Some parts of the improved proof are unchanged from the original;
we still use
\cite[Sections 8, 9]{2lintrans}  and
\cite[Lemmas~10.2, 12.4]{2lintrans}.
These results are reproduced in the present paper in order to
obtain a complete proof, all in one place. We believe that
this complete proof is suitable for The Book if not this journal.
\medskip

\noindent Concerning our second main result,
we mentioned earlier that the Leonard systems correspond to
the orthogonal polynomial sequences that satisfy Askey-Wilson duality.
Unfortunately, it is a bit difficult to go back and
forth between the two points of view, because from the
polynomial perspective, the main parameters are the intersection
numbers  (or connection coefficients) that describe the 3-term 
recurrence, and from the Leonard system perspective, the main
parameters are the first and second split sequence that make
up part of the parameter array.
There are some equations that relate the two types of parameters; see
\cite[Theorem~17.7]{ter2004} 
and Lemma 19.4 below. However the nonlinear nature of these equations
 makes them difficult to use. In order to mitigate the difficulty,
we display many identities that involve
 the intersection numbers along with the
  first and second split sequence.
Taken together, these identities should
 make it easier to work with the intersection numbers in the future.
These identities can be found in Section 19.
We also explicitly give the intersection numbers for every isomorphism
class of Leonard system; these are contained in the Appendix.
\medskip

\noindent The paper is organized as follows. Sections 2, 3  contain
 preliminary comments and definitions.
In Section 4 we describe the antiautomorphism $\dagger$ and
use it to obtain Proposition 
\ref{prop:threefourths}.
In Section 5 we describe some polynomials that will be used
throughout the paper.
In Section 6 we discuss the concept of a normalizing idempotent.
In Section 7 we use normalizing idempotents to describe
certain kinds of decompositions relevant to Leonard systems.
Section 8 contains the wrap-around result.
In Section 9 we recall the parameter array of a Leonard system.
In Section 10 we state the Leonard system 
classification, which is Theorem
\ref{thm:classification}.
Sections 11--13 are about recurrent sequences.
In Section 14 we define a polynomial in two variables that will be
useful on several occasions later in the paper.
Sections 15, 16 are about the tridiagonal
relations.
In Section 17 we complete the proof of Theorem
\ref{thm:classification}.
Section 18 contains two characterizations related to
Leonard systems and parameter arrays.
In Section 19 we give a comprehensive treatment of the intersection
numbers of a Leonard system. These intersection numbers are
listed in the Appendix.

\section{Preliminaries}

\noindent We now begin our formal argument.
Shortly we will define a Leonard pair and Leonard
system. 
Before we get into detail, we briefly review some
notation and basic concepts.
Let $\mathbb F$ denote a field. Every vector space and algebra 
discussed in this paper is understood to be over $\mathbb F$.
Throughout the paper fix an integer $d\geq 0$.
Let ${\rm Mat}_{d+1}(\mathbb F)$ denote the algebra consisting
of the $d+1$ by $d+1$ matrices that have all entries in $\mathbb F$.
We index the rows and columns by $0,1,\ldots, d$.
Throughout the paper $V$ denotes a vector space with dimension $d+1$.
Let ${\rm End}(V)$ denote the algebra consisting of the $\mathbb F$-linear
maps from $V$ to $V$.
Next we recall how each basis $\lbrace v_i\rbrace_{i=0}^d$ of $V$ gives
an algebra isomorphism
${\rm End}(V) \to
{\rm Mat}_{d+1}(\mathbb F)$.
For $X \in {\rm End}(V)$ and $M \in 
{\rm Mat}_{d+1}(\mathbb F)$, we say that
{\it $M$
represents $X$
with respect to 
$\lbrace v_i\rbrace_{i=0}^d$}
whenever $Xv_j = \sum_{i=0}^d M_{ij}v_i$ for $0 \leq j \leq d$.
The isomorphism 
sends $X$ to the unique matrix
in 
${\rm Mat}_{d+1}(\mathbb F)$
that represents $X$ with respect to 
$\lbrace v_i\rbrace_{i=0}^d$.
A matrix  $M \in {\rm Mat}_{d+1}(\mathbb F)$
is called {\it tridiagonal} whenever
each nonzero entry lies on either the diagonal, the subdiagonal,
or the superdiagonal. 
 Assume that $M$ is tridiagonal. Then $M$
is called {\it irreducible} whenever each entry on the
subdiagonal is nonzero, and each entry on the superdiagonal is
nonzero.

\begin{definition} 
\label{def:lp} \rm
{\rm (See \cite[Definition~1.1]{2lintrans}).}
By a {\it Leonard pair on $V$}
we mean an ordered pair $A, A^*$ of elements in 
${\rm End}(V)$ such that:
\begin{enumerate}
\item[\rm (i)] 
there exists a basis for $V$ with respect to which the
matrix representing $A$ is diagonal and the
matrix representing $A^*$ is irreducible tridiagonal;
\item[\rm (ii)] 
there exists a basis for $V$ with respect to which the
matrix representing $A^*$ is diagonal and the
matrix representing $A$ is irreducible tridiagonal.
\end{enumerate}
The Leonard pair $A,A^*$ is said to be {\it over} $\mathbb F$
and have {\it diameter $d$}.
\end{definition}

\begin{note}\rm According to a common notational convention,
$A^*$ denotes the conjugate-transpose of $A$. We are not
using this convention. In a Leonard pair $A$, $A^*$ the
linear transformations $A$ and $A^*$ are arbitrary subject
to (i), (ii) above.
\end{note}

\noindent When working with a Leonard pair, it is convenient
to consider a closely related object called a Leonard system.
Before defining a Leonard system, we recall a few concepts 
from linear algebra.
An element
$A \in {\rm End}(V)$ is said to be {\it diagonalizable} whenever
$V$ is spanned by the eigenspaces of $A$. The element $A$ is called
{\it multiplicity-free} whenever $A$ is diagonalizable and
each eigenspace of $A$ has dimension one.
Note that $A$ is multiplicity-free if and only if $A$
has $d+1$ mutually distinct eigenvalues in $\mathbb F$.
Assume that $A$ is multiplicity-free,
and let $\lbrace V_i\rbrace_{i=0}^d$ denote an ordering
of the eigenspaces of $A$. For $0 \leq i \leq d$
let $\theta_i$ denote the eigenvalue of $A$ for $V_i$.
For $0 \leq i \leq d$ define
$E_i \in {\rm End}(V)$ such that
$(E_i - I)V_i = 0 $ and
$E_i V_j=0$ if $j\not=i$ $(0 \leq j \leq d)$.
We call $E_i$ the {\it primitive idempotent} of $A$ for
$V_i$ (or 
$\theta_i$).
We have (i) $E_i E_j = \delta_{i,j} E_i$ $(0 \leq i,j\leq d)$;
(ii)  $I = \sum_{i=0}^d E_i$;
(iii)   $AE_i = \theta_i E_i = E_i A$ $(0 \leq i \leq d)$;
(iv) $A = \sum_{i=0}^d \theta_i E_i$;
(v) $V_i = E_iV$ $(0 \leq i \leq d)$;
(vi) ${\rm rank}(E_i) = 1$ $(0 \leq i \leq d)$,
(vii) ${\rm tr}(E_i) = 1$ $(0 \leq i \leq d)$,
where tr means trace.
Moreover
\begin{align}
\label{eq:ei}
  E_i=\prod_{\stackrel{0 \leq j \leq d}{j \neq i}}
            \frac{A-\theta_j I}{\theta_i-\theta_j}
	    \qquad \qquad (0 \leq i \leq d).
\end{align}
Let $\mathcal D$ denote the subalgebra of ${\rm End}(V)$ generated by
$A$. The elements 
$\lbrace A^i\rbrace_{i=0}^d$ form a basis for $\mathcal D$, and
$\prod_{i=0}^d (A-\theta_i I) = 0$. Moreover
$\lbrace E_i\rbrace_{i=0}^d$ 
form a basis for $\mathcal D$.
\medskip

\begin{definition}
\label{def:deflstalkS99}
\rm (See \cite[Definition~1.4]{2lintrans}).
By a {\it Leonard system} on $V$, we mean a 
sequence 
\begin{equation}
\Phi=(A; \lbrace E_i\rbrace_{i=0}^d; A^*; \lbrace E^*_i\rbrace_{i=0}^d)
\label{eq:ourstartingpt}
\end{equation}
of elements in 
${\rm End}(V)$  that satisfy {\rm (i)--(v)} below:
\begin{enumerate}
\item[\rm (i)] each of $A$,  $A^*$ is multiplicity-free;
\item[\rm (ii)]  $\lbrace E_i\rbrace_{i=0}^d$
is an ordering of the primitive 
idempotents of $A$;
\item[\rm (iii)]  $\lbrace E^*_i\rbrace_{i=0}^d$
is an ordering of the primitive 
idempotents of $A^*$;
\item[\rm (iv)] ${\displaystyle{
E^*_iAE^*_j = \begin{cases}
 0, & {\mbox{\rm if $\vert i-j\vert > 1$}}; \\ 
\not=0, &{\mbox{\rm if $\vert i-j \vert = 1$}}
\end{cases}
\qquad (0 \leq i,j\leq d)}}$;
\item[\rm (v)] ${\displaystyle{
E_iA^*E_j = \begin{cases}
 0, & {\mbox{\rm if $\vert i-j\vert > 1$}}; \\ 
\not=0, &{\mbox{\rm if $\vert i-j \vert = 1$}}
\end{cases}
\qquad (0 \leq i,j\leq d)}}$.
\end{enumerate}
The Leonard system $\Phi$ is said to be {\it over} $\mathbb F$
and have {\it diameter} $d$.
\end{definition}

\noindent 
Leonard pairs and Leonard systems are related as follows.
Let $(A; \lbrace E_i\rbrace_{i=0}^d; A^*; \lbrace E^*_i\rbrace_{i=0}^d)$
denote a Leonard system on $V$.
 Then  $A, A^*$ is a Leonard pair on $V$.
Conversely, let $A, A^*$ denote a Leonard pair on $V$. 
Then each of $A$, $A^*$ is multiplicity-free
\cite[Lemma~1.3]{2lintrans}.
Moreover there exists an ordering $\lbrace E_i\rbrace_{i=0}^d$
of the primitive idempotents of
$A$,
and there exists an ordering $\lbrace E^*_i\rbrace_{i=0}^d$
of the primitive idempotents of
$A^*$,
such that 
$(A; \lbrace E_i\rbrace_{i=0}^d; A^*; \lbrace E^*_i\rbrace_{i=0}^d)$
is a Leonard system on $V$.
\medskip

\noindent Next we recall the notion of isomorphism for Leonard pairs and
Leonard systems.
\begin{definition}
\rm Let $A, A^*$ denote a Leonard pair on $V$, and let
$B,B^*$ denote a Leonard pair on a vector space $V'$.
By an {\it isomorphism of Leonard pairs} from $A, A^*$ to
$B, B^*$ we mean a vector space isomorphism
$\sigma: V \to V'$ such that
$B = \sigma A \sigma^{-1}$ and
$B^* = \sigma A^* \sigma^{-1}$.
The Leonard pairs
$A, A^*$ and 
$B,B^*$ are {\it isomorphic} whenever
there exists an isomorphism of Leonard pairs from
$A, A^*$ to
$B, B^*$.
\end{definition}

\noindent Let $\sigma: V \to V'$ denote an isomorphism of
vector spaces. For
  $X \in {\rm End}(V)$ 
abbreviate
  $X^\sigma = \sigma X \sigma^{-1}$ 
and note that $X^\sigma \in {\rm End}(V')$.
The map ${\rm End}(V) \to {\rm End}(V')$, $X \mapsto X^\sigma$
is an isomorphism of algebras. For a Leonard system
$\Phi= (A; \lbrace E_i\rbrace_{i=0}^d; A^*; \lbrace E^*_i\rbrace_{i=0}^d)$
on $V$ the sequence
\begin{align*}
\Phi^\sigma:= (A^\sigma; \lbrace E^\sigma_i\rbrace_{i=0}^d; 
A^{*\sigma}; \lbrace E^{*\sigma}_i\rbrace_{i=0}^d)
\end{align*}
is a Leonard system on $V'$.
\begin{definition}\rm
Let $\Phi$ denote a Leonard system on $V$,
and let $\Phi'$ denote a Leonard system on a vector space $V'$.
By an {\it isomorphism of Leonard systems}
from $\Phi$ to $\Phi'$,
we mean an isomorphism of vector spaces $\sigma: V\to V'$
such that
$\Phi^\sigma = \Phi'$.
The Leonard systems
$\Phi$ and $\Phi'$ are  {\it isomorphic} whenever
there exists an isomorphism of Leonard systems from 
$\Phi$ to $\Phi'$.
\end{definition}

\noindent In
\cite[Theorem~1.9]{2lintrans} we classified the
Leonard systems up to isomorphism.
Our first main goal in the present paper 
is to give an improved proof of this classifiction.
This goal will be accomplished in
Sections 3--17.
The statement of the classification is given in
Theorem
\ref{thm:classification}. The proof of 
Theorem
\ref{thm:classification}
will be completed in Section 17.
\medskip

\noindent Recall the commutator notation
$\lbrack r,s\rbrack = rs - sr$.

\section{Pre Leonard systems}

\noindent As we start our investigation of Leonard systems,
it is helpful to consider a more general object called
a pre Leonard system. This object is defined as follows. 

\begin{definition}
\label{def:prels}
\rm By a {\it pre Leonard system} on $V$, we mean a sequence
\begin{align}
\Phi=(A; \lbrace E_i\rbrace_{i=0}^d; A^*; \lbrace E^*_i\rbrace_{i=0}^d)
\label{eq:sequence}
\end{align}
of elements in ${\rm End}(V)$ that satisfy conditions
(i)--(iii) in Definition
\ref{def:deflstalkS99}.
\end{definition}

\noindent The results in this section
refer to the pre Leonard system 
$\Phi$ from (\ref{eq:sequence}).

\begin{definition}
\label{def:D}
\rm 
For $0 \leq i \leq d$ let $\theta_i$ (resp. $\theta^*_i$) denote
the eigenvalue of $A$ (resp. $A^*$) for $E_i$ (resp. $E^*_i$).
We call $\lbrace \theta_i \rbrace_{i=0}^d$
(resp. $\lbrace \theta^*_i \rbrace_{i=0}^d$) the
{\it eigenvalue sequence}
(resp. {\it dual eigenvalue sequence}) of $\Phi$.
Let $\mathcal D$ (resp. $\mathcal D^*$) denote the subalgebra of
${\rm End}(V)$ generated by $A$ (resp. $A^*$).
\end{definition}

\begin{definition}
\label{def:ai}
\rm
Define
\begin{align*}
a_i = {\rm tr} (A E^*_i), \qquad \qquad
a^*_i = {\rm tr} (A^* E_i) \qquad \qquad (0 \leq i \leq d).
\end{align*}
We call
$\lbrace a_i \rbrace_{i=0}^d$
(resp. $\lbrace a^*_i \rbrace_{i=0}^d$) 
the {\it diagonal sequence}
(resp.  {\it dual diagonal sequence}) of $\Phi$.
\end{definition}

\begin{lemma}
\label{lem:aithi}
 We have
\begin{align}
&\theta_0 + \theta_1 + \cdots + \theta_d = 
a_0 + a_1 + \cdots + a_d, 
\label{eq:aithi}
\\
&\theta^*_0 + \theta^*_1 + \cdots + \theta^*_d = 
a^*_0 + a^*_1 + \cdots + a^*_d.
\label{eq:aithis}
\end{align}
\end{lemma}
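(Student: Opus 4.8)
The plan is to use only the linearity of the trace together with the two standard properties of primitive idempotents recorded just before Definition~\ref{def:deflstalkS99}: that $\sum_{i=0}^d E^*_i = I$ (completeness) and that ${\rm tr}(E_i)=1$ for $0 \leq i \leq d$, along with the spectral decomposition $A = \sum_{i=0}^d \theta_i E_i$. All of these apply here since $\Phi$ is a pre Leonard system, so conditions (i)--(iii) of Definition~\ref{def:deflstalkS99} hold and each of $A$, $A^*$ is multiplicity-free.

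First I would sum the defining relation $a_i = {\rm tr}(AE^*_i)$ from Definition~\ref{def:ai} over $0 \leq i \leq d$ and pull the trace outside the sum, obtaining
\begin{align*}
\sum_{i=0}^d a_i = {\rm tr}\Bigl(A\sum_{i=0}^d E^*_i\Bigr) = {\rm tr}(A),
\end{align*}
where the last equality uses $\sum_{i=0}^d E^*_i = I$. Next I would evaluate ${\rm tr}(A)$ using $A = \sum_{i=0}^d \theta_i E_i$; by linearity of the trace and ${\rm tr}(E_i)=1$ this gives ${\rm tr}(A) = \sum_{i=0}^d \theta_i$. Combining the two computations yields (\ref{eq:aithi}).

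Finally, (\ref{eq:aithis}) follows by the identical argument with the roles of $A$, $E_i$, $\theta_i$ interchanged with those of $A^*$, $E^*_i$, $\theta^*_i$: sum $a^*_i = {\rm tr}(A^* E_i)$ and use $\sum_{i=0}^d E_i = I$ to reach ${\rm tr}(A^*)$, then expand $A^* = \sum_{i=0}^d \theta^*_i E^*_i$ and use ${\rm tr}(E^*_i)=1$. There is essentially no obstacle here; the only point requiring care is the bookkeeping of starred versus unstarred idempotents, since $a_i$ deliberately pairs the unstarred operator $A$ with the starred idempotent $E^*_i$, so it is the completeness relation for the \emph{dual} idempotents that collapses the first sum.
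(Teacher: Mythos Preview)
Your proof is correct and takes essentially the same approach as the paper: both compute ${\rm tr}(A)$ in two ways, once via the eigenvalue decomposition and once via the completeness relation $\sum_i E^*_i = I$ together with the definition of $a_i$. The paper compresses this into a single chain of equalities, but the content is identical.
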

\begin{proof} To obtain
(\ref{eq:aithi}), 
observe that
\begin{align*}
\sum_{i=0}^d \theta_i = {\rm tr}(A) = {\rm tr}
\Bigl(A \sum_{i=0}^d E^*_i\Bigr) = 
\sum_{i=0}^d a_i.
\end{align*}
The proof of 
(\ref{eq:aithis}) is similar.
\end{proof}

\begin{lemma}
\label{lem:EAE}
For $0 \leq i \leq d$,
\begin{enumerate}
\item[\rm (i)] $E^*_i A E^*_i = a_i E^*_i$;
\item[\rm (ii)]
$E_i A^* E_i = a^*_i E_i$.
\end{enumerate}
\end{lemma}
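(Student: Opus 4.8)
The plan is to prove (i) and obtain (ii) by the evident $A \leftrightarrow A^*$, $E_i \leftrightarrow E^*_i$ symmetry between the two statements. The essential structural fact I would exploit is that $E^*_i$ is a primitive idempotent of the multiplicity-free element $A^*$, so it has rank one; this is what forces $E^*_i A E^*_i$ to collapse to a scalar multiple of $E^*_i$. The scalar is then pinned down by a one-line trace computation, so the whole argument reduces to (a) a rank-one linearity observation and (b) the cyclic invariance of trace together with $E^*_i E^*_i = E^*_i$.

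Concretely, for the first step I would use the decomposition $V = E^*_i V \oplus \ker E^*_i$, where $\ker E^*_i = \sum_{j \neq i} E^*_j V$ and $\dim E^*_i V = 1$. For any $w \in \ker E^*_i$ we have $E^*_i A E^*_i w = 0$, and $E^*_i w = 0$, so the two operators $E^*_i A E^*_i$ and $E^*_i$ agree on $\ker E^*_i$. On the other side, fix a nonzero $v \in E^*_i V$, so $E^*_i v = v$; then $E^*_i A E^*_i v = E^*_i(Av)$ lies in the one-dimensional space $E^*_i V$, hence equals $\lambda v$ for a unique scalar $\lambda \in \mathbb{F}$. Thus $E^*_i A E^*_i$ and $\lambda E^*_i$ agree on both summands of $V$, giving
\begin{align*}
E^*_i A E^*_i = \lambda E^*_i.
\end{align*}

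For the second step I would take the trace of both sides. Using ${\rm tr}(E^*_i) = 1$ on the right, and on the left the cyclic property of trace together with the idempotent relation $E^*_i E^*_i = E^*_i$, I get
\begin{align*}
\lambda = \lambda\,{\rm tr}(E^*_i) = {\rm tr}(E^*_i A E^*_i) = {\rm tr}(A E^*_i E^*_i) = {\rm tr}(A E^*_i) = a_i,
\end{align*}
which is exactly the claimed scalar and proves (i). Part (ii) follows by running the identical argument with $A^*$ in place of $A$ and $E_i$ in place of $E^*_i$, using that $A$ is multiplicity-free so $E_i$ has rank one and trace one, and recalling $a^*_i = {\rm tr}(A^* E_i)$.

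I do not expect any serious obstacle here; the only point requiring care is the justification that $E^*_i A E^*_i$ is a scalar multiple of $E^*_i$, and this rests entirely on $E^*_i$ having rank one, which is guaranteed because $\Phi$ is a pre Leonard system and hence $A^*$ is multiplicity-free. Everything else is routine manipulation of idempotents and traces.
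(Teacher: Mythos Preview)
Your proof is correct and takes essentially the same approach as the paper: both use that $E^*_i$ has rank one to conclude $E^*_i A E^*_i$ is a scalar multiple of $E^*_i$, then take the trace (using cyclicity and $(E^*_i)^2 = E^*_i$) to identify the scalar as $a_i$. The only cosmetic difference is that the paper phrases the rank-one step abstractly (``$E^*_i\,{\rm End}(V)\,E^*_i$ is spanned by $E^*_i$'') rather than via your explicit decomposition of $V$.
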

\begin{proof} (i) Abbreviate 
${\mathcal A}={\rm End}(V)$.
Since $E^*_i$ has rank 1, 
the vector space $E^*_i\mathcal A E^*_i$ is spanned by $E^*_i$.
Therefore there exists
$\alpha_i \in \mathbb F$ such that
$E^*_i A E^*_i=\alpha_i E^*_i$.
In this equation take the trace of each side and use
Definition \ref{def:ai} along with ${\rm tr}(XY) = {\rm tr}(YX)$
to obtain
 $a_i=\alpha_i$.
\\
\noindent (ii) Similar to the proof of (i) above.
\end{proof}

\noindent We have been discussing the pre Leonard system
\begin{align*}
\Phi= (A; \lbrace E_i\rbrace_{i=0}^d; A^*; \lbrace E^*_i\rbrace_{i=0}^d)
\end{align*}
on $V$.  Each of the following is a pre Leonard system on $V$:
\begin{align*}
&\Phi^\Downarrow:=
(A; \lbrace E_{d-i}\rbrace_{i=0}^d; A^*; \lbrace E^*_i\rbrace_{i=0}^d),
\\
&\Phi^\downarrow:=
(A; \lbrace E_i\rbrace_{i=0}^d; A^*; \lbrace E^*_{d-i}\rbrace_{i=0}^d),
\\
&\Phi^*:=
(A^*; \lbrace E^*_i\rbrace_{i=0}^d; A; \lbrace E_{i}\rbrace_{i=0}^d).
\end{align*}

\begin{proposition}
\label{prop:ttaa}
With the above notation,
\bigskip

\centerline{
\begin{tabular}[t]{c|cccc}
   {\rm pre LS} & {\rm eigenvalue seq.} & {\rm dual eigenvalue seq.} &
   {\rm diagonal seq.} & {\rm dual diagonal seq.}
\\
\hline
$\Phi$ & 
$\lbrace \theta_i \rbrace_{i=0}^d$ &
$\lbrace \theta^*_i \rbrace_{i=0}^d$ &
$\lbrace a_i \rbrace_{i=0}^d$ &
$\lbrace a^*_i \rbrace_{i=0}^d$
\\
$\Phi^\Downarrow $&   
$\lbrace \theta_{d-i} \rbrace_{i=0}^d$ &
$\lbrace \theta^*_i \rbrace_{i=0}^d$ &
$\lbrace a_i \rbrace_{i=0}^d$ &
$\lbrace a^*_{d-i} \rbrace_{i=0}^d$
\\
$\Phi^\downarrow $ &
$\lbrace \theta_i \rbrace_{i=0}^d$ &
$\lbrace \theta^*_{d-i} \rbrace_{i=0}^d$ &
$\lbrace a_{d-i} \rbrace_{i=0}^d$ &
$\lbrace a^*_i \rbrace_{i=0}^d$
\\
$\Phi^* $ & 
$\lbrace \theta^*_i \rbrace_{i=0}^d$ &
$\lbrace \theta_i \rbrace_{i=0}^d$ &
$\lbrace a^*_i \rbrace_{i=0}^d$ &
$\lbrace a_i \rbrace_{i=0}^d$
\\
\end{tabular}}
\bigskip
\end{proposition}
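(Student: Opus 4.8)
The plan is to verify each entry of the table by directly unwinding Definition \ref{def:D} and Definition \ref{def:ai}; there is no substantial mathematical content, only a careful relabeling, so I would treat the three operations $\Phi^\Downarrow$, $\Phi^\downarrow$, $\Phi^*$ one at a time and in each case read off the four sequences.

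First I would handle $\Phi^\Downarrow=(A; \lbrace E_{d-i}\rbrace_{i=0}^d; A^*; \lbrace E^*_i\rbrace_{i=0}^d)$. The element playing the role of $A$ is unchanged, while the primitive idempotent of $A$ in position $i$ is now $E_{d-i}$; by Definition \ref{def:D} its eigenvalue is $\theta_{d-i}$, giving eigenvalue sequence $\lbrace \theta_{d-i}\rbrace_{i=0}^d$. The idempotents $\lbrace E^*_i\rbrace_{i=0}^d$ are untouched, so the dual eigenvalue sequence remains $\lbrace \theta^*_i\rbrace_{i=0}^d$. For the diagonal sequences I would invoke Definition \ref{def:ai}: the $i$th diagonal entry is ${\rm tr}(A E^*_i)=a_i$ (unchanged, since the $E^*_i$ are unchanged), while the $i$th dual diagonal entry is ${\rm tr}(A^* E_{d-i})=a^*_{d-i}$, matching the second row of the table.

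Next I would treat $\Phi^\downarrow$, which is the mirror image: the $\lbrace E^*_i\rbrace_{i=0}^d$ are reversed and the $\lbrace E_i\rbrace_{i=0}^d$ are fixed, so by the identical reasoning the dual eigenvalue sequence becomes $\lbrace \theta^*_{d-i}\rbrace_{i=0}^d$, the eigenvalue sequence stays $\lbrace \theta_i\rbrace_{i=0}^d$, the diagonal sequence becomes ${\rm tr}(A E^*_{d-i})=a_{d-i}$, and the dual diagonal sequence stays $a^*_i$. Finally, for $\Phi^*=(A^*; \lbrace E^*_i\rbrace_{i=0}^d; A; \lbrace E_i\rbrace_{i=0}^d)$ the roles of $A,A^*$ and of the two idempotent sequences are interchanged. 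Applying Definition \ref{def:D} under this relabeling shows the eigenvalue and dual eigenvalue sequences swap, yielding $\lbrace \theta^*_i\rbrace_{i=0}^d$ and $\lbrace \theta_i\rbrace_{i=0}^d$ respectively; and the defining trace ${\rm tr}(A E^*_i)$ for the new diagonal sequence becomes ${\rm tr}(A^* E_i)=a^*_i$ (and symmetrically for the dual diagonal), so these two columns interchange as well.

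The only step requiring any care --- the ``main obstacle,'' such as it is --- is bookkeeping the index reversal and the $A \leftrightarrow A^*$ swap without conflating a sequence with its reversal. The trace formulas of Definition \ref{def:ai} make each entry immediate once the correct idempotent has been identified, and the invariance of the trace under reindexing is what forces the diagonal and dual diagonal columns to behave as the table records.
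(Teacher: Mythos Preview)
Your proposal is correct and takes essentially the same approach as the paper, which simply says ``Use Definitions \ref{def:D}, \ref{def:ai}.'' You have merely written out in full the routine unwinding of those definitions that the paper leaves to the reader.
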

\begin{proof} Use Definitions
\ref{def:D},
\ref{def:ai}.
\end{proof}

\section{The antiautomorphism $\dagger$}

\noindent We continue to discuss the  pre Leonard system
$\Phi=(A; \lbrace E_i\rbrace_{i=0}^d; A^*; \lbrace E^*_i\rbrace_{i=0}^d)$
from 
Definition \ref{def:prels}.

\begin{lemma}
\label{lem:EAEassume} 
Assume that
\begin{align*}
E^*_iAE^*_j = \begin{cases}
 0, & {\mbox{ if $\vert i-j\vert > 1$}}; \\ 
\not=0, &{\mbox{ if $\vert i-j \vert = 1$}}
\end{cases}
\qquad (0 \leq i,j\leq d).
\end{align*}
Then the elements
\begin{align}
A^i E^*_0 A^j \qquad \qquad 0 \leq i,j\leq d
\label{eq:AiEAj}
\end{align}
form a basis for the vector space ${\rm End}(V)$.
\end{lemma}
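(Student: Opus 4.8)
The plan is to count dimensions and then establish linear independence. The algebra ${\rm End}(V)$ has dimension $(d+1)^2$, and the list (\ref{eq:AiEAj}) has exactly $(d+1)^2$ entries, so it suffices to show that these entries are linearly independent (equivalently, that they span). I would exploit throughout that $E^*_0$ has rank one.

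Identifying ${\rm End}(V)$ with $V\otimes V^*$ in the usual way, rank one gives a nonzero $u\in V$ spanning $E^*_0V$ and a nonzero $\xi\in V^*$ with $E^*_0x=\xi(x)\,u$ for all $x$; that is, $E^*_0=u\otimes\xi$. A one-line computation then yields
\[
A^iE^*_0A^j=(A^iu)\otimes(\xi\circ A^j)\qquad (0\le i,j\le d),
\]
where $\xi\circ A^j\in V^*$ is the functional $x\mapsto \xi(A^jx)$. Since a family $\{v_i\otimes\phi_j\}$ is a basis of $V\otimes V^*$ whenever $\{v_i\}_{i=0}^d$ is a basis of $V$ and $\{\phi_j\}_{j=0}^d$ is a basis of $V^*$, the claim reduces to two cyclicity statements: (a) $\{A^iu\}_{i=0}^d$ is a basis of $V$; (b) $\{\xi\circ A^j\}_{j=0}^d$ is a basis of $V^*$.

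To prove (a) I would first put $A$ in irreducible tridiagonal form using the hypothesis. For $0\le i\le d$ choose $u_i$ spanning the one-dimensional space $E^*_iV$, with $u_0=u$, and let $M$ represent $A$ with respect to $\{u_i\}_{i=0}^d$. From $E^*_iAE^*_ju_j=M_{ij}u_i$ one reads off that $M_{ij}=0$ when $\vert i-j\vert>1$, and that $M_{ij}\neq 0$ when $\vert i-j\vert=1$; hence $M$ is irreducible tridiagonal. A routine induction on $k$ using the nonzero subdiagonal entries shows that $A^ku_0$ lies in the span of $u_0,\ldots,u_k$ with nonzero $u_k$-coefficient, so $\{A^ku_0\}_{k=0}^d$ is in echelon form relative to $\{u_k\}_{k=0}^d$ and is a basis of $V$. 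Statement (b) is the transpose of (a): in the dual basis $\{\phi_i\}$ one has $\xi=\phi_0$ and $(\phi_0\circ A)(u_k)=M_{0k}$, so now the nonzero \emph{superdiagonal} entries of $M$ drive the same echelon argument, showing $\{\phi_0\circ A^j\}_{j=0}^d$ is a basis of $V^*$. Combining (a) and (b) gives the result.

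The step I expect to require the most care is the passage from the operator hypothesis on the $E^*_iAE^*_j$ to the irreducibility of $M$: one must argue that $E^*_iAE^*_j\neq 0$ forces the single entry $M_{ij}$ to be nonzero, and this is exactly where the rank-one property of $E^*_j$ enters (if $M_{ij}=0$ then $E^*_iAE^*_j$ vanishes both on $E^*_jV=\langle u_j\rangle$ and on $\ker E^*_j$, hence is $0$). Everything after that is a standard leading-term argument together with the dimension count, so no serious obstacle remains.
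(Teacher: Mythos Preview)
Your proof is correct and follows essentially the same approach as the paper: pass to the eigenbasis of $A^*$, observe that $A$ becomes irreducible tridiagonal while $E^*_0$ becomes the $(0,0)$ matrix unit, and then count dimensions. The paper simply says ``using these matrices one routinely checks that the elements (\ref{eq:AiEAj}) are linearly independent,'' whereas you carry out that check explicitly via the rank-one tensor decomposition $E^*_0=u\otimes\xi$ and the two cyclicity/echelon arguments. Your organization makes transparent why both the subdiagonal and superdiagonal of $M$ must be nonzero (one for each factor of the tensor), which is exactly the content of the paper's ``routine'' step.
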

\begin{proof} For $0 \leq i \leq d$ pick $0 \not=v_i \in E^*_iV$.
So $\lbrace v_i \rbrace_{i=0}^d $ is a basis for $V$.
Without loss of generality, we may identify each  $X\in {\rm End}(V)$
with the matrix in ${\rm Mat}_{d+1}(\mathbb F)$
that represents $X$ with respect to
$\lbrace v_i \rbrace_{i=0}^d$.
From this point of view $A$ is irreducible tridiagonal and
$E^*_0 = {\rm diag}(1,0,\ldots,0)$.
Using these matrices one routinely checks that 
the elements (\ref{eq:AiEAj}) are linearly independent.
There are $(d+1)^2$ elements listed in 
(\ref{eq:AiEAj}), and this is the dimension of
${\rm End}(V)$. Therefore
the elements (\ref{eq:AiEAj}) form a basis for
${\rm End}(V)$.
\end{proof}
\begin{lemma}
\label{lem:genset}
Under the assumption in Lemma
\ref{lem:EAEassume},
each of the following is a generating set for
the algebra ${\rm End}(V)$:
{\rm (i)} $A, E^*_0$; 
{\rm (ii)} $A, A^*$.
\end{lemma}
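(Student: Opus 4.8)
The plan is to leverage Lemma \ref{lem:EAEassume}, which already tells us that the elements $A^i E^*_0 A^j$ for $0\le i,j\le d$ form a basis for ${\rm End}(V)$. In particular these $(d+1)^2$ elements span ${\rm End}(V)$. Since each such element $A^i E^*_0 A^j$ is manifestly a product of elements in the algebra generated by $A$ and $E^*_0$, it lies in $\langle A, E^*_0\rangle$. A spanning set for ${\rm End}(V)$ that is contained in a subalgebra forces that subalgebra to be all of ${\rm End}(V)$, which establishes part (i).

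For part (ii) the strategy is to reduce to part (i) by showing that $E^*_0$ lies in the algebra generated by $A$ and $A^*$; then $\langle A, E^*_0\rangle \subseteq \langle A, A^*\rangle \subseteq {\rm End}(V)$ together with (i) gives equality. First I would recall from the preliminary discussion that, because $A^*$ is multiplicity-free, the primitive idempotent $E^*_0$ can be written as a polynomial in $A^*$; indeed equation (\ref{eq:ei}) applied to $A^*$ gives $E^*_0 = \prod_{j\ne 0}(A^*-\theta^*_j I)/(\theta^*_0-\theta^*_j)$, so $E^*_0 \in \mathcal D^* \subseteq \langle A, A^*\rangle$. Hence the generating set $A, E^*_0$ of part (i) is contained in $\langle A, A^*\rangle$, so $\langle A, A^*\rangle = {\rm End}(V)$.

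The only point needing care is that Lemma \ref{lem:EAEassume} is stated under the hypothesis that $E^*_iAE^*_j$ is zero when $|i-j|>1$ and nonzero when $|i-j|=1$, and Lemma \ref{lem:genset} inherits exactly this same ``under the assumption in Lemma \ref{lem:EAEassume}'' hypothesis; so I may invoke the basis conclusion freely. I do not expect any genuine obstacle here: both parts are short consequences of an already-proved spanning statement plus the elementary fact that a primitive idempotent of a multiplicity-free element is a polynomial in that element. The entire argument is essentially the observation that a subalgebra containing a spanning set of the ambient algebra equals the ambient algebra.

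I would write it compactly: for (i), note $A^iE^*_0A^j \in \langle A, E^*_0\rangle$ for all $i,j$, and these span ${\rm End}(V)$ by Lemma \ref{lem:EAEassume}, so $\langle A,E^*_0\rangle = {\rm End}(V)$. For (ii), observe $E^*_0 \in \mathcal D^*$ by (\ref{eq:ei}), whence $\langle A, E^*_0\rangle \subseteq \langle A, A^*\rangle$, and now (i) yields ${\rm End}(V) = \langle A, E^*_0\rangle \subseteq \langle A, A^*\rangle \subseteq {\rm End}(V)$, forcing equality throughout.
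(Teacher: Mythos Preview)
Your proof is correct and follows essentially the same approach as the paper: part (i) is an immediate consequence of the spanning statement in Lemma \ref{lem:EAEassume}, and part (ii) follows from (i) together with the fact that $E^*_0$ is a polynomial in $A^*$.
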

\begin{proof} (i) By Lemma
\ref{lem:EAEassume}.
\\
\noindent (ii) By (i) above and since $E^*_0$ is a polynomial in
$A^*$.
\end{proof}
\noindent By an
{\it automorphism} of ${\rm End}(V)$ we mean
an algebra isomorphism 
 ${\rm End}(V)\to
 {\rm End}(V)$.
By an {\it antiautomorphism} of 
${\rm End}(V)$ 
we mean a vector space isomorphism $\zeta :
{\rm End}(V) \to 
{\rm End}(V) $
such that $(XY)^\zeta = Y^\zeta X^\zeta$ for
all $X, Y \in
{\rm End}(V)$.
\begin{lemma}
\label{lem:antiaut}
Under the assumption in Lemma
\ref{lem:EAEassume},
\begin{enumerate}
\item[\rm (i)] 
there exists a unique
antiautomorphism $\dagger$ of ${\rm End}(V)$ that 
fixes each of $A, A^*$;
\item[\rm (ii)]
$\dagger$ fixes everything in $\mathcal D$ and everything in
$\mathcal D^*$;
\item[\rm (iii)]
$\dagger$ fixes each of $E_i, E^*_i$ for $0 \leq i \leq d$;
\item[\rm (iv)]
$(X^\dagger)^\dagger = X$ for all $X \in {\rm End}(V)$.
\end{enumerate}
\end{lemma}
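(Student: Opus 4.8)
The plan is to treat uniqueness together with the algebraic consequences (ii)--(iv) as essentially formal, and to concentrate the real work in the existence half of (i).

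For uniqueness I would argue as follows. By Lemma \ref{lem:genset}(ii) the algebra ${\rm End}(V)$ is generated by $A,A^*$. An antiautomorphism is additive and reverses products, so its values on $A$ and $A^*$ determine its value on every word in $A,A^*$, hence on a spanning set of ${\rm End}(V)$. Thus at most one antiautomorphism can fix both $A$ and $A^*$.

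Existence is the main obstacle, and I would realize $\dagger$ concretely in the basis $\lbrace v_i\rbrace_{i=0}^d$ from the proof of Lemma \ref{lem:EAEassume}, where $v_i\in E^*_iV$. Relative to this basis $A$ is irreducible tridiagonal, while $A^*=\sum_i\theta^*_iE^*_i$ sends $v_i$ to $\theta^*_i v_i$ and is therefore diagonal. The matrix transpose $\tau$ (transported to ${\rm End}(V)$ through this basis) is an antiautomorphism that fixes the diagonal matrix $A^*$ but sends $A$ to $A^{t}$, which in general is not $A$. The idea is to correct this by composing $\tau$ with a conjugation $\gamma_D\colon X\mapsto DXD^{-1}$ by a diagonal invertible $D$; since conjugation is an automorphism, $\dagger:=\gamma_D\circ\tau$ is again an antiautomorphism, and since diagonal matrices commute, $\dagger$ still fixes $A^*$. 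It then remains to choose $D$ so that $DA^{t}D^{-1}=A$. Comparing entries, this reduces to the requirement $D_{ii}/D_{i+1,i+1}=A_{i,i+1}/A_{i+1,i}$ for $0 \leq i \leq d-1$, which I solve by setting $D_{00}=1$ and iterating. This is exactly the point where one might fear an obstruction: the naive alternative of rescaling the basis to make $A$ symmetric would force square roots of the ratios $A_{i+1,i}/A_{i,i+1}$ and could fail over a general field. The recursion above, by contrast, only divides by the superdiagonal and subdiagonal entries of $A$, which are nonzero precisely because $A$ is irreducible; one checks that the subdiagonal equations are automatically consistent with these choices. Hence $D$ exists and $\dagger=\gamma_D\circ\tau$ fixes both $A$ and $A^*$.

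Parts (ii)--(iv) then follow formally. Since $\dagger$ fixes $A$ and reverses products, $(A^n)^\dagger=A^n$, so $\dagger$ fixes every polynomial in $A$, that is, all of $\mathcal D$; the same argument applied to $A^*$ handles $\mathcal D^*$, proving (ii). Because each $E_i$ (resp. $E^*_i$) is a polynomial in $A$ (resp. $A^*$) by (\ref{eq:ei}), it lies in $\mathcal D$ (resp. $\mathcal D^*$) and is fixed, giving (iii). Finally $\dagger^2$ is a composite of two antiautomorphisms, hence an algebra automorphism, and it fixes both $A$ and $A^*$; as these generate ${\rm End}(V)$ by Lemma \ref{lem:genset}(ii), $\dagger^2$ must be the identity, which is (iv).
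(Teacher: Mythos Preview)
The proposal is correct and takes essentially the same approach as the paper: both construct $\dagger$ in the $E^*_i$-eigenbasis as transpose followed by conjugation by a diagonal matrix whose consecutive ratios are $A_{i,i+1}/A_{i+1,i}$, and both derive (ii)--(iv) formally from the fact that $A,A^*$ generate ${\rm End}(V)$. Your uniqueness argument (an antiautomorphism fixing the generators is determined on all words) is a direct variant of the paper's (compose two candidates to get an automorphism fixing the generators, hence the identity); they are equivalent.
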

\begin{proof} (i) First we show that $\dagger$ exists.
For $0 \leq i \leq d$ pick $0 \not=v_i \in E^*_iV$.
So $\lbrace v_i \rbrace_{i=0}^d$ is a  basis for $V$.
For $X \in {\rm End}(V)$ let $X^\sharp 
\in {\rm Mat}_{d+1}(\mathbb F)$ represent $X$ with respect to
$\lbrace v_i \rbrace_{i=0}^d$. The map
$\sharp: 
{\rm End}(V) \to
{\rm Mat}_{d+1}(\mathbb F)$, $X \mapsto X^\sharp$
is an algebra isomorphism.
Write $B=A^\sharp$ and $B^*=A^{*\sharp}$.
The matrix $B$ is irreducible tridiagonal and 
$B^*={\rm diag}(\theta^*_0, \theta^*_1,\ldots, \theta^*_d)$.
Define a diagonal matrix $K \in {\rm Mat}_{d+1}(\mathbb F)$
with diagonal entries
\begin{align*}
K_{ii} = \frac{B_{01} B_{12} \cdots B_{i-1,i}}{B_{10}B_{21} \cdots B_{i,i-1}}
\qquad \qquad (0 \leq i \leq d).
\end{align*}
The matrix $K$ is invertible and $K^{-1} B^t K = B$.
Define a map 
$\flat:  {\rm Mat}_{d+1}(\mathbb F) \to
{\rm Mat}_{d+1}(\mathbb F),
X \mapsto K^{-1} X^t K$. The map $\flat $
is an antiautomorphism
of ${\rm Mat}_{d+1}(\mathbb F)$ 
that fixes each of $B,B^*$. The composition
\[
\dagger: \quad 
\begin{CD}
 {\rm End}(V) @>>\sharp >  
{\rm Mat}_{d+1}(\mathbb F)
 @>>\flat> 
{\rm Mat}_{d+1}(\mathbb F)
                 @>>\sharp^{-1} > {\rm End}(V)
                  \end{CD} 
              \]
is an antiautomorphism of ${\rm End}(V)$ that
fixes each of $A,A^*$.
We have shown that $\dagger$ exists. Next we show that
$\dagger$ is unique. Let $\zeta$ denote an antiautomorphism
of ${\rm End}(V)$ that fixes each of $A,A^*$. Then the
composition $\dagger \circ \zeta^{-1}$ is an automorphism
of ${\rm End}(V)$ that fixes each of $A, A^*$. Now
$\dagger \circ \zeta^{-1} = 1$ in view of
Lemma \ref{lem:genset}(ii).
 Therefore
 $\dagger = \zeta$. 
We have shown that $\dagger$ is unique.
\\
\noindent (ii)  Since $A$ (resp. $A^*$) generates
$\mathcal D$ (resp. $\mathcal D^*$).
\\
\noindent (iii) Since $E_i \in \mathcal D$ and 
$E^*_i \in \mathcal D^*$  for $0 \leq i \leq d$.
\\
\noindent (iv) 
The composition
$\dagger \circ \dagger$ is an automorphism of ${\rm End}(V)$
that fixes each of $A, A^*$. Now
$\dagger \circ \dagger = 1$ in view of Lemma
\ref{lem:genset}(ii).
\end{proof}

\begin{proposition}
\label{prop:threefourths}
Consider the following four conditions:
\begin{enumerate}
\item[\rm (i)]
$\displaystyle{
E^*_iAE^*_j = \begin{cases}
 0, & {\mbox{ if $ i-j > 1$}}; \\ 
\not=0, &{\mbox{ if $ i-j = 1$}}
\end{cases}
\qquad (0 \leq i,j\leq d);
}$
\item[\rm (ii)]
$\displaystyle{
E^*_iAE^*_j = \begin{cases}
 0, & {\mbox{ if $ j-i > 1$}}; \\ 
\not=0, &{\mbox{ if $ j-i = 1$}}
\end{cases}
\qquad (0 \leq i,j\leq d);
}$
\item[\rm (iii)]
$\displaystyle{
E_iA^*E_j = \begin{cases}
 0, & {\mbox{ if $ i-j > 1$}}; \\ 
\not=0, &{\mbox{ if $ i-j = 1$}}
\end{cases}
\qquad (0 \leq i,j\leq d);
}$
\item[\rm (iv)]
$\displaystyle{
E_iA^*E_j = \begin{cases}
 0, & {\mbox{ if $ j-i > 1$}}; \\ 
\not=0, &{\mbox{ if $ j-i = 1$}}
\end{cases}
\qquad (0 \leq i,j\leq d).
}$
\end{enumerate}
Assume at least three of {\rm (i)--(iv)} hold. Then each of
{\rm (i)--(iv)} holds;
in other words 
the pre Leonard system $\Phi$ is a Leonard system.
\end{proposition}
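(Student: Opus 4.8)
The plan is to extract everything from the antiautomorphism $\dagger$ of Lemma~\ref{lem:antiaut}, by showing that it interchanges condition (i) with condition (ii), and condition (iii) with condition (iv). Granting this, the hypothesis that at least three of (i)--(iv) hold will force the remaining one.

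The key computation I would record first is this: any antiautomorphism $\dagger$ of ${\rm End}(V)$ that fixes $A$ and $A^*$ automatically fixes each $E_i$ and each $E^*_i$, since $E_i$ (resp.\ $E^*_i$) is a polynomial in $A$ (resp.\ $A^*$) by (\ref{eq:ei}) and an antiautomorphism restricts to an automorphism on the commutative algebras $\mathcal D$, $\mathcal D^*$. Consequently, for all $0\le i,j\le d$ we have $(E^*_iAE^*_j)^\dagger = E^*_jAE^*_i$ and $(E_iA^*E_j)^\dagger=E_jA^*E_i$. As $\dagger$ is a bijection sending $0$ to $0$, the products $E^*_iAE^*_j$ and $E^*_jAE^*_i$ are simultaneously zero and simultaneously nonzero, and likewise $E_iA^*E_j$ and $E_jA^*E_i$. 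Interchanging the roles of $i$ and $j$ converts the $i-j$ requirements of (i) into the $j-i$ requirements of (ii), and those of (iii) into those of (iv); hence (i)$\Leftrightarrow$(ii) and (iii)$\Leftrightarrow$(iv) whenever such a $\dagger$ exists.

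The remaining issue is availability: Lemma~\ref{lem:antiaut} produces $\dagger$ only under the hypothesis of Lemma~\ref{lem:EAEassume}, that is, only when (i) and (ii) both hold. I would therefore split into cases. If three of (i)--(iv) hold then at most one fails, so either both (i),(ii) hold or both (iii),(iv) hold. In the former case $\dagger$ exists for $\Phi$, and the equivalence (iii)$\Leftrightarrow$(iv) promotes the one surviving condition among (iii),(iv) to both. In the latter case I apply the whole construction to the dual pre Leonard system $\Phi^*=(A^*;\lbrace E^*_i\rbrace_{i=0}^d;A;\lbrace E_i\rbrace_{i=0}^d)$: conditions (iii),(iv) for $\Phi$ are exactly the hypothesis of Lemma~\ref{lem:EAEassume} for $\Phi^*$, so Lemma~\ref{lem:antiaut} yields an antiautomorphism fixing $A^*$ and $A$, and then the equivalence (i)$\Leftrightarrow$(ii) supplies the missing one of (i),(ii). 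In every case all four conditions hold, so by Definition~\ref{def:deflstalkS99} the pre Leonard system $\Phi$ is a Leonard system.

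The one point demanding care is precisely this bookkeeping. Because $\dagger$ is manufactured from the irreducible tridiagonal shape of $A$ in an eigenbasis for $A^*$, it is legitimately in hand only once both (i) and (ii) are known; this is why the two cases where (i) or (ii) fails must be routed through $\Phi^*$ rather than by attempting to build $\dagger$ from a single one of (i),(ii). Once the case split is arranged in this way, each implication is immediate from the bijectivity of $\dagger$, and essentially no computation is required.
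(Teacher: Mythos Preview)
Your proof is correct and follows essentially the same approach as the paper: use the antiautomorphism $\dagger$ from Lemma~\ref{lem:antiaut} to show that $E_iA^*E_j=0$ iff $E_jA^*E_i=0$ (and similarly with stars), so (iii)$\Leftrightarrow$(iv) and (i)$\Leftrightarrow$(ii). The paper compresses your case split into a single ``interchanging $A,A^*$ if necessary, we may assume (i),(ii) hold,'' which is exactly your appeal to $\Phi^*$ in the second case.
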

\begin{proof} Interchanging $A, A^*$ if necessary, we may assume
without loss
of generality that (i), (ii) hold.
Now the assumption of Lemma
\ref{lem:EAEassume} 
holds, so   
Lemma
\ref{lem:antiaut} applies. 
Consider the map $\dagger$ from Lemma
\ref{lem:antiaut}.
For $0 \leq i,j\leq d$ we have
\begin{align*}
(E_i A^*E_j)^\dagger = E_jA^*E_i.
\end{align*}
Therefore
$E_i A^*E_j= 0$ if and only if $E_jA^*E_i=0$.
Consequently (iii) holds if and only if (iv) holds.
The result follows.
\end{proof}

\section{The polynomials 
 $\tau_i, \eta_i, \tau^*_i, \eta^*_i$} 

\noindent We continue to discuss the  pre Leonard system
$\Phi=(A; \lbrace E_i\rbrace_{i=0}^d; A^*; \lbrace E^*_i\rbrace_{i=0}^d)$
from 
Definition \ref{def:prels}.
\medskip

\noindent 
Let $\lambda $ denote an indeterminate. Let $\mathbb F\lbrack \lambda
\rbrack$ denote the algebra consisting of the polynomials in 
$\lambda$ that have all coefficients in $\mathbb F$.

\begin{definition}
\label{def:taui}
\rm
For $0 \leq i \leq d$ define $\tau_i, \eta_i, \tau^*_i, \eta^*_i 
\in \mathbb F\lbrack \lambda \rbrack$ by
\begin{align*}
&\tau_i = (\lambda-\theta_0) (\lambda-\theta_1) \cdots 
(\lambda-\theta_{i-1}),
\qquad \qquad
\eta_i = (\lambda-\theta_d) (\lambda-\theta_{d-1}) \cdots 
(\lambda-\theta_{d-i+1}),
\\
&\tau^*_i = (\lambda-\theta^*_0) (\lambda-\theta^*_1) \cdots 
(\lambda-\theta^*_{i-1}),
\qquad \qquad
\eta^*_i = (\lambda-\theta^*_d) (\lambda-\theta^*_{d-1}) \cdots 
(\lambda-\theta^*_{d-i+1}).
\end{align*}
Each of 
$\tau_i, \eta_i, \tau^*_i, \eta^*_i $ is monic with degree $i$.
\end{definition}

\noindent We mention some results about 
 $\lbrace \tau_i\rbrace_{i=0}^d$ and 
 $\lbrace \eta_i\rbrace_{i=0}^d$;
similar results apply to
 $\lbrace \tau^*_i\rbrace_{i=0}^d$ and 
 $\lbrace \eta^*_i\rbrace_{i=0}^d$.

\begin{lemma} 
\label{lem:tau1}
 The vectors $\lbrace \tau_i(A)\rbrace_{i=0}^d$ form a basis
 for
  $\mathcal D$. Moreover the vectors
 $\lbrace \eta_i(A)\rbrace_{i=0}^d$ form a basis for
 $\mathcal D$.
 \end{lemma}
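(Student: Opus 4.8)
The plan is to reduce the assertion to an elementary fact about the single-variable polynomials $\tau_i$, and then transport that fact into $\mathcal D$ via the evaluation homomorphism $\lambda \mapsto A$. Let $U$ denote the subspace of $\mathbb F\lbrack \lambda \rbrack$ consisting of the polynomials of degree at most $d$; this subspace has dimension $d+1$, with basis $\lbrace \lambda^i\rbrace_{i=0}^d$. First I would use Definition \ref{def:taui} to note that each $\tau_i$ is monic of degree $i$, so that $\tau_i \in U$ for $0 \leq i \leq d$ and the matrix expressing $\lbrace \tau_i\rbrace_{i=0}^d$ in terms of $\lbrace \lambda^i\rbrace_{i=0}^d$ is lower triangular with all diagonal entries equal to $1$. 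Such a matrix is invertible, so $\lbrace \tau_i\rbrace_{i=0}^d$ is a basis for $U$.

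Next I would examine the evaluation map $\varepsilon \colon \mathbb F\lbrack \lambda\rbrack \to \mathcal D$, $p \mapsto p(A)$, which is a surjective algebra homomorphism, and argue that its restriction to $U$ is a vector space isomorphism onto $\mathcal D$. Indeed, the excerpt records that $\lbrace A^i\rbrace_{i=0}^d$ is a basis for $\mathcal D$, so $\dim \mathcal D = d+1 = \dim U$, and $\varepsilon$ maps $U$ onto $\mathcal D$; it therefore suffices to verify injectivity on $U$. If some nonzero $p \in U$ satisfied $p(A)=0$, then $A$ would satisfy a nonzero polynomial of degree at most $d$, contradicting the linear independence of $\lbrace A^i\rbrace_{i=0}^d$ (equivalently, the fact that the minimal polynomial $\prod_{i=0}^d(\lambda-\theta_i)$ of the multiplicity-free element $A$ has degree $d+1$). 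Hence $\varepsilon|_U$ is injective, and being a surjection between spaces of equal finite dimension, it is an isomorphism.

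Combining the two steps, the image under $\varepsilon|_U$ of the basis $\lbrace \tau_i\rbrace_{i=0}^d$ of $U$ is a basis $\lbrace \tau_i(A)\rbrace_{i=0}^d$ of $\mathcal D$, which is the first assertion. Since each $\eta_i$ is likewise monic of degree $i$ by Definition \ref{def:taui}, the identical argument shows that $\lbrace \eta_i\rbrace_{i=0}^d$ is a basis for $U$ and hence that $\lbrace \eta_i(A)\rbrace_{i=0}^d$ is a basis for $\mathcal D$, giving the second assertion. I do not expect any genuine obstacle here: the entire content is the unitriangular change of basis among the polynomials and the injectivity of $\varepsilon$ on $U$, and both are immediate from the degree data assembled just before the lemma. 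The only place where the hypotheses on $A$ enter is the injectivity step, where multiplicity-freeness guarantees that the minimal polynomial has the full degree $d+1$.
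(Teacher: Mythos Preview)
Your proof is correct and follows essentially the same approach as the paper: both arguments rest on the observation that $\tau_i$ and $\eta_i$ are monic of degree $i$, so the change of basis from $\lbrace A^i\rbrace_{i=0}^d$ to $\lbrace \tau_i(A)\rbrace_{i=0}^d$ (or $\lbrace \eta_i(A)\rbrace_{i=0}^d$) is unitriangular, hence invertible. The paper simply states this in one line, whereas you have made the reasoning explicit by passing through the polynomial space $U$ and the evaluation isomorphism $\varepsilon|_U$.
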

\begin{proof} Since $\lbrace A^i\rbrace_{i=0}^d$ is a basis for
 $\mathcal D$, and $\tau_i$, $\eta_i$ have degree $i$ for
 $0 \leq i \leq d$.
\end{proof}

\begin{lemma}
\label{lem:split2}
For $0 \leq i \leq d$,
\begin{enumerate}
\item[\rm (i)]
$\tau_i(A) = \sum_{h=i}^d \tau_i(\theta_h) E_h$;
\item[\rm (ii)]
$\eta_i(A) = \sum_{h=0}^{d-i} \eta_i(\theta_h) E_h$.
\end{enumerate}
\end{lemma}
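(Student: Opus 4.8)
The plan is to reduce everything to the functional calculus for the multiplicity-free element $A$, and then simply read off the surviving index range from the roots of $\tau_i$ and $\eta_i$.

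First I would record the basic identity $f(A) = \sum_{h=0}^d f(\theta_h) E_h$, valid for every $f \in \mathbb{F}\lbrack \lambda \rbrack$. This follows immediately from the primitive idempotent properties listed before Definition \ref{def:deflstalkS99}: starting from $A = \sum_{h=0}^d \theta_h E_h$ and using $E_h E_k = \delta_{h,k} E_h$, one obtains $A^n = \sum_{h=0}^d \theta_h^n E_h$ for every integer $n \geq 0$, and the general polynomial case follows by linearity (with $\sum_{h=0}^d E_h = I$ handling the constant term). In the present setting $f = \tau_i$ and $f = \eta_i$ are polynomials in $\lambda$ by Definition \ref{def:taui}, so this identity applies directly.

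Next, applying the identity with $f = \tau_i$ gives $\tau_i(A) = \sum_{h=0}^d \tau_i(\theta_h) E_h$, so part (i) amounts to showing that the terms with $0 \leq h \leq i-1$ vanish. But by Definition \ref{def:taui} the polynomial $\tau_i$ has roots $\theta_0, \theta_1, \ldots, \theta_{i-1}$, so for each such $h$ the factor $\lambda - \theta_h$ occurs in $\tau_i$ and hence $\tau_i(\theta_h) = 0$. Therefore only the terms with $h \geq i$ survive, which is exactly (i). Part (ii) is identical: $\eta_i$ has roots $\theta_d, \theta_{d-1}, \ldots, \theta_{d-i+1}$, so $\eta_i(\theta_h) = 0$ for $d-i+1 \leq h \leq d$, leaving only the terms with $0 \leq h \leq d-i$.

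I do not expect any genuine obstacle; the argument is pure bookkeeping of which eigenvalues lie in the root set of each polynomial. The single point that warrants a moment's care is the index range: $\tau_i$ has precisely the $i$ roots $\theta_0, \ldots, \theta_{i-1}$ and omits $\theta_i$, which is why the surviving sum begins at $h = i$; symmetrically $\eta_i$ omits $\theta_{d-i}$, so its surviving sum ends at $h = d-i$.
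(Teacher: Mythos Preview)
Your proof is correct and follows essentially the same approach as the paper: expand $f(A)$ in the primitive idempotents via $A = \sum_h \theta_h E_h$, then drop the terms where $f(\theta_h)=0$. The only cosmetic difference is that for part (ii) the paper appeals to the relative $\Phi^\Downarrow$ (which swaps $\tau_i \leftrightarrow \eta_i$ and reverses the eigenvalue ordering) rather than repeating the direct root-counting argument, but your direct treatment is equally valid.
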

\begin{proof} (i)
We have $A=\sum_{h=0}^d \theta_h E_h$, so
$\tau_i(A)=\sum_{h=0}^d \tau_i(\theta_h) E_h$. 
However $\tau_i(\theta_h)=0$ for $0 \leq h \leq i-1$,
so 
$\tau_i(A)=\sum_{h=i}^d \tau_i(\theta_h) E_h$. 
\\
\noindent (ii) Apply (i) above to $\Phi^\Downarrow$.
\end{proof}

\begin{lemma}
\label{lem:split3} For
$0 \leq i \leq d$,
\begin{enumerate}
\item[\rm (i)]  the elements 
 $\lbrace \tau_j(A)\rbrace_{j=0}^i$ and
 $\lbrace A^j\rbrace_{j=0}^i$ have the same span;
\item[\rm (ii)] 
the elements 
 $\lbrace \tau_j(A)\rbrace_{j=i}^d$ and
 $\lbrace E_j\rbrace_{j=i}^d$ have the same span.
\end{enumerate}
\end{lemma}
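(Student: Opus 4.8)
The plan is to prove each part by the same two-move argument: establish the easy inclusion of one span in the other, and then force equality by a dimension count. In both parts the left-hand spanning set is linearly independent by Lemma \ref{lem:tau1} (where $\lbrace \tau_j(A)\rbrace_{j=0}^d$ is shown to be a basis for $\mathcal D$), and the right-hand spanning set is a subset of a known basis for $\mathcal D$, so as soon as one span sits inside the other and their dimensions agree, they must coincide.

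For part (i), first I would use that $\tau_j$ has degree $j$, so $\tau_j(A)$ is a linear combination of $A^0, A^1, \ldots, A^j$; in particular $\tau_j(A)$ lies in the span of $\lbrace A^h\rbrace_{h=0}^i$ whenever $0\leq j\leq i$. This gives that the span of $\lbrace \tau_j(A)\rbrace_{j=0}^i$ is contained in the span of $\lbrace A^j\rbrace_{j=0}^i$. Since $\lbrace A^j\rbrace_{j=0}^d$ is a basis for $\mathcal D$, the latter span has dimension $i+1$. On the other hand, by Lemma \ref{lem:tau1} the set $\lbrace \tau_j(A)\rbrace_{j=0}^i$ is linearly independent, so its span also has dimension $i+1$. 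A subspace of dimension $i+1$ contained in a space of dimension $i+1$ is the whole space, so the two spans are equal.

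For part (ii) I would run the identical argument with the primitive idempotents in place of the powers of $A$. By Lemma \ref{lem:split2}(i), $\tau_j(A)=\sum_{h=j}^d \tau_j(\theta_h)E_h$, so $\tau_j(A)$ lies in the span of $\lbrace E_h\rbrace_{h=j}^d$, hence in the span of $\lbrace E_h\rbrace_{h=i}^d$ whenever $i\leq j\leq d$. This gives containment of the span of $\lbrace \tau_j(A)\rbrace_{j=i}^d$ in the span of $\lbrace E_j\rbrace_{j=i}^d$. The latter has dimension $d-i+1$ because $\lbrace E_j\rbrace_{j=0}^d$ is a basis for $\mathcal D$, and the former has the same dimension because $\lbrace \tau_j(A)\rbrace_{j=i}^d$ is linearly independent, again by Lemma \ref{lem:tau1}. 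Equality of spans follows as before.

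I do not expect a genuine obstacle: the lemma is bookkeeping about triangular transitions, and the dimension count sidesteps any explicit inversion. The one point worth keeping in view is that each easy inclusion gives only one direction, so the equality really does rely on the dimension match; equivalently, the transition matrix from the $\tau_j(A)$ to the $E_h$ in part (ii) is triangular with diagonal entries $\tau_j(\theta_j)=\prod_{h<j}(\theta_j-\theta_h)$, which are nonzero precisely because $A$ is multiplicity-free. That is the only place the distinctness of the eigenvalues enters, and it is implicit in the linear-independence claim from Lemma \ref{lem:tau1}.
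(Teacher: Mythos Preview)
Your proof is correct and follows essentially the same approach as the paper: for (i) the degree of $\tau_j$ gives the inclusion, and for (ii) Lemma~\ref{lem:split2}(i) gives the inclusion, with equality in both cases coming from the dimension count implicit in Lemma~\ref{lem:tau1}. The paper's proof is terser (it says ``by Lemma~\ref{lem:tau1} it suffices to show'' the one inclusion), but the underlying argument is identical to yours.
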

\begin{proof}(i) The polynomial $\tau_j$ has degree $j$ for $0 \leq j \leq d$.
\\
\noindent (ii)
By Lemma
\ref{lem:tau1} it suffices to show that 
the span of 
 $\lbrace \tau_j(A)\rbrace_{j=i}^d$ is contained in the
 span of
 $\lbrace E_j\rbrace_{j=i}^d$.
But this follows from Lemma
\ref{lem:split2}(i).
\end{proof}

\begin{lemma}
\label{lem:split4} For
$0 \leq i \leq d$,
\begin{enumerate}
\item[\rm (i)]  the elements 
 $\lbrace \eta_j(A)\rbrace_{j=0}^i$ and
 $\lbrace A^j\rbrace_{j=0}^i$ have the same span;
\item[\rm (ii)] 
the elements 
 $\lbrace \eta_j(A)\rbrace_{j=i}^d$ and
 $\lbrace E_j\rbrace_{j=0}^{d-i}$ have the same span.
\end{enumerate}
\end{lemma}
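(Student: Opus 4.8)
The plan is to deduce both parts from Lemma \ref{lem:split3} by applying that lemma not to $\Phi$ but to the pre Leonard system $\Phi^\Downarrow$. This mirrors exactly how Lemma \ref{lem:split2}(ii) was obtained from Lemma \ref{lem:split2}(i). The point is that the operation $\Downarrow$ reverses the eigenvalue ordering of $A$ while leaving $A$ itself fixed, and under this reversal the $\eta$-polynomials of $\Phi$ become the $\tau$-polynomials of $\Phi^\Downarrow$.

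Concretely, I would first record the dictionary supplied by Proposition \ref{prop:ttaa}: the eigenvalue sequence of $\Phi^\Downarrow$ is $\lbrace\theta_{d-i}\rbrace_{i=0}^d$, while $A$ is unchanged and the primitive idempotent of $\Phi^\Downarrow$ indexed by $j$ is $E_{d-j}$. Writing $\tilde\tau_i$ for the polynomial $\tau_i$ computed from $\Phi^\Downarrow$, I would then invoke Definition \ref{def:taui} to observe that $\tilde\tau_i=(\lambda-\theta_d)(\lambda-\theta_{d-1})\cdots(\lambda-\theta_{d-i+1})=\eta_i$, so that $\tilde\tau_i(A)=\eta_i(A)$ for $0\le i\le d$. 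With this identification in hand, each claim is an immediate transcription.

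For part (i), apply Lemma \ref{lem:split3}(i) to $\Phi^\Downarrow$: the elements $\lbrace\tilde\tau_j(A)\rbrace_{j=0}^i=\lbrace\eta_j(A)\rbrace_{j=0}^i$ have the same span as $\lbrace A^j\rbrace_{j=0}^i$, since $A$ is the same element for $\Phi$ and $\Phi^\Downarrow$. For part (ii), apply Lemma \ref{lem:split3}(ii) to $\Phi^\Downarrow$: the elements $\lbrace\tilde\tau_j(A)\rbrace_{j=i}^d=\lbrace\eta_j(A)\rbrace_{j=i}^d$ have the same span as the idempotents $\lbrace E_{d-j}\rbrace_{j=i}^d$, and as $j$ runs over $i,i+1,\dots,d$ the index $d-j$ runs over $0,1,\dots,d-i$, so $\lbrace E_{d-j}\rbrace_{j=i}^d=\lbrace E_j\rbrace_{j=0}^{d-i}$, as required.

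There is no genuine obstacle here; the entire content is the symmetry argument, and the only thing to be careful about is the reindexing of the idempotent range in part (ii), which is precisely the bookkeeping that produces the asymmetric upper limit $d-i$ in the statement. (Alternatively, one could argue part (ii) directly from Lemma \ref{lem:split2}(ii) together with the degree count in Lemma \ref{lem:tau1}, exactly as in the proof of Lemma \ref{lem:split3}(ii), but routing everything through $\Phi^\Downarrow$ is cleaner and reuses the work already done.)
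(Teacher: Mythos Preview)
Your proposal is correct and takes exactly the same approach as the paper: the paper's entire proof reads ``Apply Lemma \ref{lem:split3} to $\Phi^\Downarrow$.'' You have simply unpacked that one line, recording explicitly the dictionary $\tilde\tau_i=\eta_i$ and the reindexing $\lbrace E_{d-j}\rbrace_{j=i}^d=\lbrace E_j\rbrace_{j=0}^{d-i}$ that the paper leaves implicit.
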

\begin{proof} 
Apply Lemma
\ref{lem:split3}
to $\Phi^\Downarrow$.
\end{proof}

\section{Normalizing idempotents}

\noindent We continue to discuss the  pre Leonard system
$\Phi=(A; \lbrace E_i\rbrace_{i=0}^d; A^*; \lbrace E^*_i\rbrace_{i=0}^d)$
from Definition
\ref{def:prels}.
\medskip

\noindent Next we explain what it means for
$E^*_0$ to be 
normalizing. This concept was introduced in 
\cite[Section~6]{hanson}, although our point of view is
different.

\begin{definition}
\label{def:normalizing}
\rm
The primitive idempotent $E^*_0$ is called {\it normalizing}
whenever 
\begin{align*}
E_i E^*_0 \not=0   \qquad \qquad (0 \leq i \leq d).
\end{align*}
\end{definition}

\noindent In the next two lemmas we give some
necessary and sufficient conditions for $E^*_0$ to
be normalizing. The proofs are routine, and omitted.

\begin{lemma}
\label{lem:normalChar}
The following {\rm (i)--(iv)} are equivalent:
\begin{enumerate}
\item[\rm (i)] $E^*_0$ is normalizing;
\item[\rm (ii)] ${\mathcal D}E^*_0$ has dimension $d+1$;
\item[\rm (iii)] the elements $\lbrace A^i E^*_0\rbrace_{i=0}^d$ are linearly
independent;
\item[\rm (iv)] for $X \in \mathcal D$,
 $X E^*_0=0$ implies $X= 0$.
\end{enumerate}
\end{lemma}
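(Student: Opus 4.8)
The plan is to route everything through the single linear map $\rho \colon \mathcal D \to {\rm End}(V)$, $X \mapsto X E^*_0$, whose image is $\mathcal D E^*_0$ by definition. Since $\dim \mathcal D = d+1$, the conditions (ii)--(iv) are nothing more than three ways of saying that $\rho$ is injective. Indeed, $\mathcal D E^*_0 = {\rm im}\,\rho$ has dimension $d+1$ exactly when $\rho$ has trivial kernel, which is precisely statement (iv); and because $\lbrace A^i\rbrace_{i=0}^d$ is a basis for $\mathcal D$, its image $\lbrace A^i E^*_0\rbrace_{i=0}^d$ spans $\mathcal D E^*_0$, so this spanning set of exactly $d+1$ vectors is linearly independent iff $\dim \mathcal D E^*_0 = d+1$; this gives (ii)$\Leftrightarrow$(iii), while (ii)$\Leftrightarrow$(iv) is rank--nullity. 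So the only real content is to connect (i) with, say, (ii).

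For this I would exploit that $E^*_0$ has rank one. Fix $0 \neq v \in E^*_0 V$, so that $E^*_0 V = \mathbb F v$ and $E^*_0 v = v$. Every $w \in V$ satisfies $E^*_0 w = f(w) v$ for a suitable linear functional $f$ with $f(v)=1$, whence $E_i E^*_0 = 0$ iff $E_i v = 0$. Thus (i) says precisely that all the components $E_i v$ of $v = \sum_{i=0}^d E_i v$ are nonzero. Now I would use the other basis $\lbrace E_i\rbrace_{i=0}^d$ of $\mathcal D$: the space $\mathcal D E^*_0$ is spanned by $\lbrace E_i E^*_0\rbrace_{i=0}^d$, and I claim the nonzero members of this list are linearly independent. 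Indeed a relation $\sum_i c_i E_i E^*_0 = 0$, evaluated at $v$, gives $\sum_i c_i E_i v = 0$; since $E_i v \in V_i = E_i V$ and the eigenspaces $\lbrace V_i\rbrace_{i=0}^d$ of $A$ form a direct sum, each summand vanishes, forcing $c_i = 0$ whenever $E_i v \neq 0$. Hence $\dim \mathcal D E^*_0$ equals the number of indices $i$ with $E_i E^*_0 \neq 0$, and this number is $d+1$ iff every $E_i E^*_0 \neq 0$. This is exactly (i)$\Leftrightarrow$(ii), completing the cycle.

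The computations here are genuinely routine, as the paper indicates; the one place to be careful is the linear-independence claim, where the rank-one hypothesis on $E^*_0$ is essential. It is what lets me evaluate an operator identity on the single vector $v$ and reduce to the direct-sum decomposition $V = \bigoplus_{i=0}^d V_i$. Without rank one the elements $E_i E^*_0$ could overlap in a nondiagonal way and the clean count $\dim \mathcal D E^*_0 = \#\lbrace i : E_i E^*_0 \neq 0\rbrace$ would fail, so this is the hinge of the whole argument.
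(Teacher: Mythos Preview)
Your argument is correct; the paper omits the proof entirely as routine, and what you have written is exactly the kind of direct verification one expects, pivoting on the rank-one property of $E^*_0$ and the basis $\lbrace E_i\rbrace_{i=0}^d$ of $\mathcal D$. There is nothing to compare against, and no gap to fix.
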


\noindent 
\begin{lemma}
\label{lem:normalChar2}
The following {\rm (i)--(iv)} are equivalent:
\begin{enumerate}
\item[\rm (i)] $E^*_0$ is normalizing;
\item[\rm (ii)] $E_iV = E_iE^*_0V$ for $0 \leq i \leq d$;
\item[\rm (iii)] ${\mathcal D}E^*_0V  = V$; 
\item[\rm (iv)] for $0 \not=\xi \in E^*_0V$ the map
       $\mathcal D \to V$,
       $X \mapsto X\xi$
       is a bijection.
\end{enumerate}
\end{lemma}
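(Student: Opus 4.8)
The plan is to reduce every statement to elementary facts about the rank-one idempotent $E^*_0$ together with the eigenspace decomposition of $A$, so that all four conditions become visibly equivalent. First I would fix a nonzero vector $\xi \in E^*_0V$; since $E^*_0$ has rank one, $E^*_0V = \mathbb F\xi$ and $E^*_0$ is the projection onto this line. Then I would record two structural facts to be used throughout. Since $\lbrace E_i\rbrace_{i=0}^d$ is a basis for $\mathcal D$, every $X\in\mathcal D$ has the form $\sum_i c_i E_i$, so the image $\mathcal D E^*_0V$ equals $\mathcal D\xi = \sum_{i=0}^d E_iE^*_0V$. Moreover, because $A$ is multiplicity-free, $V = \bigoplus_{i=0}^d E_iV$ with each summand $E_iV$ one-dimensional.

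With these in hand I would establish the cyclic chain (i) $\Leftrightarrow$ (ii) $\Leftrightarrow$ (iii) $\Leftrightarrow$ (iv). For (i) $\Leftrightarrow$ (ii): for each $i$ the space $E_iE^*_0V = E_i(\mathbb F\xi)$ is a subspace of the one-dimensional space $E_iV$, hence it is either $0$ or all of $E_iV$; and it is nonzero exactly when $E_iE^*_0\neq 0$, since the image of the operator $E_iE^*_0$ is $E_iE^*_0V$. Therefore the assertion ``$E_iV = E_iE^*_0V$ for all $i$'' is the same as ``$E_iE^*_0\neq 0$ for all $i$'', which is the condition in Definition \ref{def:normalizing}.

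For (ii) $\Leftrightarrow$ (iii): inside the direct sum $V = \bigoplus_i E_iV$, the subspace $\mathcal D E^*_0V = \sum_i E_iE^*_0V$ fills out $V$ if and only if each summand $E_iE^*_0V$ already equals its ambient summand $E_iV$, which is precisely (ii). For (iii) $\Leftrightarrow$ (iv): the map $\mathcal D \to V$, $X\mapsto X\xi$, has image $\mathcal D\xi = \mathcal D E^*_0V$, and $\dim\mathcal D = \dim V = d+1$; hence this linear map between equidimensional spaces is a bijection if and only if it is surjective, i.e. if and only if $\mathcal D E^*_0V = V$, which is (iii). As an alternative closing of the loop, its kernel is $\lbrace X\in\mathcal D : XE^*_0 = 0\rbrace$ (using $X\xi = 0 \Leftrightarrow XE^*_0 = 0$ since $E^*_0V=\mathbb F\xi$), so injectivity is exactly Lemma \ref{lem:normalChar}(iv), linking (iv) directly back to (i).

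No step here is a genuine obstacle, which matches the author's remark that the proof is routine. The only point demanding care is the passage between the operator statement $E_iE^*_0\neq 0$ and the subspace statement $E_iE^*_0V\neq 0$; this is legitimate precisely because $E^*_0$ has rank one, so the image of $E_iE^*_0$ is the one-dimensional-or-zero space $E_iE^*_0V$. Once this bookkeeping is fixed, every equivalence is a direct consequence of the rank-one property of $E^*_0$ and the eigenspace decomposition of $V$.
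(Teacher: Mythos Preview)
Your argument is correct. The paper actually omits the proof of this lemma, declaring it routine; your verification via the rank-one property of $E^*_0$ and the eigenspace decomposition $V=\bigoplus_i E_iV$ is exactly the sort of direct check the author has in mind, and each step is sound.
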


\begin{proposition}
\label{prop:normTrick}
Assume that $E^*_0$ is normalizing.
Then for $X \in {\rm End}(V)$ and $0 \leq i \leq d$
the following
are equivalent:
\begin{enumerate}
\item[\rm (i)] $XE_i=0$;
\item[\rm (ii)] $XE_iE^*_0=0$.
\end{enumerate}
\end{proposition}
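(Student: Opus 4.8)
The plan is to observe that each of the two conditions simply asserts that the operator $X$ vanishes on a certain subspace of $V$, and then to identify those two subspaces using the normalizing hypothesis. Concretely, $XE_i=0$ holds if and only if $X$ annihilates the image $E_iV$ of $E_i$, while $XE_iE^*_0=0$ holds if and only if $X$ annihilates the image $E_iE^*_0V$ of $E_iE^*_0$. So the whole proposition reduces to the single identity $E_iV = E_iE^*_0V$.

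The direction (i) $\Rightarrow$ (ii) requires nothing: multiplying $XE_i=0$ on the right by $E^*_0$ immediately gives $XE_iE^*_0=0$, and this step does not even use that $E^*_0$ is normalizing. For the reverse direction the key fact is the equality $E_iE^*_0V = E_iV$. The inclusion $E_iE^*_0V \subseteq E_iV$ is automatic. For the reverse inclusion I would recall that $E_i$ has rank $1$, so $E_iV$ is one-dimensional, and that $E^*_0$ has rank $1$, so $E^*_0V$ is spanned by a single nonzero vector $\xi$; then $E_iE^*_0V$ is spanned by $E_i\xi$. Because $E^*_0$ is normalizing we have $E_iE^*_0\neq 0$, hence $E_i\xi\neq 0$, so $E_iE^*_0V$ is a nonzero subspace of the one-dimensional space $E_iV$ and therefore coincides with it.

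In fact this identity $E_iV=E_iE^*_0V$ is precisely the content of Lemma \ref{lem:normalChar2}(ii), so I can simply invoke that lemma rather than rederive it from the rank-one property. Granting $E_iV=E_iE^*_0V$, the hypothesis $XE_iE^*_0=0$ says that $X$ kills $E_iE^*_0V=E_iV$, which is the image of $E_i$; equivalently $XE_i=0$. This yields (ii) $\Rightarrow$ (i) and completes the equivalence.

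I do not anticipate any genuine obstacle here; the proof is essentially one line once Lemma \ref{lem:normalChar2} is in place. The only point that needs care is the routine translation between the operator statement ``$Y=0$ for $Y\in{\rm End}(V)$'' and the subspace statement ``$Y$ annihilates its source,'' together with the observation that it is the rank-one property of $E_i$ that upgrades the trivial inclusion $E_iE^*_0V\subseteq E_iV$ to the equality driving the argument.
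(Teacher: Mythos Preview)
Your proof is correct and essentially identical to the paper's: both translate the operator equations into statements about annihilating images and then invoke Lemma~\ref{lem:normalChar2}(ii) to identify $E_iV$ with $E_iE^*_0V$. The paper presents this as a one-line chain of equivalences, while you additionally spell out a direct rank-one argument for $E_iV=E_iE^*_0V$ before noting it is already Lemma~\ref{lem:normalChar2}(ii), but the substance is the same.
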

\begin{proof} Using Lemma
\ref{lem:normalChar2}(ii),
\begin{align*}
XE_i=0 
\quad \Leftrightarrow \quad
XE_iV=0
\quad \Leftrightarrow \quad
XE_iE^*_0V=0
\quad \Leftrightarrow \quad
XE_iE^*_0=0.
\end{align*}
\end{proof}

\section{Normalizing idempotents and decompositions}

\noindent We continue to discuss the  pre Leonard system
$\Phi=(A; \lbrace E_i\rbrace_{i=0}^d; A^*; \lbrace E^*_i\rbrace_{i=0}^d)$
from Definition
\ref{def:prels}.

\begin{definition}
\label{def:decomp}
\rm
By a {\it decomposition of $V$} we mean a sequence
$\lbrace V_i \rbrace_{i=0}^d$
of one-dimensional subspaces of $V$ such that the sum
$V= \sum_{i=0}^d V_i$ is direct.
\end{definition}

\begin{example} 
\label{ex:decomp}
\rm Each of the sequences
\begin{align*}
\lbrace E_iV\rbrace_{i=0}^d,
\qquad \qquad
\lbrace E^*_iV\rbrace_{i=0}^d
\end{align*}
is a decomposition of $V$.
\end{example}

\begin{example}
\label{ex:decbas} \rm Let $\lbrace v_i\rbrace_{i=0}^d$
denote a basis for $V$. For $0 \leq i \leq d$ let $V_i$
denote the span of $v_i$. Then $\lbrace V_i\rbrace_{i=0}^d$
is a decomposition of $V$, said to be {\it induced} by
$\lbrace v_i\rbrace_{i=0}^d$.
\end{example}

\begin{lemma}
\label{lem:AssumeNorm}
Assume that $E^*_0$ is normalizing, and define
\begin{align*} 
U_i = \tau_i(A) E^*_0V \qquad \qquad (0 \leq i \leq d).
\end{align*}
Then the following {\rm (i)--(v)} hold:
\begin{enumerate}
\item[\rm (i)] $\lbrace U_i\rbrace_{i=0}^d $ is a decomposition of $V$;
\item[\rm (ii)] $(A-\theta_i I)U_i = U_{i+1}$ $\qquad$ $(0 \leq i \leq d-1)$;
\item[\rm (iii)] $(A-\theta_d I)U_d = 0$;
\item[\rm (iv)] $U_0 + U_1 + \cdots + U_i = E^*_0V + A E^*_0V+ \cdots
+ A^i E^*_0V$ $\qquad $ $(0 \leq i \leq d)$;
\item[\rm (v)]
$U_i + U_{i+1} + \cdots + U_d = E_iV+ E_{i+1}V+ \cdots + E_dV$
$\qquad$ $(0 \leq i\leq d)$.
\end{enumerate}
\end{lemma}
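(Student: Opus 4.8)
The plan is to fix a nonzero vector $\xi \in E^*_0 V$, so that $E^*_0 V = \mathbb F \xi$ is one-dimensional, and to exploit the bijection $\mathcal D \to V$, $X \mapsto X\xi$ afforded by the normalizing assumption (Lemma \ref{lem:normalChar2}(iv)). With this notation $U_i = \tau_i(A)E^*_0 V = \mathbb F\,\tau_i(A)\xi$, so each $U_i$ has dimension at most one, and for any index set a sum $\sum_j U_j$ equals $W\xi := \lbrace X\xi : X \in W\rbrace$, where $W$ is the span in $\mathcal D$ of the corresponding elements $\tau_j(A)$. Everything below reduces to transporting span equalities through this map.

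First I would dispatch (ii) and (iii), which need only the definition of $\tau_i$ and no normalizing hypothesis. Since $\tau_{i}(\lambda)(\lambda-\theta_i) = \tau_{i+1}(\lambda)$ we have $(A-\theta_i I)\tau_i(A) = \tau_{i+1}(A)$, whence $(A-\theta_i I)U_i = \tau_{i+1}(A)E^*_0 V = U_{i+1}$, giving (ii). The same computation at $i = d$ gives $(A-\theta_d I)\tau_d(A) = \prod_{j=0}^d (A-\theta_j I) = 0$, so $(A-\theta_d I)U_d = 0$, which is (iii).

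Next I would prove (iv). Summing, $\sum_{j=0}^i U_j = W\xi$ with $W = \mathrm{span}\lbrace \tau_j(A)\rbrace_{j=0}^i$; by Lemma \ref{lem:split3}(i) this span equals $\mathrm{span}\lbrace A^j\rbrace_{j=0}^i$, and applying the map $X \mapsto X\xi$ gives $\sum_{j=0}^i A^j E^*_0 V$, which is (iv). Part (i) then follows quickly: taking $i = d$ in (iv) and using $\sum_{j=0}^d A^j E^*_0 V = \mathcal D E^*_0 V = V$ (Lemma \ref{lem:normalChar2}(iii)) shows $\sum_{i=0}^d U_i = V$. Since there are $d+1$ subspaces $U_i$, each of dimension at most one, and $\dim V = d+1$, a dimension count forces every $U_i$ to be one-dimensional and the sum to be direct; hence $\lbrace U_i\rbrace_{i=0}^d$ is a decomposition of $V$.

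Finally, (v) is the upper-index analogue of (iv). Here $\sum_{j=i}^d U_j = W'\xi$ with $W' = \mathrm{span}\lbrace \tau_j(A)\rbrace_{j=i}^d$, which by Lemma \ref{lem:split3}(ii) equals $\mathrm{span}\lbrace E_j\rbrace_{j=i}^d$; applying $X \mapsto X\xi$ gives $\sum_{j=i}^d E_j E^*_0 V$, and since $E_j E^*_0 V = E_j V$ by Lemma \ref{lem:normalChar2}(ii), this is $\sum_{j=i}^d E_j V$, proving (v). I do not anticipate a real obstacle: the entire lemma amounts to transporting the two span equalities of Lemma \ref{lem:split3} through the normalizing bijection $X \mapsto X\xi$. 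The only point demanding slight care is the one-dimensionality and directness in (i), which I obtain not by a direct argument but as a byproduct of the dimension count once (iv) has supplied $\sum_i U_i = V$.
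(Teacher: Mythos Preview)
Your proof is correct and follows essentially the same approach as the paper: both transport the span equalities of Lemma~\ref{lem:split3} through the bijection $\mathcal D \to V$, $X \mapsto X\xi$ from Lemma~\ref{lem:normalChar2}(iv). The only cosmetic difference is that the paper proves (i) directly from that bijection (a basis of $\mathcal D$ maps to a basis of $V$), whereas you recover (i) from (iv) by a dimension count; both are fine.
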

\begin{proof} (i) By Lemma
\ref{lem:normalChar2}(iv) and since
$\lbrace \tau_i(A)\rbrace_{i=0}^d$ is a basis for $\mathcal D$.
\\
\noindent (ii) By Definition \ref{def:taui}.
\\
\noindent (iii) Since
$0=\prod_{i=0}^d (A-\theta_i I)=(A-\theta_d I)\tau_d(A)$.
\\
\noindent (iv) By
Lemma
\ref{lem:split3}(i)
and
Lemma \ref{lem:normalChar2}(iv).
\\
\noindent (iv) 
By Lemma
\ref{lem:split3}(ii)
and
Lemma \ref{lem:normalChar2}(iv).
\end{proof}

\begin{lemma} 
\label{lem:normal}
The following {\rm (i)--(iii)} are equivalent:
\begin{enumerate}
\item[\rm (i)] 
$\displaystyle{
E^*_iAE^*_j = \begin{cases}
 0, & {\mbox{ if $ i-j > 1$}}; \\ 
\not=0, &{\mbox{ if $ i-j = 1$}}
\end{cases}
\qquad (0 \leq i,j\leq d);
}$
\item[\rm (ii)] for $0 \leq i \leq d$ there exists
$f_i \in {\mathbb F}\lbrack \lambda \rbrack$ such that
${\rm deg}(f_i)=i$ and 
$E^*_iV = f_i(A) E^*_0V$;
\item[\rm (iii)] for $0 \leq i \leq d$,
\begin{align}
E^*_0V + E^*_1V+ \cdots + E^*_iV = E^*_0V + A E^*_0V + \cdots + A^i E^*_0V.
\label{eq:Aisum}
\end{align}
\end{enumerate}
\noindent Assume that {\rm (i)--(iii)} hold. Then $E^*_0$ is
normalizing.
\end{lemma}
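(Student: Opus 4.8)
The plan is to introduce the flag $W_i = E^*_0 V + E^*_1 V + \cdots + E^*_i V$ for $0 \le i \le d$. Since $A^*$ is multiplicity-free, the spaces $E^*_k V$ are one-dimensional and independent, so $\dim W_i = i+1$ and $W_0 \subset W_1 \subset \cdots \subset W_d = V$ is a complete flag. I would prove the cycle (i)$\Rightarrow$(iii)$\Rightarrow$(ii)$\Rightarrow$(iii) together with (iii)$\Rightarrow$(i); this gives all three equivalences, after which the normalizing conclusion is immediate. Throughout I fix a nonzero $\xi \in E^*_0 V$, so that $E^*_0 V = \mathbb{F}\xi$ and $A^j E^*_0 V = \mathbb{F} A^j\xi$.

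The crux is (i)$\Rightarrow$(iii), which rests on a single staircase claim: for $0 \le j \le d$ the vector $A^j\xi$ lies in $W_j$ and has nonzero $E^*_j$-component. I would prove this by induction on $j$. The upper-triangular half of (i), namely $E^*_i A E^*_k = 0$ for $i-k>1$, gives $A(E^*_k V) \subseteq W_{k+1}$, which propagates the containment $A^j\xi \in W_j \Rightarrow A^{j+1}\xi \in W_{j+1}$ and confines the $E^*_{j+1}$-component of $A^{j+1}\xi$ to come solely from applying $A$ to the $E^*_j$-component of $A^j\xi$. The subdiagonal half $E^*_{j+1} A E^*_j \neq 0$, acting on the one-dimensional space $E^*_j V$, is injective, so it carries the (nonzero, by induction) $E^*_j$-component to a nonzero $E^*_{j+1}$-component. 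The resulting staircase shape forces $\xi, A\xi, \ldots, A^j\xi$ to be linearly independent and contained in $W_j$; comparing dimensions ($j+1 = \dim W_j$) yields $W_j = E^*_0 V + A E^*_0 V + \cdots + A^j E^*_0 V$, which is (iii). I expect the verification that the leading component never dies, where both the one-dimensionality of $E^*_j V$ and the nonvanishing subdiagonal condition are essential, to be the one genuinely delicate point.

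The remaining implications are short. For (iii)$\Rightarrow$(ii), fix $0 \neq w_i \in E^*_i V$; then $w_i \in W_i$ but $w_i \notin W_{i-1}$, so writing $w_i = \sum_{j=0}^i a_j A^j\xi$ in the spanning set supplied by (iii) forces $a_i \neq 0$, and $f_i = \sum_{j=0}^i a_j \lambda^j$ has degree $i$ with $E^*_i V = \mathbb{F} f_i(A)\xi = f_i(A) E^*_0 V$. For (ii)$\Rightarrow$(iii), the polynomials $f_0, \ldots, f_i$ have distinct degrees $0,1,\ldots,i$ and hence span the same subspace of $\mathbb{F}[\lambda]$ as $1,\lambda,\ldots,\lambda^i$, so $\sum_{k=0}^i E^*_k V = \sum_{k=0}^i f_k(A) E^*_0 V = \sum_{k=0}^i A^k E^*_0 V$. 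For (iii)$\Rightarrow$(i), applying $A$ to (iii) gives $A W_j \subseteq W_{j+1}$, which is exactly the vanishing $E^*_i A E^*_j = 0$ for $i-j>1$; and if some subdiagonal entry $E^*_{j+1} A E^*_j$ vanished, then $A(E^*_j V) \subseteq W_j$ would force $A^{j+1}\xi \in W_j$, contradicting the independence of $\xi, A\xi, \ldots, A^{j+1}\xi$ guaranteed by (iii).

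Finally, assuming (i)--(iii), I would read (iii) at $i=d$: here $V = W_d = E^*_0 V + A E^*_0 V + \cdots + A^d E^*_0 V = \mathcal{D} E^*_0 V$, so $\mathcal{D} E^*_0 V = V$, and Lemma~\ref{lem:normalChar2} shows that $E^*_0$ is normalizing.
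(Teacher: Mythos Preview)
Your proof is correct and follows essentially the same approach as the paper: both exploit the lower-Hessenberg structure that condition (i) imposes on the action of $A$ relative to the flag $W_i = E^*_0V + \cdots + E^*_iV$, and both derive the normalizing conclusion from (iii) at $i=d$ via Lemma~\ref{lem:normalChar2}. The only cosmetic difference is the cycle order---the paper proves (i)$\Rightarrow$(ii)$\Rightarrow$(iii)$\Rightarrow$(i), constructing the $f_i$ explicitly from the matrix entries of $A$ in the $E^*$-eigenbasis, whereas you go (i)$\Rightarrow$(iii) directly by your staircase argument and then pick off (ii) from (iii); the underlying content is identical.
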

\begin{proof} ${\rm (i)} \Rightarrow {\rm (ii)}$ For $0 \leq i \leq d$
pick
$0 \not=v_i \in E^*_iV$.
So $\lbrace v_i\rbrace_{i=0}^d$ is a basis for $V$.
Let $ B \in {\rm Mat}_{d+1}(\mathbb F)$ represent $A$
with respect to $\lbrace v_i \rbrace_{i=0}^d$. The
entries of $B$ satisfy
\begin{align*}
B_{ij} = \begin{cases}
 0, & {\mbox{\rm if $ i-j > 1$}}; \\ 
\not=0, &{\mbox{\rm if $ i-j = 1$}}
\end{cases}
\qquad (0 \leq i,j\leq d).
\end{align*}
Define polynomials $\lbrace f_i \rbrace_{i=0}^d$ in
$\mathbb F\lbrack \lambda \rbrack$ by $f_0=1$ and
\begin{align*}
\lambda f_j = \sum_{i=0}^{j+1} B_{ij} f_i \qquad \qquad (0 \leq j\leq d-1).
\end{align*}
For $0 \leq i \leq d$ the polynomial
$f_i$ 
has degree $i$. Also
$v_i = f_i(A)v_0$, so
$E^*_iV = f_i(A)E^*_0V$.
\\
\noindent ${\rm (ii)} \Rightarrow {\rm (iii)}$
The polynomial $f_j$ has degree at most $i$ for
$0 \leq j\leq i$, so
\begin{align*}
E^*_0V + E^*_1V+ \cdots + E^*_iV \subseteq
E^*_0V + A E^*_0V + \cdots + A^i E^*_0V.
\end{align*}
In this inclusion, the left-hand side has dimension $i+1$
and the right-hand side has dimension at most $i+1$.
Therefore the inclusion holds with equality.
\\
\noindent ${\rm (iii)} \Rightarrow {\rm (i)}$
For $0 \leq i \leq d$ let $V_i$ denote the common value in
(\ref{eq:Aisum}). Observe that
\begin{align*}
E^*_i V_j = 
 \begin{cases}
 0, & {\mbox{\rm if $ i > j$}}; \\ 
\not=0, &{\mbox{\rm if $ i\leq j$}}
\end{cases}
\qquad (0 \leq i,j\leq d).
\end{align*}
Also observe  that
\begin{align*}
V_{j+1} = V_j + AV_j \qquad \qquad (0 \leq j \leq d-1).
\end{align*}
Now for $0 \leq i,j\leq d$ we check the conditions in (i).
First assume that $i-j>1$. Then
\begin{align*}
E^*_iAE^*_jV \subseteq E^*_iAV_j \subseteq E^*_i V_{j+1} = 0,
\end{align*}
so $E^*_iAE^*_j = 0$.
Next assume that $i-j=1$. To show 
that $E^*_iAE^*_j\not=0$,
we suppose $E^*_iAE^*_j=0$ and 
get a contradiction. For 
 $0 \leq h \leq i-1$ we have
$E^*_iA E^*_h = 0$, so
$E^*_iA E^*_hV = 0$.
Therefore $E^*_i A V_{i-1} = 0$. We also have $E^*_iV_{i-1}=0$, so
$E^*_iV_i = E^*_i(V_{i-1} + A V_{i-1}) = 0$,
for a contradiction. Therefore 
 $E^*_iAE^*_j\not=0$.
\medskip

\noindent Assume that (i)--(iii) hold. Setting $i=d$ in (iii) we obtain
$V={\mathcal D} E^*_0V$. Consequently $E^*_0$ is normalizing by Lemma
\ref{lem:normalChar2}(i),(iii).
\end{proof}

\begin{proposition} 
\label{lem:tighter}
\label{prop:splitchar} 
The following {\rm (i)--(iii)}  are equivalent:
\begin{enumerate}
\item[\rm (i)]
Both
\begin{align}
E^*_iAE^*_j &= \begin{cases}
 0, & {\mbox{ if $ i-j > 1$}}; \\ 
\not=0, &{\mbox{ if $i-j = 1$}}
\end{cases}
\qquad (0 \leq i,j\leq d),
\label{eq:part1}
\\
E_i A^*E_j &= 0 \quad \mbox{ if $j-i> 1$} \qquad (0 \leq i,j \leq d).
\label{eq:part2}
\end{align}
\item[\rm (ii)] There exists a decomposition $\lbrace U_i \rbrace_{i=0}^d$
of $V$ such that
\begin{align}
&(A-\theta_i I) U_i = U_{i+1} \qquad (0 \leq i \leq d-1),
\qquad \quad (A-\theta_d I)U_d=0,
\label{eq:Aaction}
\\
& (A^*-\theta^*_i I)U_i \subseteq U_{i-1}
\qquad  (1 \leq i \leq d),
\qquad \qquad (A^* - \theta^*_0 I) U_0=0.
\label{eq:Asaction}
\end{align}
\item[\rm (iii)] There exist scalars $\lbrace \varphi_i\rbrace_{i=1}^d$
in $\mathbb F$ and a basis for $V$ with respect to which

\begin{align}
\label{eq:split}
       A: \quad 
	 \left(
	    \begin{array}{cccccc}
	      \theta_0 &  & & & & {\bf 0}  \\
	       1 & \theta_1 & &    & &   \\
		 & 1 & \theta_2  & &  &
		  \\
		  && \cdot & \cdot &&
		    \\
		    & & &  \cdot & \cdot & \\
		     {\bf 0}  & &  & & 1 & \theta_d
		      \end{array}
		       \right),
\qquad \quad 
	A^*: \quad \left(
	    \begin{array}{cccccc}
	      \theta^*_0 & \varphi_1  & & & & {\bf 0}  \\
	        & \theta^*_1 & \varphi_2 &    & &   \\
		 &  & \theta^*_2  & \cdot &  &
		  \\
		  && & \cdot & \cdot &
		    \\
		    & & &   &  \cdot  & \varphi_d \\
		     {\bf 0}  & &  & &  & \theta^*_d
		      \end{array}
		       \right).
\end{align}
\end{enumerate}
\noindent Assume that {\rm (i)--(iii)} hold.
Then
$E^*_0$ is normalizing and $U_i= \tau_i(A)E^*_0V$ for
$0 \leq i \leq d$. 
The basis for $V$ from
{\rm (iii)}
 is $\lbrace \tau_i(A)\xi\rbrace_{i=0}^d$,
where $0 \not=\xi \in E^*_0V$. 
This basis induces the 
 decomposition
$\lbrace U_i \rbrace_{i=0}^d$.
The sequence $\lbrace \varphi_i\rbrace_{i=1}^d$ is unique.
\end{proposition}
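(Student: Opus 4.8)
The plan is to prove the three-way equivalence by the cycle (i) $\Rightarrow$ (ii) $\Rightarrow$ (iii) $\Rightarrow$ (i), reading off the concluding assertions as they appear. For (i) $\Rightarrow$ (ii), I note that (\ref{eq:part1}) is precisely hypothesis (i) of Lemma \ref{lem:normal}, so that lemma shows $E^*_0$ is normalizing and supplies the filtration identity (\ref{eq:Aisum}). Setting $U_i=\tau_i(A)E^*_0V$, Lemma \ref{lem:AssumeNorm}(i)--(iii) gives that $\{U_i\}_{i=0}^d$ is a decomposition satisfying the $A$-action (\ref{eq:Aaction}), while parts (iv), (v) combined with (\ref{eq:Aisum}) identify the two flags $M_i:=U_0+\cdots+U_i=E^*_0V+\cdots+E^*_iV$ and $N_i:=U_i+\cdots+U_d=E_iV+\cdots+E_dV$. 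The only remaining task is (\ref{eq:Asaction}). Here (\ref{eq:part2}) says $A^*E_jV\subseteq E_{j-1}V+\cdots+E_dV=N_{j-1}$, so $A^*N_j\subseteq N_{j-1}$; since $U_i\subseteq N_i$ this yields $A^*U_i\subseteq N_{i-1}$, while $A^*$-invariance of the eigenspace sum $M_i$ gives $A^*U_i\subseteq M_i$. As $\{U_k\}$ is a direct sum, $M_i\cap N_{i-1}=U_{i-1}+U_i$, whence $A^*U_i\subseteq U_{i-1}+U_i$.

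The main obstacle is upgrading this containment to the sharp statement (\ref{eq:Asaction}), namely that the $U_i$-component of $A^*$ acting on $U_i$ is exactly $\theta^*_i$. I would pass to the quotient $M_i/M_{i-1}$: since $M_i=M_{i-1}\oplus E^*_iV$ and $A^*$ acts as $\theta^*_i$ on $E^*_iV$, the induced map on $M_i/M_{i-1}$ is scalar multiplication by $\theta^*_i$. On the other hand $M_i/M_{i-1}\cong U_i$, and writing $A^*u_i=\beta u_{i-1}+\alpha u_i$ for a generator $u_i$ of $U_i$, the induced map sends the nonzero class of $u_i$ to $\alpha$ times itself. Comparing gives $\alpha=\theta^*_i$, so $(A^*-\theta^*_iI)U_i\subseteq U_{i-1}$; the case $i=0$ follows from $U_0=\tau_0(A)E^*_0V=E^*_0V$.

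For (ii) $\Rightarrow$ (iii): from $(A^*-\theta^*_0I)U_0=0$ and $\dim U_0=1$ I get $U_0=E^*_0V$; choosing $0\neq\xi\in E^*_0V$ and setting $w_i=\tau_i(A)\xi$, relation (\ref{eq:Aaction}) gives $w_{i+1}=(A-\theta_iI)w_i$, so $\{w_i\}_{i=0}^d$ is a basis inducing $\{U_i\}$. Reading (\ref{eq:Aaction}) and (\ref{eq:Asaction}) in this basis produces $Aw_i=\theta_iw_i+w_{i+1}$ and $A^*w_i=\theta^*_iw_i+\varphi_iw_{i-1}$ for scalars $\varphi_i$, which is exactly (\ref{eq:split}). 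For (iii) $\Rightarrow$ (i), with $\{w_i\}$ the given basis and $U_i=\mathbb F w_i$, the flag $N_i=\mathrm{span}(w_i,\ldots,w_d)$ is $A$-invariant with $A|_{N_i}$ triangular of eigenvalues $\theta_i,\ldots,\theta_d$; since $A$ is multiplicity-free, any invariant subspace is a sum of eigenspaces, forcing $N_i=E_iV+\cdots+E_dV$. Likewise $M_i=\mathrm{span}(w_0,\ldots,w_i)$ is $A^*$-invariant, so $M_i=E^*_0V+\cdots+E^*_iV$, and $A^*w_0=\theta^*_0w_0$ gives $w_0\in E^*_0V$, i.e. $E^*_0V=U_0$. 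Because $w_i=\tau_i(A)w_0$, the identity $M_i=E^*_0V+AE^*_0V+\cdots+A^iE^*_0V=E^*_0V+\cdots+E^*_iV$ is hypothesis (iii) of Lemma \ref{lem:normal}, yielding (\ref{eq:part1}) and the normalization of $E^*_0$; reading the upper-bidiagonal form of $A^*$ gives $A^*N_j\subseteq N_{j-1}$, hence $E_iA^*E_j=0$ when $j-i>1$, which is (\ref{eq:part2}).

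Finally I collect the concluding assertions from these arguments. In every case $E^*_0$ is normalizing and $U_0=E^*_0V$, so the recursion (\ref{eq:Aaction}) forces $U_i=\tau_i(A)U_0=\tau_i(A)E^*_0V$, determined uniquely. Any basis realizing (\ref{eq:split}) must have $w_0$ a $\theta^*_0$-eigenvector of $A^*$, hence $w_0=\xi\in E^*_0V$ with $w_i=\tau_i(A)\xi$, and this basis induces $\{U_i\}$. The scalar $\varphi_i$, characterized intrinsically by $(A^*-\theta^*_iI)\tau_i(A)\xi=\varphi_i\,\tau_{i-1}(A)\xi$, is unchanged when $\xi$ is rescaled (both sides scale equally), so $\{\varphi_i\}_{i=1}^d$ is unique.
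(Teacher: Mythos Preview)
Your proof is correct and follows the same architecture as the paper's, built on the two flags $M_i=U_0+\cdots+U_i=E^*_0V+\cdots+E^*_iV$ and $N_i=U_i+\cdots+U_d=E_iV+\cdots+E_dV$ from Lemmas~\ref{lem:AssumeNorm} and~\ref{lem:normal}. The one variation worth noting is in deriving (\ref{eq:Asaction}): the paper applies $A^*-\theta^*_iI$ (rather than $A^*$) to the inclusion $U_i\subseteq M_i$, obtaining $(A^*-\theta^*_iI)U_i\subseteq (A^*-\theta^*_iI)(E^*_0V+\cdots+E^*_iV)=M_{i-1}$ in one stroke, and then intersects with the containment in $N_{i-1}$ to land directly in $U_{i-1}$---this sidesteps your separate quotient argument for identifying the diagonal entry as $\theta^*_i$.
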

\begin{proof}
${\rm (i)}\Rightarrow {\rm (ii)} $  
The element $E^*_0$ is normalizing by
Lemma
\ref{lem:normal}, so Lemma
\ref{lem:AssumeNorm}
applies. Consider the decomposition
$\lbrace U_i \rbrace_{i=0}^d$
of $V$ from Lemma
\ref{lem:AssumeNorm}.
This decomposition satisfies
(\ref{eq:Aaction}) 
by 
Lemma 
\ref{lem:AssumeNorm}(ii),(iii).
We now show (\ref{eq:Asaction}).
By Lemma
\ref{lem:AssumeNorm}(iv) and Lemma
\ref{lem:normal}(iii),
\begin{align*}
U_0 + \cdots + U_i = 
E^*_0V+\cdots + E^*_iV  \qquad \qquad (0 \leq i \leq d).
\end{align*}
For $0 \leq i \leq d$,
\begin{align*}
(A^*-\theta^*_i I) U_i 
&\subseteq (A^*-\theta^*_iI)(U_0 + \cdots + U_i) \\
&= (A^*-\theta^*_iI)(E^*_0V+\cdots + E^*_iV ) \\
&= E^*_0V+\cdots + E^*_{i-1}V  \\
&= U_0+\cdots + U_{i-1}.
\end{align*}
\noindent Also, using Lemma
\ref{lem:AssumeNorm}(v) and 
(\ref{eq:part2}),
\begin{align*}
(A^*-\theta^*_i I)U_i 
&\subseteq (A^*-\theta^*_iI)(U_i + \cdots + U_d) \\
&= (A^*-\theta^*_iI)(E_iV + \cdots + E_dV) \\
&\subseteq  E_{i-1}V + \cdots + E_dV \\
&=  U_{i-1} + \cdots + U_d.
\end{align*}
The above comments imply
(\ref{eq:Asaction}).
\\
${\rm (ii)}\Rightarrow {\rm (i)} $ 
From 
(\ref{eq:Asaction}) we obtain
\begin{align}
U_0 + \cdots + U_i = 
E^*_0V+\cdots + E^*_iV  \qquad \qquad (0 \leq i \leq d).
\label{eq:needthis}
\end{align}
In particular $U_0=E^*_0V$. By
this and (\ref{eq:Aaction}) we obtain
$U_i=\tau_i(A)E^*_0V$ for $0 \leq i \leq d$.
Consequently $V={\mathcal D}E^*_0V$, so $E^*_0$
is normalizing by Lemma
\ref{lem:normalChar2}(i),(iii).
We show
(\ref{eq:part1}).
Combining 
Lemma 
\ref{lem:AssumeNorm}(iv) and
(\ref{eq:needthis}) we obtain
Lemma \ref{lem:normal}(iii). This gives
Lemma \ref{lem:normal}(i), which is 
(\ref{eq:part1}).
Next we show 
(\ref{eq:part2}).
Let $i,j$ be given with
$j-i>1$.  Using Lemma
\ref{lem:AssumeNorm}(v) and
(\ref{eq:Asaction}),
\begin{align*}
E_iA^*E_jV
&\subseteq  E_iA^*(E_jV+ \cdots + E_dV)
\\
&= E_iA^*(U_j+ \cdots + U_d)
\\
&\subseteq E_i(U_{j-1}+ \cdots + U_d)
\\
&= E_i(E_{j-1}V+ \cdots + E_dV)
\\
&=0.
\end{align*}
\noindent Therefore $E_i A^*E_j=0$. We have shown 
(\ref{eq:part2}).
\\
\noindent 
${\rm (ii)}\Leftrightarrow {\rm (iii)} $ Assertion (iii) is a reformulation
of (ii) in terms of matrices.

Now assume that (i)--(iii) hold.
We mentioned in the proof of 
${\rm (ii)}\Rightarrow {\rm (i)} $ 
that $E^*_0$ is normalizing and
$U_i = \tau_i(A)E^*_0V$ for $0 \leq i \leq d$.
Let $\lbrace u_i\rbrace_{i=0}^d$ denote the basis for $V$ 
from (iii), and define $\xi = u_0$.
From the matrix representing $A^*$ in
(\ref{eq:split}),
we see that $\xi$ is an eigenvector for $A^*$ with eigenvalue
$\theta^*_0$. So $\xi \in E^*_0V$.
From the matrix representing $A$ in
(\ref{eq:split}),
we obtain 
$(A-\theta_i I)u_i =u_{i+1}$  for $0 \leq i \leq d-1$.
Consequently $u_i = \tau_i(A)\xi$ for $0 \leq i \leq d$.
For $0 \leq i \leq d$ we have 
$u_i = \tau_i(A)\xi \in \tau_i(A)E^*_0V=U_i$.
So the basis $\lbrace u_i \rbrace_{i=0}^d$ induces  
the decomposition $\lbrace U_i \rbrace_{i=0}^d$.
The sequence $\lbrace \varphi_i\rbrace_{i=1}^d$ is unique
since
the vector $\xi$ is unique up to multiplication by a nonzero scalar.
\end{proof}

\begin{lemma} 
\label{lem:varphinz}
Assume that the equivalent conditions {\rm (i)--(iii)} hold
in Proposition 
\ref{prop:splitchar}.
Then  for $1 \leq i \leq d$ the following {\rm (i)--(iii)}
are equivalent:
\begin{enumerate}
\item[\rm (i)] $E_{i-1}A^*E_i \not=0$;
\item[\rm (ii)] $(A^*-\theta^*_i I) U_i =U_{i-1}$;
\item[\rm (iii)] $\varphi_i \not=0$.
\end{enumerate}
\end{lemma}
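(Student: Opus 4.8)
The plan is to compute everything in the split basis $\lbrace u_i\rbrace_{i=0}^d$ with $u_i = \tau_i(A)\xi$ furnished by Proposition~\ref{prop:splitchar}. This basis induces the decomposition $\lbrace U_i\rbrace_{i=0}^d$, so $U_i = \mathbb F u_i$, and with respect to it $A^*$ is the bidiagonal matrix displayed in (\ref{eq:split}). Reading off the columns of that matrix gives $A^* u_i = \theta^*_i u_i + \varphi_i u_{i-1}$ for $1 \leq i \leq d$, whence $(A^*-\theta^*_i I)u_i = \varphi_i u_{i-1}$ and therefore $(A^*-\theta^*_i I)U_i = \mathbb F \varphi_i u_{i-1}$. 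Since $U_{i-1}=\mathbb F u_{i-1}$ is one-dimensional, this span equals $U_{i-1}$ exactly when $\varphi_i \neq 0$; this disposes of (ii)$\Leftrightarrow$(iii) at once.

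For (i)$\Leftrightarrow$(iii) I would abbreviate $W_i = E_iV + \cdots + E_dV$, which by Lemma~\ref{lem:AssumeNorm}(v) also equals $U_i + \cdots + U_d$, with $W_{d+1}=0$. Because $E_iA^*E_j = 0$ whenever $j-i>1$ by (\ref{eq:part2}), applying $A^*$ to a spanning vector $0 \neq w_i$ of the one-dimensional space $E_iV$ lands in $\sum_{j\geq i-1}E_jV = W_{i-1}$. Moreover $E_{i-1}A^*E_i \neq 0$ is equivalent to $E_{i-1}A^* w_i \neq 0$, and restricting $E_{i-1}$ to $W_{i-1} = E_{i-1}V \oplus W_i$ exhibits it as the projection onto $E_{i-1}V$ with kernel $W_i$ (as $E_{i-1}E_jV=0$ for $j\geq i$). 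Hence $E_{i-1}A^* w_i \neq 0$ if and only if $A^* w_i \notin W_i$.

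The crux is then to detect $A^* w_i \notin W_i$ through the scalar $\varphi_i$, and the key point is that $W_{i-1}$ carries the two direct-sum decompositions $E_{i-1}V \oplus W_i$ and $U_{i-1}\oplus W_i$ sharing the common summand $W_i$; consequently $A^* w_i \notin W_i$ if and only if its $U_{i-1}$-component relative to $U_{i-1}\oplus W_i$ is nonzero. To compute that component I would write $w_i = c\,u_i + r$ with $r \in W_{i+1}$ and $c \neq 0$, the constant being nonzero since $E_iV \cap W_{i+1}=0$ and $w_i\neq 0$. Now $A^*(W_{i+1}) \subseteq W_i$ by (\ref{eq:Asaction}), while $c\,A^* u_i = c\theta^*_i u_i + c\varphi_i u_{i-1}$, so the $U_{i-1}$-component of $A^* w_i$ is precisely $c\varphi_i u_{i-1}$, nonzero iff $\varphi_i\neq 0$. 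This yields (i)$\Leftrightarrow$(iii), and together with (ii)$\Leftrightarrow$(iii) finishes the proof.

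I expect the main obstacle to be the conceptual bridge between condition (i), which is phrased through the primitive idempotents $E_j$ of $A$, and conditions (ii),(iii), which live on the split decomposition $\lbrace U_i\rbrace$. The identity $W_i = \sum_{j\geq i}E_jV = \sum_{j\geq i}U_j$ from Lemma~\ref{lem:AssumeNorm}(v) is exactly what lets ``nonzero modulo $W_i$'' read off the $E_{i-1}$-component and the $U_{i-1}$-component simultaneously; arranging this cleanly, rather than chasing eigenvectors of $A$ by hand, is the one place that needs care.
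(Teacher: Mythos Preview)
Your proof is correct. The equivalence (ii)$\Leftrightarrow$(iii) is handled exactly as in the paper, by reading off the matrix of $A^*$ in the split basis. For the link to (i) you take a slightly different route: the paper proves (i)$\Leftrightarrow$(ii) via two contrapositive arguments at the level of subspace containments (if $(A^*-\theta^*_iI)U_i=0$ then $E_{i-1}A^*(E_iV+\cdots+E_dV)\subseteq E_{i-1}(U_i+\cdots+U_d)=0$, and conversely if $E_{i-1}A^*E_i=0$ then $U_{i-1}\subseteq U_i+\cdots+U_d$, contradicting the decomposition), whereas you prove (i)$\Leftrightarrow$(iii) by a direct coordinate computation, writing $w_i=cu_i+r$ with $r\in W_{i+1}$ and extracting the $U_{i-1}$-component of $A^*w_i$ as $c\varphi_i u_{i-1}$. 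Both arguments rest on the same identity $\sum_{j\geq i}E_jV=\sum_{j\geq i}U_j$ from Lemma~\ref{lem:AssumeNorm}(v) and on (\ref{eq:Asaction}); your version is a bit more concrete and avoids the pair of contradictions, at the cost of singling out a vector and checking $c\neq 0$.
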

\begin{proof} 
${\rm (i)} \Rightarrow {\rm (ii)}$ 
We assume that 
$(A^*-\theta^*_i I) U_i \not =U_{i-1}$ and get a contradiction.
We have
$(A^*-\theta^*_i I) U_i =0$ since 
$(A^*-\theta^*_i I) U_i \subseteq U_{i-1}$ 
and $U_{i-1}$ has dimension one.
Using Lemma
\ref{lem:AssumeNorm}(v),
\begin{align*}
E_{i-1} A^*E_i V 
&\subseteq  E_{i-1}A^*(E_iV+\cdots + E_dV)\\
&= E_{i-1}A^*(U_i+\cdots + U_d)\\
&\subseteq  E_{i-1}(U_i+\cdots + U_d)\\
& =  E_{i-1}(E_iV+\cdots + E_dV)\\
&=0.
\end{align*}
Therefore $E_{i-1}A^*E_i=0$ for a contradiction.
We have shown $(A^*-\theta^*_i I) U_i =U_{i-1}$.
\\
\noindent 
${\rm (ii)} \Rightarrow {\rm (i)}$ 
We assume that $E_{i-1} A^*E_i=0$ and get a contradiction.
Using Lemma
\ref{lem:AssumeNorm}(v),
\begin{align*}
U_{i-1}
&= (A^*-\theta^*_i I ) U_i 
\\
&\subseteq    (A^*-\theta^*_i I ) (U_i+ \cdots + U_d) \\
& =    (A^*-\theta^*_i I ) (E_iV+ \cdots + E_dV) \\
& \subseteq    E_iV+ \cdots + E_dV \\
& = U_i+ \cdots + U_d.
\end{align*}
This contradicts the fact that
$\lbrace U_i\rbrace_{i=0}^d$ is a decomposition.
We have shown $E_{i-1} A^*E_i\not=0$.
\\
\noindent
${\rm (ii)} \Leftrightarrow {\rm (iii)}$ 
Use the matrix representation of $A^*$ from
(\ref{eq:split}).
\end{proof}

\section{A result about wrap-around}

\noindent We continue to discuss the pre Leonard system
$\Phi=(A; \lbrace E_i\rbrace_{i=0}^d; A^*; \lbrace E^*_i\rbrace_{i=0}^d)$
from Definition
\ref{def:prels}.
\medskip

\noindent 
Throughout this section we assume
that $\Phi$ satisfies the equivalent conditions
(i)--(iii) in Proposition
\ref{prop:splitchar}.
We will obtain a useful result involving the
scalars $\lbrace \varphi_i \rbrace_{i=1}^d$ from
Proposition
\ref{prop:splitchar}(iii); this result is sometimes called
the wrap-around result.
\medskip

\noindent Recall the parameters
$\lbrace a_i\rbrace_{i=0}^d$, $\lbrace a^*_i\rbrace_{i=0}^d $ from Definition
\ref{def:ai}. We next compute these parameters in terms of
$\lbrace \theta_i\rbrace_{i=0}^d$,
$\lbrace \theta^*_i\rbrace_{i=0}^d$,
$\lbrace \varphi_i\rbrace_{i=1}^d$.
First assume that $d=0$. Then $A=\theta_0 I$
and $A^*=\theta^*_0 I$, so $a_0=\theta_0$ and $a^*_0 = \theta^*_0$.
\begin{lemma} 
{\rm (See \cite[Lemma~5.1]{2lintrans}).}
\label{lem:dg1}
For $d\geq 1$ we have
\begin{align*}
a_0 &= \theta_0 + \frac{\varphi_1}{\theta^*_0-\theta^*_1},
\\
a_i &= \theta_i + \frac{\varphi_i}{\theta^*_i-\theta^*_{i-1}}
+\frac{\varphi_{i+1}}{\theta^*_i-\theta^*_{i+1}}
\qquad \qquad (1 \leq i \leq d-1),
\\
a_d &= \theta_d + \frac{\varphi_d}{\theta^*_d-\theta^*_{d-1}}
\end{align*}
\noindent and
\begin{align*}
a^*_0 &= \theta^*_0 + \frac{\varphi_1}{\theta_0-\theta_1},
\\
a^*_i &= \theta^*_{i} + \frac{\varphi_{i}}{\theta_i-\theta_{i-1}}
+\frac{\varphi_{i+1}}{\theta_i-\theta_{i+1}}
\qquad \qquad (1 \leq i \leq d-1),
\\
a^*_d &= \theta^*_d + \frac{\varphi_d}{\theta_d-\theta_{d-1}}.
\end{align*}
\end{lemma}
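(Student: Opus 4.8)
The plan is to read everything off from the split basis of Proposition~\ref{prop:splitchar}(iii), in which $A$ is lower bidiagonal with diagonal entries $\theta_0,\ldots,\theta_d$ and every subdiagonal entry equal to $1$, while $A^*$ is upper bidiagonal with diagonal entries $\theta^*_0,\ldots,\theta^*_d$ and superdiagonal entries $(A^*)_{k,k+1}=\varphi_{k+1}$. Identifying each element of ${\rm End}(V)$ with its matrix in this basis, I would compute $a^*_i={\rm tr}(A^*E_i)$ and $a_i={\rm tr}(AE^*_i)$ straight from Definition~\ref{def:ai}. Since $A^*$ has nonzero entries only on its diagonal and first superdiagonal, the product $A^*E_i$ has diagonal entries built only from the diagonal and first subdiagonal of $E_i$; thus the whole trace collapses to ${\rm tr}(A^*E_i)=\theta^*_i+\varphi_i(E_i)_{i,i-1}+\varphi_{i+1}(E_i)_{i+1,i}$, and the task reduces to pinning down three entries of $E_i$.

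To find those entries, first note that $E_i$ is a polynomial in $A$ by (\ref{eq:ei}), hence lower triangular, and its diagonal entry at position $(k,k)$ is the $i$th Lagrange polynomial evaluated at $\theta_k$, namely $(E_i)_{kk}=\delta_{ik}$. The subdiagonal entries then come for free from $AE_i=\theta_iE_i=E_iA$: comparing the $(k+1,k)$ entries of $AE_i=\theta_iE_i$ gives $(E_i)_{k+1,k}(\theta_i-\theta_{k+1})=(E_i)_{kk}=\delta_{ik}$, which forces every subdiagonal entry to vanish except $(E_i)_{i+1,i}=1/(\theta_i-\theta_{i+1})$ and the exceptional entry $(E_i)_{i,i-1}$; the latter is then pinned down as $(E_i)_{i,i-1}=1/(\theta_i-\theta_{i-1})$ by the $(i,i-1)$ entry of $E_iA=\theta_iE_i$. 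Substituting into the displayed trace yields exactly the asserted formula for $a^*_i$, with the out-of-range terms $\varphi_0,\theta_{-1},\varphi_{d+1},\theta_{d+1}$ simply absent at the two boundaries $i=0$ and $i=d$.

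The formula for $a_i$ is entirely parallel and I would obtain it by the same argument with the roles of $A,A^*$ interchanged (equivalently, by passing to $\Phi^*$ as in Proposition~\ref{prop:ttaa}): now $E^*_i$ is a polynomial in $A^*$, hence upper triangular with diagonal entries $\delta_{ik}$, and $A^*E^*_i=\theta^*_iE^*_i=E^*_iA^*$ produces $(E^*_i)_{i-1,i}=\varphi_i/(\theta^*_i-\theta^*_{i-1})$ and $(E^*_i)_{i,i+1}=\varphi_{i+1}/(\theta^*_i-\theta^*_{i+1})$ together with the vanishing of all other superdiagonal entries. Because $A$ is lower bidiagonal with unit subdiagonal, ${\rm tr}(AE^*_i)=\theta_i+(E^*_i)_{i-1,i}+(E^*_i)_{i,i+1}$, which is the claimed expression for $a_i$. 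I do not expect a genuine obstacle here: the only care needed is the bookkeeping of matching the single nonzero off-diagonal band of $A$ (resp. $A^*$) against the diagonal-plus-one-band of the idempotent inside the trace, and handling the boundary rows $i=0,d$ where one of the two off-diagonal contributions disappears. An alternative that avoids even the eigenvalue-equation step is to write $E_i$ as the rank-one outer product of its right and left $\theta_i$-eigenvectors, whose coordinates are read off recursively from the bidiagonal shape; this reproduces the same three entries at the cost of slightly more setup.
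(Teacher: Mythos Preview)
Your argument is correct and complete: the key observation that only the diagonal and first off-diagonal of $E_i$ (resp.\ $E^*_i$) enter the trace, together with the recursion $AE_i=\theta_iE_i=E_iA$ to pin down those off-diagonal entries, yields the formulae with no gaps. One small quibble of exposition: in your first paragraph you write that the trace ``collapses to'' two off-diagonal terms, but that collapse uses the vanishing of the other subdiagonal entries of $E_i$, which you only establish in the next paragraph; the logic is fine, just slightly out of order. Also, the parenthetical ``equivalently, by passing to $\Phi^*$'' is not quite available at this point in the paper, since we only assumed the split conditions of Proposition~\ref{prop:splitchar} for $\Phi$, not for $\Phi^*$; your direct computation with $E^*_i$ is the right move and does not need this.

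The paper takes a genuinely different route: rather than computing any entries of $E_i$ or $E^*_i$, it evaluates ${\rm tr}(AA^{*r})$ in two ways---once as $\sum_i a_i\theta_i^{*r}$ via $A^{*r}=\sum_i \theta_i^{*r}E^*_i$, and once directly from the bidiagonal matrix representations---and then uses the distinctness of the $\theta^*_i$ (a Vandermonde argument) to force $a_i=x_i$. Your approach is essentially the one from the original paper \cite{2lintrans}, streamlined to compute only the entries actually needed; the present paper deliberately replaces that entry-by-entry computation with the single trace identity (\ref{eq:view2}), as flagged in the third bullet of the introduction. Your version has the virtue of being completely elementary and self-contained; the paper's version has the virtue of producing the reusable identity (\ref{eq:view2}) and avoiding any case-by-case analysis of the idempotents.
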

\begin{proof} Concerning
$\lbrace a_i\rbrace_{i=0}^d$, define
\begin{align*}
x_0 &= \theta_0 + \frac{\varphi_1}{\theta^*_0-\theta^*_1},
\\
x_i &= \theta_i + \frac{\varphi_i}{\theta^*_i-\theta^*_{i-1}}
+\frac{\varphi_{i+1}}{\theta^*_i-\theta^*_{i+1}}
\qquad \qquad (1 \leq i \leq d-1),
\\
x_d &= \theta_d + \frac{\varphi_d}{\theta^*_d-\theta^*_{d-1}}.
\end{align*}
We show that $a_i=x_i$ for $0 \leq i \leq d$. Since
$\lbrace \theta^*_i\rbrace_{i=0}^d$ are mutually distinct, 
it suffices to show that
$0 = \sum_{i=0}^d (x_i-a_i)\theta^{*r}_i$
for $0 \leq r \leq d$. Let $r$ be given. We compute the trace of
$A A^{*r}$ in two ways. On one hand, 
$A^*= \sum_{i=0}^d \theta^*_i E^*_i$
so $A^{*r}= \sum_{i=0}^d \theta^{*r}_i E^*_i$. By this and
Definition \ref{def:ai},
\begin{align}
\label{eq:view1}
{\rm tr} (A A^{*r}) = \sum_{i=0}^d a_i \theta^{*r}_i.
\end{align}
On the other hand, consider the matrix representations of
$A$ and $A^*$ 
from
(\ref{eq:split}).
 Using these matrices
we compute the
trace of $A A^{*r}$ as the sum of the diagonal entries.
A brief calculation yields 
\begin{align}
\label{eq:view2}
{\rm tr} (A A^{*r}) = \sum_{i=0}^d \theta_i \theta^{*r}_i
+ \sum_{i=1}^d \varphi_i \frac{\theta^{*r}_{i-1} - \theta^{*r}_i}{
\theta^*_{i-1}-\theta^*_i}.
\end{align}
Comparing 
(\ref{eq:view1}),
(\ref{eq:view2}) we get an equation that  becomes
$0 = \sum_{i=0}^d (x_i-a_i)\theta^{*r}_i$ after rearranging the terms.
We have shown that $a_i=x_i$ for $0 \leq i \leq d$.
Our assertions concerning $\lbrace a^*_i\rbrace_{i=0}^d$ 
are similarly obtained, by computing in two ways the 
trace of $A^* A^r$ for $0 \leq r \leq d$.
\end{proof}

\begin{lemma}
{\rm (See \cite[Lemma~5.2]{2lintrans}).}
\label{lem:sumaieiequalsvarphiS99}
For $1 \leq i \leq d$ 
the scalar $\varphi_i$ is equal to each of the following four expressions:
\begin{align*}
&(\theta^*_i-\theta^*_{i-1})\sum_{h=0}^{i-1} (\theta_h-a_h),
\qquad \qquad    
(\theta^*_{i-1}-\theta^*_i)\sum_{h=i}^{d} (\theta_h-a_h),
\\
&(\theta_i-\theta_{i-1})\sum_{h=0}^{i-1} (\theta^*_h-a^*_h),
\qquad \qquad  
(\theta_{i-1}-\theta_i)\sum_{h=i}^{d} (\theta^*_h-a^*_h).
\end{align*}
\end{lemma}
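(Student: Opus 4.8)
The plan is to derive all four expressions from Lemma \ref{lem:dg1} by a short telescoping argument, together with the trace identities of Lemma \ref{lem:aithi}. I would treat the first expression in full detail and then obtain the remaining three by the obvious symmetry, since the dual formulas in Lemma \ref{lem:dg1} have exactly the same shape.

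First I would rearrange Lemma \ref{lem:dg1} to isolate the differences $\theta_h - a_h$, obtaining $\theta_0 - a_0 = \varphi_1/(\theta^*_1-\theta^*_0)$, then $\theta_h - a_h = -\varphi_h/(\theta^*_h-\theta^*_{h-1}) - \varphi_{h+1}/(\theta^*_h-\theta^*_{h+1})$ for $1 \leq h \leq d-1$, and finally $\theta_d - a_d = \varphi_d/(\theta^*_{d-1}-\theta^*_d)$. I would then prove by induction on $i$ that the partial sum $S_i := \sum_{h=0}^{i-1}(\theta_h - a_h)$ equals $\varphi_i/(\theta^*_i-\theta^*_{i-1})$ for $1 \leq i \leq d$. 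The base case $i=1$ is immediate from the first rearranged formula. For the inductive step, adding $\theta_i - a_i$ to $S_i = \varphi_i/(\theta^*_i-\theta^*_{i-1})$ cancels the first summand of $\theta_i - a_i$ and leaves $-\varphi_{i+1}/(\theta^*_i-\theta^*_{i+1}) = \varphi_{i+1}/(\theta^*_{i+1}-\theta^*_i)$, which is precisely $S_{i+1}$. Multiplying the resulting identity $S_i = \varphi_i/(\theta^*_i-\theta^*_{i-1})$ by $\theta^*_i-\theta^*_{i-1}$ gives the first expression.

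For the second expression I would invoke Lemma \ref{lem:aithi}, equation (\ref{eq:aithi}), which gives $\sum_{h=0}^d(\theta_h - a_h) = 0$. Hence $\sum_{h=i}^d(\theta_h - a_h) = -S_i = \varphi_i/(\theta^*_{i-1}-\theta^*_i)$, and multiplying by $\theta^*_{i-1}-\theta^*_i$ yields the second expression.

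The third and fourth expressions then follow from the identical argument applied to the dual formulas for $a^*_h$ in Lemma \ref{lem:dg1}, which have the same form with the roles of $\theta^*$ and $\theta$ interchanged and \emph{the same} scalars $\varphi_i$; here I would use equation (\ref{eq:aithis}) in place of (\ref{eq:aithi}). There is no real obstacle in this proof: the one step requiring care is the telescoping cancellation, which the induction renders transparent. It is worth emphasizing that, because the dual formulas in Lemma \ref{lem:dg1} already involve the same $\varphi_i$, no separate analysis of $\Phi^*$ or of a split decomposition for it is needed.
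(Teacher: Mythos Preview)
Your proof is correct and follows essentially the same approach as the paper, which simply writes ``Use Lemmas \ref{lem:aithi}, \ref{lem:dg1}.'' You have spelled out the telescoping argument that is implicit in that one-line proof.
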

\begin{proof} 
Use Lemmas
\ref{lem:aithi},
\ref{lem:dg1}.
\end{proof}

\begin{definition}\rm
\label{def:vartheta}
Define
\begin{align*}
\vartheta_i = \varphi_i - (\theta^*_i-\theta^*_0)(\theta_{i-1}-\theta_d)
\qquad \qquad (1 \leq i \leq d)
\end{align*}
and 
$\vartheta_0=0$,
$\vartheta_{d+1}=0$.
\end{definition}

\begin{proposition} {\rm (wrap-around)}
\label{prop:dg2} Assume $d\geq 2$. Then
\begin{align*}
\sum_{i=0}^{d-2} E_d A^* E_i E^*_0 (\theta_i - \theta_{d-1})
= E_d E^*_0 (\vartheta_1 -\vartheta_d).
\end{align*}
\end{proposition}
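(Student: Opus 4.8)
The plan is to collapse the sum on the left into three manageable operators, simplify each into a scalar multiple of the single rank-one operator $E_d E^*_0$, and then match the resulting scalar against $\vartheta_1-\vartheta_d$ using Definition \ref{def:vartheta} and Lemma \ref{lem:dg1}. Throughout I use that $\Phi$ satisfies the conditions of Proposition \ref{prop:splitchar}, so the split decomposition $\lbrace U_i\rbrace_{i=0}^d$ with $U_i=\tau_i(A)E^*_0V$ and the scalars $\lbrace \varphi_i\rbrace_{i=1}^d$ are available.

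First I would rewrite the coefficient sum. Since $A=\sum_{i=0}^d \theta_i E_i$ and $I=\sum_{i=0}^d E_i$, the operator $\sum_{i=0}^d(\theta_i-\theta_{d-1})E_i$ equals $A-\theta_{d-1}I$. The $i=d-1$ term vanishes, so dropping the top two terms gives
\[
\sum_{i=0}^{d-2}(\theta_i-\theta_{d-1})E_i = A - \theta_{d-1}I - (\theta_d-\theta_{d-1})E_d .
\]
Inserting this between $E_d A^*$ and $E^*_0$ turns the left-hand side of the claim into
\[
E_d A^* A E^*_0 \;-\; \theta_{d-1}E_d A^* E^*_0 \;-\; (\theta_d-\theta_{d-1})E_d A^* E_d E^*_0 .
\]

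Next I would simplify the three terms. The middle term is $\theta^*_0 E_d E^*_0$, from $A^* E^*_0=\theta^*_0 E^*_0$. The last term is $a^*_d E_d E^*_0$ by Lemma \ref{lem:EAE}(ii). For the first term, write $A E^*_0=\theta_0 E^*_0+(A-\theta_0 I)E^*_0$ and use $A^* E^*_0=\theta^*_0 E^*_0$ to get $A^*AE^*_0=\theta_0\theta^*_0 E^*_0+A^*(A-\theta_0 I)E^*_0$. Now split $A^*=\theta^*_1 I+(A^*-\theta^*_1 I)$: the image of $(A-\theta_0 I)E^*_0$ is $(A-\theta_0 I)U_0=U_1$ by (\ref{eq:Aaction}), and $(A^*-\theta^*_1 I)U_1\subseteq U_0=E^*_0V$ by (\ref{eq:Asaction}), so $(A^*-\theta^*_1 I)(A-\theta_0 I)E^*_0$ is a self-map of the one-dimensional space $E^*_0V$, hence a scalar times $E^*_0$; evaluating on $0\neq\xi\in E^*_0V$ and using $(A-\theta_0 I)\xi=\tau_1(A)\xi$ identifies the scalar as $\varphi_1$. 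This yields
\[
A^* A E^*_0 = (\theta_0\theta^*_0+\varphi_1)E^*_0 + \theta^*_1(A-\theta_0 I)E^*_0 ,
\]
and applying $E_d$ with $E_d(A-\theta_0 I)=(\theta_d-\theta_0)E_d$ collapses this to $[\theta_0\theta^*_0+\varphi_1+\theta^*_1(\theta_d-\theta_0)]E_d E^*_0$.

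Finally I would collect coefficients. The left-hand side becomes the scalar
\[
\theta_0\theta^*_0 + \varphi_1 + \theta^*_1(\theta_d-\theta_0) - \theta_{d-1}\theta^*_0 - (\theta_d-\theta_{d-1})a^*_d
\]
times $E_d E^*_0$. Substituting $a^*_d=\theta^*_d+\varphi_d/(\theta_d-\theta_{d-1})$ from Lemma \ref{lem:dg1} clears the denominator, and after expanding $\vartheta_1$ and $\vartheta_d$ from Definition \ref{def:vartheta}, a term-by-term comparison shows this scalar equals $\vartheta_1-\vartheta_d$, which is exactly the coefficient on the right. The only genuinely nontrivial step is the treatment of $E_d A^* A E^*_0$: since no eigenvalue relation is available for $A E^*_0$, one must route it through the split decomposition $\lbrace U_i\rbrace_{i=0}^d$ rather than through the $A$-eigenspace projections $E_i$; everything else is the collapse of the sum and a short polynomial identity.
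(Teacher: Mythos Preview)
Your proof is correct. Both you and the paper reduce the left-hand side to a scalar multiple of $E_d E^*_0$ and then match the scalar to $\vartheta_1-\vartheta_d$, but the mechanics differ. The paper writes four auxiliary identities (expansions of $AE^*_0$, $A^*AE^*_0$, $E_dA^*$, $E_dA^*A$ via the $E^*_i$- and $E_i$-resolutions) and takes a specific linear combination chosen so that the off-diagonal term $E^*_1AE^*_0$ cancels; the scalar then involves $a_0$ and $a^*_d$, each converted to $\varphi_1,\varphi_d$ at the very end via Lemma~\ref{lem:dg1}. You instead collapse the sum directly to $E_dA^*AE^*_0$ plus correction terms, and handle $E_dA^*AE^*_0$ by routing $(A^*-\theta^*_1 I)(A-\theta_0 I)E^*_0$ through the split decomposition, reading off $\varphi_1$ from the matrix~(\ref{eq:split}). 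Your route is shorter and avoids the four-equation bookkeeping, at the cost of invoking the split basis explicitly; the paper's route stays entirely within the primitive-idempotent calculus until the final substitution, which keeps the argument closer to the pre-Leonard-system axioms but requires the clever cancellation.
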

\begin{proof}
In the equation $I = \sum_{i=0}^d E^*_i$,
multiply each term on the right by $AE^*_0$. Simplify
the result using $E^*_0AE^*_0= a_0 E^*_0$ and
$E^*_i A E^*_0 = 0 $ $(2 \leq i \leq d)$ to obtain
\begin{align}
A E^*_0 = a_0 E^*_0 + E^*_1 A E^*_0.
\label{eq:step1}
\end{align}
In 
(\ref{eq:step1}), multiply each term on the left by $A^*$ and simplify
to get
\begin{align}
A^*A E^*_0 = a_0 \theta^*_0 E^*_0 + \theta^*_1 E^*_1 A E^*_0.
\label{eq:step2}
\end{align}
\noindent In the equation
$I = \sum_{i=0}^d E_i$,
multiply each term on the left by $E_d A^*$. Simplify
the result using $E_d A^* E_d = a^*_d E_d$ to obtain
\begin{align}
E_d A^* = a^*_d E_d + 
\sum_{i=0}^{d-1} E_d A^* E_i.
\label{eq:step3}
\end{align}
\noindent In 
(\ref{eq:step3}), multiply each term on the right by $A$ and simplify
to obtain
\begin{align}
E_d A^*A = a^*_d \theta_d E_d + 
\sum_{i=0}^{d-1} \theta_i E_d A^* E_i.
\label{eq:step4}
\end{align}
\noindent We now compute
$\theta^*_1 E_d$ times 
(\ref{eq:step1})
minus
$E_d$ times 
(\ref{eq:step2})
minus
(\ref{eq:step3}) times $\theta_{d-1} E^*_0$
plus
(\ref{eq:step4}) times $ E^*_0$.
The result is
\begin{align*}
&E_d E^*_0 \biggl(
(\theta^*_0 - \theta^*_1) a_0+
(\theta_{d-1} - \theta_d) a^*_d
+ \theta_d \theta^*_1 - \theta_{d-1} \theta^*_0
\biggr)
 = \sum_{i=0}^{d-2} E_d A^* E_i E^*_0 (\theta_i - \theta_{d-1}).
\end{align*}
In the above equation,
consider the coefficient of $E_d E^*_0$.
Evaluate this coefficient using
\begin{align*}
&a_0 = \theta_0 + \frac{\varphi_1}{\theta^*_0-\theta^*_1},
\qquad \qquad 
\varphi_1 = \vartheta_1 + (\theta^*_1 - \theta^*_0)(\theta_0 - \theta_d),
\\
&a^*_d = \theta^*_d + \frac{\varphi_d}{\theta_d-\theta_{d-1}},
\qquad \qquad 
\varphi_d = \vartheta_d + (\theta^*_d - \theta^*_0)(\theta_{d-1} - \theta_d)
\end{align*}
to find that this coefficient is $\vartheta_1 -\vartheta_d$. 
\end{proof}
\noindent For the sake of completeness, we mention a second version
of Proposition
\ref{prop:dg2}. We do not use this second version, so we will
not dwell on the proof.

\begin{lemma}
Assume $d\geq 2$. Then
\begin{align*}
\sum_{i=2}^{d} E^*_0 A E^*_i E_d (\theta^*_1 - \theta^*_i)
= E^*_0 E_d (\vartheta_1 -\vartheta_d).
\end{align*}
\end{lemma}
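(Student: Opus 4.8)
The plan is to imitate the proof of Proposition \ref{prop:dg2}, interchanging the roles of the pairs $(A,E^*_0)$ and $(A^*,E_d)$ and reversing left/right multiplication throughout. First I would produce four ``step'' equations dual to \eqref{eq:step1}--\eqref{eq:step4}. Expanding $I=\sum_{i=0}^d E_i$ and right-multiplying by $A^*E_d$, then using $E_dA^*E_d=a^*_dE_d$ (Lemma \ref{lem:EAE}(ii)) together with $E_iA^*E_d=0$ for $i\le d-2$ (this is \eqref{eq:part2} with $j=d$), gives
\begin{align*}
A^*E_d = a^*_d E_d + E_{d-1}A^*E_d.
\end{align*}
Left-multiplying by $A$ yields $AA^*E_d = a^*_d\theta_d E_d + \theta_{d-1}E_{d-1}A^*E_d$. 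Dually, expanding $I=\sum_{i=0}^d E^*_i$ and left-multiplying by $E^*_0A$, then using $E^*_0AE^*_0=a_0E^*_0$ (Lemma \ref{lem:EAE}(i)), gives $E^*_0A = a_0E^*_0 + \sum_{i=1}^d E^*_0AE^*_i$; right-multiplying by $A^*$ yields $E^*_0AA^* = a_0\theta^*_0E^*_0 + \sum_{i=1}^d \theta^*_i E^*_0AE^*_i$.

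Next I would form the linear combination mirroring the one in Proposition \ref{prop:dg2}: namely $\theta_{d-1}E^*_0$ times the first equation, minus $E^*_0$ times the second, minus the third equation multiplied on the right by $\theta^*_1 E_d$, plus the fourth equation multiplied on the right by $E_d$. On the left-hand side, the two occurrences of $E^*_0AA^*E_d$ cancel, and the survivors collapse to scalars via $E^*_0A^*=\theta^*_0E^*_0$ and $AE_d=\theta_dE_d$, leaving $(\theta_{d-1}\theta^*_0-\theta^*_1\theta_d)E^*_0E_d$. On the right-hand side, the two occurrences of $\theta_{d-1}E^*_0E_{d-1}A^*E_d$ cancel, the scalar multiples of $E^*_0E_d$ accumulate, and the operator terms combine into $\sum_{i=1}^d(\theta^*_i-\theta^*_1)E^*_0AE^*_iE_d$; the $i=1$ summand vanishes, so the range reduces to $2\le i\le d$. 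Equating the two sides and rearranging, I would obtain
\begin{align*}
\sum_{i=2}^{d} E^*_0 A E^*_i E_d (\theta^*_1-\theta^*_i)
= \bigl( a_0(\theta^*_0-\theta^*_1) + a^*_d(\theta_{d-1}-\theta_d) + \theta^*_1\theta_d - \theta_{d-1}\theta^*_0 \bigr) E^*_0 E_d.
\end{align*}

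The decisive observation is that the scalar coefficient here is \emph{identical} to the one appearing in the proof of Proposition \ref{prop:dg2}. Hence I would evaluate it by the very computation already carried out there: substitute $a_0=\theta_0+\varphi_1/(\theta^*_0-\theta^*_1)$ and $a^*_d=\theta^*_d+\varphi_d/(\theta_d-\theta_{d-1})$ from Lemma \ref{lem:dg1}, together with $\varphi_1=\vartheta_1+(\theta^*_1-\theta^*_0)(\theta_0-\theta_d)$ and $\varphi_d=\vartheta_d+(\theta^*_d-\theta^*_0)(\theta_{d-1}-\theta_d)$ from Definition \ref{def:vartheta}, to find that the coefficient equals $\vartheta_1-\vartheta_d$. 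This gives the claim.

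The only real subtlety, and the place to be careful, is the bookkeeping in the linear combination: one must verify that the genuinely operator-valued terms ($E^*_0AA^*E_d$ and $E^*_0E_{d-1}A^*E_d$) cancel in pairs and that the surviving $E^*_0A^*E_d$ and $E^*_0AE_d$ terms collapse to scalar multiples of $E^*_0E_d$; everything else is then forced. I note that applying the antiautomorphism $\dagger$ of Lemma \ref{lem:antiaut} to Proposition \ref{prop:dg2} does \emph{not} shortcut this, since $\dagger$ fixes $A$ and $A^*$ and reverses products, sending that identity to $\sum_{i=0}^{d-2}E^*_0E_iA^*E_d(\theta_i-\theta_{d-1})=E^*_0E_d(\vartheta_1-\vartheta_d)$, a different relation; so the direct mirror above is the cleaner route.
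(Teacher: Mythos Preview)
Your proof is correct and is precisely the mirror of the proof of Proposition \ref{prop:dg2} that the paper intends when it says ``Similar to the proof of Proposition \ref{prop:dg2}.'' The key observation that the scalar coefficient you obtain coincides exactly with the one already evaluated in that proof is right, and your remark that $\dagger$ does not provide a shortcut is also correct.
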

\begin{proof} Similar to the proof of
Proposition
\ref{prop:dg2}.
\end{proof}

\section{The parameter array of a Leonard system}

\noindent In this section we consider a Leonard system
$\Phi= (A; \lbrace E_i\rbrace_{i=0}^d; A^*; \lbrace E^*_i\rbrace_{i=0}^d)$
on $V$. Note that $\Phi$ satisfies the equivalent conditions
(i)--(iii) in Proposition
\ref{prop:splitchar}.

\begin{definition}\rm
\label{def:splitSeq}
By the {\it first split sequence} for $\Phi$
we mean the sequence $\lbrace \varphi_i\rbrace_{i=1}^d$ from
 Proposition
\ref{prop:splitchar}(iii).
Let $\lbrace \phi_i \rbrace_{i=1}^d$ denote the first split
sequence for $\Phi^\Downarrow$. We call
$\lbrace \phi_i \rbrace_{i=1}^d$
the {\it second split sequence} for $\Phi$.
\end{definition}

\noindent By Lemma
\ref{lem:varphinz} and the construction,
$\varphi_i$ and 
$\phi_i$ are nonzero for
 $1 \leq i \leq d$.
\medskip

\begin{lemma} 
\label{lem:2basis} There exists a basis for $V$
with respect to which
\begin{align}
       A: \quad 
	 \left(
	    \begin{array}{cccccc}
	      \theta_d &  & & & & {\bf 0}  \\
	       1 & \theta_{d-1} & &    & &   \\
		 & 1 & \theta_{d-2}  & &  &
		  \\
		  && \cdot & \cdot &&
		    \\
		    & & &  \cdot & \cdot & \\
		     {\bf 0}  & &  & & 1 & \theta_0
		      \end{array}
		       \right),
\qquad \qquad
	A^*:\quad
	  \left(
	    \begin{array}{cccccc}
	      \theta^*_0 & \phi_1  & & & & {\bf 0}  \\
	        & \theta^*_1 & \phi_2 &    & &   \\
		 &  & \theta^*_2  & \cdot &  &
		  \\
		  && & \cdot & \cdot &
		    \\
		    & & &   &  \cdot  & \phi_d \\
		     {\bf 0}  & &  & &  & \theta^*_d
		      \end{array}
		       \right).
\label{eq:phibasis}
\end{align}
\end{lemma}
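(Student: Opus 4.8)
The plan is to obtain the desired basis not by analyzing $\Phi$ directly, but by applying Proposition \ref{prop:splitchar}(iii) to the relative $\Phi^\Downarrow = (A; \lbrace E_{d-i}\rbrace_{i=0}^d; A^*; \lbrace E^*_i\rbrace_{i=0}^d)$. The point is that the split canonical form (\ref{eq:split}) guaranteed by Proposition \ref{prop:splitchar}(iii) is expressed entirely in terms of the eigenvalue sequence, the dual eigenvalue sequence, and the first split sequence of the pre Leonard system to which it is applied. By Proposition \ref{prop:ttaa} the eigenvalue sequence of $\Phi^\Downarrow$ is $\lbrace \theta_{d-i}\rbrace_{i=0}^d$ and its dual eigenvalue sequence is $\lbrace \theta^*_i\rbrace_{i=0}^d$, while by Definition \ref{def:splitSeq} the first split sequence of $\Phi^\Downarrow$ is, by construction, exactly the second split sequence $\lbrace \phi_i\rbrace_{i=1}^d$ of $\Phi$. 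Substituting these three data into (\ref{eq:split}) produces precisely the matrices displayed in (\ref{eq:phibasis}): $A$ lower bidiagonal with diagonal $\theta_d,\theta_{d-1},\ldots,\theta_0$ and subdiagonal entries $1$, and $A^*$ upper bidiagonal with diagonal $\theta^*_0,\ldots,\theta^*_d$ and superdiagonal $\phi_1,\ldots,\phi_d$.

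The only genuine step is to confirm that $\Phi^\Downarrow$ satisfies the hypotheses of Proposition \ref{prop:splitchar}, i.e.\ its equivalent conditions (i)--(iii); since these are equivalent, it suffices to check condition (i), namely the analogues of (\ref{eq:part1}) and (\ref{eq:part2}) for $\Phi^\Downarrow$. Here I would use that $\Phi$ is a Leonard system, so it satisfies Definition \ref{def:deflstalkS99}(iv),(v). Because $\Phi^\Downarrow$ has the same $A$ and the same idempotents $\lbrace E^*_i\rbrace_{i=0}^d$ as $\Phi$, the requirement (\ref{eq:part1}) for $\Phi^\Downarrow$ is word-for-word the requirement for $\Phi$, which holds by Definition \ref{def:deflstalkS99}(iv). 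For (\ref{eq:part2}), writing $\widetilde E_i = E_{d-i}$ for the $i$th primitive idempotent of $A$ in $\Phi^\Downarrow$, the condition to verify is $\widetilde E_i A^* \widetilde E_j = E_{d-i} A^* E_{d-j} = 0$ whenever $j-i>1$; this follows from Definition \ref{def:deflstalkS99}(v) for $\Phi$, since $j-i>1$ forces $|(d-i)-(d-j)| = |j-i| > 1$.

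Once this verification is in place the conclusion is immediate: Proposition \ref{prop:splitchar}(iii), applied to $\Phi^\Downarrow$, furnishes a basis of $V$ with respect to which $A$ and $A^*$ take the split canonical forms computed from the parameters of $\Phi^\Downarrow$, and by the identifications of the previous paragraph these are exactly the matrices in (\ref{eq:phibasis}). I expect no computational obstacle whatsoever; the entire content is the reduction to the already-proved Proposition \ref{prop:splitchar}, and the only thing demanding care is the bookkeeping of the index reversal $i \mapsto d-i$ when reading off the eigenvalue sequence of $\Phi^\Downarrow$ from Proposition \ref{prop:ttaa} and matching it against the diagonal of the $A$-matrix in (\ref{eq:phibasis}).
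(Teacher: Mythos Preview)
Your approach is correct and is exactly the paper's proof: apply Proposition \ref{prop:splitchar}(iii) to $\Phi^\Downarrow$. The verification you supply that $\Phi^\Downarrow$ satisfies condition (i) of Proposition \ref{prop:splitchar} is accurate but could be shortened by simply observing that $\Phi^\Downarrow$ is itself a Leonard system (the conditions in Definition \ref{def:deflstalkS99}(iv),(v) depend only on $|i-j|$, which is unaffected by the reversal $i\mapsto d-i$), so the hypotheses are automatic.
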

\begin{proof} Apply Proposition
\ref{prop:splitchar}(iii) 
to $\Phi^\Downarrow$.
\end{proof}

\noindent

\begin{lemma} 
\label{lem:isoLS}
For a Leonard system $\Phi'$ over $\mathbb F$, the following are
equivalent:
\begin{enumerate}
\item[\rm (i)] $\Phi, \Phi'$ are isomorphic;
\item[\rm (ii)] $\Phi, \Phi'$ have the same eigenvalue sequence, dual eigenvalue
sequence, and first split sequence;
\item[\rm (iii)] $\Phi, \Phi'$ have the same eigenvalue sequence, dual eigenvalue
sequence, and second split sequence.
\end{enumerate}
\end{lemma}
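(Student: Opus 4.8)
The plan is to prove the three conditions equivalent by establishing the cycle $(\mathrm{i}) \Rightarrow (\mathrm{ii}) \Rightarrow (\mathrm{iii}) \Rightarrow (\mathrm{i})$, with the implication $(\mathrm{ii}) \Leftrightarrow (\mathrm{iii})$ coming essentially for free from the definition of the second split sequence as the first split sequence of $\Phi^\Downarrow$. The central object is the matrix representation from Proposition \ref{prop:splitchar}(iii): a Leonard system is determined, up to the choice of basis, by the data $(\lbrace \theta_i\rbrace, \lbrace \theta^*_i\rbrace, \lbrace \varphi_i\rbrace)$, so my strategy is to show that an isomorphism of Leonard systems is exactly the same thing as agreement of this parameter data.

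First I would handle $(\mathrm{i}) \Rightarrow (\mathrm{ii})$. Suppose $\sigma \colon V \to V'$ is an isomorphism of Leonard systems, so $A' = A^\sigma$, $E_i' = E_i^\sigma$, and similarly for the starred elements. Since $A' = \sigma A \sigma^{-1}$ carries the eigenspace $E_iV$ to $E_i'V'$ with the same eigenvalue, the eigenvalue sequences agree, and likewise the dual eigenvalue sequences. For the first split sequence I would invoke the uniqueness clause at the end of Proposition \ref{prop:splitchar}: applying $\sigma$ to the split basis $\lbrace \tau_i(A)\xi\rbrace_{i=0}^d$ of $\Phi$ (with $0 \neq \xi \in E^*_0V$) produces the vectors $\lbrace \tau_i(A')\sigma\xi\rbrace_{i=0}^d$, and $\sigma\xi \in E_0^{*\prime}V'$ is a valid choice of $\xi'$ because $\theta^*_0$ is preserved. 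With respect to this transported basis, $A'$ and $A^{*\prime}$ are represented by exactly the matrices \eqref{eq:split}, so the $\varphi_i$ read off for $\Phi'$ coincide with those for $\Phi$; uniqueness of $\lbrace \varphi_i\rbrace_{i=1}^d$ then forces equality.

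Next comes the harder direction $(\mathrm{ii}) \Rightarrow (\mathrm{i})$, which I expect to be the main obstacle, since it requires constructing the isomorphism rather than merely transporting structure. Here I would fix split bases $\lbrace u_i\rbrace_{i=0}^d$ for $V$ and $\lbrace u_i'\rbrace_{i=0}^d$ for $V'$ as in Proposition \ref{prop:splitchar}(iii), and define $\sigma$ to be the linear isomorphism $u_i \mapsto u_i'$. Because $\Phi$ and $\Phi'$ share all three sequences $\lbrace \theta_i\rbrace$, $\lbrace \theta^*_i\rbrace$, $\lbrace \varphi_i\rbrace$, the matrices representing $A, A^*$ in the first basis equal those representing $A', A^{*\prime}$ in the second. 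Consequently $A^\sigma = A'$ and $A^{*\sigma} = A^{*\prime}$ as operators. The remaining point is that the idempotents are then automatically matched: each $E_i$ is the polynomial \eqref{eq:ei} in $A$ determined by the eigenvalue sequence, so $E_i^\sigma$ is the same polynomial in $A^\sigma = A'$, namely $E_i'$, and identically $E_i^{*\sigma} = E_i^{*\prime}$. Thus $\Phi^\sigma = \Phi'$, giving the isomorphism.

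Finally, for $(\mathrm{ii}) \Leftrightarrow (\mathrm{iii})$ I would apply the already-proved equivalence $(\mathrm{i}) \Leftrightarrow (\mathrm{ii})$ to $\Phi^\Downarrow$ and $\Phi'^\Downarrow$. By Proposition \ref{prop:ttaa}, $\Phi^\Downarrow$ has eigenvalue sequence $\lbrace \theta_{d-i}\rbrace$ and the same dual eigenvalue sequence as $\Phi$, while its first split sequence is by definition the second split sequence $\lbrace \phi_i\rbrace$ of $\Phi$; the same holds for $\Phi'$. Moreover $\Phi \cong \Phi'$ if and only if $\Phi^\Downarrow \cong \Phi'^\Downarrow$, since the operation $\Downarrow$ merely reverses the ordering of the $E_i$ and commutes with conjugation by $\sigma$. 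Combining these observations, agreement of the second split sequences (together with the two eigenvalue sequences) is equivalent to isomorphism of $\Phi^\Downarrow, \Phi'^\Downarrow$, hence to isomorphism of $\Phi, \Phi'$, hence to condition $(\mathrm{ii})$. This closes the chain of equivalences.
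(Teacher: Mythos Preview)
Your proof is correct and follows essentially the same approach as the paper. The paper's proof is extremely terse---it cites Proposition~\ref{prop:splitchar}(iii) for $(\mathrm{i})\Leftrightarrow(\mathrm{ii})$ and Lemma~\ref{lem:2basis} for $(\mathrm{i})\Leftrightarrow(\mathrm{iii})$---and your argument is precisely the unpacking of those citations: the split basis of Proposition~\ref{prop:splitchar} gives a canonical matrix form determined by (and determining) the data $(\{\theta_i\},\{\theta^*_i\},\{\varphi_i\})$, and the second equivalence is the same argument applied to $\Phi^\Downarrow$ (which is exactly how Lemma~\ref{lem:2basis} is proved).
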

\begin{proof}  
${\rm (i)} \Leftrightarrow {\rm (ii)}$ By
Proposition
\ref{prop:splitchar}(iii).
\\
${\rm (i)} \Leftrightarrow {\rm (iii)}$ By Lemma
\ref{lem:2basis}.
\end{proof}

\noindent In Lemma
\ref{lem:dg1}
we gave some formulas for 
$\lbrace a_i\rbrace_{i=0}^d$,
$\lbrace a^*_i\rbrace_{i=0}^d$ that involved
$\lbrace \varphi_i\rbrace_{i=1}^d$.
Next we give some similar formulas that involve
$\lbrace \phi_i\rbrace_{i=1}^d$.

\begin{lemma} 
\label{lem:dg1Down}
For $d\geq 1$ we have
\begin{align*}
a_0 &= \theta_d + \frac{\phi_1}{\theta^*_0-\theta^*_1},
\\
a_i &= \theta_{d-i} + \frac{\phi_i}{\theta^*_i-\theta^*_{i-1}}
+\frac{\phi_{i+1}}{\theta^*_i-\theta^*_{i+1}}
\qquad \qquad (1 \leq i \leq d-1),
\\
a_d &= \theta_0 + \frac{\phi_d}{\theta^*_d-\theta^*_{d-1}}
\end{align*}
\noindent and
\begin{align*}
a^*_0 &= \theta^*_d + \frac{\phi_d}{\theta_0-\theta_{1}},
\\
a^*_{i} &= \theta^*_{d-i} + 
\frac{\phi_{d-i+1}}{\theta_{i}-\theta_{i-1}}
+
\frac{\phi_{d-i}}{\theta_{i}-\theta_{i+1}}
\qquad \qquad (1 \leq i \leq d-1),
\\
a^*_d &= \theta^*_0 + \frac{\phi_1}{\theta_d-\theta_{d-1}},
\end{align*}
\end{lemma}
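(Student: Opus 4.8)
The plan is to obtain the formulas for $\lbrace a_i \rbrace_{i=0}^d$ and $\lbrace a^*_i \rbrace_{i=0}^d$ in terms of the second split sequence $\lbrace \phi_i \rbrace_{i=1}^d$ by applying the already-established Lemma \ref{lem:dg1} to a suitable relative of $\Phi$, rather than recomputing anything from scratch. The key observation is that by Definition \ref{def:splitSeq}, the second split sequence of $\Phi$ is the \emph{first} split sequence of $\Phi^\Downarrow$. So if I apply Lemma \ref{lem:dg1} to $\Phi^\Downarrow$, the $\varphi_i$ appearing there are exactly the $\phi_i$ I want in my conclusion.

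First I would record, using Proposition \ref{prop:ttaa}, how the relevant sequences of $\Phi^\Downarrow$ relate to those of $\Phi$. Reading off the $\Phi^\Downarrow$ row of that table: the eigenvalue sequence of $\Phi^\Downarrow$ is $\lbrace \theta_{d-i}\rbrace_{i=0}^d$, the dual eigenvalue sequence is $\lbrace \theta^*_i\rbrace_{i=0}^d$ (unchanged), the diagonal sequence is $\lbrace a_i\rbrace_{i=0}^d$ (unchanged), and the dual diagonal sequence is $\lbrace a^*_{d-i}\rbrace_{i=0}^d$. Thus when I substitute into Lemma \ref{lem:dg1} applied to $\Phi^\Downarrow$, every $\theta_i$ there becomes $\theta_{d-i}$, every $\theta^*_i$ stays $\theta^*_i$, every $a_i$ stays $a_i$, every $a^*_i$ becomes $a^*_{d-i}$, and every $\varphi_i$ becomes $\phi_i$.

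Next I would carry out the substitution for the two groups of equations separately. For the $a$-formulas, Lemma \ref{lem:dg1} applied to $\Phi^\Downarrow$ gives $a_i$ in terms of the $\Phi^\Downarrow$-eigenvalues and first split sequence; after substitution these read $a_0 = \theta_d + \phi_1/(\theta^*_0-\theta^*_1)$, the generic $a_i = \theta_{d-i} + \phi_i/(\theta^*_i - \theta^*_{i-1}) + \phi_{i+1}/(\theta^*_i - \theta^*_{i+1})$, and $a_d = \theta_0 + \phi_d/(\theta^*_d - \theta^*_{d-1})$, matching the claimed $a$-formulas directly since the dual eigenvalues are unchanged. The $a^*$-formulas require slightly more care: Lemma \ref{lem:dg1} applied to $\Phi^\Downarrow$ produces formulas for the $\Phi^\Downarrow$-dual-diagonal entries, which are $a^*_{d-i}$, in terms of the $\Phi^\Downarrow$-eigenvalues $\theta_{d-i}$ and the $\phi$'s. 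I would then reindex by replacing $i$ with $d-i$ throughout to convert formulas for $a^*_{d-i}$ into formulas for $a^*_i$, and simplify the resulting eigenvalue subscripts (e.g. $\theta_{d-(d-i)} = \theta_i$). This reindexing is the only genuinely fiddly step and is where a sign or an index could slip, so it is the point deserving the most attention; everything else is mechanical substitution from the table in Proposition \ref{prop:ttaa}.

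The main obstacle, then, is purely bookkeeping: keeping the subscript substitutions $i \mapsto d-i$ on the $a^*$ side straight while the $a$ side needs no reindexing, and confirming that the difference quotients $\phi_{d-i+1}/(\theta_i - \theta_{i-1})$ and $\phi_{d-i}/(\theta_i - \theta_{i+1})$ emerge with the correct split-sequence indices after the reflection. I expect no conceptual difficulty beyond verifying these index shifts, and I would present the argument as a one-line appeal to Lemma \ref{lem:dg1} and Proposition \ref{prop:ttaa} applied to $\Phi^\Downarrow$, followed by the reindexing remark.
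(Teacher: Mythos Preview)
Your proposal is correct and is exactly the paper's approach: the paper's proof reads ``Recall that $\lbrace \phi_i \rbrace_{i=1}^d$ is the first split sequence for $\Phi^\Downarrow$. Apply Lemma~\ref{lem:dg1} to $\Phi^\Downarrow$ and use the data for $\Phi^\Downarrow$ in Proposition~\ref{prop:ttaa}.'' Your write-up simply unpacks the bookkeeping (including the $i \mapsto d-i$ reindexing on the $a^*$ side) that the paper leaves implicit.
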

\begin{proof}
Recall that $\lbrace \phi_i \rbrace_{i=1}^d$
is the first split sequence for $\Phi^\Downarrow$.
Apply Lemma
\ref{lem:dg1}
to  $\Phi^\Downarrow$ and use the data for
  $\Phi^\Downarrow$ in Proposition
\ref{prop:ttaa}.
\end{proof}

\begin{lemma}
\label{lem:sumaieiequalsphiS99}
{\rm (See \cite[Lemma~6.4]{2lintrans}).}
For $1 \leq i \leq d$ 
the scalar $\phi_i$ is equal to each of the following four expressions:
\begin{align*}
&(\theta^*_i-\theta^*_{i-1})\sum_{h=0}^{i-1} (\theta_{d-h}-a_h),
\qquad \qquad    
(\theta^*_{i-1}-\theta^*_i)\sum_{h=i}^{d} (\theta_{d-h}-a_h),
\\
&(\theta_{d-i}-\theta_{d-i+1})\sum_{h=0}^{i-1} (\theta^*_h-a^*_{d-h}),
\qquad \qquad  
(\theta_{d-i+1}-\theta_{d-i})\sum_{h=i}^{d} (\theta^*_h-a^*_{d-h}).
\end{align*}
\end{lemma}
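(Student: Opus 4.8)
The plan is to recognize that $\lbrace \phi_i\rbrace_{i=1}^d$ is, by Definition \ref{def:splitSeq}, nothing other than the first split sequence of $\Phi^\Downarrow$, and then to invoke the already-established Lemma \ref{lem:sumaieiequalsvarphiS99} with $\Phi^\Downarrow$ in place of $\Phi$. That lemma expresses the first split sequence of an arbitrary Leonard system in four ways in terms of its eigenvalue, dual eigenvalue, diagonal, and dual diagonal sequences; feeding it the corresponding data for $\Phi^\Downarrow$ and translating back into the parameters of $\Phi$ should produce the four displayed formulas. This is exactly the mechanism used a moment earlier to derive Lemma \ref{lem:dg1Down} from Lemma \ref{lem:dg1}.

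Before applying Lemma \ref{lem:sumaieiequalsvarphiS99} to $\Phi^\Downarrow$, I would first observe that $\Phi^\Downarrow$ is again a Leonard system. Indeed, passing from $\Phi$ to $\Phi^\Downarrow$ only reverses the ordering $\lbrace E_i\rbrace_{i=0}^d$, and conditions (iv), (v) of Definition \ref{def:deflstalkS99} depend only on $\vert i-j\vert$, which is unchanged under this reversal; hence $\Phi^\Downarrow$ satisfies the equivalent conditions (i)--(iii) of Proposition \ref{prop:splitchar} and Lemma \ref{lem:sumaieiequalsvarphiS99} legitimately applies to it.

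The substitution itself is read off from Proposition \ref{prop:ttaa}: for $\Phi^\Downarrow$ the eigenvalue sequence is $\lbrace \theta_{d-i}\rbrace_{i=0}^d$, the dual eigenvalue sequence is $\lbrace \theta^*_i\rbrace_{i=0}^d$, the diagonal sequence is $\lbrace a_i\rbrace_{i=0}^d$, and the dual diagonal sequence is $\lbrace a^*_{d-i}\rbrace_{i=0}^d$. Inserting these into the four expressions of Lemma \ref{lem:sumaieiequalsvarphiS99} turns, for instance, the first expression $(\theta^*_i-\theta^*_{i-1})\sum_{h=0}^{i-1}(\theta_h-a_h)$ into $(\theta^*_i-\theta^*_{i-1})\sum_{h=0}^{i-1}(\theta_{d-h}-a_h)$, and likewise the third expression becomes $(\theta_{d-i}-\theta_{d-i+1})\sum_{h=0}^{i-1}(\theta^*_h-a^*_{d-h})$, with the remaining two handled identically. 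These are precisely the four claimed formulas.

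The only point requiring care — and hence the main (if modest) obstacle — is the index bookkeeping in this final translation: one must correctly substitute the \emph{reversed} eigenvalue sequence $\theta_h\mapsto\theta_{d-h}$ and the reversed dual diagonal sequence $a^*_h\mapsto a^*_{d-h}$, and verify that the summation ranges together with the differences such as $\theta_{d-i}-\theta_{d-i+1}$ emerge exactly matching the target statement. No genuine computation beyond this relabeling is needed.
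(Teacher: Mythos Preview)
Your proposal is correct and matches the paper's own proof essentially verbatim: the paper simply says ``Apply Lemma \ref{lem:sumaieiequalsvarphiS99} to $\Phi^\Downarrow$ and use the data for $\Phi^\Downarrow$ in Proposition \ref{prop:ttaa}.'' Your additional remarks justifying that $\Phi^\Downarrow$ is again a Leonard system and spelling out the index substitution are accurate elaborations of exactly this argument.
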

\begin{proof} 
Apply Lemma
\ref{lem:sumaieiequalsvarphiS99} to $\Phi^\Downarrow$
and use the data for $\Phi^\Downarrow$ in Proposition
\ref{prop:ttaa}.
\end{proof}

\begin{definition}\rm
{\rm (See \cite[Definition~10.1]{terCanForm}).}
\label{def:pa}
By the {\it parameter array} of $\Phi$ we mean the sequence
\begin{align*}
\bigl(
\lbrace \theta_i \rbrace_{i=0}^d;
\lbrace \theta^*_i \rbrace_{i=0}^d;
\lbrace \varphi_i \rbrace_{i=1}^d;
\lbrace \phi_i \rbrace_{i=1}^d
\bigr)
\end{align*}
where we recall that 
$\lbrace \theta_i \rbrace_{i=0}^d$ is the eigenvalue sequence of $\Phi$,
$\lbrace \theta^*_i \rbrace_{i=0}^d$ is the dual eigenvalue sequence of $\Phi$,
$\lbrace \varphi_i \rbrace_{i=1}^d$ is the first split sequence of $\Phi$,
and $\lbrace \phi_i \rbrace_{i=1}^d$ is the second split sequence of $\Phi$.
\end{definition}

\begin{lemma} 
\label{prop:pacomp} 
{\rm (See \cite[Theorem~1.11]{2lintrans}).}
The parameter arrays of 
\begin{align*}
\Phi, \qquad 
\Phi^\Downarrow, \qquad 
\Phi^\downarrow, \qquad 
\Phi^*
\end{align*}
are related as follows.
\bigskip

\centerline{
\begin{tabular}[t]{c|c}
   {\rm LS} & {\rm parameter array}  
\\
\hline
$\Phi$ & 
$\bigl(
\lbrace \theta_i \rbrace_{i=0}^d;
\lbrace \theta^*_i \rbrace_{i=0}^d;
\lbrace \varphi_i \rbrace_{i=1}^d;
\lbrace \phi_i \rbrace_{i=1}^d
\bigr)
$
\\
$\Phi^\Downarrow $& $ 
\bigl(
\lbrace \theta_{d-i} \rbrace_{i=0}^d;
\lbrace \theta^*_i \rbrace_{i=0}^d;
\lbrace \phi_i \rbrace_{i=1}^d;
\lbrace \varphi_i \rbrace_{i=1}^d
\bigr) $
\\
$\Phi^\downarrow $ & $ 
\bigl(
\lbrace \theta_{i} \rbrace_{i=0}^d;
\lbrace \theta^*_{d-i} \rbrace_{i=0}^d;
\lbrace \phi_{d-i+1} \rbrace_{i=1}^d;
\lbrace \varphi_{d-i+1} \rbrace_{i=1}^d
\bigr)$
\\
$\Phi^*$ & $
\bigl(
\lbrace \theta^*_{i} \rbrace_{i=0}^d;
\lbrace \theta_{i} \rbrace_{i=0}^d;
\lbrace \varphi_{i} \rbrace_{i=1}^d;
\lbrace \phi_{d-i+1} \rbrace_{i=1}^d
\bigr) $
\\
\end{tabular}}
\bigskip

\end{lemma}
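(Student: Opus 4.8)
The plan is to read off the first two entries of each parameter array directly from Proposition \ref{prop:ttaa}, which already records the eigenvalue and dual eigenvalue sequences of $\Phi^\Downarrow$, $\Phi^\downarrow$, $\Phi^*$; these agree with the corresponding columns of the table. What remains is to determine the first and second split sequences of these three systems. The engine for this is the pair of characterizations in Lemmas \ref{lem:sumaieiequalsvarphiS99} and \ref{lem:sumaieiequalsphiS99}: the former expresses the first split sequence, and the latter the second split sequence, of any such Leonard system purely in terms of its eigenvalue, dual eigenvalue, diagonal, and dual diagonal sequences. Since each of $\Downarrow$, $\downarrow$, $*$ preserves the Leonard system axioms, these lemmas apply to all four systems, and Proposition \ref{prop:ttaa} supplies exactly the four input sequences needed. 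Thus every split sequence in the table can be obtained by substitution followed by recognizing the output as a (possibly reindexed) copy of $\{\varphi_i\}$ or $\{\phi_i\}$.

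I would dispose of $\Phi^\Downarrow$ first, as it costs nothing. By the very definition in Definition \ref{def:splitSeq} the first split sequence of $\Phi^\Downarrow$ is the second split sequence $\{\phi_i\}$ of $\Phi$; and its second split sequence is the first split sequence of $(\Phi^\Downarrow)^\Downarrow=\Phi$, namely $\{\varphi_i\}$. For $\Phi^*$, substitute its data ($\theta^*,\theta,a^*,a$) into Lemma \ref{lem:sumaieiequalsvarphiS99}: the result $(\theta_i-\theta_{i-1})\sum_{h=0}^{i-1}(\theta^*_h-a^*_h)$ is literally one of the four expressions that Lemma \ref{lem:sumaieiequalsvarphiS99} gives for $\varphi_i$ of $\Phi$, so the first split sequence of $\Phi^*$ is $\{\varphi_i\}$. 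Stated once and for all, $*$ preserves the first split sequence.

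For $\Phi^\downarrow$, substituting its data ($\theta,\theta^*_{d-i},a_{d-i},a^*$) into Lemma \ref{lem:sumaieiequalsvarphiS99} and then reversing the summation variable $h\mapsto d-h$ turns the output into one of the four expressions of Lemma \ref{lem:sumaieiequalsphiS99} for $\phi_{d-i+1}$; hence the first split sequence of $\Phi^\downarrow$ is $\{\phi_{d-i+1}\}$. The three remaining second split sequences I would obtain most economically from the involutive and commuting identities $(\Phi^*)^\Downarrow=(\Phi^\downarrow)^*$ and $(\Phi^\downarrow)^\Downarrow=(\Phi^\Downarrow)^\downarrow$, both immediate from comparing the defining four-tuples. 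Indeed, the second split sequence of $\Phi^*$ is the first split sequence of $(\Phi^*)^\Downarrow=(\Phi^\downarrow)^*$, which by the preceding paragraph equals the first split sequence $\{\phi_{d-i+1}\}$ of $\Phi^\downarrow$; and the second split sequence of $\Phi^\downarrow$ is the first split sequence of $(\Phi^\downarrow)^\Downarrow=(\Phi^\Downarrow)^\downarrow$, which by the $\Phi^\downarrow$ computation applied to $\Phi^\Downarrow$ (whose second split sequence is $\{\varphi_i\}$) equals $\{\varphi_{d-i+1}\}$.

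The only real work, and the single point where an error is easy to make, is the index bookkeeping in the last paragraph: pairing each substituted formula with the correct one of the four expressions in Lemmas \ref{lem:sumaieiequalsvarphiS99} and \ref{lem:sumaieiequalsphiS99}, and carrying out the reversal $h\mapsto d-h$ that simultaneously sends a sum $\sum_{h=0}^{i-1}$ to $\sum_{h=i}^{d}$ and the index $i$ to $d-i+1$. The presence of all four equivalent expressions for each split scalar is precisely what lets the swap $A\leftrightarrow A^*$ (for $\Phi^*$) and the reversal of the dual eigenvalue ordering (for $\Phi^\downarrow$) be absorbed cleanly; beyond this matching, there is no genuinely hard step.
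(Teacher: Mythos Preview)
Your proposal is correct and follows essentially the same approach as the paper, which simply cites Proposition~\ref{prop:ttaa} and Lemmas~\ref{lem:sumaieiequalsvarphiS99}, \ref{lem:sumaieiequalsphiS99}. You have filled in exactly the substitutions and reindexings those citations entail, and your use of the involutive identities $(\Phi^*)^\Downarrow=(\Phi^\downarrow)^*$ and $(\Phi^\downarrow)^\Downarrow=(\Phi^\Downarrow)^\downarrow$ together with the observation that $*$ preserves the first split sequence is a clean way to avoid repeating the same bookkeeping for the second split sequences.
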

\begin{proof} Use 
 Proposition
\ref{prop:ttaa} and
Lemmas
\ref{lem:sumaieiequalsvarphiS99},
\ref{lem:sumaieiequalsphiS99}.
\end{proof}

\noindent We mention a variation on Lemma
\ref{lem:isoLS}.

\begin{proposition}
\label{cor:paIso}
Two Leonard systems over $\mathbb F$ are isomorphic if and only if
they have the same parameter array.
\end{proposition}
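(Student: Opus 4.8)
The plan is to deduce this proposition directly from Lemma \ref{lem:isoLS}, which already characterizes isomorphism of Leonard systems in terms of three of the four components of the parameter array, namely the eigenvalue sequence, the dual eigenvalue sequence, and the first split sequence.

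For the backward direction, the argument is immediate. If two Leonard systems over $\mathbb F$ have the same parameter array, then by Definition \ref{def:pa} they in particular share the same eigenvalue sequence, dual eigenvalue sequence, and first split sequence. Lemma \ref{lem:isoLS}, in the implication (ii)$\Rightarrow$(i), then produces an isomorphism between them.

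For the forward direction, suppose $\Phi$ and $\Phi'$ are isomorphic, say $\sigma\colon V\to V'$ satisfies $\Phi^\sigma=\Phi'$. By Lemma \ref{lem:isoLS}, (i)$\Rightarrow$(ii), the eigenvalue sequences, dual eigenvalue sequences, and first split sequences of $\Phi$ and $\Phi'$ coincide, so three of the four components of the parameter array already agree. The only remaining task is to match the second split sequences $\{\phi_i\}_{i=1}^d$. Here I would use that the operation $\Downarrow$ is compatible with conjugation by $\sigma$: writing $\Phi^\Downarrow$ and $(\Phi')^\Downarrow$ out componentwise and substituting $\Phi^\sigma=\Phi'$ shows $(\Phi^\Downarrow)^\sigma=(\Phi')^\Downarrow$, so $\Phi^\Downarrow$ and $(\Phi')^\Downarrow$ are isomorphic via the \emph{same} $\sigma$. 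Applying Lemma \ref{lem:isoLS} to this new isomorphic pair, their first split sequences agree; by Definition \ref{def:splitSeq} these are precisely the second split sequences of $\Phi$ and $\Phi'$. Hence all four components of the parameter array coincide.

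The only mild subtlety, which is the main obstacle such as it is, lies in the second split sequence, since Lemma \ref{lem:isoLS} does not mention it explicitly. This is resolved cleanly by the observation that $\Downarrow$ commutes with isomorphism, which reduces the claim about $\{\phi_i\}_{i=1}^d$ to the already-established statement for first split sequences. No new computation is required beyond unwinding the definitions of $\Phi^\Downarrow$, $\Phi^\sigma$, and the parameter array.
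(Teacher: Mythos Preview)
Your proof is correct and follows essentially the same approach as the paper, which simply cites Lemma~\ref{lem:isoLS} and Definition~\ref{def:pa}. Note that your detour through $\Phi^\Downarrow$ to match the second split sequences is unnecessary: Lemma~\ref{lem:isoLS} already contains the equivalence (i)$\Leftrightarrow$(iii), so the implication (i)$\Rightarrow$(iii) directly yields agreement of the second split sequences without any further argument.
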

\begin{proof} By Lemma
\ref{lem:isoLS} and Definition
\ref{def:pa}.
\end{proof}

\section{Statement of the Leonard system classification}

\noindent In the following theorem we classify up to isomorphism
the Leonard systems over $\mathbb F$.

\begin{theorem}
\label{thm:classification}
{\rm(See \cite[Theorem~1.9]{2lintrans}).}
Consider a sequence 
\begin{align}
\bigl(
\lbrace \theta_i \rbrace_{i=0}^d;
\lbrace \theta^*_i \rbrace_{i=0}^d;
\lbrace \varphi_i \rbrace_{i=1}^d;
\lbrace \phi_i \rbrace_{i=1}^d
\bigr)
\label{eq:parameterArray}
\end{align}
of scalars in $\mathbb F$.
Then there exists  a Leonard system $\Phi$ over $\mathbb F$  with
parameter array
{\rm (\ref{eq:parameterArray})} if and only if
the following conditions {\rm (PA1)--(PA5)} hold:
\begin{description}
\item[\rm (PA1)] $ \theta_i\not=\theta_j,\quad  \theta^*_i\not=\theta^*_j
\quad $
if $\;\;i\not=j,\qquad \qquad (0 \leq i,j\leq d)$;
\item[\rm (PA2)] $ \varphi_i \not=0, \quad \phi_i\not=0 \qquad\qquad 
(1 \leq i \leq d)$;
\item[\rm (PA3)] $ {\displaystyle{ \varphi_i = \phi_1 \sum_{h=0}^{i-1}
\frac{\theta_h-\theta_{d-h}}{\theta_0-\theta_d} 
+(\theta^*_i-\theta^*_0)(\theta_{i-1}-\theta_d) \qquad \;\;(1 \leq i \leq d)}}$;
\item[\rm (PA4)] $ {\displaystyle{ \phi_i = \varphi_1 \sum_{h=0}^{i-1}
\frac{\theta_h-\theta_{d-h}}{\theta_0-\theta_d} 
+(\theta^*_i-\theta^*_0)(\theta_{d-i+1}-\theta_0) \qquad (1 \leq i \leq d)}}$;
\item[\rm (PA5)] the scalars
\begin{equation}
\frac{\theta_{i-2}-\theta_{i+1}}{\theta_{i-1}-\theta_i},\qquad \qquad  
 \frac{\theta^*_{i-2}-\theta^*_{i+1}}{\theta^*_{i-1}-\theta^*_i} 
 \qquad  \qquad 
\label{eq:defbetaplusoneS99int}
\end{equation}
 are equal and independent of $i$ for $2\leq i \leq d-1$.
\end{description}
Moreover, if $\Phi$ exists then
 $\Phi$ is unique up to isomorphism of Leonard systems.
\end{theorem}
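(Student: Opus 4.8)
The plan is to prove the biconditional and the uniqueness clause of Theorem~\ref{thm:classification} together, with uniqueness being immediate: if two Leonard systems over $\mathbb F$ share the parameter array (\ref{eq:parameterArray}) then they are isomorphic by Proposition~\ref{cor:paIso}, so $\Phi$ is determined up to isomorphism once it is shown to exist. For the biconditional I would treat necessity and sufficiency separately, the latter being the substantial direction.

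For necessity, assume $\Phi$ is a Leonard system with parameter array (\ref{eq:parameterArray}). Then (PA1) is the multiplicity-free condition built into Definition~\ref{def:deflstalkS99}(i), and (PA2) holds by Lemma~\ref{lem:varphinz} applied to $\Phi$ (for $\varphi_i$) and to $\Phi^\Downarrow$ (for $\phi_i$). For (PA3), recall $\vartheta_i$ from Definition~\ref{def:vartheta}; using the split-sequence formulas of Lemmas~\ref{lem:sumaieiequalsvarphiS99} and \ref{lem:sumaieiequalsphiS99} one checks directly that $\vartheta_1=\phi_1$, the $i=1$ instance of (PA3). The wrap-around identity of Proposition~\ref{prop:dg2} gives the opposite endpoint: since $\Phi$ is a Leonard system we have $E_d A^* E_i=0$ for $0\le i\le d-2$, so the left-hand side vanishes, while $E_d E^*_0\ne 0$ because $E^*_0$ is normalizing; hence $\vartheta_1=\vartheta_d$, which together with $\vartheta_1=\phi_1$ is the $i=d$ instance of (PA3). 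Filling in the interior values is where the real work lies: I would establish a three-term recurrence for the $\vartheta_i$ and solve it, the eigenvalue recurrence (PA5) being exactly what identifies the closed form $\vartheta_i=\phi_1\sum_{h=0}^{i-1}(\theta_h-\theta_{d-h})/(\theta_0-\theta_d)$. Condition (PA4) then follows by applying the established (PA3) to $\Phi^\Downarrow$ and translating through the parameter-array table of Lemma~\ref{prop:pacomp}, while (PA5) itself I would extract from the tridiagonal (Askey--Wilson) relations satisfied by $A,A^*$, whose $\beta$-parameter is the common value in (PA5).

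For sufficiency, assume (PA1)--(PA5) and construct $\Phi$. Using (PA1), define $A$ and $A^*$ by the split matrices (\ref{eq:split}) built from $\{\theta_i\}$, $\{\theta^*_i\}$, $\{\varphi_i\}$; both are multiplicity-free, giving a pre Leonard system once the idempotents are ordered to match the diagonals. The matrix shapes are Proposition~\ref{prop:splitchar}(iii), hence (i), so (\ref{eq:part1}) and (\ref{eq:part2}) both hold: (\ref{eq:part1}) is exactly condition (i) of Proposition~\ref{prop:threefourths}, and (\ref{eq:part2}) is the vanishing half of its condition (iv). Since the equivalent conditions of Proposition~\ref{prop:splitchar} now hold, Lemma~\ref{lem:varphinz} applies, and (PA2) supplies the nonvanishing half, so condition (iv) holds in full. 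At this point I have two of the four conditions of Proposition~\ref{prop:threefourths}, and the crux is to produce a third, say condition (iii): in the $A$-eigenbasis the construction so far guarantees only that $A^*$ is upper Hessenberg, and I must show that $A^*$ is genuinely tridiagonal, i.e. $E_iA^*E_j=0$ when $i-j>1$. This is precisely what (PA3)--(PA5) encode; I would verify that $A,A^*$ satisfy the tridiagonal relations, with $\beta$ from (PA5) and the remaining coefficients from (PA3),(PA4), and deduce the missing vanishing from them. With three conditions in hand, Proposition~\ref{prop:threefourths} gives all four, so $\Phi$ is a Leonard system; its eigenvalue and first split sequences are $\{\theta_i\},\{\theta^*_i\},\{\varphi_i\}$ by construction and the uniqueness clause of Proposition~\ref{prop:splitchar}, and its second split sequence equals the given $\{\phi_i\}$ because both satisfy the formula (PA4) in the common data $\{\theta_i\},\{\theta^*_i\},\varphi_1$.

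The main obstacle is the same in both directions: passing between the split (Hessenberg) data and genuine tridiagonality. Concretely, it is the step that kills the far-lower entries of $A^*$ in the $A$-eigenbasis (sufficiency) and, dually, the step that forces the $\vartheta_i$ to obey the recurrence pinned down by (PA5) (necessity). Both reduce to the tridiagonal relations together with a recurrent-sequence analysis of the eigenvalue sequences; the wrap-around result of Proposition~\ref{prop:dg2} disposes of the boundary case $\vartheta_1=\vartheta_d$, but the interior of the split sequences and the eigenvalue recurrence (PA5) are what require the machinery developed after the present statement.
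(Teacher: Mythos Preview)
Your overall architecture matches the paper's, and the necessity direction is essentially correct (your use of wrap-around to get $\vartheta_1=\vartheta_d$ is a legitimate alternative to the paper's direct comparison of the $a^*_d$ formulas in Lemmas~\ref{lem:dg1} and \ref{lem:dg1Down}). The gap is in sufficiency, at the step where you claim to produce condition~(iii) of Proposition~\ref{prop:threefourths}.

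Two problems. First, the tridiagonal relation (TD1) yields $E_iA^*E_j(\theta_i-\theta_j)P(\theta_i,\theta_j)=0$, and by Proposition~\ref{prop:P}(ii) the factor $P(\theta_i,\theta_j)$ can vanish when $\{i,j\}=\{0,d\}$; so (TD1) alone does not kill $E_dA^*E_0$. In the paper this corner case is handled by a second use of wrap-around (Proposition~\ref{prop:dg2}), now in the sufficiency direction: once $E_dA^*E_i=0$ for $1\le i\le d-2$ and $\vartheta_1=\vartheta_d$ (which (PA3) gives), the wrap-around identity forces $E_dA^*E_0E^*_0=0$, hence $E_dA^*E_0=0$ via Proposition~\ref{prop:normTrick}.

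Second, and more seriously, condition~(iii) of Proposition~\ref{prop:threefourths} is not just the vanishing $E_iA^*E_j=0$ for $i-j>1$; it also demands $E_iA^*E_{i-1}\ne 0$ for $1\le i\le d$. You do not address this, and it does not come for free. In the paper this is the most involved part of the sufficiency argument: once the vanishing half of (iii) is in hand, Proposition~\ref{prop:splitchar} applies to $\Phi^\Downarrow$ as well, producing a second split sequence $\{\varphi^\Downarrow_i\}$; by Lemma~\ref{lem:varphinz} (for $\Phi^\Downarrow$) the nonvanishing of $E_iA^*E_{i-1}$ is equivalent to $\varphi^\Downarrow_i\ne 0$. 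One then shows $\varphi^\Downarrow_i=\phi_i$ by running the entire $\vartheta$-machinery a second time (Proposition~\ref{note:varthetacharacconvS99}, Proposition~\ref{cor:th1d}, and wrap-around again, all applied to $\Phi^\Downarrow$), after which (PA2) finishes. Note this also makes your final sentence about identifying the second split sequence with $\{\phi_i\}$ out of order: the identification $\varphi^\Downarrow_i=\phi_i$ must be established \emph{before} you know $\Phi$ is a Leonard system, not deduced afterward from the necessity direction.
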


\noindent The proof of
Theorem
\ref{thm:classification}
will be completed in Section
17.
\medskip

\begin{definition}\rm 
By a {\it parameter array of diameter $d$} over $\mathbb F$, we mean
a sequence 
(\ref{eq:parameterArray}) of scalars in $\mathbb F$
that satisfy
{\rm (PA1)--(PA5)}. 
\end{definition}

\noindent Theorem
\ref{thm:classification} gives a bijection between
the following two sets:
\begin{enumerate}
\item[\rm (i)] the parameter arrays over $\mathbb F$ that have diameter $d$;
\item[\rm (ii)] the isomorphism classes of Leonard systems over
$\mathbb F$ that have diameter $d$.
\end{enumerate}

\noindent We have a comment.
\begin{lemma}
\label{cor:iso1}
For $d\geq 1$, a parameter array 
$\bigl(
\lbrace \theta_i \rbrace_{i=0}^d;
\lbrace \theta^*_i \rbrace_{i=0}^d;
\lbrace \varphi_i \rbrace_{i=1}^d;
\lbrace \phi_i \rbrace_{i=1}^d
\bigr)$
is uniquely determined by
$\varphi_1$, $\lbrace \theta_i \rbrace_{i=0}^d$,
$\lbrace \theta^*_i \rbrace_{i=0}^d$.
\end{lemma}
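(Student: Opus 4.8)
The plan is to extract all of $\lbrace \varphi_i\rbrace_{i=1}^d$ and $\lbrace \phi_i\rbrace_{i=1}^d$ directly from the defining relations (PA3) and (PA4), using only $\varphi_1$ together with the two eigenvalue sequences. Since $d\geq 1$, condition (PA1) gives $\theta_0\neq \theta_d$, so for $1\leq i \leq d$ the quantity
\[
S_i = \sum_{h=0}^{i-1} \frac{\theta_h-\theta_{d-h}}{\theta_0-\theta_d}
\]
is a well-defined element of $\mathbb F$ depending only on $\lbrace \theta_i\rbrace_{i=0}^d$. Both (PA3) and (PA4) are affine-linear expressions in $\phi_1$ (resp.\ $\varphi_1$), with the remaining data coming from $S_i$ and the eigenvalue sequences.

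First I would recover $\phi_1$. Specializing (PA4) at $i=1$ gives $S_1=1$ and hence
\[
\phi_1 = \varphi_1 + (\theta^*_1-\theta^*_0)(\theta_d-\theta_0),
\]
so $\phi_1$ is determined by $\varphi_1$ and the eigenvalue sequences. With $\phi_1$ in hand, (PA3) expresses each $\varphi_i$ $(1\leq i\leq d)$ as $\phi_1 S_i + (\theta^*_i-\theta^*_0)(\theta_{i-1}-\theta_d)$, which determines the entire first split sequence. Likewise (PA4) expresses each $\phi_i$ as $\varphi_1 S_i + (\theta^*_i-\theta^*_0)(\theta_{d-i+1}-\theta_0)$, determining the entire second split sequence. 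Thus every entry of the array is forced by $\varphi_1$ and the eigenvalue data, which is the assertion.

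There is essentially no obstacle here; the content lies entirely in (PA3) and (PA4), which are part of the hypothesis that we are dealing with a parameter array. The only point deserving a moment's care is to confirm there is no circularity: the recovery of $\lbrace \varphi_i\rbrace$ uses $\phi_1$, and $\phi_1$ is itself built from the given $\varphi_1$, so the chain $\varphi_1 \to \phi_1 \to \lbrace \varphi_i\rbrace$ terminates. As a consistency check one verifies that (PA3) at $i=1$ returns the original $\varphi_1$: substituting $\phi_1 = \varphi_1 - (\theta^*_1-\theta^*_0)(\theta_0-\theta_d)$ into $\varphi_1 = \phi_1 S_1 + (\theta^*_1-\theta^*_0)(\theta_0-\theta_d)$ yields an identity, confirming that the data is consistent and the array unique.
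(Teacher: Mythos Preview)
Your argument is correct and is essentially the same as the paper's: the paper simply writes ``By the nature of the equations (PA3), (PA4),'' and you have spelled out exactly what that means, recovering $\phi_1$ from (PA4) at $i=1$ and then reading off all $\varphi_i$ and $\phi_i$ from (PA3) and (PA4).
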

\begin{proof} By the nature of the equations
(PA3), (PA4).
\end{proof}

\section{Recurrent sequences}

 \noindent  Throughout this section
let $\lbrace \theta_i\rbrace_{i=0}^d$ denote
 scalars in $\mathbb F$.

\begin{definition}
\label{lem:beginthreetermS99}
{\rm (See \cite[Definition~8.2]{2lintrans}).}
\rm
Let $\beta, \gamma, \varrho$ denote scalars
in $\mathbb F$.
\begin{enumerate} 
\item[\rm (i)] The sequence 
  $\lbrace \theta_i\rbrace_{i=0}^d$ 
is said to be {\it recurrent} whenever $\theta_{i-1}\not=\theta_i$ for
$2 \leq i \leq d-1$, and 
\begin{equation}
\frac{\theta_{i-2}-\theta_{i+1}}{\theta_{i-1}-\theta_i} 
\label{eq:thethingwhichisbetaS99}
\end{equation}
is independent of
$i$ for $2 \leq i \leq  d-1$.
\item[\rm (ii)] The sequence 
  $\lbrace \theta_i\rbrace_{i=0}^d$ 
is said to be {\it $\beta$-recurrent} whenever 
\begin{equation}
\theta_{i-2}\,-\,(\beta+1)\theta_{i-1}\,+\,(\beta +1)\theta_i \,-\,\theta_{i+1}
\label{eq:betarecS99}
\end{equation}
is zero for 
$2 \leq i \leq d-1$.
\item[\rm (iii)] The sequence 
  $\lbrace \theta_i\rbrace_{i=0}^d$ 
is said to be {\it $(\beta,\gamma)$-recurrent} whenever 
\begin{equation}
\theta_{i-1}\,-\,\beta \theta_i\,+\,\theta_{i+1}=\gamma 
\label{eq:gammathreetermS99}
\end{equation}
 for 
$1 \leq i \leq d-1$.
\item[\rm (iv)] The sequence 
  $\lbrace \theta_i\rbrace_{i=0}^d$ 
is said to be {\it $(\beta,\gamma,\varrho)$-recurrent} whenever 
\begin{equation}
\theta^2_{i-1}-\beta \theta_{i-1}\theta_i+\theta^2_i 
-\gamma (\theta_{i-1} +\theta_i)=\varrho
\label{eq:varrhothreetermS99}
\end{equation}
 for 
$1 \leq i \leq d$.
\end{enumerate}
\end{definition}

\begin{lemma} 
\label{lem:recvsbrecS99}
The following are equivalent:
\begin{enumerate}
\item[\rm (i)] the sequence 
  $\lbrace \theta_i\rbrace_{i=0}^d$
is  recurrent;
\item[\rm (ii)]
the scalars $\theta_{i-1}\not=\theta_i$ for
$2 \leq i \leq d-1$, and 
there exists $\beta \in \mathbb F$ such that
  $\lbrace \theta_i\rbrace_{i=0}^d$ 
  is $\beta$-recurrent.
\end{enumerate}
\noindent Suppose {\rm (i), (ii)} hold, and that $d\geq 3$. Then
the common value of
{\rm (\ref{eq:thethingwhichisbetaS99})}
is equal to $\beta +1$.

\end{lemma}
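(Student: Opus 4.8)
The plan is to reduce both conditions to a single linear identity and then read off the equivalence directly. The key observation is that the $\beta$-recurrence expression (\ref{eq:betarecS99}) regroups as $(\theta_{i-2}-\theta_{i+1})-(\beta+1)(\theta_{i-1}-\theta_i)$. Hence the sequence $\lbrace \theta_i\rbrace_{i=0}^d$ is $\beta$-recurrent if and only if
\[
\theta_{i-2}-\theta_{i+1}=(\beta+1)(\theta_{i-1}-\theta_i) \qquad (2 \leq i \leq d-1).
\]
This is the only algebraic fact needed; the rest is bookkeeping about the denominators $\theta_{i-1}-\theta_i$, which both (i) and (ii) require to be nonzero.

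For the direction ${\rm (i)} \Rightarrow {\rm (ii)}$, I would assume $\lbrace \theta_i\rbrace_{i=0}^d$ is recurrent. By Definition \ref{lem:beginthreetermS99}(i), the condition $\theta_{i-1}\not=\theta_i$ for $2 \leq i \leq d-1$ is already part of being recurrent, giving the first assertion of (ii). I would then let $c$ denote the common value of (\ref{eq:thethingwhichisbetaS99}) and put $\beta=c-1$. Clearing the nonzero denominator in $\frac{\theta_{i-2}-\theta_{i+1}}{\theta_{i-1}-\theta_i}=c$ yields the displayed identity with $\beta+1=c$, so the sequence is $\beta$-recurrent.

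For the direction ${\rm (ii)} \Rightarrow {\rm (i)}$, I would assume $\theta_{i-1}\not=\theta_i$ for $2 \leq i \leq d-1$ and that the sequence is $\beta$-recurrent for some $\beta \in \mathbb F$. Rewriting $\beta$-recurrence as the displayed identity and dividing by the nonzero quantity $\theta_{i-1}-\theta_i$ gives $\frac{\theta_{i-2}-\theta_{i+1}}{\theta_{i-1}-\theta_i}=\beta+1$ for $2 \leq i \leq d-1$. The right-hand side is independent of $i$, so (\ref{eq:thethingwhichisbetaS99}) is independent of $i$, and the sequence is recurrent.

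For the last assertion, the computation in the second direction already shows that the common value of (\ref{eq:thethingwhichisbetaS99}) equals $\beta+1$; the hypothesis $d\geq 3$ is precisely what makes the index range $2 \leq i \leq d-1$ nonempty, so this common value is genuinely attained. I do not anticipate any real obstacle here: the entire content is the single regrouping of (\ref{eq:betarecS99}), and the only point that needs care is tracking that the denominators $\theta_{i-1}-\theta_i$ are nonzero, which is supplied as a standing hypothesis in both (i) and (ii).
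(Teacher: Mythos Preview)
Your proposal is correct and is exactly the routine verification the paper has in mind; the paper's own proof consists of the single word ``Routine.'' Your regrouping of (\ref{eq:betarecS99}) as $(\theta_{i-2}-\theta_{i+1})-(\beta+1)(\theta_{i-1}-\theta_i)$ is the natural way to see the equivalence, and you correctly track the role of the nonvanishing denominators and of the hypothesis $d\geq 3$.
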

\begin{proof} Routine.

\end{proof}

\begin{lemma}
\label{lem:brecvsbgrecS99}
For $\beta \in \mathbb F$ the following are equivalent:
\begin{enumerate}
\item[\rm (i)] the sequence 
  $\lbrace \theta_i\rbrace_{i=0}^d$
is  $\beta$-recurrent;
\item[\rm (ii)] there exists $\gamma \in \mathbb F$ such that
  $\lbrace \theta_i\rbrace_{i=0}^d$
  is $(\beta,\gamma)$-recurrent.
\end{enumerate}
\end{lemma}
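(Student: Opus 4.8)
The plan is to relate the two conditions through the single auxiliary quantity $g_i = \theta_{i-1} - \beta \theta_i + \theta_{i+1}$, defined for $1 \leq i \leq d-1$. By Definition \ref{lem:beginthreetermS99}(iii), equation (\ref{eq:gammathreetermS99}) says precisely that $(\beta,\gamma)$-recurrence with a given $\gamma$ is the assertion $g_i = \gamma$ for $1 \leq i \leq d-1$. So condition (ii) holds if and only if the sequence $\lbrace g_i \rbrace_{i=1}^{d-1}$ is constant, and the content of the lemma is that this constancy is equivalent to the $\beta$-recurrence of Definition \ref{lem:beginthreetermS99}(ii).

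First I would compute the first difference $g_i - g_{i-1}$ for $2 \leq i \leq d-1$. A direct expansion gives $g_i - g_{i-1} = -\theta_{i-2} + (\beta+1)\theta_{i-1} - (\beta+1)\theta_i + \theta_{i+1}$, so that $g_i - g_{i-1}$ is exactly the negative of the expression (\ref{eq:betarecS99}) evaluated at index $i$. Consequently the $\beta$-recurrence condition, namely that (\ref{eq:betarecS99}) vanishes for $2 \leq i \leq d-1$, holds if and only if $g_i = g_{i-1}$ for $2 \leq i \leq d-1$, which is to say $\lbrace g_i \rbrace_{i=1}^{d-1}$ is constant.

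With this single identity in hand both implications are immediate. For ${\rm (ii)} \Rightarrow {\rm (i)}$, if the sequence is $(\beta,\gamma)$-recurrent then $g_i = \gamma$ throughout, so every first difference vanishes and the sequence is $\beta$-recurrent. For ${\rm (i)} \Rightarrow {\rm (ii)}$, $\beta$-recurrence forces $\lbrace g_i \rbrace_{i=1}^{d-1}$ to be constant, and taking $\gamma$ to be that common value yields $(\beta,\gamma)$-recurrence.

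There is no real obstacle here; the only point requiring care is the behavior for small diameter, where the relevant index ranges are empty. For $d \leq 1$ both conditions hold vacuously; for $d = 2$ the $\beta$-recurrence condition is vacuous while $(\beta,\gamma)$-recurrence imposes the single equation $g_1 = \gamma$, satisfiable by choosing $\gamma = g_1$ (and when $1 \leq i \leq d-1$ is empty one may take any $\gamma \in \mathbb F$). The telescoping argument then covers the substantive case $d \geq 3$. I would present the first-difference identity and let both directions fall out of it, keeping the proof to a few lines in keeping with the \emph{Routine} proofs of the neighboring recurrence lemmas.
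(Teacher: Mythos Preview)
Your proof is correct and takes essentially the same approach as the paper: both hinge on the telescoping identity that the difference of consecutive values of $\theta_{i-1}-\beta\theta_i+\theta_{i+1}$ equals the negative of the $\beta$-recurrence expression (\ref{eq:betarecS99}). You package this more symmetrically by naming the quantity $g_i$ and handling both directions at once, but the substance is identical.
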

\begin{proof} 
${\rm (i)}\Rightarrow {\rm (ii)} $ 
For $2\leq i \leq d-1$, the expression
(\ref{eq:betarecS99}) is zero by assumption,
so
\begin{align*}
\theta_{i-2}-\beta \theta_{i-1}+\theta_i = 
\theta_{i-1}-\beta \theta_i+\theta_{i+1}.
\end{align*}
The left-hand side of 
(\ref{eq:gammathreetermS99}) is independent of $i$, and
the result follows.

\noindent 
${\rm (ii)}\Rightarrow {\rm (i)} $ 
For $2\leq i \leq d-1$,
subtract the equation 
(\ref{eq:gammathreetermS99}) at $i$ from the corresponding equation
obtained by replacing $i$ by $i-1$, to find
(\ref{eq:betarecS99}) is zero. 
\end{proof}

\begin{lemma}
\label{lem:bgrecvsbgdrecS99}
The following {\rm (i), (ii)} hold
for all $\beta, \gamma \in \mathbb F$.
\begin{enumerate}
\item[\rm (i)]  Suppose 
  $\lbrace \theta_i\rbrace_{i=0}^d$
is  $(\beta,\gamma)$-recurrent. Then 
there exists $\varrho \in \mathbb F$ such that
  $\lbrace \theta_i\rbrace_{i=0}^d$
 is $(\beta,\gamma,\varrho)$-recurrent.
\item[\rm (ii)]  Suppose 
  $\lbrace \theta_i\rbrace_{i=0}^d$
is  $(\beta,\gamma,\varrho)$-recurrent, and that $\theta_{i-1}\not=\theta_{i+1}$
for $1 \leq i\leq d-1$. Then
  $\lbrace \theta_i\rbrace_{i=0}^d$
  is $(\beta,\gamma)$-recurrent.
\end{enumerate}
\end{lemma}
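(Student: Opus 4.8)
The plan is to prove the two implications of Lemma~\ref{lem:bgrecvsbgdrecS99} directly from the defining recurrences in Definition~\ref{lem:beginthreetermS99}, using only elementary manipulations. For part (i), I would start from the hypothesis that $\lbrace \theta_i\rbrace_{i=0}^d$ is $(\beta,\gamma)$-recurrent, so that the quantity $\theta_{i-1}-\beta\theta_i+\theta_{i+1}=\gamma$ for $1\le i\le d-1$. The natural candidate for $\varrho$ is the value of the expression (\ref{eq:varrhothreetermS99}) at $i=1$, namely $\varrho=\theta_0^2-\beta\theta_0\theta_1+\theta_1^2-\gamma(\theta_0+\theta_1)$; the whole point is to show this same value is attained for every $i$ in the range $1\le i\le d$.

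The key computational step I would carry out is to examine the difference of the expression (\ref{eq:varrhothreetermS99}) at consecutive indices $i$ and $i+1$. Subtracting, the left-hand side becomes
\begin{align*}
\bigl(\theta_i^2-\beta\theta_i\theta_{i+1}+\theta_{i+1}^2-\gamma(\theta_i+\theta_{i+1})\bigr)
-\bigl(\theta_{i-1}^2-\beta\theta_{i-1}\theta_i+\theta_i^2-\gamma(\theta_{i-1}+\theta_i)\bigr),
\end{align*}
which simplifies to $(\theta_{i+1}-\theta_{i-1})\bigl(\theta_{i-1}-\beta\theta_i+\theta_{i+1}-\gamma\bigr)$ after factoring out $\theta_{i+1}-\theta_{i-1}$ from the quadratic and linear terms. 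Under the $(\beta,\gamma)$-recurrence hypothesis the second factor vanishes for $1\le i\le d-1$, so the difference is zero and the expression (\ref{eq:varrhothreetermS99}) is indeed independent of $i$; this establishes $(\beta,\gamma,\varrho)$-recurrence and proves (i).

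For part (ii), I would reverse this reasoning. Given that $\lbrace \theta_i\rbrace_{i=0}^d$ is $(\beta,\gamma,\varrho)$-recurrent, the expression (\ref{eq:varrhothreetermS99}) equals $\varrho$ at both $i$ and $i+1$, so its difference is zero. By the factorization above, that difference equals $(\theta_{i+1}-\theta_{i-1})(\theta_{i-1}-\beta\theta_i+\theta_{i+1}-\gamma)$ for $1\le i\le d-1$. The added hypothesis $\theta_{i-1}\ne\theta_{i+1}$ lets me cancel the first factor, forcing $\theta_{i-1}-\beta\theta_i+\theta_{i+1}=\gamma$, which is exactly $(\beta,\gamma)$-recurrence. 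I expect the main obstacle to be purely bookkeeping: verifying that the index ranges match up correctly (the $\varrho$-recurrence holds for $1\le i\le d$, the differencing is valid for $1\le i\le d-1$, and the conclusion of (i) needs the full range $1\le i\le d$, which follows since the common value pins down $\varrho$ at $i=1$ and propagates upward). No deep idea is required; the single factorization identity drives both directions, and the nonvanishing hypothesis in (ii) is precisely what is needed to divide by $\theta_{i+1}-\theta_{i-1}$.
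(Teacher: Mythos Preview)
Your proposal is correct and follows essentially the same approach as the paper: the paper sets $p_i$ equal to the left-hand side of (\ref{eq:varrhothreetermS99}), records the identical factorization $p_i-p_{i+1}=(\theta_{i-1}-\theta_{i+1})(\theta_{i-1}-\beta\theta_i+\theta_{i+1}-\gamma)$ for $1\le i\le d-1$, and then states that both (i) and (ii) are routine consequences. Your write-up is a fully spelled-out version of this same argument.
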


\begin{proof} 
Let  $p_i$ denote the expression on the left in
(\ref{eq:varrhothreetermS99}),
and observe
 \begin{align*}
p_i-p_{i+1} &= 
(\theta_{i-1}-\theta_{i+1})(\theta_{i-1}-\beta \theta_i +\theta_{i+1} - \gamma)
\end{align*}
for $1 \leq i \leq d-1$. 
Assertions (i), (ii) are both routine consequences of this.
\end{proof}

\section{Recurrent sequences in closed form } 

\noindent In this section,  we obtain some formula
involving recurrent sequences. Let 
$\overline{\mathbb F}$ denote the algebraic closure of
$\mathbb F$.
For $q \in \overline{\mathbb F}$ let
$\mathbb F\lbrack q \rbrack$ denote the field extension of
$\mathbb F$ generated by $q$.

\medskip 

\noindent 
 Throughout this section let
 $\beta$ and 
  $\lbrace \theta_i\rbrace_{i=0}^d$
 denote 
 scalars in $\mathbb F$.

\begin{lemma}
\label{lem:closedformthreetermS99}
Assume that
  $\lbrace \theta_i\rbrace_{i=0}^d$ is $\beta$-recurrent. 
Then the following  {\rm (i)--(iii)} hold.
\begin{enumerate}
\item[\rm (i)]  Suppose $\beta \not=2$, $\beta \not=-2$, and pick
$0 \not=q \in 
\overline{\mathbb F}$ such that 
 $q+q^{-1}=\beta $. Then there exist  scalars 
 $\alpha_1, \alpha_2, \alpha_3$  in  
$\mathbb F\lbrack q \rbrack $ such that
\begin{equation}
\theta_i = \alpha_1 + \alpha_2 q^i + \alpha_3 q^{-i}
\qquad \qquad (0 \leq i \leq d). 
\label{eq:closedformthreetermIS99}
\end{equation}
\item[\rm (ii)] Suppose $\beta = 2$. Then there
exist 
 $\alpha_1, \alpha_2, \alpha_3 $  in $\mathbb F $ such that
\begin{equation}
\theta_i = \alpha_1 + \alpha_2 i + \alpha_3 i(i-1)/2 
\qquad \qquad (0 \leq i \leq d).  
\label{eq:closedformthreetermIIS99}
\end{equation}
\item[\rm (iii)]
Suppose $\beta = -2$ and  ${\rm char}(\mathbb F) \not=2$. Then there
exist 
 $\alpha_1, \alpha_2, \alpha_3 $  in $\mathbb F $ such that
\begin{equation}
\theta_i = \alpha_1 + \alpha_2 (-1)^i + \alpha_3 i(-1)^i 
\qquad \qquad (0 \leq i \leq d).  
\label{eq:closedformthreetermIIIS99}
\end{equation}
\end{enumerate}
\noindent Referring to case {\rm (ii)} above,
if
${\rm char}(\mathbb F) =2$ then we interpret
the expression $i(i-1)/2$ as
                   $0$ if $i=0$ or $i=1$ {\rm (mod $4$)}, 
				  and as
				 $1$ if $i=2$ or $i=3$ {\rm (mod $4$)}.
\end{lemma}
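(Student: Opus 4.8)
The plan is to collapse the $\beta$-recurrence into a single third-order linear recurrence and read the closed form off the roots of its characteristic polynomial. Rewriting the condition that \eqref{eq:betarecS99} vanishes for $2\le i\le d-1$ as
\[
\theta_{i+1}=(\beta+1)\theta_i-(\beta+1)\theta_{i-1}+\theta_{i-2}\qquad (2\le i\le d-1),
\]
we see that $\theta_3,\dots,\theta_d$ are determined by $\theta_0,\theta_1,\theta_2$. The associated characteristic polynomial is $x^3-(\beta+1)x^2+(\beta+1)x-1$, and the key observation is that it factors as $(x-1)(x^2-\beta x+1)$. Hence its roots are $1$ together with the roots $q,q^{-1}$ of $x^2-\beta x+1$, where $q+q^{-1}=\beta$. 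In every case I would produce $\alpha_1,\alpha_2,\alpha_3$ by forcing the proposed closed form to match $\theta_0,\theta_1,\theta_2$, check that the resulting $3\times3$ coefficient matrix is invertible, and then argue by forward induction: the closed form satisfies the third-order recurrence identically (being a combination of solutions attached to the roots), so agreement at $i=0,1,2$ propagates to all $0\le i\le d$.

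For case (i), $\beta\neq\pm2$, the scalars $1,q,q^{-1}$ are mutually distinct ($q=1$ forces $\beta=2$, and $q=q^{-1}$ forces $\beta=\pm2$), so the solution space is spanned by $1^i,q^i,q^{-i}$. The system matching $\theta_i=\alpha_1+\alpha_2q^i+\alpha_3q^{-i}$ at $i=0,1,2$ is a Vandermonde system in the distinct nodes $1,q,q^{-1}$, with determinant $(q-1)(q^{-1}-1)(q^{-1}-q)\neq0$. Since $q^{-1}=\beta-q\in\mathbb F[q]$ and the right-hand side lies in $\mathbb F\subseteq\mathbb F[q]$, Cramer's rule places $\alpha_1,\alpha_2,\alpha_3$ in $\mathbb F[q]$, as required.

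Cases (ii) and (iii) are the repeated-root cases, and this is where the real care is needed. For $\beta=2$ we have $q=1$ and the polynomial is $(x-1)^3$; writing $S$ for the shift operator $(Sf)(i)=f(i+1)$, the operator $(S-1)^3$ annihilates every polynomial in $i$ of degree $\le2$, so the solution space is spanned by $1,\;i,\;\binom{i}{2}=i(i-1)/2$. For $\beta=-2$ with $\mathrm{char}(\mathbb F)\neq2$ we have $q=-1$ and the polynomial is $(x-1)(x+1)^2$, with solution space spanned by $1,\;(-1)^i,\;i(-1)^i$. Evaluating these bases at $i=0,1,2$ gives a lower unitriangular matrix of determinant $1$ in case (ii), and a matrix of determinant $-4$ in case (iii); the latter is nonzero precisely because $\mathrm{char}(\mathbb F)\neq2$, matching the hypothesis. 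In both cases the coefficients therefore lie in $\mathbb F$, and the same recurrence-plus-initial-values induction finishes.

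The main obstacle is exactly this bookkeeping in the degenerate cases: confirming that $1,i,\binom{i}{2}$ and $1,(-1)^i,i(-1)^i$ really are bases of solutions, and that the two $3\times3$ matrices are invertible under, and only under, the stated conditions on $\beta$ and $\mathrm{char}(\mathbb F)$. A secondary subtlety arises in case (ii) when $\mathrm{char}(\mathbb F)=2$: the symbol $i(i-1)/2$ cannot be read as a literal division by $2$, so it must be interpreted as the reduction mod $2$ of the integer $\binom{i}{2}$, namely the periodic pattern $0,0,1,1$ recorded in the statement; the operator identities $(S-1)\binom{i}{2}=i$ and $(S-1)i=1$ descend from $\mathbb Z$ to $\mathbb F$, so with this reading $\binom{i}{2}$ is a genuine solution and the argument is uniform across characteristics. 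Finally, for $d\le1$ the claim is immediate by taking $\alpha_3=0$ (and also $\alpha_2=0$ when $d=0$), so one may assume $d\ge2$ in order to run the $3\times3$ systems above.
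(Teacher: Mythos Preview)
Your proposal is correct and takes essentially the same approach as the paper: match the proposed closed form to $\theta_0,\theta_1,\theta_2$ by solving a $3\times3$ linear system, then propagate using the $\beta$-recurrence (the paper phrases this via the difference $\varepsilon_i$ satisfying the same recurrence with zero initial data). You supply more explicit detail than the paper does---the characteristic-polynomial factorization $(x-1)(x^2-\beta x+1)$ explaining the shape of the closed forms, and the actual determinants $1$ and $-4$ in cases (ii) and (iii)---but the underlying argument is the same.
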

\begin{proof} (i)
We assume $d\geq 2$; otherwise the result is trivial.
Let $q$ be given, and consider the equations 
(\ref{eq:closedformthreetermIS99}) for $i=0,1,2$. These
equations are linear in $\alpha_1, \alpha_2, \alpha_3$.
We routinely find the coefficient matrix is nonsingular,
so there exist
$\alpha_1, \alpha_2, \alpha_3$ in $\mathbb F \lbrack q \rbrack $
such that
(\ref{eq:closedformthreetermIS99})  holds for $i=0,1,2$.
Using these scalars, let $\varepsilon_i $ denote the
left-hand  side of 
(\ref{eq:closedformthreetermIS99}) minus the 
right-hand  side of 
(\ref{eq:closedformthreetermIS99}), for $0 \leq i \leq d$.
On one hand 
$\varepsilon_0$,
$\varepsilon_1$,
$\varepsilon_2$ are zero from the construction.
On the other hand,
one readily checks
\begin{align*}
\varepsilon_{i-2}\,-\,(\beta+1)\varepsilon_{i-1}\,+\,(\beta +1)\varepsilon_i \,-\,\varepsilon_{i+1}=0 
\end{align*}
for 
$2 \leq i \leq d-1$.
By these comments $\varepsilon_i=0$
for $0 \leq i \leq d$, and the result follows.

\noindent (ii), (iii) Similar to the proof of (i) above.
\end{proof}

\begin{lemma}
\label{lem:closedformcommentthreetermS99}
Assume 
that  $\lbrace \theta_i\rbrace_{i=0}^d$ are mutually distinct
and $\beta$-recurrent.
Then {\rm (i)--(iv)} hold below.
\begin{enumerate}
\item[\rm (i)]  Suppose $\beta \not=2$, $\beta \not=-2$,
and pick
$0 \not= q \in 
\overline{\mathbb F}$ such that 
 $q+q^{-1}=\beta $. Then  
$q^i \not=1 $ for 
$1 \leq i \leq d$.   
\item[\rm (ii)] Suppose $\beta = 2$ and ${\rm char}(\mathbb F)=p$, $p\geq 3$.
Then $d<p$. 
\item[\rm (iii)] 
Suppose $\beta = -2$ and  ${\rm char}(\mathbb F) =p$, $p\geq 3$. Then 
 $d<2p$. 
\item[\rm (iv)] 
Suppose $\beta = 0$ and  ${\rm char}(\mathbb F) =2$. Then $d\leq 3$. 
\end{enumerate}
\end{lemma}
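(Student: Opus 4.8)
The plan is to feed each case into the corresponding closed form from Lemma~\ref{lem:closedformthreetermS99} and then exploit the mutual distinctness of $\lbrace \theta_i\rbrace_{i=0}^d$. In every case the strategy is identical: I look for the smallest index $i\geq 1$ at which the closed form forces $\theta_i=\theta_0$. Since the $\theta_i$ are mutually distinct, no such coincidence can occur in the range $1\leq i\leq d$, and the resulting restriction on $i$ is exactly the asserted bound.

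For (i), with $\beta\neq 2$ and $\beta\neq -2$, I write $\theta_i=\alpha_1+\alpha_2 q^i+\alpha_3 q^{-i}$ as in Lemma~\ref{lem:closedformthreetermS99}(i). If $q^i=1$ for some $1\leq i\leq d$, then also $q^{-i}=1$, so $\theta_i=\alpha_1+\alpha_2+\alpha_3=\theta_0$, contradicting distinctness. Hence $q^i\neq 1$ for $1\leq i\leq d$.

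For (ii) and (iii) the idea is to locate the natural ``period'' of the closed form modulo $p$. When $\beta=2$ and ${\rm char}(\mathbb F)=p\geq 3$, I use $\theta_i=\alpha_1+\alpha_2 i+\alpha_3 i(i-1)/2$. Since $p$ is odd, $(p-1)/2$ is an integer, so $p(p-1)/2\equiv 0 \pmod p$; together with $p\equiv 0 \pmod p$ this gives $\theta_p=\alpha_1=\theta_0$. Were $d\geq p$, the index $p$ would lie in $\lbrace 1,\ldots,d\rbrace$ and distinctness would fail, so $d<p$. The case $\beta=-2$, ${\rm char}(\mathbb F)=p\geq 3$ is handled the same way with $\theta_i=\alpha_1+\alpha_2(-1)^i+\alpha_3 i(-1)^i$ (available since $p\neq 2$): as $(-1)^{2p}=1$ and $2p\equiv 0\pmod p$, one finds $\theta_{2p}=\alpha_1+\alpha_2=\theta_0$, forcing $d<2p$.

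The subtle case, and the one I expect to be the main obstacle, is (iv). Here ${\rm char}(\mathbb F)=2$, so $\beta=0$ coincides with $\beta=2$, and I invoke Lemma~\ref{lem:closedformthreetermS99}(ii) together with its stated char-$2$ convention for $i(i-1)/2$. Reducing $\alpha_2 i$ modulo $2$ and reading $i(i-1)/2$ modulo $4$, one computes $\theta_0=\alpha_1$, $\theta_1=\alpha_1+\alpha_2$, $\theta_2=\alpha_1+\alpha_3$, $\theta_3=\alpha_1+\alpha_2+\alpha_3$, and $\theta_4=\alpha_1$. Thus $\theta_4=\theta_0$, and distinctness forces $d\leq 3$. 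The care needed lies precisely in tracking the two different periodicities ($\alpha_2$ modulo $2$ and $\alpha_3$ modulo $4$) under the special interpretation, which is what makes this case the most delicate.
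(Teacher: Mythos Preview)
Your proof is correct and follows essentially the same approach as the paper: in each case you invoke the closed form from Lemma~\ref{lem:closedformthreetermS99} and locate the smallest index $i$ at which $\theta_i=\theta_0$, which is exactly what the paper does. Your treatment of case~(iv) is a bit more explicit than the paper's (you compute all of $\theta_0,\ldots,\theta_4$, whereas the paper just asserts $\theta_4=\theta_0$), but the argument is the same.
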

\begin{proof} (i) Using
(\ref{eq:closedformthreetermIS99}), we find 
$q^i=1 $ implies $\theta_i=\theta_0$ for $1 \leq i \leq d$.

\noindent (ii) 
Suppose $d\geq p$. Setting $i=p$
in (\ref{eq:closedformthreetermIIS99}) 
and recalling that $p$ is congruent to $0$ modulo $p$,
 we obtain $\theta_p=\theta_0$, a contradiction.
Hence $d<p$.

\noindent (iii)
Suppose $d\geq 2p$. Setting $i=2p$
in 
(\ref{eq:closedformthreetermIIIS99})
and recalling that $p$ is congruent to $0$ modulo $p$,
 we obtain $\theta_{2p}=\theta_0$, a contradiction.
Hence $d<2p$.

\noindent (iv) Suppose $d\geq 4$. Setting $i=4$ in
(\ref{eq:closedformthreetermIIS99}),
we find $\theta_4=\theta_0$
in view of the comment at the end of Lemma
\ref{lem:closedformthreetermS99}.
This is a contradiction, so $d\leq 3$.
\end{proof}

\begin{lemma}
\label{lem:symeigformulaS99}
{\rm (See \cite[Lemma~9.4]{2lintrans}).}
Assume that
  $\lbrace \theta_i\rbrace_{i=0}^d$ are mutually distinct
  and $\beta$-recurrent.
Pick any integers $i,j,r,s$ $(0 \leq i,j,r,s \leq d)$
such that $i+j=r+s$, $r\not=s$.
Then  {\rm (i)--(iv)} hold below.
\begin{enumerate}
\item[\rm (i)] Suppose $\beta \not=2$, $\beta\not=-2$. Then
\begin{equation}
\frac{\theta_i-\theta_{j}}{\theta_r-\theta_s}
= \frac{q^{i}-q^j}{q^r-q^s},
\label{eq:symeiggencasenewthreetermS99}
\end{equation}
where  $q+q^{-1}=\beta $.
\item[\rm (ii)] Suppose $\beta = 2$ and
 ${\rm char}(\mathbb F) \not=2$. 
 Then
\begin{equation}
\frac{\theta_i-\theta_{j}}{\theta_r-\theta_s}
= \frac{i-j}{r-s}.
\label{eq:symeigbeta2newthreetermS99}
\end{equation}
\item[\rm (iii)] Suppose $\beta = -2$ and  ${\rm char}(\mathbb F)\not=2$.
  Then
\begin{equation}
\frac{\theta_i-\theta_{j}}{\theta_r-\theta_s}
=
 \left\{ \begin{array}{ll}
            (-1)^{i+r} \frac{i-j}{r-s},  & \mbox{if $\;i+j\;$ is even}; \\
	(-1)^{i+r},  & \mbox{if $\;i+j\;$ is odd.}
				   \end{array}
				\right. 
\end{equation}
\item[\rm (iv)] Suppose $\beta = 0$ and
 ${\rm char}(\mathbb F) =2$. 
 Then
\begin{equation}
\frac{\theta_i-\theta_{j}}{\theta_r-\theta_s}
= 
 \left\{ \begin{array}{ll}
            0,  & \mbox{if $\;i=j$}; \\
	1,  & \mbox{if $\;i\not=j$.}
				   \end{array}
				\right. 
\end{equation}

\end{enumerate}
\end{lemma}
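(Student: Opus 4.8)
The plan is to feed the closed-form expressions of Lemma~\ref{lem:closedformthreetermS99} into the difference $\theta_i-\theta_j$ and observe that, in each of the cases (i)--(iii), this difference factors as a quantity depending only on the index-sum $i+j$ times a quantity built from $i$ and $j$. Since the hypothesis forces $i+j=r+s$, the index-sum factor is common to the numerator $\theta_i-\theta_j$ and the denominator $\theta_r-\theta_s$, and cancels in the ratio. The cancellation is legitimate because $r\neq s$ together with the mutual distinctness of $\lbrace \theta_i\rbrace_{i=0}^d$ gives $\theta_r-\theta_s\neq 0$, so the common factor is nonzero.

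For (i), substituting $\theta_i=\alpha_1+\alpha_2 q^i+\alpha_3 q^{-i}$ and using $q^{-i}-q^{-j}=-(q^i-q^j)q^{-(i+j)}$ yields $\theta_i-\theta_j=(q^i-q^j)(\alpha_2-\alpha_3 q^{-(i+j)})$. The second factor depends only on $i+j$, hence matches the one for $r,s$ and cancels, leaving $(q^i-q^j)/(q^r-q^s)$. Part (ii) is identical in spirit: from $\theta_i=\alpha_1+\alpha_2 i+\alpha_3 i(i-1)/2$ and the identity $i(i-1)-j(j-1)=(i-j)(i+j-1)$ one gets $\theta_i-\theta_j=(i-j)\bigl(\alpha_2+\alpha_3(i+j-1)/2\bigr)$, whose second factor again depends only on $i+j$, so the ratio collapses to $(i-j)/(r-s)$.

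Part (iii) follows the same pattern but must be split on the parity of $i+j$, since the closed form $\theta_i=\alpha_1+\alpha_2(-1)^i+\alpha_3 i(-1)^i$ mixes parities. When $i+j$ is even, $i$ and $j$ share parity, so $(-1)^i=(-1)^j$ and $\theta_i-\theta_j=\alpha_3(-1)^i(i-j)$; here the leftover factor $(-1)^i$ is \emph{not} determined by $i+j$ alone, which is precisely why the sign $(-1)^{i+r}$ appears (note $(-1)^{i-r}=(-1)^{i+r}$). When $i+j$ is odd one has $(-1)^j=-(-1)^i$, and a short computation gives $\theta_i-\theta_j=(-1)^i\bigl(2\alpha_2+\alpha_3(i+j)\bigr)$; now the nontrivial factor depends only on $i+j$, so after cancellation only the sign $(-1)^{i+r}$ survives.

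The delicate case is (iv), where $\beta=0$ in characteristic $2$, so that $\beta=0,2,-2$ all coincide and the uniform factoring argument is unavailable. Here I would instead invoke Lemma~\ref{lem:closedformcommentthreetermS99}(iv), which forces $d\leq 3$, and argue directly. In characteristic $2$ every difference is symmetric, $\theta_a-\theta_b=\theta_b-\theta_a$, so any pair $(i,j)$ obtained from $(r,s)$ by a swap yields the same difference; the only genuinely new relation needed (arising for $d=3$ with $i+j=r+s=3$) is $\theta_0-\theta_3=\theta_1-\theta_2$, which is exactly the $\beta$-recurrence relation (\ref{eq:betarecS99}) with $\beta=0$ evaluated at $i=2$, namely $\theta_0-\theta_1+\theta_2-\theta_3=0$. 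Combined with the fact that $i=j$ forces a zero numerator, this gives the stated values $0$ and $1$. The main obstacle is thus confined to this characteristic-$2$ case, where one cannot use the degenerate closed form and must instead exploit the bound $d\leq 3$, the symmetry of differences, and the recurrence itself.
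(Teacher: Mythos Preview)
Your proposal is correct and follows essentially the same approach as the paper, which simply says to evaluate the ratio using the closed forms of Lemma~\ref{lem:closedformthreetermS99} and simplify. Your treatment of case (iv) via the bound $d\leq 3$ and the recurrence is a minor but legitimate variant of what the paper intends (using the closed form (\ref{eq:closedformthreetermIIS99}) with the characteristic-$2$ interpretation of $i(i-1)/2$); both arguments yield the result with the same effort.
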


\begin{proof} To get (i), evaluate the left-hand side of  
(\ref{eq:symeiggencasenewthreetermS99}) using
(\ref{eq:closedformthreetermIS99}), and simplify the
result. The cases (ii)--(iv) are  similar.
\end{proof}

\section{A sum}

\noindent Throughout this section  assume $d\geq 1$. 
 Let $\beta $ and $\lbrace \theta_i\rbrace_{i=0}^d$
 denote scalars in $\mathbb F$ with
 $\lbrace \theta_i\rbrace_{i=0}^d$ mutually distinct.

\medskip

\noindent  
 We consider the sums
\begin{equation}
 \sum_{h=0}^{i-1} \frac{\theta_h-\theta_{d-h}}{\theta_0-\theta_d},
\label{eq:omegaabovediagS99}
\end{equation}
where $0 \leq i \leq d+1$.
Denoting the sum in 
(\ref{eq:omegaabovediagS99}) by $\vartheta_i$, we have
\begin{equation}
 \vartheta_0=0,\qquad \vartheta_1=1,\qquad  \vartheta_d=1,\qquad
\vartheta_{d+1}=0.
\label{eq:varthetaprelimsS99}           
\end{equation}
Moreover 
\begin{equation}
\vartheta_i= \vartheta_{d-i+1} \qquad \qquad (0 \leq i \leq d+1).
\label{eq:varthetaprelims2S99}
\end{equation}

\noindent 
The sums
(\ref{eq:omegaabovediagS99})
play an important role a bit later, so we will 
examine them carefully.
We  begin by giving explicit formulas for
the sums 
(\ref{eq:omegaabovediagS99}) under the assumption
that
 $\lbrace \theta_i\rbrace_{i=0}^d$ 
is $\beta$-recurrent. To avoid trivialities we assume that
$d\geq 3$.
\begin{lemma} 
\label{lem:symeigvalsformulaS99}
{\rm (See \cite[Lemma~10.2]{2lintrans}).}
Assume that 
 $\lbrace \theta_i\rbrace_{i=0}^d$ are mutually distinct and
$\beta$-recurrent. Further assume that $d\geq 3$. 
Then for $0 \leq i \leq d+1$ we have the following.
\begin{enumerate}
\item[\rm (i)] Suppose $\beta \not=2$, $\beta\not=-2$. Then
\begin{equation}
 \sum_{h=0}^{i-1} 
\frac{\theta_h-\theta_{d-h}}{\theta_0-\theta_d}
= \frac{q^i-1}{q-1}\,\frac{q^{d-i+1}-1}{q^d-1},
\label{eq:alphagencaseS99}
\end{equation}
where  $q+q^{-1}=\beta $.
\item[\rm (ii)] Suppose $\beta = 2$ and 
${\rm char}(\mathbb F) \not=2$. Then
\begin{equation}
 \sum_{h=0}^{i-1} 
\frac{\theta_h-\theta_{d-h}}{\theta_0-\theta_d}
= \frac{i(d-i+1)}{d}.
\label{eq:alphacasebeta2S99}
\end{equation}
\item[\rm (iii)] Suppose $\beta = -2$,  
${\rm char}(\mathbb F) \not=2$, and 
$d$ odd. Then
\begin{equation}
 \sum_{h=0}^{i-1} 
\frac{\theta_h-\theta_{d-h}}{\theta_0-\theta_d}
=  \left\{ \begin{array}{ll}
                   0,  & \mbox{if $i$ is even}; \\
				  1, & \mbox{if $i$ is odd.}
				   \end{array}
				\right. 
\label{eq:alphacasebetamin2S99}
\end{equation}                     
\item[\rm (iv)] Suppose $\beta = -2$, 
${\rm char}(\mathbb F) \not=2$, and 
  $d$ even. Then 
\begin{equation}
 \sum_{h=0}^{i-1} 
\frac{\theta_h-\theta_{d-h}}{\theta_0-\theta_d}
 = 
  \left\{ \begin{array}{ll}
                   i/d,  & \mbox{if $i $ is even; } \\
	(d-i+1)/d, \quad 	   & \mbox{if $i$ is odd. }
				   \end{array}
				\right.  
\label{eq:alphacasebetamin2deveS99}
\end{equation}
\item[\rm (v)] Suppose $\beta = 0$, 
${\rm char}(\mathbb F) =2$, and 
  $d=3$. Then 
\begin{equation}
 \sum_{h=0}^{i-1} 
\frac{\theta_h-\theta_{d-h}}{\theta_0-\theta_d}
 = 
  \left\{ \begin{array}{ll}
                   0,  & \mbox{if $i $ is even; } \\
	1, \quad 	   & \mbox{if $i$ is odd. }
				   \end{array}
				\right.  
\label{eq:alphacasebeta0char2S99}
\end{equation}

\end{enumerate}
\end{lemma}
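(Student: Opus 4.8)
The plan is to evaluate the sum termwise by appealing to Lemma \ref{lem:symeigformulaS99}. The crucial observation is that for each $h$ with $0 \leq h \leq i-1$, the four indices $h, d-h, 0, d$ satisfy $h+(d-h)=0+d$ together with $0\neq d$ (the latter because $d\geq 3$). Hence Lemma \ref{lem:symeigformulaS99} applies to the summand $(\theta_h-\theta_{d-h})/(\theta_0-\theta_d)$ with $(i,j,r,s)$ replaced by $(h,d-h,0,d)$, and in each of the cases (i)--(v) I would substitute the corresponding closed form and then sum the resulting elementary series over $h$.

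For case (i), each summand equals $(q^h-q^{d-h})/(1-q^d)$, so the sum splits into two finite geometric series; evaluating $\sum_{h=0}^{i-1}q^h=(q^i-1)/(q-1)$ and $\sum_{h=0}^{i-1}q^{d-h}=\sum_{k=d-i+1}^{d}q^k=(q^{d+1}-q^{d-i+1})/(q-1)$, then combining over a common denominator and using $1-q^d=-(q^d-1)$, yields a numerator that factors as $(q^i-1)(q^{d-i+1}-1)$, giving (\ref{eq:alphagencaseS99}). Here $q\neq 1$ and $q^d\neq 1$ by Lemma \ref{lem:closedformcommentthreetermS99}(i), so all divisions are legitimate. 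For case (ii) each summand is $(d-2h)/d$, and $\sum_{h=0}^{i-1}(d-2h)=di-i(i-1)=i(d-i+1)$ produces (\ref{eq:alphacasebeta2S99}).

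For cases (iii) and (iv) one has $h+(d-h)=d$, so the even/odd dichotomy in Lemma \ref{lem:symeigformulaS99}(iii) is governed entirely by the parity of $d$. When $d$ is odd (case (iii)) each summand is $(-1)^h$, and $\sum_{h=0}^{i-1}(-1)^h$ equals $0$ or $1$ according to the parity of $i$, giving (\ref{eq:alphacasebetamin2S99}). When $d$ is even (case (iv)) each summand is $(-1)^h(d-2h)/d$; here I would write $\sum_{h=0}^{i-1}(-1)^h(d-2h)=d\sum_{h=0}^{i-1}(-1)^h-2\sum_{h=0}^{i-1}(-1)^h h$ and evaluate the two alternating sums separately for even and odd $i$, obtaining the value $i$ when $i$ is even and $d-i+1$ when $i$ is odd, which after dividing by $d$ is (\ref{eq:alphacasebetamin2deveS99}). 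Finally, for case (v) with $d=3$ we have $h\neq d-h$ for every integer $h$, so by Lemma \ref{lem:symeigformulaS99}(iv) each summand is $1$; the sum is $i\cdot 1=i$, which in characteristic $2$ reduces to $0$ or $1$ according to the parity of $i$, giving (\ref{eq:alphacasebeta0char2S99}).

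The computations are all elementary, so there is no conceptual obstacle; the main sources of potential error are the parity bookkeeping in case (iv), where the two alternating sums must be tracked carefully for even and odd $i$, and the algebraic rearrangement in case (i), where the essential step is to recognize the combined numerator as the product $(q^i-1)(q^{d-i+1}-1)$ rather than leaving it in expanded form.
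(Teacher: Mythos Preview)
Your proposal is correct and follows exactly the approach the paper takes: the paper's proof consists of the single sentence ``The above sums can be computed directly from Lemma \ref{lem:symeigformulaS99},'' and you have supplied precisely those computations. Your termwise evaluations and the subsequent finite sums are all accurate, including the parity bookkeeping in case (iv) and the factorization in case (i).
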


\begin{proof} 
The  above sums
can  be  computed directly from 
Lemma \ref{lem:symeigformulaS99}.

\end{proof}

\begin{note}\rm Referring to Lemma
\ref{lem:symeigvalsformulaS99}, the cases (iii), (iv)
can be handled in
the following uniform way.
Suppose $\beta=-2$ and 
${\rm char}(\mathbb F) \not=2$. Then for $0 \leq i \leq d+1$,
\begin{align*}
 \sum_{h=0}^{i-1} 
\frac{\theta_h-\theta_{d-h}}{\theta_0-\theta_d}
 = \frac{2d+1+(2i-2d-1)(-1)^i +(-1)^d + (2i-1)(-1)^{i+d}}{4d}.
 \end{align*}
 \end{note}

\noindent We make an observation.

\begin{lemma}
\label{lem:varthetacharacS99}
Assume 
that $\lbrace \theta_i\rbrace_{i=0}^d$ 
are mutually distinct and 
 $\beta$-recurrent. 
Define 
\begin{equation}
\vartheta_i = \sum_{h=0}^{i-1} 
\frac{\theta_h-\theta_{d-h}} {\theta_0-\theta_d}
\qquad \qquad (0 \leq i \leq d+1).
\label{eq:remembervarthS99}
\end{equation}
Then the sequence
 $\lbrace \vartheta_i\rbrace_{i=0}^{d+1}$ 
is $\beta$-recurrent.
\end{lemma}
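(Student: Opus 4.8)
The goal is to show that the sequence $\lbrace \vartheta_i\rbrace_{i=0}^{d+1}$ defined by $\vartheta_i = \sum_{h=0}^{i-1}(\theta_h-\theta_{d-h})/(\theta_0-\theta_d)$ is $\beta$-recurrent, i.e. that the expression $\vartheta_{i-2}-(\beta+1)\vartheta_{i-1}+(\beta+1)\vartheta_i-\vartheta_{i+1}$ from (\ref{eq:betarecS99}) vanishes for $2\leq i\leq d$. The natural approach is to reduce the fourth-order statement about the partial sums to the known $\beta$-recurrence of the summands $\lbrace \theta_i\rbrace_{i=0}^d$ via a telescoping/difference argument, since taking a first difference of a partial sum returns a single summand.

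\textbf{Key steps.} First I would introduce the normalized summands $c_h = (\theta_h - \theta_{d-h})/(\theta_0-\theta_d)$, so that $\vartheta_i = \sum_{h=0}^{i-1} c_h$ and hence $\vartheta_i - \vartheta_{i-1} = c_{i-1}$ for the relevant range of $i$. The second step is to observe that the sequence $\lbrace c_h\rbrace$ is itself $\beta$-recurrent: it is an $\mathbb F$-linear combination of the two $\beta$-recurrent sequences $\lbrace \theta_h\rbrace_{h=0}^d$ and $\lbrace \theta_{d-h}\rbrace_{h=0}^d$ (the latter being $\beta$-recurrent because reversing the index leaves the defining relation (\ref{eq:betarecS99}) invariant, as it is symmetric under $i\mapsto d-i$), and $\beta$-recurrence is preserved under $\mathbb F$-linear combinations since the defining expression (\ref{eq:betarecS99}) is linear in the sequence. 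The third step is the key reduction: write the target expression for $\lbrace\vartheta_i\rbrace$ as a first difference,
\begin{align*}
&\vartheta_{i-2}-(\beta+1)\vartheta_{i-1}+(\beta+1)\vartheta_i-\vartheta_{i+1}
\\
&\qquad = -\bigl(c_{i-1}-(\beta+1)c_{i-2}+\cdots\bigr)
\end{align*}
— more precisely, group the terms so that the combination collapses, using $\vartheta_{j}-\vartheta_{j-1}=c_{j-1}$, into $-(c_{i-1}-(\beta+1)c_{i-2}+(\beta+1)c_{i-3}-c_{i-4})$ or a similar shifted copy of the $\beta$-recurrence expression applied to the $c$'s, which vanishes by step two. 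Because the defining relation (\ref{eq:betarecS99}) for the $\vartheta$'s has the same coefficient pattern $(1,-(\beta+1),\beta+1,-1)$ whose alternating-with-multiplicity structure is exactly what is annihilated by one summation, the telescoping is clean.

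\textbf{Main obstacle.} The hard part will be the bookkeeping in step three: verifying that the operator with symbol $1-(\beta+1)x+(\beta+1)x^2-x^3$ (the shift-operator form of (\ref{eq:betarecS99})) applied to a partial-sum sequence equals the same operator applied to the summand sequence, and pinning down the exact index range $2\leq i\leq d$ where this holds given that $\vartheta_i$ is defined for $0\leq i\leq d+1$ while $c_h$ is only defined for $0\leq h\leq d$. I would check the boundary indices $i$ near $d$ directly, using the explicit values $\vartheta_0=0$, $\vartheta_1=1$, $\vartheta_d=1$, $\vartheta_{d+1}=0$ from (\ref{eq:varthetaprelimsS99}) and the symmetry $\vartheta_i=\vartheta_{d-i+1}$ from (\ref{eq:varthetaprelims2S99}), which guarantee the recurrence does not break down at the endpoints. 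Once the difference identity is established and the summand sequence is known $\beta$-recurrent, the conclusion is immediate.
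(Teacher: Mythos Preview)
Your telescoping idea is the right one and is genuinely different from the paper's proof, which for $d\geq 3$ simply verifies the $\beta$-recurrence case by case using the explicit closed forms in Lemma~\ref{lem:symeigvalsformulaS99}. Your approach is more direct and avoids the case split altogether.

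However, step three does not work as you state it. Writing $c_h=(\theta_h-\theta_{d-h})/(\theta_0-\theta_d)$ and expanding,
\[
\vartheta_{i-2}-(\beta+1)\vartheta_{i-1}+(\beta+1)\vartheta_i-\vartheta_{i+1}
=-(c_{i-2}-\beta\,c_{i-1}+c_i),
\]
a \emph{three}-term expression in the $c$'s, not a shifted copy of the four-term $\beta$-recurrence. (Equivalently: the operator $1-(\beta+1)x+(\beta+1)x^2-x^3$ factors as $(1-x)(1-\beta x+x^2)$, and the factor $1-x$ is absorbed by the summation, leaving only $1-\beta x+x^2$ acting on $c$.) So your step two --- mere $\beta$-recurrence of $\{c_h\}$ --- is not enough; what you actually need is the stronger $(\beta,0)$-recurrence $c_{h-1}-\beta c_h+c_{h+1}=0$ for $1\le h\le d-1$.

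That stronger fact does hold, and for a reason close to what you wrote: by Lemma~\ref{lem:brecvsbgrecS99} there is some $\gamma$ with $\theta_{h-1}-\beta\theta_h+\theta_{h+1}=\gamma$, and the reversed sequence $\{\theta_{d-h}\}$ satisfies the same relation with the \emph{same} $\gamma$; subtracting kills the constant and gives $c_{h-1}-\beta c_h+c_{h+1}=0$. With this correction your argument goes through cleanly on the full range $2\le i\le d$, and no separate boundary check is needed.
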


\begin{proof} For $d=1$ there is nothing
to prove. For $d=2$ we have
\begin{align*}
\vartheta_0 -
(\beta+1) \vartheta_1 
+
(\beta+1) \vartheta_2 
-
\vartheta_3 =0
\end{align*}
since
\begin{align*}
 \vartheta_0=0,\qquad \vartheta_1=1,\qquad  \vartheta_2=1,\qquad
\vartheta_{3}=0.
\end{align*}
\noindent For $d\geq 3$ the result is obtained by 
examining the  cases in Lemma
\ref{lem:symeigvalsformulaS99}.
\end{proof}

\begin{proposition}
\label{note:varthetacharacconvS99}
Assume that 
$\lbrace \theta_i\rbrace_{i=0}^d $ are mutually distinct
and $\beta$-recurrent.
Then for scalars  $\lbrace \vartheta_i\rbrace_{i=0}^{d+1}$ 
 in  $\mathbb F$ the following are equivalent:
\begin{enumerate}
\item[\rm (i)] $\displaystyle
\vartheta_i = \vartheta_1 \sum_{h=0}^{i-1} 
\frac{\theta_h-\theta_{d-h}}{\theta_0 - \theta_d}
\qquad \qquad (0 \leq i \leq d+1)$;
\item[\rm (ii)] 
the sequence  $\lbrace \vartheta_i\rbrace_{i=0}^{d+1}$ 
is $\beta$-recurrent and 
\begin{align}
\label{ex:normalize}
\vartheta_0=0, \qquad \quad \vartheta_1 =\vartheta_d, \qquad \quad 
\vartheta_{d+1} = 0.
\end{align}
\end{enumerate}
\end{proposition}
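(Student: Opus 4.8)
The plan is to compare the unknown sequence against the specific sum
\[
S_i = \sum_{h=0}^{i-1}\frac{\theta_h-\theta_{d-h}}{\theta_0-\theta_d}\qquad (0\le i\le d+1),
\]
about which we already know everything we need: by Lemma \ref{lem:varthetacharacS99} the sequence $\{S_i\}_{i=0}^{d+1}$ is $\beta$-recurrent, and by (\ref{eq:varthetaprelimsS99}) it satisfies $S_0=0$, $S_1=1$, $S_d=1$, $S_{d+1}=0$. With this notation condition (i) is exactly the statement $\vartheta_i=\vartheta_1 S_i$ for all $i$. The implication (i)$\Rightarrow$(ii) is then immediate: a scalar multiple of a $\beta$-recurrent sequence is $\beta$-recurrent, since the defining expression (\ref{eq:betarecS99}) is linear and homogeneous; and evaluating $\vartheta_i=\vartheta_1 S_i$ at $i=0,d,d+1$ gives $\vartheta_0=0$, $\vartheta_{d+1}=0$, and $\vartheta_d=\vartheta_1 S_d=\vartheta_1$, which is (ii).

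For the reverse implication (ii)$\Rightarrow$(i) I would introduce the error sequence $\varepsilon_i=\vartheta_i-\vartheta_1 S_i$ and aim to show $\varepsilon\equiv 0$. Because $\{\vartheta_i\}$ and $\{S_i\}$ are both $\beta$-recurrent, so is $\{\varepsilon_i\}$. The boundary data in (ii) translate into four vanishing conditions: $\varepsilon_0=\vartheta_0=0$ and $\varepsilon_{d+1}=\vartheta_{d+1}=0$ are immediate, $\varepsilon_1=\vartheta_1-\vartheta_1 S_1=0$ comes for free from $S_1=1$, and $\varepsilon_d=\vartheta_d-\vartheta_1 S_d=\vartheta_d-\vartheta_1=0$ is precisely the content of $\vartheta_1=\vartheta_d$. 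Thus the whole proposition reduces to the claim that a $\beta$-recurrent sequence $\{\varepsilon_i\}_{i=0}^{d+1}$ vanishing at $i=0,1,d,d+1$ is identically zero. When $d\le 2$ these four indices exhaust $\{0,\dots,d+1\}$, so there is nothing to prove; hence assume $d\ge 3$.

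For $d\ge 3$ I would feed $\{\varepsilon_i\}$ into the closed forms of Lemma \ref{lem:closedformthreetermS99}, splitting on the value of $\beta$. In each case the two conditions $\varepsilon_0=\varepsilon_1=0$ collapse the three free coefficients to a single scalar multiple of an explicit sequence, and the remaining conditions $\varepsilon_d=\varepsilon_{d+1}=0$ become two equations in that lone scalar. The goal is to show these two equations are incompatible unless the scalar vanishes. This is where mutual distinctness of the $\theta_i$ is used, through Lemma \ref{lem:closedformcommentthreetermS99}: in the generic case $\beta\ne\pm 2$, subtracting the two equations forces $q^{2d}=1$ (where $q+q^{-1}=\beta$), after which $q^d\ne 1$ and $q\ne\pm 1$ yield a contradiction; the cases $\beta=\pm 2$ with ${\rm char}(\mathbb F)\ne 2$ are handled similarly using the bounds $d<p$ and $d<2p$ together with a parity computation; and the residual case $\beta=0$, ${\rm char}(\mathbb F)=2$ is forced to have $d\le 3$, already covered above.

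I expect the heart of the difficulty to lie in the (ii)$\Rightarrow$(i) direction, and specifically in seeing why the third boundary condition $\vartheta_1=\vartheta_d$ cannot be dropped. The two endpoint conditions $\varepsilon_0=\varepsilon_{d+1}=0$ alone do not suffice: in the degenerate situation $q^{d+1}=1$ the space of $\beta$-recurrent sequences vanishing at both endpoints is two-dimensional rather than one-dimensional, and it is exactly the extra condition $\varepsilon_d=0$ that trims it back down. Managing these degeneracies uniformly across the characteristic-dependent cases --- and checking in each that the distinctness hypothesis, via Lemma \ref{lem:closedformcommentthreetermS99}, excludes the bad parameter combinations (for instance $\beta=-1$ with $d=3$, which would otherwise produce the spurious solution $(0,0,1,0,0)$) --- is the delicate bookkeeping that the proof must get right.
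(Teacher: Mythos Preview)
Your approach is essentially the paper's: form the difference $\varepsilon_i=\vartheta_i-\vartheta_1 S_i$, observe it is $\beta$-recurrent with $\varepsilon_0=\varepsilon_1=\varepsilon_d=\varepsilon_{d+1}=0$, and then invoke the closed forms of Lemma~\ref{lem:closedformthreetermS99} case by case to force $\varepsilon\equiv 0$. The only real difference is bookkeeping: the paper does not first eliminate down to one free parameter. It plugs the three conditions $\varepsilon_0=\varepsilon_1=\varepsilon_d=0$ directly into the closed form $\varepsilon_i=\alpha_1 f_i+\alpha_2 g_i+\alpha_3 h_i$ to obtain a $3\times 3$ homogeneous system, computes the determinant in each case, and checks via Lemma~\ref{lem:closedformcommentthreetermS99} that it is nonzero. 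The fourth condition $\varepsilon_{d+1}=0$ is never used.

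Your sketch for the generic case $\beta\ne\pm 2$ has a small gap: from $q^{2d}=1$, $q^d\ne 1$, and $q\ne\pm 1$ alone no contradiction follows (take $q$ a primitive $2d$-th root of unity with $d\ge 4$). What does work is simpler than the subtraction you propose. After your reduction to the single parameter $\alpha_3$ one has
\[
\varepsilon_i=\alpha_3\,q^{-i}(q^i-1)(q^{i-1}-1),
\]
so $\varepsilon_d=0$ reads $\alpha_3\,q^{-d}(q^d-1)(q^{d-1}-1)=0$, and Lemma~\ref{lem:closedformcommentthreetermS99}(i) gives $q^d\ne 1$, $q^{d-1}\ne 1$, forcing $\alpha_3=0$ directly. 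The condition at $i=d+1$ is a detour. Your concern about $(\beta,d)=(-1,3)$ and the sequence $(0,0,1,0,0)$ is well placed but resolves exactly as you anticipate: $\beta=-1$ makes $q$ a primitive cube root of unity, so $q^d=q^3=1$, and Lemma~\ref{lem:closedformcommentthreetermS99}(i) excludes it.
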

\begin{proof}
${\rm (i)} \Rightarrow {\rm (ii)}$ The sequence
 $\lbrace \vartheta_i\rbrace_{i=0}^{d+1}$ 
is $\beta$-recurrent by Lemma
\ref{lem:varthetacharacS99}. Condition 
(\ref{ex:normalize})
follows  from
(\ref{eq:varthetaprelimsS99}).

\noindent 
${\rm (ii)} \Rightarrow {\rm (i)}$ Define
\begin{align*}
\Delta_i = \vartheta_i - \vartheta_1 
 \sum_{h=0}^{i-1} 
\frac{\theta_h-\theta_{d-h}}{\theta_0 - \theta_d}
\qquad \qquad (0 \leq i \leq d+1).
\end{align*}
We show that $\Delta_i=0$ for $0 \leq i \leq d+1$.
By construction
\begin{align}
\label{eq:start}
&\Delta_0 = 0, \qquad
\Delta_1 = 0, \qquad
\Delta_d = 0, \qquad
\Delta_{d+1} = 0.
\end{align}
For the rest of the proof we assume that $d\geq 3$; otherwise we are done.
By construction and Lemma
\ref{lem:varthetacharacS99}, the sequence
$\lbrace \Delta_i \rbrace_{i=0}^{d+1}$ is $\beta$-recurrent.
We break the argument into cases.
\\
\noindent {\bf Case $\beta \not=2$, $\beta\not=-2$}. Pick 
$0 \not= q \in 
\overline{\mathbb F}$ such that 
 $q+q^{-1}=\beta $. 
There exist
$\alpha_1$,
$\alpha_2$,
$\alpha_3$ in ${\mathbb F}\lbrack q \rbrack$ such that
\begin{align*}
\Delta_i = \alpha_1 + \alpha_2 q^i + \alpha_3 q^{-i} \qquad 
\qquad (0 \leq i \leq d+1).
\end{align*}
Since  $\lbrace \theta_i \rbrace_{i=0}^d$ are mutually distinct
and $\beta$-recurrent, 
we have $q^i \not=1$ for $1 \leq i \leq d$.
The first three equations in (\ref{eq:start}) give
\begin{align*}
	 \left(
	    \begin{array}{c}
	      0 \\
	       0 \\
	       0 
		      \end{array}
		       \right)
		       =
	 \left(
	    \begin{array}{ccc}
	      1 & 1  & 1 \\
	       1 & q & q^{-1} \\
	       1 & q^d  & q^{-d} 
		      \end{array}
		       \right)
	 \left(
	    \begin{array}{c}
	      \alpha_1  \\
	       \alpha_2 \\
	       \alpha_3 
		      \end{array}
		       \right).
\end{align*}
For the above equation the coefficient matrix has determinant
\begin{align*}
(q-1)(q^d-1)(q^{d-1}-1)q^{-d},
\end{align*}
which is nonzero.
Therefore the coefficient matrix is invertible,
so each of 
$\alpha_1$,
$\alpha_2$,
$\alpha_3$ is zero. Consequently $\Delta_i = 0$ for $0 \leq i \leq d+1$.
\\
\noindent {\bf Case $\beta =2$ and ${\rm char}(\mathbb F)\not=2$}.
There exist
$\alpha_1$,
$\alpha_2$,
$\alpha_3$ in ${\mathbb F}$ such that
\begin{align*}
\Delta_i = \alpha_1 + \alpha_2 i + \alpha_3 i(i-1)/2 \qquad 
\qquad (0 \leq i \leq d+1).
\end{align*}
Since $\lbrace \theta_i \rbrace_{i=0}^d$ are mutully distinct
and $\beta$-recurrent, 
we have 
${\rm char}(\mathbb F)=0$ or
${\rm char}(\mathbb F)=p$ with $d<p$.   
The first three equations in (\ref{eq:start}) give
\begin{align*}
	 \left(
	    \begin{array}{c}
	      0 \\
	       0 \\
	       0 
		      \end{array}
		       \right)
		       =
	 \left(
	    \begin{array}{ccc}
	      1 & 0  & 0 \\
	       1 & 1 & 0  \\
	       1 & d  & d(d-1)/2 
		      \end{array}
		       \right)
	 \left(
	    \begin{array}{c}
	      \alpha_1  \\
	       \alpha_2 \\
	       \alpha_3 
		      \end{array}
		       \right).
\end{align*}
For the above equation the coefficient matrix has determinant
$d(d-1)/2$,  which is nonzero.
Therefore the coefficient matrix is invertible,
so each of 
$\alpha_1$,
$\alpha_2$,
$\alpha_3$ is zero. Consequently $\Delta_i = 0$ for $0 \leq i \leq d+1$.
\\
\noindent {\bf Case $\beta =-2$ and ${\rm char}(\mathbb F)\not=2$}.
There exist
$\alpha_1$,
$\alpha_2$,
$\alpha_3$ in ${\mathbb F}$ such that
\begin{align*}
\Delta_i = \alpha_1 + \alpha_2 (-1)^i + \alpha_3 i (-1)^i \qquad 
\qquad (0 \leq i \leq d+1).
\end{align*}
Since $\lbrace \theta_i \rbrace_{i=0}^d$ are mutully distinct
and $\beta$-recurrent, 
we have 
\begin{align}
{\rm char}(\mathbb F)=0 \quad {\rm or} \quad
{\rm char}(\mathbb F)=p, \quad d< 2p.
\label{eq:charDetail}
\end{align}
The first three equations in (\ref{eq:start}) give
\begin{align*}
	 \left(
	    \begin{array}{c}
	      0 \\
	       0 \\
	       0 
		      \end{array}
		       \right)
		       =
	 \left(
	    \begin{array}{ccc}
	      1 & 1  & 0 \\
	       1 & -1 & -1  \\
	       1 & (-1)^d  & d(-1)^d 
		      \end{array}
		       \right)
	 \left(
	    \begin{array}{c}
	      \alpha_1  \\
	       \alpha_2 \\
	       \alpha_3 
		      \end{array}
		       \right).
\end{align*}
For the above equation, consider the determinant of the coefficient matrix.
For even $d=2n$ 
this determinant is $-2^2n$, and 
for odd $d=2n+1$
this determinant is $2^2 n$.
Note that $2 \not=0$ in $\mathbb F$  since
 ${\rm char}(\mathbb F)\not=2$.
For either parity of $d$ we have  $n\not=0$ in $\mathbb F$ by
(\ref{eq:charDetail}).
So for either parity of $d$
the determinant is nonzero.
Therefore the coefficient matrix is invertible,
so each of 
$\alpha_1$,
$\alpha_2$,
$\alpha_3$ is zero. Consequently $\Delta_i = 0$ for $0 \leq i \leq d+1$.
\\
\noindent {\bf Case $\beta =0$ and ${\rm char}(\mathbb F)=2$}.
We have $d=3$ by Lemma
\ref{lem:closedformcommentthreetermS99}(iv).
There exist
$\alpha_1$,
$\alpha_2$,
$\alpha_3$ in ${\mathbb F}$ such that
\begin{align*}
\Delta_i = \alpha_1 + \alpha_2 i + \alpha_3 i (i-1)/2  \qquad 
\qquad (0 \leq i \leq 4),
\end{align*}
where $i(i-1)/2$ is interpreted at the end of Lemma
\ref{lem:closedformthreetermS99}.
The first three equations in (\ref{eq:start}) give
\begin{align*}
	 \left(
	    \begin{array}{c}
	      0 \\
	       0 \\
	       0 
		      \end{array}
		       \right)
		       =
	 \left(
	    \begin{array}{ccc}
	      1 & 0  & 0 \\
	       1 & 1 & 0  \\
	       1 & 1  & 1 
		      \end{array}
		       \right)
	 \left(
	    \begin{array}{c}
	      \alpha_1  \\
	       \alpha_2 \\
	       \alpha_3 
		      \end{array}
		       \right).
\end{align*}
In the above equation the coefficient matrix is invertible,
so each of 
$\alpha_1$,
$\alpha_2$,
$\alpha_3$ is zero. Consequently $\Delta_i = 0$ for $0 \leq i \leq d+1$.

\end{proof}

\section{The polynomial $P(x,y)$}

\noindent Let $\beta, \gamma, \varrho$ denote scalars in $\mathbb F$,
and consider a polynomial in two variables
\begin{align}
P(x,y)= x^2-\beta xy+y^2-\gamma(x+y)-\varrho.
\label{eq:P}
\end{align}
\noindent Note that $P(x,y)= P(y,x)$.
Let $\lbrace \theta_i \rbrace_{i=0}^d$ denote scalars in $\mathbb F$.

\begin{lemma} 
\label{lem:P}
The following are equivalent:
\begin{enumerate}
\item[\rm (i)]
$P(\theta_{i-1},\theta_i)=0 $ for $1 \leq i \leq d$;
\item[\rm (ii)] the sequence $\lbrace \theta_i \rbrace_{i=0}^d$ is
$(\beta, \gamma, \varrho)$-recurrent.
\end{enumerate}
\end{lemma}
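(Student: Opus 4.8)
The plan is to prove this by direct substitution, comparing the defining expression for $P(x,y)$ in (\ref{eq:P}) with the defining condition (\ref{eq:varrhothreetermS99}) for $(\beta,\gamma,\varrho)$-recurrence. Both conditions are indexed over the same range $1 \leq i \leq d$, so it suffices to check that, for each fixed $i$ in this range, the single equation $P(\theta_{i-1},\theta_i)=0$ is literally the same as the $i$th instance of (\ref{eq:varrhothreetermS99}). The equivalence of (i) and (ii) then follows by quantifying over $i$.

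First I would substitute $x=\theta_{i-1}$ and $y=\theta_i$ into (\ref{eq:P}), obtaining
\[
P(\theta_{i-1},\theta_i)= \theta^2_{i-1} - \beta\,\theta_{i-1}\theta_i + \theta^2_i - \gamma(\theta_{i-1}+\theta_i) - \varrho.
\]
Setting the right-hand side equal to zero and transposing the constant term $-\varrho$, one finds that $P(\theta_{i-1},\theta_i)=0$ is equivalent to
\[
\theta^2_{i-1} - \beta\,\theta_{i-1}\theta_i + \theta^2_i - \gamma(\theta_{i-1}+\theta_i) = \varrho,
\]
which is precisely the relation (\ref{eq:varrhothreetermS99}) at index $i$. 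Thus $P(\theta_{i-1},\theta_i)=0$ holds for a given $i$ if and only if the $(\beta,\gamma,\varrho)$-recurrence relation holds at that same $i$.

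There is no genuine obstacle in this argument; the entire content is the observation that $P(\theta_{i-1},\theta_i)$ and the left-hand side of (\ref{eq:varrhothreetermS99}) differ only by the additive constant $\varrho$, which has been folded into the definition of $P$. The only point requiring mild care is the bookkeeping of the sign on $\varrho$ and the symmetric roles of $\theta_{i-1}$ and $\theta_i$ (consistent with the noted symmetry $P(x,y)=P(y,x)$), both of which are routine.
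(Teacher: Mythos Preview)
Your proof is correct and takes essentially the same approach as the paper, which simply cites Definition~\ref{lem:beginthreetermS99}(iv); you have just spelled out the substitution explicitly.
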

\begin{proof} By Definition
\ref{lem:beginthreetermS99}(iv).
\end{proof}

\begin{proposition} 
\label{prop:P}
Assume that 
$\lbrace \theta_i\rbrace_{i=0}^d$ are mutually distinct
and $(\beta, \gamma,\varrho)$-recurrent. Then
the following hold:
\begin{enumerate}
\item[\rm (i)]
$P(x,\theta_j) = (x-\theta_{j-1})(x-\theta_{j+1})
\qquad  (1 \leq j \leq d-1)$;
\item[\rm (ii)]
for $0 \leq i,j\leq d$,
$P(\theta_i, \theta_j) = 0$ implies
$|i-j|=1$ or $i,j\in \lbrace 0,d\rbrace$.
\end{enumerate}
\end{proposition}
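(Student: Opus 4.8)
The plan is to treat part (i) as a statement about a monic quadratic in a single variable, and then to deduce part (ii) from part (i) by symmetry. First I would fix $j$ with $1 \le j \le d-1$ and regard $P(x,\theta_j)$ as a polynomial in $x$. Collecting terms gives $P(x,\theta_j) = x^2 - (\beta\theta_j + \gamma)x + (\theta_j^2 - \gamma\theta_j - \varrho)$, which is monic of degree $2$ in $x$. Such a polynomial is determined by its two roots, so the entire content of (i) is to exhibit $\theta_{j-1}$ and $\theta_{j+1}$ as those roots.

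To locate the roots I would invoke Lemma \ref{lem:P}: since $\lbrace \theta_i\rbrace_{i=0}^d$ is $(\beta,\gamma,\varrho)$-recurrent, we have $P(\theta_{i-1},\theta_i)=0$ for $1\le i \le d$. Taking $i=j$ gives $P(\theta_{j-1},\theta_j)=0$, so $x=\theta_{j-1}$ is a root of $P(x,\theta_j)$; taking $i=j+1$ gives $P(\theta_j,\theta_{j+1})=0$, and since $P(x,y)=P(y,x)$ this reads $P(\theta_{j+1},\theta_j)=0$, so $x=\theta_{j+1}$ is a root as well. Both $j-1$ and $j+1$ lie in $\lbrace 0,1,\ldots,d\rbrace$, and $\theta_{j-1}\neq\theta_{j+1}$ because the $\theta_i$ are mutually distinct. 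A monic quadratic with two distinct prescribed roots factors accordingly, yielding $P(x,\theta_j)=(x-\theta_{j-1})(x-\theta_{j+1})$, which is (i).

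For part (ii) I would argue that an interior index forces $|i-j|=1$. Suppose $P(\theta_i,\theta_j)=0$ and $1 \le j \le d-1$. Then part (i) gives $0 = (\theta_i-\theta_{j-1})(\theta_i-\theta_{j+1})$, so $\theta_i$ equals $\theta_{j-1}$ or $\theta_{j+1}$; by mutual distinctness $i=j-1$ or $i=j+1$, i.e. $|i-j|=1$. Using the symmetry $P(\theta_i,\theta_j)=P(\theta_j,\theta_i)$, the same conclusion holds whenever $1 \le i \le d-1$. Hence if $|i-j|\neq 1$, neither $i$ nor $j$ can lie in $\lbrace 1,\ldots,d-1\rbrace$, forcing $i,j\in\lbrace 0,d\rbrace$, which is exactly (ii).

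There is no serious obstacle here; once one notices that $P(x,\theta_j)$ is monic quadratic and that symmetry of $P$ supplies the second root, the argument is essentially bookkeeping. The only point requiring care is the boundary in (ii): part (i) applies only for $1\le j\le d-1$, so I must invoke the symmetry of $P$ to cover interior $i$ as well, and then observe that escaping $|i-j|=1$ is possible only when both indices are extreme. I do not expect to need the stronger $(\beta,\gamma)$-recurrence of Lemma \ref{lem:bgrecvsbgdrecS99} for either part, since $P(\theta_{i-1},\theta_i)=0$ follows directly from $(\beta,\gamma,\varrho)$-recurrence via Lemma \ref{lem:P}.
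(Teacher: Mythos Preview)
Your proposal is correct and follows essentially the same approach as the paper: for (i), observe $P(x,\theta_j)$ is monic quadratic in $x$ and use Lemma~\ref{lem:P} (plus symmetry of $P$) to exhibit $\theta_{j-1},\theta_{j+1}$ as its roots; for (ii), apply (i) at whichever index lies in $\{1,\ldots,d-1\}$ and invoke mutual distinctness. You are slightly more explicit than the paper in noting $\theta_{j-1}\neq\theta_{j+1}$ and in spelling out how symmetry is used, but the argument is the same.
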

\begin{proof}
(i) The polynomial $P(x,\theta_j)$ is monic in $x$,
and has roots $\theta_{j-1}$, $\theta_{j+1}$
by Lemma
\ref{lem:P}.
\\
\noindent (ii) Assume that $P(\theta_i,\theta_j)=0$.
Also assume that $1 \leq i \leq d-1$ or
 $1 \leq j \leq d-1$; otherwise
 $i,j\in \lbrace 0,d\rbrace$ and we are done.
Interchanging $i,j$ if necessary, we may assume
that $1 \leq j \leq d-1$. Using (i) we have
$0= P(\theta_i,\theta_j)= (\theta_i-\theta_{j-1})(\theta_i-\theta_{j+1})$.
Therefore $i=j-1$ or $i=j+1$, so $|i-j|=1$.
\end{proof}

\section{The tridiagonal relations}

In this section, we consider a Leonard system
\begin{align*}
\Phi=(A; \lbrace E_i\rbrace_{i=0}^d; A^*; \lbrace E^*_i\rbrace_{i=0}^d)
\end{align*}
on $V$, with eigenvalue sequence
$\lbrace \theta_i \rbrace_{i=0}^d$ and dual eigenvalue
sequence
$\lbrace \theta^*_i \rbrace_{i=0}^d$.
 Our goal is to prove the following result.

\begin{theorem}
 {\rm (See \cite[Theorem 1.12]{2lintrans}).}
\label{tdptheorem}
There exists a sequence of scalars
$\beta,\gamma,\gamma^*,\varrho,\varrho^*$ taken from $\mathbb F$ such
that both
\begin{description}
\item[\rm (TD1)]
$\qquad 0 = \lbrack A, A^2 A^*-\beta A A^* A+A^* A^2-\gamma\left(
A A^*+A^* A\right)-\varrho A^* \rbrack$,
\item[\rm (TD2)]
$\qquad 0=\lbrack A^*, A^{*2} A-\beta A^*AA^*
+AA^{*2}-\gamma^* \left(A^*A+AA^*\right)
-\varrho^* A\rbrack$.
\end{description}
The sequence is unique if
$d\geq 3$.
\end{theorem}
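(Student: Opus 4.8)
The plan is to reduce each of (TD1), (TD2) to a recurrence condition on the eigenvalues via the polynomial $P(x,y)$ of (\ref{eq:P}), and then to produce the five scalars from the recurrent-sequence machinery of Sections 11--13. First I would fix $\beta,\gamma,\varrho \in \mathbb F$, write $M$ for the bracketed expression in (TD1), and compute $E_r M E_s$ for $0 \le r,s \le d$. Using $A E_i = \theta_i E_i = E_i A$, each of the six terms of $M$ collapses onto $E_r A^* E_s$, giving $E_r M E_s = P(\theta_r,\theta_s)\,E_r A^* E_s$ with $P$ as in (\ref{eq:P}). Since $E_r[A,M]E_s = (\theta_r-\theta_s)E_r M E_s$ and $I = \sum_r E_r$, the relation (TD1) holds if and only if $(\theta_r-\theta_s)P(\theta_r,\theta_s)E_r A^* E_s = 0$ for all $r,s$. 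The factor is automatically zero when $r=s$, and $E_r A^* E_s = 0$ when $|r-s|>1$ by Definition \ref{def:deflstalkS99}(v); when $|r-s|=1$ we have $\theta_r\ne\theta_s$ and $E_r A^* E_s\ne 0$, so the condition is exactly $P(\theta_{i-1},\theta_i)=0$ for $1\le i\le d$. By Lemma \ref{lem:P} this says precisely that $\lbrace\theta_i\rbrace_{i=0}^d$ is $(\beta,\gamma,\varrho)$-recurrent. Applying the identical computation to $\Phi^*$ (equivalently, interchanging $A\leftrightarrow A^*$ and using Definition \ref{def:deflstalkS99}(iv)) shows that (TD2) holds for $\beta,\gamma^*,\varrho^*$ if and only if $\lbrace\theta^*_i\rbrace_{i=0}^d$ is $(\beta,\gamma^*,\varrho^*)$-recurrent.

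With the reduction in hand, existence amounts to finding a single $\beta$ for which both $\lbrace\theta_i\rbrace$ and $\lbrace\theta^*_i\rbrace$ are $\beta$-recurrent: given such a $\beta$, Lemma \ref{lem:brecvsbgrecS99} supplies $\gamma$ (resp. $\gamma^*$) making the sequences $(\beta,\gamma)$- (resp. $(\beta,\gamma^*)$-) recurrent, and Lemma \ref{lem:bgrecvsbgdrecS99}(i) then supplies $\varrho$ (resp. $\varrho^*$). For $d\le 2$ the $\beta$-recurrence condition of Definition \ref{lem:beginthreetermS99}(ii) is vacuous, so any $\beta$ works and the few equations $P(\theta_{i-1},\theta_i)=0$ are solved by choosing $\gamma,\varrho$ directly. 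The substance is therefore the case $d\ge 3$, where by Lemma \ref{lem:recvsbrecS99} a common $\beta$ exists precisely when the two ratios in (\ref{eq:defbetaplusoneS99int}) are equal and $i$-independent; this is condition (PA5).

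I expect establishing (PA5) to be the main obstacle, since it is genuine structural input about Leonard systems rather than formal manipulation. My plan here is to work in the split basis of Proposition \ref{prop:splitchar}(iii), where $A$ is lower bidiagonal with diagonal $\lbrace\theta_i\rbrace$ and $A^*$ is upper bidiagonal with diagonal $\lbrace\theta^*_i\rbrace$ and superdiagonal $\lbrace\varphi_i\rbrace$, and to feed the resulting constraints into the recurrent-sequence framework. The wrap-around result, Proposition \ref{prop:dg2}, provides the crucial closure relation, because in a Leonard system $E_d A^* E_i = 0$ for $0\le i\le d-2$ forces its left-hand side to vanish; since $E^*_0$ is normalizing this collapses to $\vartheta_1=\vartheta_d$, pinning down the boundary behaviour of the split sequence. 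The harder part will be extracting the full $i$-by-$i$ recurrence from the split structure, as the closure relation alone controls only the endpoints.

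Once (PA5) is known, uniqueness for $d\ge 3$ is immediate from the reduction. The relation (TD1) forces $\lbrace\theta_i\rbrace$ to be $(\beta,\gamma,\varrho)$-recurrent, so Lemma \ref{lem:recvsbrecS99} determines $\beta+1$ as the common value of (\ref{eq:thethingwhichisbetaS99}), which is available since $d\ge 3$ and the $\theta_i$ are distinct; Definition \ref{lem:beginthreetermS99}(iii) then determines $\gamma$, and Definition \ref{lem:beginthreetermS99}(iv) determines $\varrho$. The same $\beta$ together with $\lbrace\theta^*_i\rbrace$ determines $\gamma^*,\varrho^*$ in the same way, so the whole tuple $\beta,\gamma,\gamma^*,\varrho,\varrho^*$ is unique.
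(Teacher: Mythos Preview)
Your reduction of (TD1) to the $(\beta,\gamma,\varrho)$-recurrence of $\{\theta_i\}$ via $E_r M E_s = P(\theta_r,\theta_s)\,E_r A^* E_s$ is exactly the paper's Lemma \ref{paul12p2}, and your uniqueness argument for $d\ge 3$ matches the paper's. The gap is in existence: you correctly isolate (PA5) as what is needed, but your plan to derive it from the split basis and Proposition \ref{prop:dg2} is incomplete, as you yourself acknowledge. The wrap-around result gives only the boundary relation $\vartheta_1=\vartheta_d$; it does not produce the $\beta$-recurrence of either eigenvalue sequence, and nothing in the split-form data alone yields (PA5) without a relation like (TD1) already in hand. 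In the paper the wrap-around is used to obtain (PA3), not (PA5).

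The paper avoids this circularity by proving (TD1) \emph{first}, without assuming (PA5), and then extracting (PA5) as Corollary \ref{cor:equal}. The missing ingredient is Lemma \ref{lem:spanSpan}: since $A^2A^*A - AA^*A^2$ lies in $\mathrm{Span}\{XA^*Y - YA^*X : X,Y\in\mathcal{D}\} = \{ZA^*-A^*Z : Z\in\mathcal{D}\}$, there exists $Z=f(A)\in\mathcal{D}$ with $A^2A^*A - AA^*A^2 = ZA^* - A^*Z$. One then works in the $E^*$-picture (Lemma \ref{eiarej}), sandwiching this identity between $E^*_k$ and $E^*_0$ for various $k$ to force $\deg f = 3$; dividing by the leading coefficient produces $\beta,\gamma,\varrho$ satisfying (TD1). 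Sandwiching (TD1) itself between $E^*_{i-2}$ and $E^*_{i+1}$ then shows $\{\theta^*_i\}$ is $\beta$-recurrent with the \emph{same} $\beta$, after which the recurrence Lemmas \ref{lem:brecvsbgrecS99}, \ref{lem:bgrecvsbgdrecS99} and Lemma \ref{paul12p2} (applied to $\Phi^*$) supply $\gamma^*,\varrho^*$ for (TD2). So the logical order is Theorem \ref{tdptheorem} $\Rightarrow$ (PA5), not the reverse; your reduction is the right second half, but the first half needs the span identity and the degree-bounding argument in the $E^*$-basis rather than the wrap-around machinery.
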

\noindent The relations (TD1), (TD2) are called the
{\it tridiagonal relations}. They are displayed in
\cite[Lemma~5.4]{tersub3} and  examined carefully in
  \cite{qSerre}.

\begin{lemma} \label{paul12p2}
For  $\beta, \gamma, \varrho \in  \mathbb F$ the
following are equivalent:
\begin{enumerate}
\item[\rm (i)] the scalars 
 $\beta, \gamma, \varrho $ satisfy
{\rm (TD1)}; 
\item[\rm (ii)] the sequence $\lbrace \theta_i\rbrace_{i=0}^d$
is $(\beta,\gamma, \varrho)$-recurrent.
\end{enumerate}
\end{lemma}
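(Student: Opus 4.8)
The plan is to sandwich the relation (TD1) between primitive idempotents $E_i$ and $E_j$ of $A$, thereby converting the operator identity into a collection of scalar conditions governed by the polynomial $P(x,y)$ from \eqref{eq:P}. Write $W = A^2A^* - \beta AA^*A + A^*A^2 - \gamma(AA^* + A^*A) - \varrho A^*$, so that (TD1) reads $[A,W]=0$. Since $I=\sum_{h=0}^d E_h$ and $E_hE_k=\delta_{h,k}E_h$, every $X\in\mathrm{End}(V)$ decomposes uniquely as $X=\sum_{i,j}E_iXE_j$; hence $[A,W]=0$ if and only if $E_i[A,W]E_j=0$ for all $0\le i,j\le d$.

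First I would compute each block $E_i[A,W]E_j$ explicitly. Using $E_iA=\theta_i E_i$ and $AE_j=\theta_j E_j$, each of the six terms of $W$ collapses to a scalar multiple of $E_iA^*E_j$, and one finds
\[
E_iWE_j = P(\theta_i,\theta_j)\,E_iA^*E_j,
\qquad
E_i[A,W]E_j = (\theta_i-\theta_j)\,P(\theta_i,\theta_j)\,E_iA^*E_j.
\]
Thus (TD1) is equivalent to the statement that $(\theta_i-\theta_j)\,P(\theta_i,\theta_j)\,E_iA^*E_j=0$ for all $i,j$.

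Next I would run the case analysis on $i,j$. When $i=j$ the factor $\theta_i-\theta_j$ vanishes, and when $|i-j|>1$ the factor $E_iA^*E_j$ vanishes by Definition \ref{def:deflstalkS99}(v); so these blocks impose no condition. The only nontrivial blocks are those with $|i-j|=1$, where $\theta_i\neq\theta_j$ (the eigenvalues are mutually distinct since $A$ is multiplicity-free) and $E_iA^*E_j\neq0$ (again by Definition \ref{def:deflstalkS99}(v)). Hence such a block vanishes precisely when $P(\theta_i,\theta_j)=0$. Using $P(x,y)=P(y,x)$, the surviving conditions are exactly $P(\theta_{i-1},\theta_i)=0$ for $1\le i\le d$, which by Lemma \ref{lem:P} holds if and only if $\lbrace\theta_i\rbrace_{i=0}^d$ is $(\beta,\gamma,\varrho)$-recurrent; this is (ii).

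I expect no serious obstacle: the argument is a direct computation once the idempotent decomposition $\mathrm{End}(V)=\bigoplus_{i,j}E_i\,\mathrm{End}(V)\,E_j$ is in place. The one point requiring care is the bookkeeping that ensures no spurious constraints arise—namely that the diagonal blocks ($i=j$) are killed by the commutator factor $\theta_i-\theta_j$ while the far-off-diagonal blocks ($|i-j|>1$) are killed by the tridiagonality factor $E_iA^*E_j$, so that the equivalence pins down exactly the nearest-neighbor conditions $P(\theta_{i-1},\theta_i)=0$. Verifying that both $\theta_i\neq\theta_j$ and $E_iA^*E_j\neq0$ hold simultaneously on the band $|i-j|=1$, which is precisely what the Leonard system axioms guarantee, is the crux of the matter.
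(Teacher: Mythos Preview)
Your proposal is correct and follows essentially the same approach as the paper: sandwich the commutator between primitive idempotents to obtain $E_i[A,W]E_j=(\theta_i-\theta_j)P(\theta_i,\theta_j)E_iA^*E_j$, then use the Leonard system axioms to isolate the $|i-j|=1$ blocks as the only nontrivial ones. The paper's argument is identical in substance, with only cosmetic differences in presentation.
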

\begin{proof}  Let $C$ denote the expression on
the right in
(TD1).
We have
\begin{align*}
C = \sum_{i=0}^d \sum_{j=0}^d E_iCE_j.
\end{align*}
For $0 \leq i,j\leq d$,
\begin{align}
\label{eq:cpiece}
E_iCE_j =  (\theta_i-\theta_j)P(\theta_i, \theta_j)E_iA^*E_j
\end{align}
\noindent where $P$ is from
(\ref{eq:P}).
\\
${\rm (i)} \Rightarrow {\rm (ii)}$ 
We have $C=0$.
So for $1 \leq j \leq d$,
\begin{align*}
0=  E_{j-1}CE_j=
(\theta_{j-1}-\theta_j)P(\theta_{j-1},\theta_j)E_{j-1}A^*E_j.
\end{align*}
By construction  $\theta_{j-1}\not=\theta_j$
 and
$E_{j-1}A^*E_j\not=0$. Therefore
$P(\theta_{j-1},\theta_j)=0$. Consequently the
sequence 
$\lbrace \theta_i\rbrace_{i=0}^d$
is $(\beta,\gamma, \varrho)$-recurrent.
\\
${\rm (ii)} \Rightarrow {\rm (i)}$ 
For $0 \leq i,j\leq d$ the right-hand side of
(\ref{eq:cpiece}) has at least one zero factor, so 
$E_iCE_j=0$. Consequently
$C=0$.
\end{proof}

\begin{lemma} \label{eiarej}
 The following {\rm (i)--(iii)} hold for $0 \leq i,j\leq d$:
\begin{enumerate}
\item[\rm (i)] $E^*_i A^r E^*_j=0$ for $0 \leq r<\vert i-j\vert $;
\item[\rm (ii)] $E^*_iA^rE^*_j \not=0$  for $r=\vert i-j\vert $;
\item[\rm (iii)] for $0 \leq r,s\leq d$,
\begin{align*}
&E^*_i A^r A^* A^s E^*_j=
\begin{cases}
 \theta^*_{j+s} E^*_i A^{r+s} E^*_j,& {\mbox {if $i-j=r+s$}}; \\
 \theta^*_{j-s}E^*_i A^{r+s} E^*_j,&  {\mbox{if $j-i=r+s$}}; \\ 
 0,& {\mbox{ if  $\vert i-j\vert >r+s$}}.
\end{cases}
\end{align*}
\end{enumerate}
\end{lemma}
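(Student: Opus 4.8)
The plan is to work throughout in an eigenbasis of $A^*$. First I would pick $0\neq v_i\in E^*_iV$ for $0\le i\le d$, obtaining a basis $\{v_i\}_{i=0}^d$ of $V$, and identify each $X\in{\rm End}(V)$ with the matrix representing it with respect to this basis, exactly as in the proof of Lemma \ref{lem:antiaut}. In this picture $A$ becomes an irreducible tridiagonal matrix $B$: Definition \ref{def:deflstalkS99}(iv) forces the entries of $A$ off the three central diagonals to vanish and those on the sub- and superdiagonal to be nonzero. Each $E^*_m$ becomes the diagonal matrix unit $E_{mm}$, and for any $M\in{\rm End}(V)$ the product $E^*_iME^*_j$ is the matrix whose only possibly-nonzero entry is $M_{ij}$ in position $(i,j)$. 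In particular the behavior of $E^*_iA^rE^*_j$ is governed entirely by the single scalar $(B^r)_{ij}$.

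For (i) and (ii) I would read $(B^r)_{ij}$ as a sum of weighted walks of length $r$ from $i$ to $j$ in the path graph on $\{0,1,\dots,d\}$ underlying the tridiagonal shape of $B$, since each factor of $B$ changes the index by at most one. For part (i), when $r<|i-j|$ there is no walk of length $r$ joining $i$ and $j$, so $(B^r)_{ij}=0$ and hence $E^*_iA^rE^*_j=0$. For part (ii), when $r=|i-j|$ there is exactly one such walk, the monotone one joining $i$ to $j$, whose weight is a product of $|i-j|$ consecutive sub- or superdiagonal entries of $B$ (sub- if $i>j$, super- if $i<j$).

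Here lies the main obstacle: a product of individually nonzero operators can still vanish, so the real content of (ii) is precisely that this single weight is nonzero. This is where irreducibility of $B$ enters: every sub- and superdiagonal entry of $B$ is nonzero, so the product of $|i-j|$ of them is nonzero, giving $(B^{|i-j|})_{ij}\neq0$ and therefore $E^*_iA^{|i-j|}E^*_j\neq0$. Equivalently, one may expand $E^*_iA^rE^*_j=\sum E^*_iAE^*_{k_1}AE^*_{k_2}\cdots AE^*_j$ abstractly via $I=\sum_m E^*_m$ and Definition \ref{def:deflstalkS99}(iv), note by the triangle inequality that the only surviving chain is monotone, and then pass to the matrix model to certify that this surviving term is nonzero.

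For (iii) I would drop the matrix picture and instead substitute $A^*=\sum_k\theta^*_kE^*_k$ directly, giving
\[
E^*_iA^rA^*A^sE^*_j=\sum_{k=0}^d \theta^*_k\,E^*_iA^rE^*_kA^sE^*_j .
\]
By part (i) a term survives only when $|i-k|\le r$ and $|k-j|\le s$. If $|i-j|>r+s$ the triangle inequality annihilates every term, yielding $0$. If $i-j=r+s$ these inequalities force $k=j+s$ uniquely; comparing with the expansion $E^*_iA^{r+s}E^*_j=\sum_m E^*_iA^rE^*_mA^sE^*_j$ obtained by inserting $I=\sum_m E^*_m$ between $A^r$ and $A^s$, which by the same constraint collapses to the single term $E^*_iA^rE^*_{j+s}A^sE^*_j$, I get $E^*_iA^rA^*A^sE^*_j=\theta^*_{j+s}E^*_iA^{r+s}E^*_j$. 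The case $j-i=r+s$ is identical with the unique index $k=j-s$, producing the factor $\theta^*_{j-s}$. Thus (iii) reduces cleanly to (i), and no separate computation is needed.
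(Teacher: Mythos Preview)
Your proof is correct and follows essentially the same approach as the paper: pass to the eigenbasis of $A^*$, where $A$ becomes an irreducible tridiagonal matrix and each $E^*_m$ becomes a diagonal matrix unit, then verify (i)--(iii) directly. The paper simply declares this verification ``routine'' and stops; you have supplied the details, using the walk interpretation for (i), (ii) and the spectral decomposition $A^*=\sum_k\theta^*_kE^*_k$ together with an insertion of $I=\sum_m E^*_m$ for (iii), which is exactly what the matrix picture encodes.
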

\begin{proof} For $0 \leq i \leq d$ pick $0 \not=v_i \in E^*_iV$.
So $\lbrace v_i \rbrace_{i=0}^d$ is a basis for $V$.
Without loss of generality, we may identify each  $X\in {\rm End}(V)$
with the matrix in ${\rm Mat}_{d+1}(\mathbb F)$
that represents $X$ with respect to
$\lbrace v_i \rbrace_{i=0}^d$.
From this point of view, $A$ is irreducible tridiagonal
and
$A^* = {\rm diag}(\theta^*_0,\theta^*_1,\ldots,\theta^*_d)$.
Moreover for $0 \leq i \leq d$, 
the matrix
$E^*_i$ is diagonal with $(i,i)$-entry 1 and all other entries 0.
Using these matrix representations, one routinely verifies the 
assertions (i)--(iii) in the lemma statement.
\end{proof}

\noindent Recall that $\mathcal D$ is the subalgebra of
${\rm End}(V)$ generated by $A$.
\begin{lemma}
\label{lem:Li}
Define 
\begin{align*}
L_i = E_0 + E_1 + \cdots + E_i \qquad \qquad (0 \leq i \leq d).
\end{align*}
Then 
\begin{enumerate}
\item[\rm (i)] $\lbrace L_i \rbrace_{i=0}^d$ is a basis for the vector space
$\mathcal D$;
\item[\rm (ii)] $L_d=I$;
\item[\rm (iii)] for $0 \leq i \leq d-1$,
\begin{align*}
L_i A^*-A^*L_i = E_i A^*E_{i+1}-E_{i+1}A^*E_i.
\end{align*}
\end{enumerate}
\end{lemma}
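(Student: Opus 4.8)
The plan is to dispatch parts (i) and (ii) immediately and then concentrate on part (iii). For (i), I would recall from Section 2 that the primitive idempotents $\lbrace E_i\rbrace_{i=0}^d$ form a basis for $\mathcal D$. Since $L_i = \sum_{j=0}^i E_j$, the transition matrix from $\lbrace E_i\rbrace_{i=0}^d$ to $\lbrace L_i\rbrace_{i=0}^d$ is lower unitriangular, hence invertible; equivalently the relation $E_i = L_i - L_{i-1}$ (with the convention $L_{-1}=0$) exhibits each $E_i$ in the span of $\lbrace L_j\rbrace_{j=0}^d$. Therefore $\lbrace L_i\rbrace_{i=0}^d$ is also a basis for $\mathcal D$. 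For (ii), the identity $I = \sum_{i=0}^d E_i$ gives $L_d = I$ at once.

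For (iii) the idea is to expand $A^*$ against the resolution of the identity and then read off the surviving terms. Writing $A^* = I A^* I = \sum_{j=0}^d \sum_{k=0}^d E_j A^* E_k$ and discarding the terms with $\vert j-k\vert > 1$, which vanish by Definition \ref{def:deflstalkS99}(v), leaves only the diagonal, superdiagonal, and subdiagonal contributions. Because $L_i = \sum_{m=0}^i E_m$ and $E_m E_j = \delta_{m,j} E_m$, left multiplication by $L_i$ retains exactly the terms $E_j A^* E_k$ with $j \leq i$, while right multiplication by $L_i$ retains exactly those with $k \leq i$. Forming $L_i A^* - A^* L_i$, the diagonal terms $E_j A^* E_j$ cancel, as do all off-diagonal terms whose two indices lie on the same side of the cut between $i$ and $i+1$. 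The only survivors are the two boundary terms $E_i A^* E_{i+1}$, present in $L_i A^*$ but not in $A^* L_i$, and $E_{i+1} A^* E_i$, present in $A^* L_i$ but not in $L_i A^*$, giving the claimed identity.

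I expect the work in (iii) to be bookkeeping rather than a genuine obstacle; the one point needing care is to confirm that after left versus right multiplication by $L_i$ only the two boundary terms at indices $i,i+1$ remain. A tidy way to organize this is to split $A^* = D + R + S$, where $D = \sum_j E_j A^* E_j$, $R = \sum_{j=0}^{d-1} E_j A^* E_{j+1}$, and $S = \sum_{j=0}^{d-1} E_{j+1} A^* E_j$. Since $D = \sum_j a^*_j E_j$ lies in $\mathcal D$ by Lemma \ref{lem:EAE}(ii) and $\mathcal D$ is commutative, we have $\lbrack L_i, D\rbrack = 0$; then a one-line computation using $E_m E_j = \delta_{m,j} E_m$ gives $\lbrack L_i, R\rbrack = E_i A^* E_{i+1}$ and $\lbrack L_i, S\rbrack = -E_{i+1} A^* E_i$, and summing the three commutators completes the proof. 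Either route uses nothing beyond the idempotent relations and the tridiagonal vanishing from Definition \ref{def:deflstalkS99}(v).
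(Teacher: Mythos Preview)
Your proof is correct and follows essentially the same approach as the paper: for (i) and (ii) the paper likewise appeals to the fact that $\lbrace E_i\rbrace_{i=0}^d$ is a basis for $\mathcal D$ and that $I=\sum_i E_i$, and for (iii) the paper expands $E_jA^*$ and $A^*E_j$ against the resolution of the identity, sums over $j=0,\ldots,i$, and observes the same telescoping cancellation you describe. Your $D+R+S$ reorganization is a clean variant of the same bookkeeping.
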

\begin{proof}
(i) Since $\lbrace E_i \rbrace_{i=0}^d$ is a basis for $\mathcal D$.
\\
\noindent (ii) 
Since $I=\sum_{i=0}^d E_i$.
\\
\noindent (iii)  
For $0\leq 
j\leq d-1$ we have
\begin{align}
E_jA^* & = E_j A^* (E_0 + \cdots + E_d)
\nonumber
\\
&=E_jA^*E_{j-1}+E_jA^*E_j+E_jA^*E_{j+1} \label{ejas12p1},
\end{align}
where $E_{-1}=0$. 
Similarly for $0 \leq j \leq d-1$,
\begin{align}
A^*E_j & =  E_{j-1}A^*E_j+E_jA^*E_j+E_{j+1}A^*E_j.
\label{asej12p1}
\end{align}
Sum both (\ref{ejas12p1}) and (\ref{asej12p1}) over
$j=0,1,\ldots,i$ and  take the difference  between these two sums.
\end{proof}

\begin{lemma}
\label{lem:spanSpan}
We have
\begin{align*}
& {\rm Span}\lbrace  XA^*Y-YA^*X \,\vert\, X,Y\in{\mathcal D} \rbrace
= 
\lbrace ZA^*-A^*Z\, \vert\,  Z\in{\mathcal D} \rbrace.
\end{align*}
\end{lemma}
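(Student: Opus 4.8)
The plan is to show that both sides coincide with the same explicit spanning set, namely the ``near-diagonal'' antisymmetric pieces $\lbrace E_iA^*E_{i+1}-E_{i+1}A^*E_i\rbrace_{i=0}^{d-1}$. Write $S$ for the left-hand side and $T$ for the right-hand side.

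First I would treat $T$. The assignment $Z\mapsto ZA^*-A^*Z$ is linear on $\mathcal D$, so $T$ is its image and hence a subspace; moreover $T$ is spanned by the images of any basis of $\mathcal D$. I would use the basis $\lbrace L_i\rbrace_{i=0}^d$ of $\mathcal D$ from Lemma \ref{lem:Li}(i). Since $L_d=I$ by Lemma \ref{lem:Li}(ii), its image $L_dA^*-A^*L_d$ is zero, and for $0\leq i\leq d-1$ Lemma \ref{lem:Li}(iii) identifies the image of $L_i$ as exactly $E_iA^*E_{i+1}-E_{i+1}A^*E_i$. Hence $T={\rm Span}\lbrace E_iA^*E_{i+1}-E_{i+1}A^*E_i\rbrace_{i=0}^{d-1}$.

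Next I would treat $S$. Expanding $X=\sum_i x_iE_i$ and $Y=\sum_j y_jE_j$ in the basis $\lbrace E_i\rbrace_{i=0}^d$ of $\mathcal D$ and using bilinearity, each generator becomes $XA^*Y-YA^*X=\sum_{i,j}(x_iy_j-x_jy_i)E_iA^*E_j$, whose coefficients are antisymmetric in $(i,j)$ and vanish on the diagonal. Thus $S={\rm Span}\lbrace E_iA^*E_j-E_jA^*E_i:0\leq i<j\leq d\rbrace$, and conversely each such difference arises by taking $X=E_i$, $Y=E_j$. Now I invoke the defining condition of a Leonard system, Definition \ref{def:deflstalkS99}(v), which gives $E_iA^*E_j=0$ whenever $\vert i-j\vert>1$; this annihilates every generator with $j-i>1$ and leaves only the case $j=i+1$. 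Hence $S={\rm Span}\lbrace E_iA^*E_{i+1}-E_{i+1}A^*E_i\rbrace_{i=0}^{d-1}$, which coincides with $T$, proving the claim.

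There is no serious obstacle here; the content lies in recognizing that Lemma \ref{lem:Li}(iii) already writes the generators of $T$ in near-diagonal form, and that the Leonard axiom collapses the a priori larger spanning set for $S$ to the same form. The only points requiring care are the antisymmetry bookkeeping in the expansion of $S$ and the observation that $L_d=I$ contributes nothing to $T$.
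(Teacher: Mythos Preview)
Your proof is correct and follows essentially the same route as the paper's: both sides are identified with ${\rm Span}\lbrace E_iA^*E_{i+1}-E_{i+1}A^*E_i\rbrace_{i=0}^{d-1}$, using Lemma~\ref{lem:Li}(i)--(iii) for the right-hand side and the Leonard-system condition Definition~\ref{def:deflstalkS99}(v) to collapse the left-hand side. The paper presents this as a single chain of equalities rather than treating the two sides separately, but the ingredients and logic are identical.
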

\begin{proof}
Using Lemma
\ref{lem:Li} we obtain
\begin{align*}
{\rm Span}\lbrace XA^*Y&-YA^*X \,|\, X,Y\in{\mathcal D} \rbrace \\
&={\rm Span}\lbrace E_iA^*E_j-E_jA^*E_i \,\vert \, 0\leq i,j\leq d \rbrace \\
&= {\rm Span}\lbrace E_iA^*E_{i+1}-E_{i+1}A^*E_i\,\vert\,0\leq i\leq d-1 \rbrace
\\
&= {\rm Span}\lbrace L_iA^*-A^*L_i \,\vert \,0\leq i\leq d-1 \rbrace \\
&=\{ ZA^*-A^*Z \,\vert \, Z\in{\mathcal D} \rbrace. 
\end{align*}
\end{proof}

\noindent {\it Proof of Theorem \ref{tdptheorem}}.
 First assume that $d\geq 3$. By Lemma
\ref{lem:spanSpan} (with $X=A^2$ and $Y=A$) there exists $Z\in\mathcal D$
such that
\begin{equation} \label{asliebpa}
A^2A^*A-A A^*A^2 = ZA^*-A^*Z.
\end{equation}
Since $\lbrace A^i \rbrace_{i=0}^d$ is a basis for $\mathcal D$,
there exists a
 polynomial $f\in\mathbb F[\lambda]$
such that ${\rm deg}(f)\leq d$ and 
$Z=f(A)$. Let $k$ denote the degree of $f$.

We show that $k=3$. 
We first assume that $k<3$ and get a contradiction.
We multiply each term in
(\ref{asliebpa})
on the left by $E^*_3$ and on the right by
$E^*_0$. We evaluate the result using
Lemma
\ref{eiarej} to  find
$(\theta^*_1-\theta^*_2)E^*_3A^3E^*_0=0$.
The scalar
$ \theta^*_1-\theta^*_2$ is nonzero,
and $E^*_3A^3E^*_0\not=0$ by
Lemma \ref{eiarej}(ii).
Therefore
$ (\theta^*_1-\theta^*_2 )E^*_3A^3E^*_0\not=0$
for a contradiction.
We have shown $k\geq 3$. Let $c$ denote the coefficient of
$\lambda^k$ in $f$. By construction $c\not=0$.

Next we assume that $k>3$ and get a
contradiction. We multiply each term in (\ref{asliebpa}) on the
left by $E^*_k$ and on the right by $E^*_0$. We evaluate the
result using
 Lemma
\ref{eiarej}  to find
$0= c(\theta^*_0-\theta^*_k) E^*_kA^kE^*_0$.
 The scalars
$c$ and
$ \theta^*_0-\theta^*_k$ are nonzero,
and $E^*_kA^kE^*_0\not=0$ by
Lemma \ref{eiarej}(ii).
Therefore $ 0\not=c\left(\theta^*_0-\theta^*_k\right)E^*_kA^kE^*_0$
for
a contradiction.
We have shown $k=3$.

Define $\beta = c^{-1}-1$, so $\beta+1=c^{-1}$.
Multiply each term in  (\ref{asliebpa}) by $c^{-1}$. The
result is
\begin{equation} \label{prebrack}
(\beta+1)(A^2 A^* A-A A^*A^2) =
A^3 A^* -A^*A^3 -\gamma
(A^2 A^*-A^*A^2)-\varrho (AA^*-A^*A),
\end{equation}
where $\gamma, \varrho \in
\mathbb F$.
The equation (\ref{prebrack}) is (TD1) in disguise; it
is (TD1) with the commutator  expanded. 
Therefore $\beta, \gamma,\varrho$ satisfy (TD1). 
Concerning (TD2), 
pick any integer $i$ $(2 \leq i\leq d-1)$.
We multiply each term in
(\ref{prebrack}) 
 on the left by
$E^*_{i-2}$ and on the right by $E^*_{i+1}$. We evaluate
the result using
 Lemma \ref{eiarej} to  find that 
 $E^*_{i-2}A^3E^*_{i+1}$ times
\begin{align}
\label{eq:brecur}
\theta^*_{i-2}-(\beta +1)\theta^*_{i-1}+
(\beta +1)\theta^*_i -\theta^*_{i+1}
\end{align}
is zero.
We have $E^*_{i-2}A^3E^*_{i+1}\not=0$ by
Lemma
\ref{eiarej}(ii), 
so
(\ref{eq:brecur}) is zero.
Thus the sequence
$\lbrace \theta^*_i\rbrace_{i=0}^d$
is $\beta$-recurrent. 
By Lemma \ref{lem:brecvsbgrecS99}
 there exists
$\gamma^* \in \mathbb F$ such that
$\lbrace \theta^*_i\rbrace_{i=0}^d$
is $(\beta, \gamma^*)$-recurrent.
By Lemma \ref{lem:bgrecvsbgdrecS99}(i) 
there exists
$\varrho^* \in \mathbb F$ such that
$\lbrace \theta^*_i\rbrace_{i=0}^d$
is $(\beta, \gamma^*, \varrho^*)$-recurrent.
By this and Lemma \ref{paul12p2} (applied to $\Phi^*$)
we see that $\beta, \gamma^*,
\varrho^*$ satisfy (TD2).

We have obtained scalars
$\beta, \gamma, \gamma^*, \varrho, \varrho^*$ in $\mathbb F$
  that satisfy (TD1), (TD2).
Next we show that these scalars are unique.
Let
 $\beta, \gamma, \gamma^*, \varrho, \varrho^*$
denote any  scalars in $\mathbb F$
  that satisfy (TD1), (TD2).
By Lemma
\ref{paul12p2} the sequence
$\lbrace \theta_i\rbrace_{i=0}^d$
is $(\beta, \gamma, \varrho)$-recurrent.
By Lemma \ref{lem:bgrecvsbgdrecS99}(ii) 
the sequence $\lbrace \theta_i\rbrace_{i=0}^d$
is $(\beta, \gamma)$-recurrent.
By Lemma
\ref{lem:brecvsbgrecS99} 
the sequence $\lbrace \theta_i\rbrace_{i=0}^d$
is $\beta$-recurrent.
Also by 
Lemma \ref{paul12p2} the sequence
$\lbrace \theta^*_i\rbrace_{i=0}^d$
is $(\beta, \gamma^*, \varrho^*)$-recurrent.
By Lemma \ref{lem:bgrecvsbgdrecS99}(ii) 
the sequence $\lbrace \theta^*_i\rbrace_{i=0}^d$
is $(\beta, \gamma^*)$-recurrent.
By Lemma
\ref{lem:brecvsbgrecS99} 
the sequence $\lbrace \theta^*_i\rbrace_{i=0}^d$
 is $\beta$-recurrent.
By these comments and Definition
\ref{lem:beginthreetermS99},
\begin{enumerate}
\item[$\bullet$]
the scalars  
\begin{align*}
\frac{\theta_{i-2}-\theta_{i+1}}{\theta_{i-1}-\theta_i},\qquad
\frac{\theta^*_{i-2}-\theta^*_{i+1}}{\theta^*_{i-1}-\theta^*_i}
\end{align*}
are both equal to $\beta+1$ for $2\le i\leq d-1$;
\item[$\bullet$]  $\gamma=\theta_{i-1}-\beta\theta_i+\theta_{i+1} \qquad
(1\leq  i\le d-1)$,
\item[$\bullet$]
$\gamma^*=\theta^*_{i-1}-\beta\theta^*_i+\theta^*_{i+1}
\qquad 
(1\leq i\leq d-1)$,
\item[$\bullet$]
$\varrho=
\theta^2_{i-1}-\beta\,\theta_{i-1}\,\theta_i
+\theta_{i}^2-\gamma\,(\theta_{i-1}+\theta_i)
\qquad (1 \leq i\leq d)$,
\item[$\bullet$]
$\varrho^*=
\theta_{i-1}^{*2}-\beta\,\theta^*_{i-1}\,\theta^*_i
+\theta_{i}^{*2}-\gamma^*\,(\theta^*_{i-1}+\theta^*_i)
\qquad ( 1 \leq i \leq d)$.
\end{enumerate}
\noindent  The above equations show that
$\beta, \gamma, \gamma^*, \varrho, \varrho^*$ are unique.
We have proved the theorem under the assumption
$d\geq 3$. 

Next assume that $d\leq 2$.
 Pick any  $\beta  \in \mathbb F$. For 
$d=2$ define $\gamma=\theta_0-\beta \theta_1+\theta_2$ and for 
$d\leq 1$ pick any $\gamma \in \mathbb F$. For $d\geq 1$
define
\begin{align*}
\varrho =
 \theta_0^2-\beta\theta_0\theta_1
+\theta_1^2-\gamma (\theta_0+\theta_1)
\end{align*}
and for $d=0$ pick any $\varrho \in \mathbb F$. 
One checks that $\lbrace \theta_i \rbrace_{i=0}^d$ 
is $(\beta,\gamma,\varrho)$-recurrent.
By Lemma
\ref{paul12p2} the scalars $\beta, \gamma, \varrho$ satisfy
(TD1). Replacing $\Phi$ by $\Phi^*$ in the above argument, we
obtain $\gamma^*, \varrho^* \in \mathbb F$ 
such that 
 $\beta, \gamma^*, \varrho^*$ satisfy
 (TD2).
\hfill {$\Box$}
\medskip

\noindent
We emphasize one aspect of the above proof.

\begin{corollary}
\label{cor:equal}
  {\rm (See \cite[Lemma~12.7]{2lintrans}).}
For the Leonard system $\Phi$ the scalars  
\begin{equation}
\frac{\theta_{i-2}-\theta_{i+1}}{\theta_{i-1}-\theta_i},\qquad
\frac{\theta^*_{i-2}-\theta^*_{i+1}}{\theta^*_{i-1}-\theta^*_i}
\end{equation}
are equal and independent of $i$ for 
$2\leq i\leq d-1$.
\end{corollary}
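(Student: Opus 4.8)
The plan is to read the conclusion directly off the proof of Theorem~\ref{tdptheorem}, where both the eigenvalue sequence and the dual eigenvalue sequence are shown to be $\beta$-recurrent for one and the same scalar $\beta$. First I would dispose of the trivial range $d\leq 2$: the index set $2\leq i\leq d-1$ is then empty, so the assertion holds vacuously. Hence assume $d\geq 3$.

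Next, I would recall from the proof of Theorem~\ref{tdptheorem} that we produced a scalar $\beta\in\mathbb F$ for which $\{\theta_i\}_{i=0}^d$ is $(\beta,\gamma,\varrho)$-recurrent and $\{\theta^*_i\}_{i=0}^d$ is $(\beta,\gamma^*,\varrho^*)$-recurrent; the essential point is that the \emph{same} $\beta$ governs both sequences. Passing through Lemma~\ref{lem:bgrecvsbgdrecS99}(ii) and Lemma~\ref{lem:brecvsbgrecS99}, each of the two sequences is therefore $\beta$-recurrent for this common value of $\beta$. Since $A$ and $A^*$ are multiplicity-free, the eigenvalues $\{\theta_i\}_{i=0}^d$ are mutually distinct and the dual eigenvalues $\{\theta^*_i\}_{i=0}^d$ are mutually distinct; in particular $\theta_{i-1}\neq\theta_i$ and $\theta^*_{i-1}\neq\theta^*_i$ for all $i$ in the relevant range.

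Finally, I would apply Lemma~\ref{lem:recvsbrecS99} to each sequence separately. For a mutually distinct $\beta$-recurrent sequence with $d\geq 3$, that lemma asserts the sequence is recurrent and that the common value of the displayed ratio equals $\beta+1$. Applied to $\{\theta_i\}_{i=0}^d$ this shows that $(\theta_{i-2}-\theta_{i+1})/(\theta_{i-1}-\theta_i)$ is independent of $i$ with common value $\beta+1$; applied to $\{\theta^*_i\}_{i=0}^d$ it shows that $(\theta^*_{i-2}-\theta^*_{i+1})/(\theta^*_{i-1}-\theta^*_i)$ is independent of $i$ with the same common value $\beta+1$. Since both ratios equal $\beta+1$, they are equal to each other, which is exactly the claim.

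There is essentially no genuine obstacle here, as this is a bookkeeping consequence of the theorem rather than a new argument. The only point that requires care is to confirm that the scalar $\beta$ furnished by Theorem~\ref{tdptheorem} is literally shared by both sequences, rather than being two a priori distinct values; this is guaranteed by how $\beta$ entered the proof, namely through the single degree-three polynomial $Z=f(A)$ and the definition $\beta=c^{-1}-1$ from its leading coefficient, after which the $\beta$-recurrence of $\{\theta^*_i\}_{i=0}^d$ was deduced with that very same $\beta$.
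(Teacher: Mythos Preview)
Your proposal is correct and is essentially the same as the paper's own argument: the corollary is stated immediately after Theorem~\ref{tdptheorem} with no separate proof, because the proof of that theorem already establishes (in its uniqueness part) that both $\{\theta_i\}$ and $\{\theta^*_i\}$ are $\beta$-recurrent for the same $\beta$, whence both ratios equal $\beta+1$. You have simply spelled out that chain of implications explicitly.
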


\section{The tridiagonal relations, cont.}

Throughout this section let
$\lbrace \theta_i\rbrace_{i=0}^d$,
$\lbrace \theta^*_i\rbrace_{i=0}^d$,
$\lbrace \varphi_i\rbrace_{i=1}^d$ denote
scalars in $\mathbb F$.
Define matrices $A, A^* \in {\rm Mat}_{d+1} (\mathbb F)$ by
\begin{align*}
       A  =
	 \left(
	    \begin{array}{cccccc}
	      \theta_0 &  & & & & {\bf 0}  \\
	       1 & \theta_1 & &    & &   \\
		 & 1 & \theta_2  & &  &
		  \\
		  && \cdot & \cdot &&
		    \\
		    & & &  \cdot & \cdot & \\
		     {\bf 0}  & &  & & 1 & \theta_d
		      \end{array}
		       \right),
\qquad \qquad
	A^* = \left(
	    \begin{array}{cccccc}
	      \theta^*_0 & \varphi_1  & & & & {\bf 0}  \\
	        & \theta^*_1 & \varphi_2 &    & &   \\
		 &  & \theta^*_2  & \cdot &  &
		  \\
		  && & \cdot & \cdot &
		    \\
		    & & &   &  \cdot  & \varphi_d \\
		     {\bf 0}  & &  & &  & \theta^*_d
		      \end{array}
		       \right).
\end{align*}
\begin{definition}\rm
\label{def:vth} Define the scalars
\begin{align*}
\vartheta_i = \varphi_i - (\theta^*_i-\theta^*_0)(\theta_{i-1}-\theta_d)
\qquad \qquad (1 \leq i \leq d)
\end{align*}
and $\vartheta_0=0$, $\vartheta_{d+1}=0$.
\end{definition}

\begin{lemma}
\label{lem:theentriesofdolangradyS99n}
{\rm (See \cite[Lemma~12.4]{2lintrans}).}
Let $\beta, \gamma,  \varrho $ denote 
scalars in $\mathbb F$, and consider the commutator 
\begin{equation}
\lbrack A,A^2A^*-\beta AA^*A + A^*A^2 -\gamma (AA^*+A^*A)-\varrho A^*\rbrack. 
\label{eq:entriesdolangradyS99n}
\end{equation}
Then the entries of  
{\rm (\ref{eq:entriesdolangradyS99n})}
are as  follows.
\begin{enumerate}
\item[\rm (i)] The $(i+1,i-2)$-entry is 
\begin{align*}
\theta^*_{i-2}\;-\;(\beta+1) \theta^*_{i-1} 
\;+\;(\beta+1) \theta^*_i \;-\;\theta^*_{i+1}
\end{align*}
for $2 \leq i \leq d-1$.
\item[\rm (ii)] The $(i,i-2)$-entry is 
\begin{align*}
&
\vartheta_{i-2}\;-\;(\beta+1) \vartheta_{i-1} 
\;+\;(\beta+1) \vartheta_i \;-\;\vartheta_{i+1}
\\
&+\;(\theta^*_{i-2}-\theta^*_0)
(\theta_{i-3}\;-\;(\beta+1) \theta_{i-2} 
\;+\;(\beta+1) \theta_{i-1} \;-\;\theta_i)
\\
&+\;(\theta_i-\theta_d)
(\theta^*_{i-2}\;-\;(\beta+1) \theta^*_{i-1} 
\;+\;(\beta+1) \theta^*_i \;-\;\theta^*_{i+1})
\\
&+\;(\theta^*_{i-2}-\theta^*_i)
(\theta_{i-2}\;-\;\beta \theta_{i-1} 
\;+\;\theta_i\;-\;\gamma)
\end{align*}
for $2 \leq i \leq d$,  where  
$\lbrace \vartheta_i\rbrace_{i=0}^{d+1}$  are from
Definition 
\ref{def:vth}.
\item[\rm (iii)] The $(i,i-1)$-entry is
\begin{align*}
&\varphi_{i-1}(\theta_{i-2}-\beta \theta_{i-1}+\theta_i-\gamma)\;-\;
\varphi_{i+1}(\theta_{i-1}-\beta \theta_i+\theta_{i+1}-\gamma)
\\
&+\;(\theta^*_{i-1}-\theta^*_i)
(\theta^2_{i-1}-\beta \theta_{i-1}\theta_i + \theta_i^2
-\gamma (\theta_{i-1} +\theta_i) -\varrho)
\end{align*}
for $1 \leq i \leq d$.
\item[\rm (iv)] The $(i,i)$-entry is 
\begin{align*}
&\varphi_i(\theta^2_{i-1}-\beta \theta_{i-1}\theta_i + \theta_i^2
-\gamma (\theta_{i-1} +\theta_i) -\varrho)
\\
&-\;\varphi_{i+1}(\theta^2_i-\beta \theta_i\theta_{i+1} + \theta_{i+1}^2
-\gamma (\theta_{i} +\theta_{i+1}) -\varrho)
\end{align*}
for $0 \leq i \leq d$.
\item[\rm (v)] The $(i-1,i)$-entry is 
\begin{align*}
&&\varphi_i(\theta_{i-1}-\theta_i)(\theta^2_{i-1}-\beta \theta_{i-1}\theta_i + \theta_i^2
-\gamma (\theta_{i-1} +\theta_i) -\varrho)
\end{align*}
for $1 \leq i \leq d$.
\end{enumerate}
All other entries  in 
{\rm (\ref{eq:entriesdolangradyS99n})}
are zero. In the above 
formulas, we assume $\varphi_0=0$, $\varphi_{d+1}=0$,
and that $\theta_{-1}$, $\theta_{d+1}$, 
 $\theta^*_{d+1}$ 
are indeterminates.
\end{lemma}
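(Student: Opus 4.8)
The whole statement is a matrix identity, so the plan is to compute the entries of (\ref{eq:entriesdolangradyS99n}) directly from the displayed matrices for $A$, $A^*$. Write $M$ for the operand of the outer commutator, so that (\ref{eq:entriesdolangradyS99n}) equals $AM-MA$. Since $A$ is lower bidiagonal with $A_{ii}=\theta_i$ and $A_{i,i-1}=1$, for any $M\in{\rm Mat}_{d+1}(\mathbb F)$ one has
\[
(AM-MA)_{ij} = (\theta_i-\theta_j)M_{ij} + M_{i-1,j} - M_{i,j+1},
\]
with the convention that a matrix entry is $0$ whenever an index falls outside $\{0,1,\ldots,d\}$. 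Thus the entire problem reduces to computing the entries of $M$.

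First I would record the band structure. Because $A$ occupies the bands $i-j\in\{0,1\}$ and $A^*$ occupies the bands $i-j\in\{-1,0\}$, each of the terms $A^2A^*,\ AA^*A,\ A^*A^2,\ AA^*,\ A^*A,\ A^*$ has all its entries in the bands $i-j\in\{-1,0,1,2\}$; hence so does $M$. By the displayed formula, $AM-MA$ then lives in the bands $i-j\in\{-1,0,1,2,3\}$, which already accounts for the five families of entries named in the statement and shows that all other entries vanish. Next I would expand the constituent products one band at a time, a short routine computation for each, and in particular isolate the two bands of $M$ that come out cleanly: the superdiagonal
\[
M_{i,i+1}=\varphi_{i+1}\,P(\theta_i,\theta_{i+1}),
\]
with $P$ the polynomial (\ref{eq:P}), and the band $i-j=2$,
\[
M_{i,i-2}=\theta^*_{i-2}-\beta\theta^*_{i-1}+\theta^*_i.
\]

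Feeding these into the commutator formula gives three of the five claimed entries almost for free: the $(i+1,i-2)$-entry is $M_{i,i-2}-M_{i+1,i-1}$, which is the stated second difference of $\lbrace\theta^*_i\rbrace$; the $(i-1,i)$-entry is $(\theta_{i-1}-\theta_i)M_{i-1,i}$; and the $(i,i)$-entry is $M_{i-1,i}-M_{i,i+1}=\varphi_iP(\theta_{i-1},\theta_i)-\varphi_{i+1}P(\theta_i,\theta_{i+1})$, matching (iv) with the conventions $\varphi_0=\varphi_{d+1}=0$.

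The remaining work, and the main obstacle, is the subdiagonal and band-$2$ entries, which require the diagonal $M_{ii}$ and subdiagonal $M_{i,i-1}$ of $M$ in full. Collecting terms, $M_{i,i-1}$ carries the combination $\varphi_{i-1}-\beta\varphi_i+\varphi_{i+1}$ together with explicit $\theta^*_{i-1}$- and $\theta^*_i$-linear pieces, and similarly for $M_{ii}$. The $(i,i-1)$-entry $(\theta_i-\theta_{i-1})M_{i,i-1}+M_{i-1,i-1}-M_{ii}$ then simplifies, after grouping the $\varphi$- and $P$-type terms, to the expression in (iii). The delicate case is the $(i,i-2)$-entry $(\theta_i-\theta_{i-2})M_{i,i-2}+M_{i-1,i-2}-M_{i,i-1}$: here the $\varphi$-parts of $M_{i-1,i-2}-M_{i,i-1}$ combine into the second difference $\varphi_{i-2}-(\beta+1)\varphi_{i-1}+(\beta+1)\varphi_i-\varphi_{i+1}$, and I would substitute $\varphi_i=\vartheta_i+(\theta^*_i-\theta^*_0)(\theta_{i-1}-\theta_d)$ from Definition \ref{def:vth}. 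The $\vartheta$-part yields the first line of (ii) directly; the cross term $(\theta^*_i-\theta^*_0)(\theta_{i-1}-\theta_d)$ must be expanded by a Leibniz-type identity for second differences into the second and third lines of (ii), while the leftover $\theta^*$-bilinear terms produce the fourth line. Verifying that all of these pieces assemble exactly as stated, with no slip in the boundary conventions $\vartheta_0=\vartheta_{d+1}=0$, is the one genuinely fiddly step; everything else is bookkeeping.
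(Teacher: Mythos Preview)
Your approach is correct and is exactly what the paper does: its proof reads ``Routine matrix multiplication.'' You have simply unpacked that phrase—reducing the commutator to the band formula $(AM-MA)_{ij}=(\theta_i-\theta_j)M_{ij}+M_{i-1,j}-M_{i,j+1}$ and then computing the relevant bands of $M$—so there is no methodological difference to discuss.
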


\begin{proof} Routine matrix multiplication.

\end{proof}

\begin{lemma}
\label{lem:specialCase}
Let $\beta,\gamma, \varrho$ denote scalars in $\mathbb F$.
Assume that 
$\lbrace \theta_i \rbrace_{i=0}^d$ are mutually distinct and
 $(\beta, \gamma,\varrho)$-recurrent.
Assume that $\lbrace \theta^*_i \rbrace_{i=0}^d$ is $\beta$-recurrent.
Then for the commutator {\rm (\ref{eq:entriesdolangradyS99n})}
the $(i,i-2)$-entry is 
\begin{align*}
\vartheta_{i-2} - 
(\beta+1) \vartheta_{i-1}
+(\beta+1) \vartheta_{i}
-
\vartheta_{i+1}
\end{align*}
for $2 \leq i \leq d$, where
$\lbrace \vartheta_i\rbrace_{i=0}^{d+1}$  are from
Definition  \ref{def:vth}.
All other entries in
{\rm (\ref{eq:entriesdolangradyS99n})} are zero.
\end{lemma}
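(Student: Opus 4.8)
The plan is to read off the five families of entries from Lemma \ref{lem:theentriesofdolangradyS99n} and show that, under the present hypotheses, each either vanishes identically or collapses to the asserted form. Before touching the entries, I would first upgrade the recurrence hypotheses on $\lbrace \theta_i \rbrace_{i=0}^d$. Since these scalars are mutually distinct we have $\theta_{i-1}\neq \theta_{i+1}$ for $1 \leq i \leq d-1$, so Lemma \ref{lem:bgrecvsbgdrecS99}(ii) promotes the $(\beta,\gamma,\varrho)$-recurrence to $(\beta,\gamma)$-recurrence, and the implication ${\rm (ii)} \Rightarrow {\rm (i)}$ of Lemma \ref{lem:brecvsbgrecS99} then yields $\beta$-recurrence. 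Thus I may freely use that $\lbrace \theta_i \rbrace_{i=0}^d$ is simultaneously $\beta$-, $(\beta,\gamma)$-, and $(\beta,\gamma,\varrho)$-recurrent, while $\lbrace \theta^*_i \rbrace_{i=0}^d$ is $\beta$-recurrent.

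Next I would dispatch the four off-target families, in each case recognizing the relevant factor as a recurrence expression from Definition \ref{lem:beginthreetermS99} that vanishes on the appropriate index range. The $(i+1,i-2)$-entry of part (i) is exactly the expression (\ref{eq:betarecS99}) for $\lbrace \theta^*_i \rbrace_{i=0}^d$, hence zero for $2 \leq i \leq d-1$. The $(i-1,i)$-entry of part (v) and the last summand of the $(i,i-1)$-entry of part (iii) both carry the $(\beta,\gamma,\varrho)$-recurrence expression (\ref{eq:varrhothreetermS99}), which vanishes for $1 \leq i \leq d$; the two remaining summands of the $(i,i-1)$-entry carry factors $\theta_{i-2}-\beta\theta_{i-1}+\theta_i-\gamma$ and $\theta_{i-1}-\beta\theta_i+\theta_{i+1}-\gamma$, which are instances of (\ref{eq:gammathreetermS99}) and vanish except at the extreme indices, where they are annihilated by $\varphi_0=0$ or $\varphi_{d+1}=0$. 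Finally the $(i,i)$-entry of part (iv) is a difference of two multiples of (\ref{eq:varrhothreetermS99}) at consecutive indices, each summand vanishing by the same mechanism. Hence all these entries are zero.

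It remains to treat the surviving $(i,i-2)$-entry of part (ii). Its first line is already the target $\vartheta_{i-2}-(\beta+1)\vartheta_{i-1}+(\beta+1)\vartheta_i-\vartheta_{i+1}$, so I must show the other three lines vanish for $2 \leq i \leq d$. The second line is $(\theta^*_{i-2}-\theta^*_0)$ times the expression (\ref{eq:betarecS99}) for $\lbrace \theta_i \rbrace_{i=0}^d$ at index $i-1$; the third line is $(\theta_i-\theta_d)$ times the expression (\ref{eq:betarecS99}) for $\lbrace \theta^*_i \rbrace_{i=0}^d$ at index $i$; and the fourth line is $(\theta^*_{i-2}-\theta^*_i)$ times $\theta_{i-2}-\beta\theta_{i-1}+\theta_i-\gamma$, an instance of (\ref{eq:gammathreetermS99}) that is zero on the whole range $2 \leq i \leq d$. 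Combining the four families then gives the claim.

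The only delicate point, and the step I would treat most carefully, is the boundary bookkeeping in the second and third lines: the two $\beta$-recurrence factors there hold only on the interior ranges ($3 \leq i \leq d$ after the index shift for the second line, and $2 \leq i \leq d-1$ for the third), so I must verify that the coefficients $(\theta^*_{i-2}-\theta^*_0)$ and $(\theta_i-\theta_d)$ annihilate precisely the out-of-range indices $i=2$ and $i=d$ respectively. This also takes care of the indeterminate symbols $\theta_{-1}$ and $\theta^*_{d+1}$ appearing in Lemma \ref{lem:theentriesofdolangradyS99n}, since each occurs only at the very index where its line is multiplied by one of these vanishing coefficients. Everything else is a direct reading of the recurrence definitions off the entry formulas.
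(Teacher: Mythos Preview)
Your proposal is correct and is precisely the intended argument: the paper's proof is the single line ``Examine the entries given in Lemma \ref{lem:theentriesofdolangradyS99n},'' and you have carried out that examination in full, including the boundary bookkeeping for $i=2$ and $i=d$ that absorbs the indeterminates $\theta_{-1}$ and $\theta^*_{d+1}$.
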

\begin{proof} Examine the entries given in
Lemma
\ref{lem:theentriesofdolangradyS99n}.
\end{proof}

\begin{proposition}
\label{cor:th1d}
With the notation and assumptions of Lemma \ref{lem:specialCase},
\begin{align*}
0 =
\lbrack A,A^2A^*-\beta AA^*A + A^*A^2 -\gamma (AA^*+A^*A)-\varrho A^*\rbrack 
\end{align*}
if and only if $\lbrace \vartheta_i \rbrace_{i=0}^{d+1}$ is $\beta$-recurrent.
\end{proposition}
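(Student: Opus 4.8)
The plan is to read this off almost directly from Lemma \ref{lem:specialCase} together with the definition of $\beta$-recurrence; essentially no computation is required, since the hard work has already been done in Lemma \ref{lem:theentriesofdolangradyS99n} and its specialization.

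First I would invoke Lemma \ref{lem:specialCase}. Under the stated assumptions (namely $\lbrace \theta_i\rbrace_{i=0}^d$ mutually distinct and $(\beta,\gamma,\varrho)$-recurrent, and $\lbrace \theta^*_i\rbrace_{i=0}^d$ being $\beta$-recurrent), that lemma tells us that the commutator (\ref{eq:entriesdolangradyS99n}) has all entries zero except possibly the $(i,i-2)$-entries for $2 \leq i \leq d$, and that the $(i,i-2)$-entry equals
\[
\vartheta_{i-2} - (\beta+1)\vartheta_{i-1} + (\beta+1)\vartheta_i - \vartheta_{i+1},
\]
where $\lbrace \vartheta_i\rbrace_{i=0}^{d+1}$ are the scalars of Definition \ref{def:vth}.

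Next I would use the elementary fact that a matrix is the zero matrix precisely when all of its entries vanish. Since the only entries of the commutator that can be nonzero are the $(i,i-2)$-entries for $2 \leq i \leq d$, it follows that the commutator is zero if and only if
\[
\vartheta_{i-2} - (\beta+1)\vartheta_{i-1} + (\beta+1)\vartheta_i - \vartheta_{i+1} = 0 \qquad (2 \leq i \leq d).
\]
Finally I would match this condition with Definition \ref{lem:beginthreetermS99}(ii). The sequence under consideration, $\lbrace \vartheta_i\rbrace_{i=0}^{d+1}$, has top index $d+1$, so applying that definition with $d$ replaced by $d+1$ shows that $\lbrace \vartheta_i\rbrace_{i=0}^{d+1}$ is $\beta$-recurrent exactly when the displayed expression vanishes for $2 \leq i \leq (d+1)-1 = d$. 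This is precisely the condition obtained in the previous step, and the proposition follows.

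The only point requiring a moment of care — and the closest thing to an obstacle in an otherwise immediate argument — is the index bookkeeping: one must confirm that the range $2 \leq i \leq d$ arising from Lemma \ref{lem:specialCase} coincides with the range demanded by the definition of $\beta$-recurrence when that definition is applied to a sequence whose top index is $d+1$ rather than $d$. Once this alignment is verified, the \emph{if and only if} is transparent.
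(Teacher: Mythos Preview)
Your proposal is correct and is exactly the paper's approach: the paper's proof reads simply ``By Lemma \ref{lem:specialCase},'' and you have spelled out the (entirely straightforward) unpacking of that citation, including the index-range check against Definition \ref{lem:beginthreetermS99}(ii).
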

\begin{proof} By Lemma
\ref{lem:specialCase}.
\end{proof}

\section{The proof of Theorem
\ref{thm:classification}}

\noindent In this section we prove Theorem
\ref{thm:classification}. 
\medskip

\noindent {\it Proof of Theorem 
\ref{thm:classification}}. We may assume that $d\geq 1$;
otherwise the result is vacuous. Assume that there exists
a Leonard system
$\Phi=(A; \lbrace E_i\rbrace_{i=0}^d; A^*; \lbrace E^*_i\rbrace_{i=0}^d)$
over $\mathbb F$ with parameter array
(\ref{eq:parameterArray}).
 We show that this parameter array satisfies
(PA1)--(PA5).
Condition (PA1) holds since $A$ and $A^*$ are
multiplicity-free.
Condition (PA2) holds by the comment below Definition
\ref{def:splitSeq}.
Condition (PA5) holds by Corollary
\ref{cor:equal}.  
By (PA5) and Lemma
\ref{lem:recvsbrecS99},
 there exists
$\beta \in \mathbb F$
such that $\lbrace \theta_i \rbrace_{i=0}^d$ is $\beta$-recurrent 
and $\lbrace \theta^*_i \rbrace_{i=0}^d$ is $\beta$-recurrent.
Concerning (PA3),  Define
\begin{align*}
\vartheta_i = \varphi_i - (\theta^*_i-\theta^*_0)(\theta_{i-1}-\theta_d)
\qquad \qquad (1 \leq i \leq d).
\end{align*}
We show that
\begin{align}
\vartheta_i = \phi_1 \sum_{h=0}^{i-1} \frac{\theta_h - \theta_{d-h}}{\theta_0 - \theta_d}
\qquad \qquad (1 \leq i \leq d).
\label{eq:goalPA3}
\end{align}
To this end we invoke Proposition
\ref{note:varthetacharacconvS99}.
We will show
(i) $\vartheta_1=\phi_1$;
(ii) $\vartheta_d = \phi_1$;
(iii) $\lbrace \vartheta_i\rbrace_{i=0}^{d+1}$
is $\beta$-recurrent, where  
$\vartheta_0 = 0 $ and $\vartheta_{d+1}=0$. 
To show (i), we compare the formulas for $a_0$
in Lemmas
\ref{lem:dg1},
\ref{lem:dg1Down}
to obtain $\varphi_1 - \phi_1 = 
(\theta^*_1-\theta^*_0)(\theta_0 - \theta_d)$.
We have
\begin{align*}
\vartheta_1 = \varphi_1 - (\theta^*_1-\theta^*_0)(\theta_0 - \theta_d) = \phi_1.
\end{align*}
To show (ii),   we compare the formulas for $a^*_d$
in Lemmas
\ref{lem:dg1},
\ref{lem:dg1Down}
to obtain $\varphi_d - \phi_1 = 
(\theta^*_d-\theta^*_0)(\theta_{d-1} - \theta_d)$.
We have
\begin{align*}
\vartheta_d = \varphi_d - (\theta^*_d-\theta^*_0)(\theta_{d-1} - \theta_d) = \phi_1.
\end{align*}
To show (iii) we apply
Proposition
\ref{cor:th1d} 
to the 
 matrix representations of 
$A$, $A^*$ from
Proposition
\ref{prop:splitchar}(iii).
Recall that $\lbrace \theta_i \rbrace_{i=0}^d$
is $\beta$-recurrent. By Lemma
\ref{lem:brecvsbgrecS99} there exists
$\gamma \in \mathbb F$
such that 
$\lbrace \theta_i \rbrace_{i=0}^d$ is
$(\beta, \gamma)$-recurrent.
By Lemma
\ref{lem:bgrecvsbgdrecS99}(i) there exists
$\varrho \in \mathbb F$ such that
$\lbrace \theta_i \rbrace_{i=0}^d$ is
$(\beta, \gamma,\varrho)$-recurrent.
The scalars $\beta, \gamma, \varrho$ satisfy (TD1) by
Lemma
\ref{paul12p2}. 
The assumptions of
Lemma
\ref{lem:specialCase}
are satisfied, so by
Proposition
\ref{cor:th1d} 
the sequence $\lbrace \vartheta_i\rbrace_{i=0}^{d+1}$
is $\beta$-recurrent.
We have shown (i)--(iii). Now 
(\ref{eq:goalPA3}) holds by Proposition
\ref{note:varthetacharacconvS99},
so (PA3) holds.
To obtain (PA4), apply (PA3) to the Leonard system
$\Phi^\Downarrow$, and use
Lemma \ref{prop:pacomp}.
We have obtained (PA1)--(PA5), and we are done in one  direction.

We now reverse the direction. Assume that the scalars
(\ref{eq:parameterArray}) satisfy (PA1)--(PA5). We display
a Leonard system $\Phi$ over $\mathbb F$ that has parameter
array 
(\ref{eq:parameterArray}).
Recall the vector space $V$ with dimension $d+1$.
Pick a basis $\lbrace u_i \rbrace_{i=0}^d$ for $V$.
Define $A, A^* \in {\rm End}(V)$ with the following
matrix representations 
with respect to $\lbrace u_i \rbrace_{i=0}^d$:
\begin{align*}
       A: \quad
	 \left(
	    \begin{array}{cccccc}
	      \theta_0 &  & & & & {\bf 0}  \\
	       1 & \theta_1 & &    & &   \\
		 & 1 & \theta_2  & &  &
		  \\
		  && \cdot & \cdot &&
		    \\
		    & & &  \cdot & \cdot & \\
		     {\bf 0}  & &  & & 1 & \theta_d
		      \end{array}
		       \right),
\qquad \qquad
	A^*:\quad  \left(
	    \begin{array}{cccccc}
	      \theta^*_0 & \varphi_1  & & & & {\bf 0}  \\
	        & \theta^*_1 & \varphi_2 &    & &   \\
		 &  & \theta^*_2  & \cdot &  &
		  \\
		  && & \cdot & \cdot &
		    \\
		    & & &   &  \cdot  & \varphi_d \\
		     {\bf 0}  & &  & &  & \theta^*_d
		      \end{array}
		       \right).
\end{align*}
Observe that $A$ (resp. $A^*$) is multiplicity-free with eigenvalues
$\lbrace \theta_i \rbrace_{i=0}^d$
(resp. $\lbrace \theta^*_i \rbrace_{i=0}^d$);
 for $0 \leq i \leq d$
let $E_i$ (resp. $E^*_i$) denote
the primitive idempotent of $A$ (resp. $A^*$) for
$\theta_i$
(resp. $\theta^*_i$).
The sequence 
$\Phi:=(A; \lbrace E_i\rbrace_{i=0}^d; A^*; \lbrace E^*_i\rbrace_{i=0}^d)
$
is a pre Leonard system on $V$. We show that $\Phi$ is a 
Leonard system on $V$.
To this end we 
show the following (\ref{eq:s1})--(\ref{eq:s9})
for $0 \leq i,j\leq d$:
\begin{align}
&E^*_i A E^*_j = 0 \quad {\rm if} \quad  i-j > 1;
\label{eq:s1}
\\
&E^*_i A E^*_j = 0 \quad {\rm if} \quad  j-i> 1;
\label{eq:s2}
\\
&E^*_i A E^*_j \not= 0 \quad {\rm if} \quad i-j=1;
\label{eq:s6}
\\
&E^*_i A E^*_j \not= 0 \quad {\rm if} \quad j-i=1
\label{eq:s7}
\end{align}
\noindent and
\begin{align}
&E_i A^* E_j = 0 \quad {\rm if} \quad d> i-j> 1;
\label{eq:s4}
\\
&E_i A^* E_j = 0 \quad {\rm if} \quad  d= i-j>1;
\label{eq:s5}
\\
&E_i A^* E_j = 0 \quad {\rm if} \quad j-i > 1;
\label{eq:s3}
\\
&E_i A^* E_j \not= 0 \quad {\rm if} \quad i-j=1;
\label{eq:s8}
\\
&E_i A^* E_j \not= 0 \quad {\rm if} \quad j-i=1.
\label{eq:s9}
\end{align}
Proposition
\ref{prop:splitchar} implies
(\ref{eq:s1}),
(\ref{eq:s6}),
(\ref{eq:s3}).
Lemma
\ref{lem:varphinz} implies
(\ref{eq:s9}).
Before proceeding we make some comments.
The element $E^*_0$ is normalizing
by Proposition
\ref{prop:splitchar}.
By (PA5) and Lemma
\ref{lem:recvsbrecS99},
there exists $\beta \in \mathbb F$ such that
$\lbrace \theta_i\rbrace_{i=0}^d$ is $\beta$-recurrent and
$\lbrace \theta^*_i\rbrace_{i=0}^d$ is $\beta$-recurrent.
By Lemma
\ref{lem:brecvsbgrecS99}
there exists $\gamma \in \mathbb F$ such that
$\lbrace \theta_i\rbrace_{i=0}^d$ is $(\beta,\gamma)$-recurrent.
By 
Lemma \ref{lem:bgrecvsbgdrecS99}(i) there exists
$\varrho \in \mathbb F$ such that
$\lbrace \theta_i\rbrace_{i=0}^d$ is $(\beta,\gamma,\varrho)$-recurrent.
By (PA3), for $1 \leq i \leq d$  we have
\begin{align*}
\varphi_i - (\theta^*_i - \theta^*_0)(\theta_{i-1}-\theta_d)
= 
\phi_1 \sum_{h=0}^{i-1} \frac{\theta_h - \theta_{d-h}}{\theta_0-\theta_d};
\end{align*}
let $\vartheta_i$ denote this common value.
 Note that
$\vartheta_1=\phi_1=
\vartheta_d$. For notational convenience define
$\vartheta_0 = 0$ and
$\vartheta_{d+1} = 0$.

We show (\ref{eq:s4}).
The sequence $\lbrace \vartheta_i\rbrace_{i=0}^{d+1}$
satisfies
Proposition \ref{note:varthetacharacconvS99}(i).
By 
Proposition \ref{note:varthetacharacconvS99}
the sequence $\lbrace \vartheta_i\rbrace_{i=0}^{d+1}$
is $\beta$-recurrent.
Next we apply
Proposition
\ref{cor:th1d}.
We mentioned earlier that $\lbrace \theta_i\rbrace_{i=0}^d$
 is $(\beta, \gamma, \varrho)$-recurrent and
 $\lbrace \theta^*_i\rbrace_{i=0}^d$ is
 $\beta$-recurrent. By these comments
and Proposition
\ref{cor:th1d}, the scalars $\beta, \gamma, \varrho$
satisfy (TD1).
For $0 \leq i,j\leq  d$ we multiply each term in (TD1)
on the left by $E_i$ and on the right by $E_j$.
This yields
\begin{align}
0 = E_i A^* E_j (\theta_i-\theta_j)P(\theta_i,\theta_j),
\label{eq:ttt}
\end{align}
where $P$ is from
(\ref{eq:P}). 
By  Proposition
\ref{prop:P}(ii) we obtain $P(\theta_i,\theta_j) \not=0$ if $d>  i-j>1$.
By this and 
(\ref{eq:ttt}) 
we have $E_i A^*E_j =0$ if $d>  i-j>1$.
We have shown (\ref{eq:s4}). 

We show  
(\ref{eq:s5}). We may assume $d\geq 2$; otherwise there is nothing to prove.
We show $E_d A^*E_0=0$. 
Since $E^*_0$ is normalizing, it suffices to show
that $E_d A^* E_0 E^*_0=0$ in view of Proposition
\ref{prop:normTrick}.
To show that $E_d A^* E_0 E^*_0=0$,
we invoke Proposition
\ref{prop:dg2}.
By (\ref{eq:s4}) we have $E_d A^* E_i=0$ for $1 \leq i \leq d-2$.
We mentioned earlier that $\vartheta_1 =\vartheta_d$.
These comments and Proposition
\ref{prop:dg2} imply
$E_d A^*E_0 E^*_0(\theta_0-\theta_{d-1})=0$. 
The scalar
$\theta_0-\theta_{d-1}$ is nonzero since $d\geq 2$, so 
$E_d A^*E_0 E^*_0=0$.
 We have shown (\ref{eq:s5}).

We show (\ref{eq:s8}).
We must show that
 $E_i A^*E_{i-1}\not=0$ for $1 \leq i \leq d$.
To this end we first apply 
Proposition
\ref{prop:splitchar}
to $\Phi^\Downarrow$.
Proposition
\ref{prop:splitchar}(i) 
 holds for $\Phi^\Downarrow$
by
(\ref{eq:s1}),
(\ref{eq:s6}),
(\ref{eq:s4}),
(\ref{eq:s5}).
So Proposition
\ref{prop:splitchar}(iii) holds for $\Phi^\Downarrow$. 
Consequently there exist
scalars 
$\lbrace \varphi^\Downarrow_i \rbrace_{i=1}^d$ in $\mathbb F$
and a basis for $V$ with respect to which
\begin{align}
       A: \quad 
	 \left(
	    \begin{array}{cccccc}
	      \theta_d &  & & & & {\bf 0}  \\
	       1 & \theta_{d-1} & &    & &   \\
		 & 1 & \theta_{d-2}  & &  &
		  \\
		  && \cdot & \cdot &&
		    \\
		    & & &  \cdot & \cdot & \\
		     {\bf 0}  & &  & & 1 & \theta_0
		      \end{array}
		       \right),
\qquad 
	A^*:\quad
	  \left(
	    \begin{array}{cccccc}
	      \theta^*_0 & \varphi^\Downarrow_1  & & & & {\bf 0}  \\
	        & \theta^*_1 & \varphi^\Downarrow_2 &    & &   \\
		 &  & \theta^*_2  & \cdot &  &
		  \\
		  && & \cdot & \cdot &
		    \\
		    & & &   &  \cdot  & \varphi^\Downarrow_d \\
		     {\bf 0}  & &  & &  & \theta^*_d
		      \end{array}
		       \right).
\label{eq:DD}
\end{align}
We are trying to show that $E_i A^*E_{i-1}\not=0$ for
$1 \leq i \leq d$. By Lemma
\ref{lem:varphinz}
(applied to $\Phi^\Downarrow$), it suffices to show
that $\varphi^\Downarrow_i \not=0$ for $1 \leq i \leq d$.
To this end we show that $\varphi^\Downarrow_i = \phi_i$ for $1 \leq i \leq d$.
By (PA4),
\begin{align*}
\phi_i - (\theta^*_i-\theta^*_0)(\theta_{d-i+1}-\theta_0) =
\varphi_1 \sum_{h=0}^{i-1} \frac{\theta_h - \theta_{d-h}}{\theta_0 - \theta_d}
\qquad (1 \leq i \leq d).
\end{align*}
\noindent Define
\begin{align*}
\vartheta^{\Downarrow}_i = \varphi^\Downarrow_i
- (\theta^*_i - \theta^*_0)(\theta_{d-i+1}-\theta_0) \qquad \qquad (1 \leq i \leq d).
\end{align*}
We show
\begin{align}
\vartheta^\Downarrow_i = \varphi_1 \sum_{h=0}^{i-1} 
\frac{\theta_h - \theta_{d-h}}{\theta_0 - \theta_d}
\qquad \qquad (1 \leq i \leq d).
\label{eq:DDneed}
\end{align}
To show 
(\ref{eq:DDneed}), 
by Proposition
\ref{note:varthetacharacconvS99} it suffices to show
(i)  $\vartheta^\Downarrow_1=\varphi_1$;
(ii)  $\vartheta^\Downarrow_1=
  \vartheta^\Downarrow_d$;
(iii) 
 $\lbrace \vartheta^\Downarrow_i\rbrace_{i=0}^{d+1}$ is $\beta$-recurrent,
 where
$\vartheta^\Downarrow_0=0$ and
$\vartheta^\Downarrow_{d+1}=0$.
To show (i), 
for $\Phi$ and $\Phi^\Downarrow$ we compute
$a_0$ using
Lemma
\ref{lem:dg1};
this yields
$a_0 = \theta_0 + \varphi_1(\theta^*_0 - \theta^*_1)^{-1}$ and
$a_0 = \theta_d + \varphi_1^\Downarrow(\theta^*_0 - \theta^*_1)^{-1}$.
Combining these equations we obtain
$\varphi_1-\varphi^\Downarrow_1 = 
 (\theta^*_1 - \theta^*_0)(\theta_0 - \theta_d)$, so
 \begin{align*}
 \vartheta^\Downarrow_1 = \varphi^\Downarrow_1 - 
 (\theta^*_1 - \theta^*_0)(\theta_d - \theta_0) = \varphi_1.
 \end{align*}
To show (ii), we apply Proposition 
\ref{prop:dg2} 
to $\Phi^\Downarrow$. This gives 
\begin{align*}
\sum_{i=0}^{d-2} E_0 A^* E_{d-i} E^*_0 (\theta_{d-i}-\theta_1)=
E_0 E^*_0 (\vartheta^\Downarrow_1 - \vartheta^\Downarrow_d).
\end{align*}
For this equation the left-hand side is zero, since each summand is
zero by
(\ref{eq:s3}).
We mentioned earlier that $E^*_0$ is 
normalizing, so $E_0 E^*_0\not=0$ by Definition
\ref{def:normalizing}.
By these comments
$\vartheta^\Downarrow_1 =\vartheta^\Downarrow_d$.
To show (iii), we apply
Proposition \ref{cor:th1d} to
the matrices (\ref{eq:DD}).
Recall that $\lbrace \theta_i \rbrace_{i=0}^d$ is
$(\beta, \gamma, \varrho)$-recurrent and 
$\lbrace \theta^*_i \rbrace_{i=0}^d$ is
$\beta$-recurrent. We mentioned earlier that $\beta, \gamma, \varrho$
satisfy (TD1). Applying
Proposition \ref{cor:th1d} to
the matrices (\ref{eq:DD}),
we see that
$\lbrace \vartheta^\Downarrow_i \rbrace_{i=0}^{d+1}$ is
$\beta$-recurrent. We have shown {\rm (i)--(iii)}, 
so (\ref{eq:DDneed}) holds by Proposition
\ref{note:varthetacharacconvS99}.
Now by the construction
$\varphi^\Downarrow_i = \phi_i$ for $1 \leq i \leq d$.
Consequently
$\varphi^\Downarrow_i \not= 0$ for $1 \leq i \leq d$, so
$E_i A^*E_{i-1}\not=0$ for $1 \leq i \leq d$.
We have shown 
 (\ref{eq:s8}).

We show (\ref{eq:s2}) and 
 (\ref{eq:s7}) by invoking
Proposition  
\ref{prop:threefourths}. Consider the conditions
(i)--(iv) in that proposition.
Condition (i) holds by
(\ref{eq:s1}),
(\ref{eq:s6}).
Condition (iii) holds by
(\ref{eq:s4}),
(\ref{eq:s5}),
(\ref{eq:s8}).
Condition (iv) holds by
(\ref{eq:s3}),
(\ref{eq:s9}).
Condition (ii) holds by 
Proposition 
\ref{prop:threefourths}, and this implies
(\ref{eq:s2}) and 
 (\ref{eq:s7}).

We have shown (\ref{eq:s1})--(\ref{eq:s9}), so
$\Phi$ is a Leonard system on $V$.
By construction $\Phi$ has eigenvalue sequence
$\lbrace \theta_i \rbrace_{i=0}^d$,
dual eigenvalue sequence
$\lbrace \theta^*_i \rbrace_{i=0}^d$,
and first split sequence
$\lbrace \varphi_i \rbrace_{i=1}^d$.
By construction 
$\lbrace \varphi^\Downarrow_i \rbrace_{i=1}^d$ is the first split
sequence of $\Phi^\Downarrow$ and hence the
 second split sequence of $\Phi$.
We showed $\varphi^\Downarrow_i = \phi_i $ for $1 \leq i \leq d$, so
$\lbrace \phi_i \rbrace_{i=1}^d$ is the second
split sequence for $\Phi$.
By these comments and Definition
\ref{def:pa}, the sequence
(\ref{eq:parameterArray}) is the parameter
array of $\Phi$.
By Proposition
\ref{cor:paIso}
the Leonard system $\Phi$ is unique up to isomorphism.
\hfill {$\Box$}
\medskip

\section{Characterizations of Leonard systems and parameter arrays}

\noindent We are done discussing Theorem
\ref{thm:classification}. In this section we discuss some 
related results
concerning Leonard systems and parameter arrays.
\medskip

\noindent We comment on notation. Let
$\lbrace u_i\rbrace_{i=0}^d$
and 
$\lbrace v_i\rbrace_{i=0}^d$
denote bases for $V$. By the
{\it transition matrix from 
$\lbrace u_i\rbrace_{i=0}^d$
to $\lbrace v_i\rbrace_{i=0}^d$} we mean 
the matrix 
$M \in {\rm Mat}_{d+1}(\mathbb F)$ such that 
$v_j = \sum_{i=0}^d M_{ij} u_i$ for $0 \leq j \leq d$.
\medskip

\noindent The following result is a variation on
\cite[Theorem~5.1]{ter2005b}.

\begin{theorem} 
\label{thm:thmChar}
Let
$\Phi=(A; \lbrace E_i\rbrace_{i=0}^d; A^*; \lbrace E^*_i\rbrace_{i=0}^d)$
denote a pre Leonard system on $V$,
with eigenvalue sequence 
$\lbrace \theta_i \rbrace_{i=0}^d$
and dual eigenvalue sequence
$\lbrace \theta^*_i \rbrace_{i=0}^d$.
Then $\Phi$ is a Leonard system on $V$ if and only 
if the following {\rm (i)}, {\rm (ii)} hold:
\begin{enumerate}
\item[\rm (i)] there exist nonzero scalars
$\lbrace \varphi_i \rbrace_{i=1}^d$ in $\mathbb F$ 
and a basis for $V$ with respect to which

\begin{align*}
       A: \quad 
	 \left(
	    \begin{array}{cccccc}
	      \theta_0 &  & & & & {\bf 0}  \\
	       1 & \theta_1 & &    & &   \\
		 & 1 & \theta_2  & &  &
		  \\
		  && \cdot & \cdot &&
		    \\
		    & & &  \cdot & \cdot & \\
		     {\bf 0}  & &  & & 1 & \theta_d
		      \end{array}
		       \right),
\qquad \qquad
	A^*: \quad \left(
	    \begin{array}{cccccc}
	      \theta^*_0 & \varphi_1  & & & & {\bf 0}  \\
	        & \theta^*_1 & \varphi_2 &    & &   \\
		 &  & \theta^*_2  & \cdot &  &
		  \\
		  && & \cdot & \cdot &
		    \\
		    & & &   &  \cdot  & \varphi_d \\
		     {\bf 0}  & &  & &  & \theta^*_d
		      \end{array}
		       \right);
\end{align*}
\item[\rm (ii)] there exist nonzero scalars
$\lbrace \phi_i \rbrace_{i=1}^d$ in $\mathbb F$ 
and a basis for $V$ with respect to which

\begin{align*}
       A: \quad 
	 \left(
	    \begin{array}{cccccc}
	      \theta_d &  & & & & {\bf 0}  \\
	       1 & \theta_{d-1} & &    & &   \\
		 & 1 & \theta_{d-2}  & &  &
		  \\
		  && \cdot & \cdot &&
		    \\
		    & & &  \cdot & \cdot & \\
		     {\bf 0}  & &  & & 1 & \theta_0
		      \end{array}
		       \right),
\qquad \qquad
	A^*: \quad \left(
	    \begin{array}{cccccc}
	      \theta^*_0 & \phi_1  & & & & {\bf 0}  \\
	        & \theta^*_1 & \phi_2 &    & &   \\
		 &  & \theta^*_2  & \cdot &  &
		  \\
		  && & \cdot & \cdot &
		    \\
		    & & &   &  \cdot  & \phi_d \\
		     {\bf 0}  & &  & &  & \theta^*_d
		      \end{array}
		       \right).
\end{align*}
\end{enumerate}
\noindent In this case
$\bigl(
\lbrace \theta_i \rbrace_{i=0}^d;
\lbrace \theta^*_i \rbrace_{i=0}^d;
\lbrace \varphi_i \rbrace_{i=1}^d;
\lbrace \phi_i \rbrace_{i=1}^d
\bigr)$
is the parameter array of $\Phi$.
\end{theorem}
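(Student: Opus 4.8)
The plan is to prove the two directions separately, with the reverse direction carrying essentially all of the content. For the forward direction, suppose $\Phi$ is a Leonard system. Then $\Phi$ satisfies the equivalent conditions of Proposition \ref{prop:splitchar}, so part (iii) of that proposition supplies the split basis required in (i), with $\lbrace \varphi_i\rbrace_{i=1}^d$ the first split sequence of $\Phi$; likewise Lemma \ref{lem:2basis} supplies the basis required in (ii), with $\lbrace \phi_i\rbrace_{i=1}^d$ the second split sequence. Both sequences are nonzero by the remark following Definition \ref{def:splitSeq}.

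For the reverse direction, I would read (i) and (ii) as instances of Proposition \ref{prop:splitchar}(iii). Condition (i) is precisely Proposition \ref{prop:splitchar}(iii) for $\Phi$ (the requirement $\varphi_i \neq 0$ being extra), so Proposition \ref{prop:splitchar} yields the pattern (\ref{eq:part1}) for $E^*_i A E^*_j$ together with $E_i A^* E_j = 0$ whenever $j-i>1$; Lemma \ref{lem:varphinz}, using $\varphi_i \neq 0$, upgrades the latter to $E_i A^* E_j \neq 0$ whenever $j-i=1$. Condition (ii) is Proposition \ref{prop:splitchar}(iii) applied to $\Phi^\Downarrow$, whose eigenvalue ordering is reversed and whose first split sequence is $\lbrace \phi_i\rbrace_{i=1}^d$ (using the data for $\Phi^\Downarrow$ in Proposition \ref{prop:ttaa}). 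Translating the resulting conditions back through the reindexing $E^\Downarrow_i = E_{d-i}$, and applying Lemma \ref{lem:varphinz} to $\Phi^\Downarrow$ with $\phi_i \neq 0$, gives $E_i A^* E_j = 0$ whenever $i-j>1$ and $E_i A^* E_j \neq 0$ whenever $i-j=1$.

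Collecting these, I will have verified three of the four conditions of Proposition \ref{prop:threefourths}: the lower pattern for $E^*_i A E^*_j$ (from (i)), and both the lower and upper patterns for $E_i A^* E_j$ (from (ii) and (i) respectively). Proposition \ref{prop:threefourths} then forces the remaining pattern and concludes that $\Phi$ is a Leonard system. The parameter array claim follows from uniqueness of the split sequences: the $\varphi_i$ of (i) are the first split sequence by the uniqueness clause of Proposition \ref{prop:splitchar}, the $\phi_i$ of (ii) are the first split sequence of $\Phi^\Downarrow$ and hence the second split sequence of $\Phi$, and the eigenvalue and dual eigenvalue sequences are $\lbrace \theta_i\rbrace_{i=0}^d$ and $\lbrace \theta^*_i\rbrace_{i=0}^d$ by inspection, so Definition \ref{def:pa} gives the asserted array. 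The main obstacle is the bookkeeping in the reverse direction: one must track carefully how the index reversal in $\Phi^\Downarrow$ converts the conclusions of Proposition \ref{prop:splitchar} and Lemma \ref{lem:varphinz} into statements about $E_i A^* E_j$ with the correct inequalities, and confirm that (i) and (ii) together supply three genuinely distinct patterns from Proposition \ref{prop:threefourths} rather than redundant copies of the same two; everything else is a direct appeal to the cited results.
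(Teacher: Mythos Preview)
Your proposal is correct and follows the same approach as the paper, which simply says ``Apply Proposition \ref{lem:tighter} and Lemma \ref{lem:varphinz} to both $\Phi$ and $\Phi^\Downarrow$.'' Your version is more explicit in one useful respect: the paper's one-line proof does not mention Proposition \ref{prop:threefourths}, but as you correctly observe, applying Proposition \ref{prop:splitchar} and Lemma \ref{lem:varphinz} to $\Phi$ and $\Phi^\Downarrow$ yields only the lower pattern for $E^*_i A E^*_j$ (since $E^{*\Downarrow}_i = E^*_i$, so the $\Phi^\Downarrow$ application gives nothing new on that side) together with the full pattern for $E_i A^* E_j$; the upper pattern for $E^*_i A E^*_j$ then comes from Proposition \ref{prop:threefourths}, exactly as you say.
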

\begin{proof} Apply
Proposition 
\ref{lem:tighter} and  Lemma
\ref{lem:varphinz} to both $\Phi$ and
$\Phi^\Downarrow$.
\end{proof}

\noindent Using Theorem
\ref{thm:thmChar} we can easily recover the following 
result.

\begin{theorem} 
{\rm (See \cite[Theorem~3.2]{ter2005})}.
Consider a sequence 
\begin{align}
\bigl(
\lbrace \theta_i \rbrace_{i=0}^d;
\lbrace \theta^*_i \rbrace_{i=0}^d;
\lbrace \varphi_i \rbrace_{i=1}^d;
\lbrace \phi_i \rbrace_{i=1}^d
\bigr)
\label{eq:parameterArray2}
\end{align}
of scalars in $\mathbb F$ that satisfies 
{\rm (PA1)}, {\rm (PA2)}. Then the following are equivalent:
\begin{enumerate}
\item[\rm (i)] the 
sequence 
{\rm (\ref{eq:parameterArray2})} satisfies
{\rm (PA3)--(PA5)};
\item[\rm (ii)] 
there exists an invertible $G \in {\rm Mat}_{d+1}(\mathbb F)$
such that both
\begin{align*}
  G^{-1} \left(
	    \begin{array}{cccccc}
	      \theta_0 &  & & & & {\bf 0}  \\
	       1 & \theta_1 & &    & &   \\
		 & 1 & \theta_2  & &  &
		  \\
		  && \cdot & \cdot &&
		    \\
		    & & &  \cdot & \cdot & \\
		     {\bf 0}  & &  & & 1 & \theta_d
		      \end{array}
		       \right) G = 
	 \left(
	    \begin{array}{cccccc}
	      \theta_d &  & & & & {\bf 0}  \\
	       1 & \theta_{d-1} & &    & &   \\
		 & 1 & \theta_{d-2}  & &  &
		  \\
		  && \cdot & \cdot &&
		    \\
		    & & &  \cdot & \cdot & \\
		     {\bf 0}  & &  & & 1 & \theta_0
		      \end{array}
		       \right),
\end{align*}
\begin{align*}	
	G^{-1} \left(
	    \begin{array}{cccccc}
	      \theta^*_0 & \varphi_1  & & & & {\bf 0}  \\
	        & \theta^*_1 & \varphi_2 &    & &   \\
		 &  & \theta^*_2  & \cdot &  &
		  \\
		  && & \cdot & \cdot &
		    \\
		    & & &   &  \cdot  & \varphi_d \\
		     {\bf 0}  & &  & &  & \theta^*_d
		      \end{array}
		       \right) G=
	 \left(
	    \begin{array}{cccccc}
	      \theta^*_0 & \phi_1  & & & & {\bf 0}  \\
	        & \theta^*_1 & \phi_2 &    & &   \\
		 &  & \theta^*_2  & \cdot &  &
		  \\
		  && & \cdot & \cdot &
		    \\
		    & & &   &  \cdot  & \phi_d \\
		     {\bf 0}  & &  & &  & \theta^*_d
		      \end{array}
		       \right).
\end{align*}
\end{enumerate}
\end{theorem}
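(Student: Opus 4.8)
The plan is to deduce this result from the classification Theorem~\ref{thm:classification} together with the characterization Theorem~\ref{thm:thmChar}, so that essentially no new computation is needed. The key observation is that conditions (i), (ii) here, applied to the single pair of matrices that appears, are exactly the two split forms that a Leonard system attains simultaneously (Proposition~\ref{prop:splitchar}(iii) and Lemma~\ref{lem:2basis}), and that conjugation by an invertible $G$ encodes the same information as a change of basis.

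First I would treat the direction (i)$\Rightarrow$(ii). Assuming (PA3)--(PA5), the sequence satisfies all of (PA1)--(PA5), so by Theorem~\ref{thm:classification} there is a Leonard system $\Phi=(A;\lbrace E_i\rbrace_{i=0}^d;A^*;\lbrace E^*_i\rbrace_{i=0}^d)$ on some $V$ whose parameter array is the given sequence. By Proposition~\ref{prop:splitchar}(iii) there is a basis $\lbrace u_i\rbrace_{i=0}^d$ of $V$ with respect to which $A,A^*$ are represented by the two matrices in (i), and by Lemma~\ref{lem:2basis} there is a basis $\lbrace v_i\rbrace_{i=0}^d$ with respect to which $A,A^*$ are represented by the two matrices in (ii). I would then let $G$ be the transition matrix from $\lbrace u_i\rbrace_{i=0}^d$ to $\lbrace v_i\rbrace_{i=0}^d$; this $G$ is invertible, and since passing from the $u$-basis representation to the $v$-basis representation is effected by $M\mapsto G^{-1}MG$, the two displayed conjugation identities in (ii) hold at once.

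For the converse (ii)$\Rightarrow$(i) I would run the argument backwards through Theorem~\ref{thm:thmChar}. Take $V=\mathbb F^{d+1}$ with standard basis, and let $A,A^*$ be the endomorphisms represented by the first pair of matrices. By (PA1) the eigenvalues $\lbrace\theta_i\rbrace_{i=0}^d$ of $A$ and $\lbrace\theta^*_i\rbrace_{i=0}^d$ of $A^*$ are mutually distinct, so $A,A^*$ are multiplicity-free; letting $E_i,E^*_i$ be their primitive idempotents gives a pre Leonard system $\Phi=(A;\lbrace E_i\rbrace_{i=0}^d;A^*;\lbrace E^*_i\rbrace_{i=0}^d)$ with the prescribed eigenvalue and dual eigenvalue sequences. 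Condition (i) of Theorem~\ref{thm:thmChar} holds for the standard basis, with the $\varphi_i$ nonzero by (PA2). For condition (ii) of that theorem I would use the columns of $G$ as a new basis: the representing matrices become $G^{-1}AG$ and $G^{-1}A^*G$, which by hypothesis are precisely the second split matrices, whose superdiagonal entries $\phi_i$ are nonzero by (PA2). Hence Theorem~\ref{thm:thmChar} shows $\Phi$ is a Leonard system whose parameter array is the given sequence, and the forward direction of Theorem~\ref{thm:classification} then forces (PA1)--(PA5), in particular (PA3)--(PA5).

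The only genuinely delicate point is the bookkeeping that identifies conjugation by $G$ with a change of basis in the correct direction: one must verify that if $G$ is the transition matrix from $\lbrace u_i\rbrace_{i=0}^d$ to $\lbrace v_i\rbrace_{i=0}^d$, then a matrix $M$ representing $X$ in the $u$-basis yields $G^{-1}MG$ as the matrix of $X$ in the $v$-basis, so that the conjugations in (ii) reproduce the two split forms rather than their inverses. Once this is pinned down, everything else is a direct appeal to the two cited theorems.
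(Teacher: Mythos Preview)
Your proposal is correct and follows essentially the same approach as the paper: both directions proceed via Theorem~\ref{thm:classification} and Theorem~\ref{thm:thmChar}, with $G$ interpreted as the transition matrix between the two split bases. The only cosmetic difference is that for (i)$\Rightarrow$(ii) you cite Proposition~\ref{prop:splitchar}(iii) and Lemma~\ref{lem:2basis} separately, whereas the paper invokes Theorem~\ref{thm:thmChar}(i),(ii) directly; the content is identical.
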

\begin{proof} 
${\rm (i)} \Rightarrow {\rm (ii)}$ 
By Theorem
\ref{thm:classification} there exists a Leonard
system on $V$ with parameter array
(\ref{eq:parameterArray2}).
The matrix $G$ is the transition matrix from
a basis for $V$ that satisfies
Theorem \ref{thm:thmChar}(i), to a basis
for $V$ that satisfies
Theorem \ref{thm:thmChar}(ii).
\\
\noindent 
${\rm (ii)} \Rightarrow {\rm (i)}$ 
Pick a basis $\lbrace u_i \rbrace_{i=0}^d$ for $V$.
Define $A, A^* \in {\rm End}(V)$ whose
 matrix representations 
with respect to $\lbrace u_i \rbrace_{i=0}^d$ are 
from Theorem \ref{thm:thmChar}(i).
Observe that $A$ (resp. $A^*$) is multiplicity-free with eigenvalues
$\lbrace \theta_i \rbrace_{i=0}^d$
(resp. $\lbrace \theta^*_i \rbrace_{i=0}^d$);
 for $0 \leq i \leq d$
let $E_i$ (resp. $E^*_i$) denote
the primitive idempotent of $A$ (resp. $A^*$) for
$\theta_i$
(resp. $\theta^*_i$).
The sequence 
$\Phi:=(A; \lbrace E_i\rbrace_{i=0}^d; A^*; \lbrace E^*_i\rbrace_{i=0}^d)
$
is a pre Leonard system on $V$.
We show that $\Phi$ is a 
Leonard system on $V$. By linear algebra there exists a basis
$\lbrace v_i \rbrace_{i=0}^d$ of $V$ such that
$G$ is the
transition matrix from 
$\lbrace u_i \rbrace_{i=0}^d$ to 
$\lbrace v_i \rbrace_{i=0}^d$.
The matrix representations of $A$ and $A^*$
with respect to 
$\lbrace v_i \rbrace_{i=0}^d$
are shown in
 Theorem \ref{thm:thmChar}(ii).
The pre Leonard system $\Phi$ satisfies
 Theorem \ref{thm:thmChar}(i) and
 Theorem \ref{thm:thmChar}(ii), so by that theorem $\Phi$
 is a Leonard system on $V$. Also by that theorem
(\ref{eq:parameterArray2}) is the parameter array of $\Phi$,
so 
(\ref{eq:parameterArray2})
satisfies (PA3)--(PA5).
\end{proof}

\section{The intersection numbers}

\noindent 
We now bring in the intersection numbers.
Throughout this section 
$\Phi=(A; \lbrace E_i\rbrace_{i=0}^d; A^*; \lbrace E^*_i\rbrace_{i=0}^d)$
denotes a Leonard system 
on $V$, with parameter array
$\bigl(
\lbrace \theta_i \rbrace_{i=0}^d;
\lbrace \theta^*_i \rbrace_{i=0}^d;
\lbrace \varphi_i \rbrace_{i=1}^d;
\lbrace \phi_i \rbrace_{i=1}^d
\bigr)$.
Applying Lemma
\ref{lem:normal}
to $\Phi^*$ we see that
$E_0$ is normalizing. For
$0 \not=\xi \in E_0V$ the vectors $\lbrace E^*_i \xi\rbrace_{i=0}^d$
form a basis for $V$, said to be {\it $\Phi$-standard}.
With respect to this basis
\begin{align*}
       A: \quad 
	 \left(
	    \begin{array}{cccccc}
	      a_0 & b_0 & & & & {\bf 0}  \\
	       c_1 & a_1 & b_1 &    & &   \\
		 & c_2 & \cdot  & \cdot &  &
		  \\
		  && \cdot & \cdot &\cdot &
		    \\
		    & & &  \cdot & \cdot & b_{d-1} \\
		     {\bf 0}  & &  & & c_d & a_d
		      \end{array}
		       \right),
\qquad \qquad
	A^*: \quad {\rm diag}(\theta^*_0,\theta^*_1,\ldots, \theta^*_d),
\end{align*}
where $\lbrace a_i\rbrace_{i=0}^d$ 
are from
Definition \ref{def:ai}
and
$\lbrace b_i\rbrace_{i=0}^{d-1}$,
$\lbrace c_i\rbrace_{i=1}^{d}$ are nonzero scalars in $\mathbb F$.
The vector $\xi$ is an eigenvector for $A$ with
eigenvalue $\theta_0$. Moreover $\xi=\sum_{i=0}^d E^*_i\xi$.
Consequently
\begin{align}
                  c_i + a_i + b_i = \theta_0 \qquad \qquad (0 \leq i \leq d),
\label{eq:crs}
\end{align}
where $c_0=0$ and $b_d=0$.
We call 
$\lbrace b_i\rbrace_{i=0}^{d-1}$,
$\lbrace c_i\rbrace_{i=1}^{d}$ the {\it intersection
numbers} of $\Phi$.
They are discussed in
\cite[Section~11]{ter2004}.
The intersection numbers
$\lbrace b^*_i\rbrace_{i=0}^{d-1}$,
$\lbrace c^*_i\rbrace_{i=1}^{d}$ of $\Phi^*$ are
called the {\it dual intersection
numbers} of $\Phi$. By construction
\begin{align}
                  c^*_i + a^*_i + b^*_i = \theta^*_0 \qquad \qquad (0 \leq i \leq d),
\label{eq:crsd}
\end{align}
where $c^*_0=0$ and $b^*_d=0$.
With respect to a $\Phi^*$-standard basis for $V$,
\begin{align}
	A: \quad {\rm diag}(\theta_0,\theta_1,\ldots, \theta_d),
\qquad \qquad 
A^*: \quad 
	 \left(
	    \begin{array}{cccccc}
	      a^*_0 & b^*_0 & & & & {\bf 0}  \\
	       c^*_1 & a^*_1 & b^*_1 &    & &   \\
		 & c^*_2 & \cdot  & \cdot &  &
		  \\
		  && \cdot & \cdot &\cdot &
		    \\
		    & & &  \cdot & \cdot & b^*_{d-1} \\
		     {\bf 0}  & &  & & c^*_d & a^*_d
		      \end{array}
		       \right).
\label{eq:abcs}
\end{align}
We mention a handy recurrence.
\begin{lemma}
\label{lem:rec3}
Assume $d\geq 1$. Then for $0 \leq i \leq d$ we have
\begin{align}
&c_i \theta^*_{i-1} + 
a_i \theta^*_{i} + 
b_i \theta^*_{i+1} = \theta_1 \theta^*_i + a^*_0(\theta_0-\theta_1),
\label{eq:rec3}
\\
&c^*_i \theta_{i-1} + 
a^*_i \theta_{i} + 
b^*_i \theta_{i+1} = \theta^*_1 \theta_i + a_0(\theta^*_0-\theta^*_1),
\label{eq:rec3s}
\end{align}
where
$\theta_{-1}$,
$\theta_{d+1}$,
$\theta^*_{-1}$,
$\theta^*_{d+1}$ denote indeterminates.
\end{lemma}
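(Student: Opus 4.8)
The plan is to compute $A A^*\xi$ in two different ways and compare coefficients in the $\Phi$-standard basis $\lbrace v_i\rbrace_{i=0}^d$, where $v_i = E^*_i\xi$ and $0\neq\xi\in E_0V$. Since $A^*$ is diagonal on this basis, $A^*v_i = \theta^*_i v_i$; and since $\xi = \sum_{i=0}^d v_i$, we have $A^*\xi = \sum_{i=0}^d \theta^*_i v_i$. The whole identity (\ref{eq:rec3}) will fall out by reading off the coefficient of $v_i$ from the two computations.

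For the first computation I would apply $A$ to $A^*\xi = \sum_i \theta^*_i v_i$ using the tridiagonal matrix representing $A$ in the displayed form, whose action is $Av_i = b_{i-1}v_{i-1} + a_i v_i + c_{i+1}v_{i+1}$. Collecting terms, the coefficient of $v_i$ in $AA^*\xi$ comes out to be exactly $c_i\theta^*_{i-1} + a_i\theta^*_i + b_i\theta^*_{i+1}$, the left-hand side of (\ref{eq:rec3}). At the boundaries $i=0$ and $i=d$ the terms $c_0\theta^*_{-1}$ and $b_d\theta^*_{d+1}$ formally appear, but they vanish since $c_0=0$ and $b_d=0$; this is precisely why the indeterminate eigenvalues $\theta^*_{-1},\theta^*_{d+1}$ are harmless.

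For the second computation the key observation is that $A^*\xi$ decomposes cleanly. Because $\xi\in E_0V$ and $E_iA^*E_0=0$ for $i\ge 2$ by Definition \ref{def:deflstalkS99}(v), together with $E_0A^*E_0 = a^*_0 E_0$ from Lemma \ref{lem:EAE}(ii), I get $A^*\xi = a^*_0\xi + w$, where $w = E_1A^*\xi\in E_1V$ and hence $Aw=\theta_1 w$. Therefore $AA^*\xi = a^*_0\theta_0\,\xi + \theta_1 w$. Substituting $w = A^*\xi - a^*_0\xi = \sum_i(\theta^*_i-a^*_0)v_i$ and $\xi=\sum_i v_i$, the coefficient of $v_i$ becomes $a^*_0\theta_0 + \theta_1(\theta^*_i - a^*_0) = \theta_1\theta^*_i + a^*_0(\theta_0-\theta_1)$, which is the right-hand side of (\ref{eq:rec3}). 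Comparing the two coefficient expressions gives (\ref{eq:rec3}).

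Finally, (\ref{eq:rec3s}) follows by applying (\ref{eq:rec3}) to $\Phi^*$ and invoking the data for $\Phi^*$ in Proposition \ref{prop:ttaa}, which interchanges the starred and unstarred parameters (and whose intersection numbers are $\lbrace b^*_i\rbrace$, $\lbrace c^*_i\rbrace$). I do not anticipate a genuine obstacle; the only points needing care are the matrix-versus-index convention in the first computation and the verification that the boundary terms drop out. The one substantive idea is the decomposition $A^*\xi = a^*_0\xi + w$ with $w$ an $A$-eigenvector for $\theta_1$, which is what produces the compact right-hand side.
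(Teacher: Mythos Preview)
Your proof is correct and follows essentially the same approach as the paper: the paper's hint is precisely that $(A^*-a^*_0 I)\xi$ is an $A$-eigenvector with eigenvalue $\theta_1$, which is your decomposition $A^*\xi = a^*_0\xi + w$ with $w\in E_1V$, and both arguments then read off coefficients in the $\Phi$-standard basis. Your version computes $AA^*\xi$ two ways while the paper applies $A$ directly to $(A^*-a^*_0 I)\xi$ and uses $c_i+a_i+b_i=\theta_0$ to rearrange, but these are the same computation organized slightly differently.
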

\begin{proof} The proof of
(\ref{eq:rec3}) is similar to the proof of 
(\ref{eq:crs}). To obtain
(\ref{eq:rec3}),
use the fact that
for $0 \not=\xi \in E_0V$ the vector
$(A^*-a^*_0 I) \xi$ is an eigenvector for $A$ with eigenvalue
$\theta_1$, and
$(A^*-a^*_0 I) \xi = \sum_{i=0}^d (\theta^*_i-a^*_0)E^*_i \xi$.
To get (\ref{eq:rec3s}), apply
(\ref{eq:rec3}) to $\Phi^*$.
\end{proof}

\begin{lemma} 
\label{lem:solvebc}
For $d\geq 1$ we have
\begin{align*}
 b_0 &= \theta_0-a_0,
\\
b_i &= \frac{(a_i - \theta_0)(\theta^*_i - \theta^*_{i-1})+ 
(\theta_0-\theta_1)(\theta^*_i-a^*_0)}{\theta^*_{i-1} - \theta^*_{i+1}}
\qquad (1 \leq i \leq d-1),
\\
c_i &= \frac{(a_i - \theta_0)(\theta^*_i - \theta^*_{i+1})+ 
(\theta_0-\theta_1)(\theta^*_i-a^*_0)}{\theta^*_{i+1} - \theta^*_{i-1}}
\qquad (1 \leq i \leq d-1),
\\
c_d &= \theta_0 - a_d.
\end{align*}
To get 
$\lbrace b^*_i\rbrace_{i=0}^{d-1}$
and $\lbrace c^*_i\rbrace_{i=1}^d$, exchange starred and nonstarred symbols
everywhere in the above equations.
\end{lemma}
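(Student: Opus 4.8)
The plan is to read the intersection numbers off from the two linear relations already in hand: the row-sum relation (\ref{eq:crs}), namely $c_i + a_i + b_i = \theta_0$ with $c_0 = 0$ and $b_d = 0$, and the weighted relation (\ref{eq:rec3}) from Lemma \ref{lem:rec3}, namely $c_i\theta^*_{i-1} + a_i\theta^*_i + b_i\theta^*_{i+1} = \theta_1\theta^*_i + a^*_0(\theta_0-\theta_1)$. First I would dispatch the two boundary cases directly from (\ref{eq:crs}): setting $i=0$ and using $c_0 = 0$ gives $b_0 = \theta_0 - a_0$, while setting $i = d$ and using $b_d = 0$ gives $c_d = \theta_0 - a_d$.

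Next, for each $i$ with $1 \le i \le d-1$, I would regard (\ref{eq:crs}) and (\ref{eq:rec3}) as a system of two linear equations in the two unknowns $c_i$ and $b_i$, with the $a_i$-terms moved to the right-hand side:
\begin{align*}
c_i + b_i &= \theta_0 - a_i, \\
\theta^*_{i-1} c_i + \theta^*_{i+1} b_i &= \theta_1\theta^*_i + a^*_0(\theta_0-\theta_1) - a_i\theta^*_i.
\end{align*}
The coefficient matrix has determinant $\theta^*_{i+1} - \theta^*_{i-1}$, which is nonzero because $A^*$ is multiplicity-free; hence the $\lbrace \theta^*_j\rbrace_{j=0}^d$ are mutually distinct, and in particular $\theta^*_{i-1}\ne\theta^*_{i+1}$. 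The system therefore has a unique solution, obtained by Cramer's rule. A routine expansion of the resulting $2\times 2$ determinants shows that this solution coincides with the stated closed forms for $b_i$ and $c_i$; note that the two displayed formulas differ only by interchanging $\theta^*_{i-1}$ and $\theta^*_{i+1}$, exactly matching the symmetry of the system under exchanging the roles of $c_i$ and $b_i$.

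Finally, the dual formulas for $\lbrace b^*_i\rbrace_{i=0}^{d-1}$ and $\lbrace c^*_i\rbrace_{i=1}^d$ follow by applying the entire argument to $\Phi^*$: by the definitions in this section these are precisely the intersection numbers of $\Phi^*$, and by Proposition \ref{prop:ttaa} passing from $\Phi$ to $\Phi^*$ interchanges starred and nonstarred eigenvalue and diagonal data throughout, which produces the asserted star-exchanged equations.

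The main obstacle: there is no real difficulty here. The only point needing care is the nonvanishing of the determinant $\theta^*_{i+1}-\theta^*_{i-1}$, and this is immediate from the multiplicity-freeness of $A^*$. Everything else is a $2\times 2$ linear solve followed by a direct algebraic simplification, so I would keep the verification brief and simply record that the expanded determinants reproduce the displayed expressions.
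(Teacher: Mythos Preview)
Your proposal is correct and follows essentially the same approach as the paper: the paper's proof also sets $i=0$ and $i=d$ in (\ref{eq:crs}) for the boundary cases, solves the linear system (\ref{eq:crs}), (\ref{eq:rec3}) for the interior $b_i$, $c_i$, and applies the argument to $\Phi^*$ for the dual quantities. Your write-up is slightly more detailed (explicitly noting the nonvanishing determinant and invoking Cramer's rule), but the method is the same.
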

\begin{proof} To obtain $b_0$ set
$i=0$ in
(\ref{eq:crs}). To obtain
$b_i$, $c_i$ for $1\leq i \leq d-1$,
solve the linear equations 
(\ref{eq:crs}),
(\ref{eq:rec3}).
To obtain $c_d$ set
$i=d$ in
(\ref{eq:crs}).
To obtain 
$\lbrace b^*_i\rbrace_{i=0}^{d-1}$
and $\lbrace c^*_i\rbrace_{i=1}^d$,
apply the above comments to $\Phi^*$.
\end{proof}

\noindent In Lemma
\ref{lem:rec3} we gave a recurrence. 
We now give a more general recurrence.

\begin{lemma} 
\label{lem:Tmat} For $0 \leq i\leq d$ and $1\leq j \leq d$,
\begin{align}
\label{eq:tijs}
c_i \tau^*_j(\theta^*_{i-1})+
a_i \tau^*_j(\theta^*_{i})+
b_i \tau^*_j(\theta^*_{i+1}) 
&= \theta_j \tau^*_j(\theta^*_i) + \varphi_j \tau^*_{j-1}(\theta^*_i),
\\
\label{eq:etaijs}
c_i \eta^*_j(\theta^*_{i-1})+
a_i \eta^*_j(\theta^*_{i})+
b_i \eta^*_j(\theta^*_{i+1}) 
&= \theta_j \eta^*_j(\theta^*_i) + \phi_{d-j+1} \eta^*_{j-1}(\theta^*_i),
\\
\label{eq:tij}
c^*_i \tau_j(\theta_{i-1})+
a^*_i \tau_j(\theta_{i})+
b^*_i \tau_j(\theta_{i+1}) 
&= \theta^*_j \tau_j(\theta_i) + \varphi_j \tau_{j-1}(\theta_i),
\\
\label{eq:etaij}
c^*_i \eta_j(\theta_{i-1})+
a^*_i \eta_j(\theta_{i})+
b^*_i \eta_j(\theta_{i+1}) 
&= \theta^*_j \eta_j(\theta_i) + \phi_j \eta_{j-1}(\theta_i),
\end{align}
where $\theta_{-1}$, $\theta_{d+1}$,
 $\theta^*_{-1}$, $\theta^*_{d+1}$ denote indeterminates.
\end{lemma}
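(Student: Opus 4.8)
The plan is to derive the first identity (\ref{eq:tijs}) by evaluating the single vector $A\,\tau^*_j(A^*)\xi$ in two different bases, where $0\neq\xi\in E_0V$; this is the natural generalization of the argument for Lemma \ref{lem:rec3}, and indeed the case $j=1$ of (\ref{eq:tijs}) collapses to (\ref{eq:rec3}) after invoking (\ref{eq:crs}) and Lemma \ref{lem:dg1}. Recall from the discussion preceding Lemma \ref{lem:rec3} that $E_0$ is normalizing (Lemma \ref{lem:normal} applied to $\Phi^*$) and that the $\Phi$-standard basis is $\{v_i\}_{i=0}^d$ with $v_i=E^*_i\xi$, so that $\xi=\sum_i v_i$ and $A^*v_i=\theta^*_i v_i$. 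First I would record the elementary fact $\tau^*_j(A^*)\xi=\sum_i \tau^*_j(\theta^*_i)v_i$. Expanding $A\,\tau^*_j(A^*)\xi$ in the $\Phi$-standard basis, where $A$ is tridiagonal with entries $a_i,b_i,c_i$, the coefficient of $v_i$ is exactly the left-hand side of (\ref{eq:tijs}); the edge cases $i=0,d$ are harmless because $c_0=0=b_d$.

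For the second evaluation I would apply Proposition \ref{prop:splitchar}(iii) to $\Phi^*$. The relevant normalizing idempotent for $\Phi^*$ is $E_0$ and the base vector lies in $E_0V$, so the resulting split basis is precisely $\{\tau^*_i(A^*)\xi\}_{i=0}^d$ — the same vectors just used. In this basis $A$ (playing the role of the bidiagonal dual generator for $\Phi^*$) is upper bidiagonal with diagonal $\{\theta_i\}$ and superdiagonal the first split sequence of $\Phi^*$, which equals $\{\varphi_i\}$ by Lemma \ref{prop:pacomp}. Reading off the action on the $j$th basis vector gives $A\,\tau^*_j(A^*)\xi=\theta_j\,\tau^*_j(A^*)\xi+\varphi_j\,\tau^*_{j-1}(A^*)\xi$, whose $v_i$-coefficient is the right-hand side of (\ref{eq:tijs}). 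Since $\{v_i\}$ is a basis, comparing coefficients yields (\ref{eq:tijs}). The identity (\ref{eq:etaijs}) follows from the identical two-way computation, but now with the split basis $\{\eta^*_i(A^*)\xi\}_{i=0}^d$ furnished by Lemma \ref{lem:2basis} applied to $\Phi^*$ (the split basis of $\Phi^{*\Downarrow}$); there $A$ is again upper bidiagonal with diagonal $\{\theta_i\}$, but its superdiagonal is the first split sequence of $\Phi^{*\Downarrow}$, namely $\{\phi_{d-i+1}\}$ by Lemma \ref{prop:pacomp}.

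Finally I would obtain the two starred identities (\ref{eq:tij}), (\ref{eq:etaij}) for free by applying the already-proved (\ref{eq:tijs}), (\ref{eq:etaijs}) to $\Phi^*$ and translating through Proposition \ref{prop:ttaa} and Lemma \ref{prop:pacomp}. Under the $*$-duality one has $\theta_i\leftrightarrow\theta^*_i$, $a_i\leftrightarrow a^*_i$, $b_i\leftrightarrow b^*_i$, $c_i\leftrightarrow c^*_i$, $\tau^*\leftrightarrow\tau$ and $\eta^*\leftrightarrow\eta$; the first split sequence of $\Phi^*$ is $\{\varphi_i\}$, and the first split sequence of $(\Phi^*)^{*\Downarrow}=\Phi^{\Downarrow}$ is the second split sequence $\{\phi_i\}$ of $\Phi$, which is why the superdiagonal entry becomes $\phi_j$ in (\ref{eq:etaij}). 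I expect the only real difficulty to be bookkeeping rather than conceptual: one must read off, via Lemma \ref{prop:pacomp}, the first split sequences of the relatives $\Phi^*$ and $\Phi^{*\Downarrow}$ with the correct index reversals so that $\varphi_j$, $\phi_{d-j+1}$, and $\phi_j$ land in the right places, and one must verify that the single vector $\xi\in E_0V$ simultaneously produces the $\Phi$-standard expansion and serves as the base vector for each split basis invoked.
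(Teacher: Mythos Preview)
Your proof is correct and is essentially the paper's own argument: computing $A\,\tau^*_j(A^*)\xi$ in two bases is exactly the column-by-column content of the paper's transition-matrix identity $B^*T=TD^*$, and the only difference is that the paper proves (\ref{eq:tij}) first and dualizes to (\ref{eq:tijs}) while you do the reverse. Your bookkeeping of the split sequences of $\Phi^*$ and $(\Phi^*)^\Downarrow$ via Lemma~\ref{prop:pacomp} is correct, so no gap.
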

\begin{proof} Concerning
(\ref{eq:tij}),
Let $T \in {\rm Mat}_{d+1}(\mathbb F)$
have $(i,j)$-entry $\tau_j(\theta_i)$ for $0 \leq i,j\leq d$.
For $0 \not=\xi \in E^*_0V$,
$T$ is the transition matrix from the
$\Phi^*$-standard basis
$\lbrace E_i \xi \rbrace_{i=0}^d$ to the
basis
$\lbrace \tau_i(A)\xi \rbrace_{i=0}^d$.
The matrix $B^*$ on the right in
(\ref{eq:abcs}) represents $A^*$ with respect to
$\lbrace E_i \xi \rbrace_{i=0}^d$.
The matrix $D^*$ on the right in
(\ref{eq:split}) represents $A^*$ 
with respect to 
$\lbrace \tau_i(A)\xi \rbrace_{i=0}^d$.
By linear algebra $B^*T = T D^*$.
The entries of this matrix give the equations 
(\ref{eq:tij}).
To obtain 
(\ref{eq:etaij}), in the above argument
replace the basis
$\lbrace \tau_i(A)\xi \rbrace_{i=0}^d$ by the basis
$\lbrace \eta_i(A)\xi \rbrace_{i=0}^d$,
and replace the matrix on the right in (\ref{eq:split}) 
by the matrix on the right in
(\ref{eq:phibasis}).
To obtain
(\ref{eq:tijs}) and
(\ref{eq:etaijs}),
apply 
(\ref{eq:tij}) and
(\ref{eq:etaij}) to $\Phi^*$.
\end{proof}

\begin{lemma}
\label{lem:bbcc}
{\rm(See \cite[Theorem~17.7]{ter2004}).}
We have
\begin{enumerate}
\item[\rm (i)] $b_i = \varphi_{i+1} 
\frac{\tau^*_i(\theta^*_i)}
{\tau^*_{i+1}(\theta^*_{i+1})} \qquad (0 \leq i \leq d-1)$;
\item[\rm (ii)] $c_i = \phi_{i} \frac{\eta^*_{d-i}(\theta^*_i)}
{\eta^*_{d-i+1}(\theta^*_{i-1})} \qquad (1 \leq i \leq d)$;
\item[\rm (iii)] $b^*_i = \varphi_{i+1} 
\frac{\tau_i(\theta_i)}
{\tau_{i+1}(\theta_{i+1})} \qquad (0 \leq i \leq d-1)$;
\item[\rm (iv)] $c^*_i = \phi_{d-i+1} \frac{\eta_{d-i}(\theta_i)}
{\eta_{d-i+1}(\theta_{i-1})} \qquad (1 \leq i \leq d)$.
\end{enumerate}
\end{lemma}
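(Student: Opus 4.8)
The plan is to read off each intersection number from the four recurrences in Lemma~\ref{lem:Tmat} by making a single well-chosen substitution for the free index $j$, exploiting the root structure of the polynomials recorded in Definition~\ref{def:taui}. Recall that $\tau^*_j$ is monic of degree $j$ with roots $\theta^*_0,\dots,\theta^*_{j-1}$, while $\eta^*_j$ is monic of degree $j$ with roots $\theta^*_d,\dots,\theta^*_{d-j+1}$. In particular $\tau^*_j(\theta^*_h)=0$ for $0\le h\le j-1$ and $\eta^*_j(\theta^*_h)=0$ for $d-j+1\le h\le d$, whereas $\tau^*_{j}(\theta^*_{j})$ and $\eta^*_{j}(\theta^*_{d-j})$ are nonzero since the $\theta^*_i$ are mutually distinct. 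The idea throughout is to pick $j$ so that exactly two of the three terms on the left and one of the two terms on the right are forced to vanish.

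To obtain (i), I would set $j=i+1$ in (\ref{eq:tijs}). Then $\tau^*_{i+1}$ vanishes at both $\theta^*_{i-1}$ and $\theta^*_i$, so the terms carrying $c_i$ and $a_i$ drop out on the left, and the term $\theta_{i+1}\tau^*_{i+1}(\theta^*_i)$ drops out on the right; what survives is $b_i\,\tau^*_{i+1}(\theta^*_{i+1})=\varphi_{i+1}\,\tau^*_i(\theta^*_i)$, and dividing by the nonzero scalar $\tau^*_{i+1}(\theta^*_{i+1})$ gives (i). The boundary case $i=0$ behaves identically, the undefined quantity $\theta^*_{-1}$ being harmless because its coefficient $c_0$ is zero. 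For (ii), I would instead set $j=d-i+1$ in (\ref{eq:etaijs}). Now $\eta^*_{d-i+1}$ has roots $\theta^*_i,\dots,\theta^*_d$, so it vanishes at $\theta^*_i$ and $\theta^*_{i+1}$, killing the $a_i$ and $b_i$ terms on the left and the first term on the right; since $d-j+1=i$, the remaining identity is $c_i\,\eta^*_{d-i+1}(\theta^*_{i-1})=\phi_i\,\eta^*_{d-i}(\theta^*_i)$, which is (ii) after dividing by the nonzero $\eta^*_{d-i+1}(\theta^*_{i-1})$. The case $i=d$ causes no trouble, as the vanishing $b_d=0$ absorbs the indeterminate $\theta^*_{d+1}$.

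Finally I would deduce (iii) and (iv) by applying the already-established (i) and (ii) to the Leonard system $\Phi^*$. By Lemma~\ref{prop:pacomp} the parameter array of $\Phi^*$ is $\bigl(\{\theta^*_i\};\{\theta_i\};\{\varphi_i\};\{\phi_{d-i+1}\}\bigr)$, so under the $*$-operation the roles of the starred and nonstarred data interchange: the polynomials $\tau_i,\eta_i$ for $\Phi$ play the part of $\tau^*_i,\eta^*_i$ for $\Phi^*$, the relevant eigenvalues become the $\theta_i$, and the intersection numbers of $\Phi^*$ are by definition the dual intersection numbers $b^*_i,c^*_i$ of $\Phi$. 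Substituting these identifications into (i) yields (iii), and into (ii) yields (iv), with the index shift $\phi_i\mapsto\phi_{d-i+1}$ coming precisely from the second split sequence of $\Phi^*$.

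I do not anticipate a genuine obstacle here; the whole argument is a matter of careful index bookkeeping. The only points requiring attention are choosing each $j$ so that the correct three of the five terms vanish, and tracking how the split sequences and the polynomials relabel when passing to $\Phi^*$, in particular the reversal $\phi_i\mapsto\phi_{d-i+1}$ in the second split sequence.
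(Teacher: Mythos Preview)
Your argument is correct and essentially the same as the paper's: for (i) and (ii) it makes exactly the substitutions $j=i+1$ in (\ref{eq:tijs}) and $j=d-i+1$ in (\ref{eq:etaijs}) that you describe. The only cosmetic difference is that for (iii) and (iv) the paper substitutes directly into the remaining two recurrences (\ref{eq:tij}), (\ref{eq:etaij}) rather than invoking $\Phi^*$ at the end---but since those recurrences were themselves obtained by passing to $\Phi^*$ in Lemma~\ref{lem:Tmat}, this amounts to the same thing.
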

\begin{proof} (i) Set $j=i+1$ in 
(\ref{eq:tijs}).
\\
\noindent (ii) Set $j=d-i+1$ in 
(\ref{eq:etaijs}).
\\
\noindent (iii) Set $j=i+1$ in 
(\ref{eq:tij}).
\\
\noindent (ii) Set $j=d-i+1$ in 
(\ref{eq:etaij}).
\end{proof}

\noindent In Lemma
\ref{lem:bbcc} we see some fractions. In order to simplify
these fractions,  we consider the products
\begin{align}
\psi_i = \prod_{h=0}^{i-2} \frac{\theta_i - \theta_{h+1}}{\theta_{i+1}-\theta_h}
\qquad \qquad (1 \leq i \leq d-1).
\label{eq:psin}
\end{align}
Note that $\psi_1 = 1$. 
Using Definition
\ref{def:taui} we find that 
for $1 \leq i \leq d-1$,
\begin{align}
\frac{\tau_i(\theta_i)}
{\tau_{i+1}(\theta_{i+1})} = \frac{\psi_i (\theta_i - \theta_0)}
{(\theta_{i+1}-\theta_i)(\theta_{i+1}-\theta_{i-1})}.
\label{eq:tauVSetas}
\end{align}

\noindent We now describe the scalars
$\lbrace \psi_i \rbrace_{i=1}^{d-1}$ in
detail. To avoid trivialities we assume that $d\geq 3$.
Let $\beta+1$ denote the common value of
(\ref{eq:defbetaplusoneS99int}).
\begin{lemma}
\label{prop:prodformn}
For $d\geq 3$ 
and $1 \leq i \leq d-1$ we have the following.
\begin{enumerate}
\item[\rm (i)] Suppose $\beta \not=2$, $\beta\not=-2$. Then
\begin{align*}
\psi_i = q^{i-1}\frac{q-1}{q^i-1}\,\frac{q^{2}-1}{q^{i+1}-1},
\end{align*}
where  $q+q^{-1}=\beta $.
\item[\rm (ii)] Suppose $\beta = 2$ and 
${\rm char}(\mathbb F) \not=2$. Then
\begin{align*}
\psi_i = \frac{2}{i(i+1)}.
\end{align*}
\item[\rm (iii)] Suppose $\beta = -2$ and 
${\rm char}(\mathbb F) \not=2$.
 Then
\begin{align*}
\psi_i
=  \left\{ \begin{array}{ll}
                   -2/i,  & \mbox{if $i$ is even; } \\
				  2/(i+1), & \mbox{if $i$ is odd.}
				   \end{array}
				\right. 
\end{align*}                     
\item[\rm (iv)] Suppose $\beta = 0$, 
${\rm char}(\mathbb F) =2$, and 
  $d=3$. Then 
\begin{align*}
\psi_i = 1.
\end{align*}
\end{enumerate}
\end{lemma}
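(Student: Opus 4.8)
The plan is to evaluate each factor of the product (\ref{eq:psin}) using Lemma \ref{lem:symeigformulaS99} and then collapse the resulting telescoping product. The crucial observation is that in the $h$th factor $\frac{\theta_i-\theta_{h+1}}{\theta_{i+1}-\theta_h}$ the numerator index-sum $i+(h+1)$ equals the denominator index-sum $(i+1)+h$, both being $i+h+1$; moreover the denominator indices $i+1$ and $h$ are distinct because $0\le h\le i-2$. Since the eigenvalue sequence is mutually distinct (by (PA1)) and $\beta$-recurrent (by Corollary \ref{cor:equal} together with Lemma \ref{lem:recvsbrecS99}, which identify $\beta+1$ with the common value (\ref{eq:defbetaplusoneS99int})), Lemma \ref{lem:symeigformulaS99} applies to each factor, and its four parts produce the four cases here.

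For part (i), formula (\ref{eq:symeiggencasenewthreetermS99}) rewrites the $h$th factor as $\frac{q^i-q^{h+1}}{q^{i+1}-q^h}$, which after extracting powers of $q$ equals $q\,\frac{q^{i-h-1}-1}{q^{i+1-h}-1}$. Taking the product over $0\le h\le i-2$ yields a prefactor $q^{i-1}$ times $\frac{\prod_{m=1}^{i-1}(q^m-1)}{\prod_{n=3}^{i+1}(q^n-1)}$; all factors with exponent between $3$ and $i-1$ cancel, leaving $q^{i-1}\frac{(q-1)(q^2-1)}{(q^i-1)(q^{i+1}-1)}$, as claimed. Part (ii) is the same computation at $q\to 1$: formula (\ref{eq:symeigbeta2newthreetermS99}) turns the factor into $\frac{i-h-1}{i+1-h}$, so the product telescopes to $\frac{(i-1)!}{(i+1)!/2}=\frac{2}{i(i+1)}$. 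Part (iv) is immediate: by Lemma \ref{lem:closedformcommentthreetermS99}(iv) we have $d=3$, so $i\in\{1,2\}$, and since the numerator indices $i,h+1$ are distinct and the denominator indices $i+1,h$ are distinct, part (iv) of Lemma \ref{lem:symeigformulaS99} makes every factor equal to $1$, whence $\psi_i=1$.

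The remaining part (iii) is where the care is needed and is the main obstacle, because part (iii) of Lemma \ref{lem:symeigformulaS99} branches on the parity of the numerator index-sum $i+h+1$. When $i+h$ is even the factor equals $(-1)^{i+(i+1)}=-1$, and when $i+h$ is odd it equals $-\frac{i-h-1}{i+1-h}$. I would therefore split the product according to the parity of $h$ (for fixed $i$), separately accumulating the sign contributed by the $-1$'s and the telescoping product of the surviving ratios $\frac{i-h-1}{i+1-h}$. The bookkeeping differs for $i$ even versus $i$ odd, with expected outcome $\psi_i=-2/i$ for even $i$ and $\psi_i=2/(i+1)$ for odd $i$. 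To avoid an error-prone direct count I would instead confirm the stated formula by induction on $i$, checking the ratio $\psi_{i+1}/\psi_i$ against the single new factor appended to the product. As a sanity check one verifies the small cases $\psi_1=1$, $\psi_2=-1$, $\psi_3=\tfrac12$, $\psi_4=-\tfrac12$, which agree with both the parity formula and the direct evaluation.
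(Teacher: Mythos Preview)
Your proposal is correct and follows essentially the same approach as the paper: the paper's proof reads in its entirety ``Evaluate the right-hand side of (\ref{eq:psin}) using Lemma \ref{lem:symeigformulaS99}, and simplify the result,'' and you have supplied exactly that evaluation, with the telescoping made explicit in cases (i), (ii) and the parity split identified in case (iii). Your sanity checks for $\psi_1,\psi_2,\psi_3,\psi_4$ in case (iii) are all correct, and either of the two completion strategies you suggest (direct parity bookkeeping or induction via $\psi_{i+1}/\psi_i$) works without difficulty.
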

\begin{proof} 
Evaluate the right-hand side of
(\ref{eq:psin}) using
Lemma
\ref{lem:symeigformulaS99}, and simplify the result.
\end{proof}

\begin{note}\rm Under the assumptions of Lemma
\ref{prop:prodformn}(iii),
\begin{align*}
\psi_i =\frac{4 (-1)^i}{(-1)^i-1-2i} \qquad \qquad (1 \leq i \leq d-1).
\end{align*}
\end{note}

\begin{corollary}
\label{cor:psiDown}
For $1\leq i \leq d-1$,
\begin{align*}
\frac{\tau^*_i(\theta^*_i)}
{\tau^*_{i+1}(\theta^*_{i+1})} &= \frac{\psi_i (\theta^*_i - \theta^*_0)}
{(\theta^*_{i+1}-\theta^*_i)(\theta^*_{i+1}-\theta^*_{i-1})},
\\
\frac{\eta_{d-i}(\theta_i)}
{\eta_{d-i+1}(\theta_{i-1})} &= \frac{\psi_{d-i} (\theta_i - \theta_d)}
{(\theta_{i-1}-\theta_i)(\theta_{i-1}-\theta_{i+1})},
\\
\frac{\eta^*_{d-i}(\theta^*_i)}
{\eta^*_{d-i+1}(\theta^*_{i-1})} &= \frac{\psi_{d-i} (\theta^*_i - \theta^*_d)}
{(\theta^*_{i-1}-\theta^*_i)(\theta^*_{i-1}-\theta^*_{i+1})}.
\end{align*}
\end{corollary}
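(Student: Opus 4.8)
The plan is to derive all three identities from the single established identity (\ref{eq:tauVSetas}) by transporting it along the maps $*$ and $\Downarrow$, after first observing that the products $\psi_i$ are insensitive to which of our eigenvalue sequences is used to build them. The core input is (\ref{eq:tauVSetas}), which already expresses $\tau_i(\theta_i)/\tau_{i+1}(\theta_{i+1})$ in terms of $\psi_i$ and the $\theta$'s; everything else is relabelling.

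First I would record the invariance of the $\psi_i$. By Corollary \ref{cor:equal} (equivalently (PA5)) together with Lemma \ref{lem:recvsbrecS99}, there is a single $\beta \in \mathbb F$ such that both $\lbrace \theta_i\rbrace_{i=0}^d$ and $\lbrace \theta^*_i\rbrace_{i=0}^d$ are $\beta$-recurrent. Reversing a $\beta$-recurrent sequence leaves it $\beta$-recurrent with the same $\beta$ (the defining relation (\ref{eq:betarecS99}) is invariant under $i\mapsto d-i$), so the reversed sequences $\lbrace \theta_{d-i}\rbrace_{i=0}^d$ and $\lbrace \theta^*_{d-i}\rbrace_{i=0}^d$ are $\beta$-recurrent as well, with the same $\beta$. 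All four sequences are mutually distinct since $A,A^*$ are multiplicity-free, so Lemma \ref{prop:prodformn} applies and expresses the associated $\psi$-product purely in terms of $\beta$ and the index. Consequently the product $\prod_{h=0}^{j-2}(\cdots)$ formed from any one of these four sequences equals $\psi_j$ of $\Phi$; the cases $d\leq 2$ are trivial, since then $\psi_1=1$ and there is nothing further.

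Now for the first identity I would apply (\ref{eq:tauVSetas}) to $\Phi^*$. By Proposition \ref{prop:ttaa} the eigenvalue sequence of $\Phi^*$ is $\lbrace \theta^*_i\rbrace$, so the polynomials $\tau_i$ of $\Phi^*$ are exactly the $\tau^*_i$ of $\Phi$, and by the previous paragraph the $\psi_i$ of $\Phi^*$ equals $\psi_i$ of $\Phi$; substituting $\theta\mapsto\theta^*$ and $\tau\mapsto\tau^*$ in (\ref{eq:tauVSetas}) gives the first displayed equation. For the second identity I would apply (\ref{eq:tauVSetas}) to $\Phi^\Downarrow$, whose eigenvalue sequence is $\lbrace \theta_{d-i}\rbrace$; from Definition \ref{def:taui} the polynomial $\tau_j$ of $\Phi^\Downarrow$ equals the $\eta_j$ of $\Phi$. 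The resulting relation equates $\eta_j(\theta_{d-j})/\eta_{j+1}(\theta_{d-j-1})$ with $\psi_j^\Downarrow(\theta_{d-j}-\theta_d)$ divided by $(\theta_{d-j-1}-\theta_{d-j})(\theta_{d-j-1}-\theta_{d-j+1})$; putting $j=d-i$ and using $\psi_{d-i}^\Downarrow=\psi_{d-i}$ yields the second displayed equation. The third identity is the second one applied to $\Phi^*$, using once more that the $\eta$ of $\Phi^*$ is the $\eta^*$ of $\Phi$ and that the $\psi$-products coincide.

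The only genuine obstacle is the bookkeeping behind the invariance of $\psi_i$: one must be certain that the products built from the reversed and starred sequences really reduce to the same $\psi_j$, and this is exactly where Corollary \ref{cor:equal} is indispensable, since the equality of the two common ratios forces a single $\beta$ to govern all four sequences so that Lemma \ref{prop:prodformn} can be invoked uniformly. Once that invariance is established, each of the three identities is just (\ref{eq:tauVSetas}) transported along $*$, $\Downarrow$, or their composite, followed by a routine index substitution.
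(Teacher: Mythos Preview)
Your proposal is correct and follows essentially the same approach as the paper: both argue that $\psi_i$ depends only on $i$ and the common $\beta$ (via Lemma~\ref{prop:prodformn} and Corollary~\ref{cor:equal}), hence is unchanged under $*$ and $\Downarrow$, and then transport (\ref{eq:tauVSetas}) along these operations with the appropriate reindexing. Your write-up supplies a bit more detail on the invariance of $\psi_i$ under reversal, but the logic is identical to the paper's.
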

\begin{proof} The result is vacuous for $d\leq 1$
and trivial for $d=2$. Next assume that $d\geq 3$.
From the data in
Lemma \ref{prop:prodformn}, we see that for $1 \leq i \leq d-1$
the scalar $\psi_i$ depends only on $i$ and
$\beta$. Consequently $\psi_i$ is unchanged if we replace $\Phi$
by $\Phi^*$ or $\Phi^\Downarrow$. The result follows from
these comments and
(\ref{eq:tauVSetas}).
\end{proof}

\begin{proposition}
\label{prop:cibiform}
\rm
For $d\geq 1$ we have
\begin{align*}
b_0 &= \frac{\varphi_1}{\theta^*_1-\theta^*_0},
\\
b_i &= \frac{\varphi_{i+1} \psi_i (\theta^*_i-\theta^*_0)}{
(\theta^*_{i+1}-\theta^*_i) 
(\theta^*_{i+1}-\theta^*_{i-1}) 
}
\qquad  (1 \leq i \leq d-1),
\\
c_i &= \frac{\phi_{i} \psi_{d-i} (\theta^*_i-\theta^*_d)}{
(\theta^*_{i-1}-\theta^*_i) 
(\theta^*_{i-1}-\theta^*_{i+1}) 
}
\qquad  (1 \leq i \leq d-1),
\\
c_d &= \frac{\phi_d}{\theta^*_{d-1}-\theta^*_d}
\end{align*}
\noindent and
\begin{align*}
b^*_0 &= \frac{\varphi_1}{\theta_1-\theta_0},
\\
b^*_i &= \frac{\varphi_{i+1} \psi_i (\theta_i-\theta_0)}{
(\theta_{i+1}-\theta_i) 
(\theta_{i+1}-\theta_{i-1}) 
}
\qquad  (1 \leq i \leq d-1),
\\
c^*_i &= \frac{\phi_{d-i+1} \psi_{d-i} (\theta_i-\theta_d)}{
(\theta_{i-1}-\theta_i) 
(\theta_{i-1}-\theta_{i+1}) 
}
\qquad  (1 \leq i \leq d-1),
\\
c^*_d &= \frac{\phi_1}{\theta_{d-1}-\theta_d}.
\end{align*}
\end{proposition}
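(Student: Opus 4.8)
The plan is to obtain every formula by feeding closed forms for the polynomial ratios of Lemma \ref{lem:bbcc} into that lemma. Recall that Lemma \ref{lem:bbcc} writes each intersection number as a split-sequence scalar times a ratio of the polynomials $\tau, \eta, \tau^*, \eta^*$ evaluated at eigenvalues, and that (\ref{eq:tauVSetas}) together with Corollary \ref{cor:psiDown} evaluates exactly these four ratios in terms of the scalars $\psi_i$ and differences of eigenvalues. So the proof is a direct substitution, and there is no serious obstacle; the only care needed is that the ratio evaluations are valid for $1 \le i \le d-1$, so the two endpoints must be handled separately.

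For the interior indices $1 \le i \le d-1$ I would combine the four parts of Lemma \ref{lem:bbcc} with the four ratio evaluations. Part (i) with the first display of Corollary \ref{cor:psiDown} gives $b_i$; part (iii) with (\ref{eq:tauVSetas}) gives $b^*_i$; part (ii) with the third display of Corollary \ref{cor:psiDown} gives $c_i$; and part (iv) with the second display of Corollary \ref{cor:psiDown} gives $c^*_i$. In each case the factor $\psi_i$ or $\psi_{d-i}$ and the eigenvalue differences appear precisely as claimed.

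It remains to treat the four endpoint cases, which I would read off directly from Lemma \ref{lem:bbcc} using Definition \ref{def:taui}. There $\tau_0 = \eta_0 = \tau^*_0 = \eta^*_0 = 1$, while $\tau_1 = \lambda - \theta_0$, $\eta_1 = \lambda - \theta_d$, $\tau^*_1 = \lambda - \theta^*_0$, and $\eta^*_1 = \lambda - \theta^*_d$. Thus Lemma \ref{lem:bbcc}(i) at $i=0$ gives $b_0 = \varphi_1/(\theta^*_1 - \theta^*_0)$, part (ii) at $i=d$ gives $c_d = \phi_d/(\theta^*_{d-1}-\theta^*_d)$, and parts (iii), (iv) likewise give $b^*_0 = \varphi_1/(\theta_1-\theta_0)$ and $c^*_d = \phi_1/(\theta_{d-1}-\theta_d)$. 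This completes the list. As a consistency check one may note, via Proposition \ref{prop:ttaa} and Lemma \ref{prop:pacomp}, that the starred formulas are exactly the unstarred ones applied to $\Phi^*$, since passing to $\Phi^*$ interchanges starred and nonstarred symbols, fixes $\lbrace \varphi_i \rbrace_{i=1}^d$, sends $\lbrace \phi_i \rbrace_{i=1}^d$ to $\lbrace \phi_{d-i+1}\rbrace_{i=1}^d$, and leaves each $\psi_i$ unchanged.
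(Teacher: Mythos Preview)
Your proposal is correct and follows exactly the paper's approach: the paper's proof is the single sentence ``Evaluate the formulas in Lemma \ref{lem:bbcc} using (\ref{eq:tauVSetas}) and Corollary \ref{cor:psiDown},'' and you have simply spelled out which part of Lemma \ref{lem:bbcc} matches which ratio evaluation, plus the endpoint cases via Definition \ref{def:taui}.
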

\begin{proof}
Evaluate the formulas in Lemma
\ref{lem:bbcc} using
(\ref{eq:tauVSetas})
and
Corollary
\ref{cor:psiDown}.
\end{proof}

\noindent We mention some attractive formulas involving
the intersection numbers; similar formulas apply to
the dual intersection numbers.

\begin{lemma}
\label{lem:frac}
The following hold:
\begin{enumerate}
\item[\rm (i)] for $0 \leq i \leq d-1$,
\begin{align}
\label{eq:hold1}
\frac{
\theta_0 + \theta_1 + \cdots + \theta_i - a_0 -a_1-\cdots - a_i}
{b_i} 
= \prod_{h=0}^{i-1}\frac{\theta^*_{i+1}-\theta^*_h}{\theta^*_i - \theta^*_h};
\end{align}
\item[\rm (ii)] for $1 \leq i \leq d$,
\begin{align}
\label{eq:hold2}
\frac{
a_0 +a_1+\cdots + a_{i-1}
-
\theta_d - \theta_{d-1} - \cdots - \theta_{d-i+1} 
}{c_i} 
= \prod_{h=i+1}^{d}\frac{\theta^*_{i-1}-\theta^*_h}
{\theta^*_i - \theta^*_h}.
\end{align}
\end{enumerate}
\end{lemma}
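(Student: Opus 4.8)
The plan is to reduce each side to quantities that are already in closed form: rewrite the numerators using the split-sequence sum identities of Lemmas \ref{lem:sumaieiequalsvarphiS99} and \ref{lem:sumaieiequalsphiS99}, express $b_i$ and $c_i$ through Lemma \ref{lem:bbcc}, and then cancel the common $\varphi$ (resp.\ $\phi$) factor, leaving a quotient of the monic polynomials $\tau^*$, $\eta^*$ that telescopes into the stated product.

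For part (i), I would first apply Lemma \ref{lem:sumaieiequalsvarphiS99} with $i$ replaced by $i+1$ (the first of its four expressions) to get $\sum_{h=0}^{i}(\theta_h-a_h)=\varphi_{i+1}/(\theta^*_{i+1}-\theta^*_i)$, so the numerator on the left of (\ref{eq:hold1}) equals $\varphi_{i+1}/(\theta^*_{i+1}-\theta^*_i)$. Next I would substitute $b_i=\varphi_{i+1}\,\tau^*_i(\theta^*_i)/\tau^*_{i+1}(\theta^*_{i+1})$ from Lemma \ref{lem:bbcc}(i). The factors $\varphi_{i+1}$ cancel, leaving $\tau^*_{i+1}(\theta^*_{i+1})\big/\big((\theta^*_{i+1}-\theta^*_i)\,\tau^*_i(\theta^*_i)\big)$. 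Expanding $\tau^*_{i+1}(\theta^*_{i+1})=\prod_{h=0}^{i}(\theta^*_{i+1}-\theta^*_h)$ and $\tau^*_i(\theta^*_i)=\prod_{h=0}^{i-1}(\theta^*_i-\theta^*_h)$ by Definition \ref{def:taui}, the $h=i$ factor of the numerator cancels the explicit $(\theta^*_{i+1}-\theta^*_i)$, and what remains is exactly $\prod_{h=0}^{i-1}(\theta^*_{i+1}-\theta^*_h)/(\theta^*_i-\theta^*_h)$, the right side of (\ref{eq:hold1}).

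For part (ii), I would rewrite the numerator of (\ref{eq:hold2}) as $-\sum_{h=0}^{i-1}(\theta_{d-h}-a_h)$, using $\theta_d+\cdots+\theta_{d-i+1}=\sum_{h=0}^{i-1}\theta_{d-h}$. By the first expression in Lemma \ref{lem:sumaieiequalsphiS99} this sum equals $\phi_i/(\theta^*_i-\theta^*_{i-1})$, so the numerator equals $\phi_i/(\theta^*_{i-1}-\theta^*_i)$. Substituting $c_i=\phi_i\,\eta^*_{d-i}(\theta^*_i)/\eta^*_{d-i+1}(\theta^*_{i-1})$ from Lemma \ref{lem:bbcc}(ii) and cancelling $\phi_i$ leaves $\eta^*_{d-i+1}(\theta^*_{i-1})\big/\big((\theta^*_{i-1}-\theta^*_i)\,\eta^*_{d-i}(\theta^*_i)\big)$. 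Here Definition \ref{def:taui} gives $\eta^*_{d-i+1}(\theta^*_{i-1})=\prod_{h=i}^{d}(\theta^*_{i-1}-\theta^*_h)$ and $\eta^*_{d-i}(\theta^*_i)=\prod_{h=i+1}^{d}(\theta^*_i-\theta^*_h)$; the $h=i$ factor of the numerator cancels $(\theta^*_{i-1}-\theta^*_i)$ and the quotient collapses to $\prod_{h=i+1}^{d}(\theta^*_{i-1}-\theta^*_h)/(\theta^*_i-\theta^*_h)$, as required.

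The argument is essentially routine once the right identities are invoked; there is no deep obstacle, and the one place demanding care is the index bookkeeping in the $\eta^*$ products of part (ii), where one must track that $\eta^*_{d-i+1}$ runs over $h=i,\ldots,d$ while $\eta^*_{d-i}$ runs over $h=i+1,\ldots,d$, so that the single surplus $h=i$ factor is precisely what cancels the explicit difference $\theta^*_{i-1}-\theta^*_i$. No separate boundary analysis is needed: at $i=0$ in (i) one has $\tau^*_0=1$ and at $i=d$ in (ii) one has $\eta^*_0=1$, so the quotients collapse to the empty product $1$ on both sides, matching the stated ranges.
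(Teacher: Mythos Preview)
Your proof is correct and follows essentially the same approach as the paper's: both eliminate $b_i$ (resp.\ $c_i$) via Lemma~\ref{lem:bbcc} and evaluate the numerator via Lemma~\ref{lem:sumaieiequalsvarphiS99} (resp.\ Lemma~\ref{lem:sumaieiequalsphiS99}), after which the $\tau^*$ and $\eta^*$ products telescope to the stated right-hand sides. Your writeup simply fills in the routine simplification that the paper leaves implicit.
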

\begin{proof} (i) To verify
(\ref{eq:hold1}), eliminate $b_i$ using
Lemma
\ref{lem:bbcc}(i), and evaluate the result using
Lemma
\ref{lem:sumaieiequalsvarphiS99}.
\\
\noindent 
(ii) To verify
(\ref{eq:hold2}), eliminate $c_i$ using
Lemma
\ref{lem:bbcc}(ii), and evaluate the result using
Lemma
\ref{lem:sumaieiequalsphiS99}.
\end{proof}

\begin{corollary} 
\label{cor:c1}
For $d\geq 1$,
\begin{align}
\label{eq:bdm1}
&b_{d-1} = (a_d-\theta_d)\frac{
(\theta^*_{d-1} - \theta^*_0)
(\theta^*_{d-1} - \theta^*_1)
\cdots
(\theta^*_{d-1} - \theta^*_{d-2})}
{(\theta^*_d - \theta^*_0)
(\theta^*_d - \theta^*_1)
\cdots
(\theta^*_d - \theta^*_{d-2})},
\\
\label{eq:c1}
&c_1 = (a_0 - \theta_d)\frac{
(\theta^*_1 - \theta^*_2)
(\theta^*_1 - \theta^*_3)
\cdots
(\theta^*_1 - \theta^*_d)}
{(\theta^*_0 - \theta^*_2)
(\theta^*_0 - \theta^*_3)
\cdots
(\theta^*_0 - \theta^*_d)}.
\end{align}
\end{corollary}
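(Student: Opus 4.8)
The plan is to specialize Lemma \ref{lem:frac} at the two extreme indices and then tidy up using the trace identity of Lemma \ref{lem:aithi}.

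First I would derive (\ref{eq:c1}). Setting $i=1$ in Lemma \ref{lem:frac}(ii), the sum $a_0+\cdots+a_{i-1}$ in the numerator collapses to the single term $a_0$ and the subtracted sum collapses to $\theta_d$, so the left-hand side reads $(a_0-\theta_d)/c_1$, while the product on the right runs over $2\le h\le d$. Solving for $c_1$ and inverting the product, so that each factor $(\theta^*_0-\theta^*_h)/(\theta^*_1-\theta^*_h)$ is replaced by its reciprocal, yields exactly (\ref{eq:c1}). This direction uses nothing beyond Lemma \ref{lem:frac}(ii).

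For (\ref{eq:bdm1}) I would set $i=d-1$ in Lemma \ref{lem:frac}(i), which gives
\[
\frac{\theta_0+\cdots+\theta_{d-1}-a_0-\cdots-a_{d-1}}{b_{d-1}}
=\prod_{h=0}^{d-2}\frac{\theta^*_d-\theta^*_h}{\theta^*_{d-1}-\theta^*_h}.
\]
The one nontrivial step is recognizing that the numerator equals $a_d-\theta_d$. By (\ref{eq:aithi}) we have $\sum_{i=0}^d\theta_i=\sum_{i=0}^d a_i$, hence
\[
\sum_{h=0}^{d-1}(\theta_h-a_h)=\Bigl(\sum_{h=0}^{d}\theta_h-\theta_d\Bigr)-\sum_{h=0}^{d-1}a_h=a_d-\theta_d.
\]
Substituting this and inverting the finite product (replacing $(\theta^*_d-\theta^*_h)/(\theta^*_{d-1}-\theta^*_h)$ by its reciprocal) then gives (\ref{eq:bdm1}).

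The argument is essentially a pair of substitutions, so I do not expect a genuine obstacle; the only points needing care are tracking which endpoints survive in the collapsed sums and inverting the products with the correct orientation, together with the single invocation of the global trace identity (\ref{eq:aithi}) to rewrite the partial sum $\sum_{h=0}^{d-1}(\theta_h-a_h)$ in the closed form $a_d-\theta_d$.
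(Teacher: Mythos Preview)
Your proof is correct and matches the paper's approach exactly: the paper also obtains (\ref{eq:bdm1}) by setting $i=d-1$ in (\ref{eq:hold1}) and simplifying via (\ref{eq:aithi}), and obtains (\ref{eq:c1}) by setting $i=1$ in (\ref{eq:hold2}).
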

\begin{proof} 
To get
(\ref{eq:bdm1})
set $i=d-1$ in
(\ref{eq:hold1}),
 and simplify the result using
(\ref{eq:aithi}).
To get 
(\ref{eq:c1})
set $i=1$ in
(\ref{eq:hold2}).
\end{proof}

\noindent 
Next we obtain $\lbrace \theta_i \rbrace_{i=0}^d$
in terms of the
intersection numbers and dual eigenvalues. We give two versions,
that resemble \cite[Corollary~8.3.3]{BCN}.
\begin{lemma}
\label{lem:vv1}
For $d\geq 1$,
\begin{align*}
\theta_0 &= a_0 + b_0,
\\
\theta_i &= a_i +
b_i \prod_{h=0}^{i-1}\frac{\theta^*_{i+1}-\theta^*_h}{\theta^*_i-\theta^*_h}
-
b_{i-1} \prod_{h=0}^{i-2}\frac{\theta^*_{i}-\theta^*_h}{\theta^*_{i-1}-\theta^*_h}
\qquad (1 \leq i \leq d-1),
\\
\theta_d &= a_d -
b_{d-1} \prod_{h=0}^{d-2}\frac{\theta^*_{d}-\theta^*_h}{\theta^*_{d-1}-\theta^*_h}.
\end{align*}
\end{lemma}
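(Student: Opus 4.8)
The plan is to derive all three formulas from the single identity (\ref{eq:hold1}) in Lemma \ref{lem:frac}(i), together with the trace identity (\ref{eq:aithi}) in Lemma \ref{lem:aithi}. First I would introduce the partial sums $\sigma_i = \theta_0 + \theta_1 + \cdots + \theta_i$ and $\alpha_i = a_0 + a_1 + \cdots + a_i$, and abbreviate the product on the right-hand side of (\ref{eq:hold1}) by $\pi_i = \prod_{h=0}^{i-1}(\theta^*_{i+1}-\theta^*_h)/(\theta^*_i-\theta^*_h)$ for $0 \leq i \leq d-1$. With this notation, Lemma \ref{lem:frac}(i) asserts $\sigma_i - \alpha_i = b_i \pi_i$ for $0 \leq i \leq d-1$.

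The three formulas then come out by telescoping. The case $i=0$ is immediate: the product $\pi_0$ is empty and hence equals $1$, so $\sigma_0 - \alpha_0 = b_0$, which is $\theta_0 - a_0 = b_0$ and gives the first formula. For the middle range $1 \leq i \leq d-1$, I would write $\theta_i = \sigma_i - \sigma_{i-1}$, substitute $\sigma_i = \alpha_i + b_i \pi_i$ and $\sigma_{i-1} = \alpha_{i-1} + b_{i-1}\pi_{i-1}$, and use $\alpha_i - \alpha_{i-1} = a_i$ to obtain $\theta_i = a_i + b_i \pi_i - b_{i-1}\pi_{i-1}$; writing $\pi_i$ and $\pi_{i-1}$ back out as the two displayed products recovers the middle formula exactly. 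For the top index $i=d$, the identity (\ref{eq:hold1}) is not available, so here I would invoke Lemma \ref{lem:aithi}: equation (\ref{eq:aithi}) says $\sigma_d = \alpha_d$, both being equal to $\operatorname{tr}(A)$. Consequently $\theta_d = \sigma_d - \sigma_{d-1} = \alpha_d - (\alpha_{d-1} + b_{d-1}\pi_{d-1}) = a_d - b_{d-1}\pi_{d-1}$, which is the last formula.

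There is no real obstacle here; the argument is a clean telescoping sum. The only points requiring a moment of care are the two boundary indices: at $i=0$ one must note that the relevant product is empty and equals $1$, and at $i=d$ one must replace the (unavailable) identity (\ref{eq:hold1}) by the trace identity (\ref{eq:aithi}) to supply the missing top partial-sum relation $\sigma_d = \alpha_d$.
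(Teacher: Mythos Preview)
Your proof is correct and essentially identical to the paper's: the paper also telescopes (\ref{eq:hold1}) to obtain $\theta_0,\ldots,\theta_{d-1}$, and for $\theta_d$ it cites (\ref{eq:bdm1}) from Corollary~\ref{cor:c1}, which is itself obtained by combining (\ref{eq:hold1}) at $i=d-1$ with the trace identity (\ref{eq:aithi}). So your direct appeal to (\ref{eq:aithi}) simply unwinds that intermediate citation.
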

\begin{proof} To find $\theta_i$ for $0 \leq i \leq d-1$,
solve
(\ref{eq:hold1}) for $\theta_i$ and use induction on $i$.
The formula for $\theta_d$ comes from
(\ref{eq:bdm1}).
\end{proof}

\begin{lemma}
\label{lem:vv2}
For $d\geq 1$,
\begin{align*}
\theta_0 &= a_d + c_d,
\\
\theta_i &= a_{d-i} +
c_{d-i} \prod_{h=d-i+1}^{d}\frac{\theta^*_{d-i-1}-\theta^*_h}{\theta^*_{d-i}-\theta^*_h}
-
c_{d-i+1} \prod_{h=d-i+2}^{d}\frac{\theta^*_{d-i}-\theta^*_h}{\theta^*_{d-i+1}-\theta^*_h}
\qquad (1 \leq i \leq d-1),
\\
\theta_d &= a_0 -
c_1 \prod_{h=2}^d\frac{\theta^*_0-\theta^*_h}{\theta^*_1-\theta^*_h}.
\end{align*}
\end{lemma}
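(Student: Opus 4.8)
The plan is to mirror the proof of Lemma \ref{lem:vv1}, replacing the cumulative identity (\ref{eq:hold1}) by (\ref{eq:hold2}), and to recover the individual eigenvalues by a telescoping argument. For $1 \leq i \leq d$ I would set
\begin{align*}
F_i = a_0 + a_1 + \cdots + a_{i-1} - \theta_d - \theta_{d-1} - \cdots - \theta_{d-i+1},
\end{align*}
together with $F_0 = 0$, so that (\ref{eq:hold2}) reads $F_i = c_i \prod_{h=i+1}^d (\theta^*_{i-1}-\theta^*_h)/(\theta^*_i - \theta^*_h)$. Since $F_i - F_{i-1} = a_{i-1} - \theta_{d-i+1}$, this yields $\theta_{d-i+1} = a_{i-1} - F_i + F_{i-1}$ for $1 \leq i \leq d$. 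Substituting the product expressions for $F_i$ and $F_{i-1}$ and reindexing by $j = d-i+1$ produces exactly the middle formula of the lemma.

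Next I would dispose of the two boundary cases. The telescoping only produces $\theta_1,\dots,\theta_d$, so $\theta_0$ must be handled separately: setting $i=d$ in (\ref{eq:crs}) and using $b_d=0$ gives $\theta_0 = a_d + c_d$. For $\theta_d$, the general telescoped formula at $i=1$ (equivalently $j=d$) has its $c_{d-j}=c_0$ term drop out because $c_0=0$, leaving $\theta_d = a_0 - c_1 \prod_{h=2}^d (\theta^*_0-\theta^*_h)/(\theta^*_1-\theta^*_h)$, which is precisely (\ref{eq:c1}) and hence automatically consistent. The only genuine care needed is the index bookkeeping: one must verify that under $j = d-i+1$ the products $\prod_{h=i+1}^d$ and $\prod_{h=i}^d$ become the stated $\prod_{h=d-j+2}^d$ and $\prod_{h=d-j+1}^d$, and that the conventions $c_0=0$ and the empty product keep the single middle formula valid throughout $1\leq i\leq d-1$ while $\theta_0,\theta_d$ are treated as above. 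I expect this reindexing, rather than any conceptual difficulty, to be the main (and only mild) obstacle, since the underlying computation is routine.

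Alternatively, and perhaps more cleanly, one may deduce the lemma directly from Lemma \ref{lem:vv1} applied to $\Phi^\downarrow$. The $\Phi^\downarrow$-standard basis is the $\Phi$-standard basis listed in reverse order, because the $A$-idempotent ordering is unchanged under $\downarrow$ while the dual idempotents reverse; consequently the matrix representing $A$ with respect to the $\Phi^\downarrow$-standard basis is the reversal of the one with respect to the $\Phi$-standard basis. Reversing a tridiagonal matrix swaps its sub- and superdiagonals and reverses each, so the intersection numbers of $\Phi^\downarrow$ satisfy $b^\downarrow_i = c_{d-i}$ with diagonal sequence $\{a_{d-i}\}_{i=0}^d$. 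Feeding these, together with the eigenvalue data of $\Phi^\downarrow$ from Proposition \ref{prop:ttaa} (namely $\{\theta_i\}$ and $\{\theta^*_{d-i}\}$), into Lemma \ref{lem:vv1}, and reindexing each product by $h \mapsto d-h$, reproduces all three displayed formulas including both endpoints. This route trades the telescoping for the short verification that passing to $\Phi^\downarrow$ reverses the standard basis, and I would likely present it as the primary argument.
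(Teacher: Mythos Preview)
Your first approach is exactly the paper's: it solves for $\theta_1,\ldots,\theta_d$ by telescoping (\ref{eq:hold2}) and obtains $\theta_0$ from (\ref{eq:crs}) at $i=d$. Your alternative route via $\Phi^\downarrow$ is also correct and is not used in the paper; the identification $b^\downarrow_i=c_{d-i}$, $a^\downarrow_i=a_{d-i}$, $\theta^{*\downarrow}_i=\theta^*_{d-i}$ is right, and the reindexing $h\mapsto d-h$ converts the products in Lemma~\ref{lem:vv1} into those stated here, so this gives a clean one-line deduction at the cost of checking how $\downarrow$ acts on the standard basis.
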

\begin{proof} To find $\theta_d, \theta_{d-1},\ldots, \theta_1$
use 
(\ref{eq:hold2}).
The formula for $\theta_0$ comes from
(\ref{eq:crs}) at $i=d$.
\end{proof}

\noindent Next we obtain some results about duality.

\begin{lemma}
\label{lem:thdual}
For $0 \leq i,j,r,s\leq d$ such that $i+j=r+s$ and $r\not=s$,
\begin{align*}
\frac{\theta_i - \theta_j}{\theta_r-\theta_s}
=
\frac{\theta^*_i - \theta^*_j}{\theta^*_r-\theta^*_s}.
\end{align*}
\end{lemma}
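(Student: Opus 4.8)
The plan is to reduce the identity to Lemma~\ref{lem:symeigformulaS99}, whose whole point is that for a mutually distinct, $\beta$-recurrent sequence the ratio $(\theta_i-\theta_j)/(\theta_r-\theta_s)$ with $i+j=r+s$ and $r\neq s$ is given by an explicit expression in $\beta$ (equivalently $q$) and the four indices \emph{alone}---the actual scalars $\theta_\bullet$ never appear on the right-hand side. Consequently, once I know that $\lbrace\theta_i\rbrace_{i=0}^d$ and $\lbrace\theta^*_i\rbrace_{i=0}^d$ are both $\beta$-recurrent for one and the same $\beta$, applying that lemma to each of the two sequences yields two ratios equal to the \emph{same} right-hand side, and the claimed equality drops out immediately. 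Note that both sequences are mutually distinct because $A$ and $A^*$ are multiplicity-free.

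The key preliminary is therefore to produce a common $\beta$. First I would dispose of the degenerate range $d\leq 2$ by hand: there, whenever $i+j=r+s$ with $r\neq s$, the multiset $\lbrace i,j\rbrace$ either equals $\lbrace r,s\rbrace$ (forcing each side to be $\pm 1$, with the sign determined by the indices alone) or contains a repeated index (forcing each numerator to vanish), so the two sides agree by inspection. For $d\geq 3$ I would invoke Corollary~\ref{cor:equal}, which asserts that the two recurrence ratios $(\theta_{i-2}-\theta_{i+1})/(\theta_{i-1}-\theta_i)$ and $(\theta^*_{i-2}-\theta^*_{i+1})/(\theta^*_{i-1}-\theta^*_i)$ are equal to a common value, independent of $i$, for $2\leq i\leq d-1$. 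Writing this common value as $\beta+1$, the sequences $\lbrace\theta_i\rbrace_{i=0}^d$ and $\lbrace\theta^*_i\rbrace_{i=0}^d$ are recurrent in the sense of Definition~\ref{lem:beginthreetermS99}(i), so Lemma~\ref{lem:recvsbrecS99} shows each is $\beta$-recurrent; the very assertion of that lemma (valid for $d\geq 3$) that the common value of the recurrence ratio equals $\beta+1$ guarantees that it is the \emph{same} $\beta$ for both.

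With the common $\beta$ in hand the finish is routine: apply Lemma~\ref{lem:symeigformulaS99} to $\lbrace\theta_i\rbrace_{i=0}^d$ and to $\lbrace\theta^*_i\rbrace_{i=0}^d$ separately. Since both sequences share the same $\beta$ and live over the same field $\mathbb F$, they fall into the same one of the four cases (i)--(iv) of that lemma, and in every case the right-hand side depends only on $\beta$ (through a choice of $q$ with $q+q^{-1}=\beta$ where relevant) and on $i,j,r,s$. Hence the two evaluations coincide, giving $(\theta_i-\theta_j)/(\theta_r-\theta_s)=(\theta^*_i-\theta^*_j)/(\theta^*_r-\theta^*_s)$.

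I expect the only genuine friction to be bookkeeping rather than mathematics. The crucial subtlety is ensuring the \emph{same} $\beta$ governs both sequences, which is exactly why the reduction rests on Corollary~\ref{cor:equal} (equality of the two ratios) rather than merely on each sequence being separately recurrent. One must also confirm that the case split in Lemma~\ref{lem:symeigformulaS99} is driven by $\beta$ and $\mathrm{char}(\mathbb F)$ alone, so that the starred and unstarred sequences cannot land in different cases. The small-$d$ verification is the other point requiring care, since the identification ``common value $=\beta+1$'' supplied by Lemma~\ref{lem:recvsbrecS99} is only available for $d\geq 3$.
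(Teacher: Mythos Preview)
Your proposal is correct and follows essentially the same approach as the paper: the paper's proof is the one-liner ``Use the data in Lemma~\ref{lem:symeigformulaS99},'' and you have simply unpacked that line by explaining why both sequences share a common $\beta$ (via Corollary~\ref{cor:equal}) and why the right-hand sides in Lemma~\ref{lem:symeigformulaS99} depend only on $\beta$, $\mathrm{char}(\mathbb F)$, and the indices. Your extra care with the $d\leq 2$ case is not strictly needed here, since for $d\leq 2$ the constraint $i+j=r+s$, $r\neq s$ forces $\{i,j\}=\{r,s\}$ anyway, but it does no harm.
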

\begin{proof}
Use the data in Lemma
\ref{lem:symeigformulaS99}.
\end{proof}

\begin{lemma}
\label{lem:bdual}
For $0 \leq i \leq d-1$,
\begin{align*}
&b_i \frac{
(\theta^*_{i+1}-\theta^*_0)
(\theta^*_{i+1}-\theta^*_1)
\cdots (\theta^*_{i+1}-\theta^*_i)}
{(\theta^*_i-\theta^*_0)
(\theta^*_i-\theta^*_1)
\cdots (\theta^*_i-\theta^*_{i-1})}
\\
& \qquad \qquad =\quad b^*_i \frac{
(\theta_{i+1}-\theta_0)
(\theta_{i+1}-\theta_1)
\cdots (\theta_{i+1}-\theta_i)}
{(\theta_i-\theta_0)
(\theta_i-\theta_1)
\cdots (\theta_i-\theta_{i-1})}.
\end{align*}
\end{lemma}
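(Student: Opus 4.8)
The plan is to show that \emph{both} sides of the claimed identity collapse to the single scalar $\varphi_{i+1}$, so that they are trivially equal. The key observation is that the products of eigenvalue differences multiplying $b_i$ on the left (and $b^*_i$ on the right) are exactly the reciprocals of the ratios that appear in Lemma \ref{lem:bbcc}(i) and Lemma \ref{lem:bbcc}(iii). Once this is recognized, the fractions cancel and only $\varphi_{i+1}$ survives.

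Concretely, I would first rewrite the products using Definition \ref{def:taui}. From that definition one has
\[
\tau^*_{i+1}(\theta^*_{i+1}) = \prod_{h=0}^{i}(\theta^*_{i+1}-\theta^*_h), \qquad \tau^*_i(\theta^*_i) = \prod_{h=0}^{i-1}(\theta^*_i-\theta^*_h),
\]
so the factor multiplying $b_i$ on the left-hand side is precisely $\tau^*_{i+1}(\theta^*_{i+1})/\tau^*_i(\theta^*_i)$. By Lemma \ref{lem:bbcc}(i) we have $b_i = \varphi_{i+1}\,\tau^*_i(\theta^*_i)/\tau^*_{i+1}(\theta^*_{i+1})$, and multiplying this by the reciprocal ratio yields exactly $\varphi_{i+1}$. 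Hence the left-hand side equals $\varphi_{i+1}$.

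An entirely parallel computation handles the right-hand side: the factor multiplying $b^*_i$ is $\tau_{i+1}(\theta_{i+1})/\tau_i(\theta_i)$, and Lemma \ref{lem:bbcc}(iii) gives $b^*_i = \varphi_{i+1}\,\tau_i(\theta_i)/\tau_{i+1}(\theta_{i+1})$, so the right-hand side is also $\varphi_{i+1}$. Comparing the two values proves the lemma. There is no genuine obstacle here; the only thing requiring a little care is the index bookkeeping that matches the displayed products to $\tau^*_{i+1}(\theta^*_{i+1})$, $\tau^*_i(\theta^*_i)$ and their unstarred analogues, together with the degenerate case $i=0$, where the empty denominator product is interpreted as $1$ and one checks directly (or via Proposition \ref{prop:cibiform}) that both sides reduce to $\varphi_1$.
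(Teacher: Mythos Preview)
Your proposal is correct and takes essentially the same approach as the paper: the paper's proof is the single line ``Each side is equal to $\varphi_{i+1}$ by Lemma~\ref{lem:bbcc}(i),(iii).'' Your write-up simply unpacks this by identifying the displayed products with $\tau^*_{i+1}(\theta^*_{i+1})/\tau^*_i(\theta^*_i)$ and $\tau_{i+1}(\theta_{i+1})/\tau_i(\theta_i)$, which is exactly the cancellation the paper is invoking.
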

\begin{proof}
Each side is equal to $\varphi_{i+1}$ by Lemma
\ref{lem:bbcc}(i),(iii).
\end{proof}

\begin{lemma} 
\label{lem:dd2}
For $d\geq 1$,
\begin{align*}
b_0(\theta^*_1-\theta^*_0) &= 
b^*_0(\theta_1-\theta_0),
\\
\frac{b_i 
(\theta^*_{i+1}-\theta^*_i)
(\theta^*_{i+1}-\theta^*_{i-1})
}{
\theta^*_i-\theta^*_0}
&=
\frac{b^*_i 
(\theta_{i+1}-\theta_i)
(\theta_{i+1}-\theta_{i-1})
}{
\theta_i-\theta_0}
\qquad (1 \leq i \leq d-1).
\end{align*}
\end{lemma}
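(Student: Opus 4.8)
The plan is to derive Lemma~\ref{lem:dd2} directly from the relations already packaged in Lemma~\ref{lem:bdual}, specializing the general index identity to the two cases $i=0$ and $1 \leq i \leq d-1$. The key observation is that Lemma~\ref{lem:bdual} asserts, for each $i$ in the range $0 \leq i \leq d-1$, an equality of two products, one built from $b_i$ and the dual eigenvalues $\{\theta^*_h\}$, the other from $b^*_i$ and the eigenvalues $\{\theta_h\}$. Both statements of Lemma~\ref{lem:dd2} are obtained from this single identity by canceling common factors, so no new structural input is needed; the work is purely algebraic rearrangement.

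For the case $i=0$, I would set $i=0$ in the identity of Lemma~\ref{lem:bdual}. The empty products in the denominators are $1$ by convention, and the numerators collapse to the single factors $\theta^*_1 - \theta^*_0$ on the left and $\theta_1 - \theta_0$ on the right, yielding exactly $b_0(\theta^*_1 - \theta^*_0) = b^*_0(\theta_1 - \theta_0)$. For the case $1 \leq i \leq d-1$, I would take the identity of Lemma~\ref{lem:bdual} and isolate the ratio of the two eigenvalue-products appearing on each side. The ratio $\tfrac{(\theta^*_{i+1}-\theta^*_0)\cdots(\theta^*_{i+1}-\theta^*_i)}{(\theta^*_i-\theta^*_0)\cdots(\theta^*_i-\theta^*_{i-1})}$ on the left differs from the target expression in Lemma~\ref{lem:dd2} only in the surviving top factor $\theta^*_{i+1}-\theta^*_i$ and the structure of the remaining quotient; the plan is to show that after dividing numerator and denominator by the matching partial products, the leftover factors are precisely $(\theta^*_{i+1}-\theta^*_i)(\theta^*_{i+1}-\theta^*_{i-1})/(\theta^*_i - \theta^*_0)$ up to the terms that also appear on the right-hand side.

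The cleanest route, and the one I would actually carry out, is to recognize that each side of Lemma~\ref{lem:bdual} equals $\varphi_{i+1}$ (this is the content of its proof, via Lemma~\ref{lem:bbcc}(i),(iii)). Thus the natural strategy is to bypass Lemma~\ref{lem:bdual} entirely and instead use Lemma~\ref{lem:bbcc} together with Corollary~\ref{cor:psiDown} and the explicit formulas of Proposition~\ref{prop:cibiform}. From Proposition~\ref{prop:cibiform} we have closed forms $b_i = \varphi_{i+1}\psi_i(\theta^*_i - \theta^*_0)/[(\theta^*_{i+1}-\theta^*_i)(\theta^*_{i+1}-\theta^*_{i-1})]$ and the starred analogue $b^*_i = \varphi_{i+1}\psi_i(\theta_i - \theta_0)/[(\theta_{i+1}-\theta_i)(\theta_{i+1}-\theta_{i-1})]$. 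Solving each for the quantity in the statement of Lemma~\ref{lem:dd2} gives
\begin{align*}
\frac{b_i(\theta^*_{i+1}-\theta^*_i)(\theta^*_{i+1}-\theta^*_{i-1})}{\theta^*_i - \theta^*_0} = \varphi_{i+1}\psi_i = \frac{b^*_i(\theta_{i+1}-\theta_i)(\theta_{i+1}-\theta_{i-1})}{\theta_i - \theta_0},
\end{align*}
where the crucial point is that $\psi_i$ is \emph{self-dual}: by Corollary~\ref{cor:psiDown} (and the observation in its proof that $\psi_i$ depends only on $i$ and $\beta$, hence is unchanged under $\Phi \mapsto \Phi^*$), the factor $\varphi_{i+1}\psi_i$ is literally common to both sides. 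This immediately yields the second equality of Lemma~\ref{lem:dd2}.

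The main obstacle, such as it is, lies in handling the degenerate cases at the ends of the index range and confirming that the conventions for empty products and indeterminate eigenvalues $\theta^*_{-1}, \theta^*_{d+1}$ do not cause spurious factors. For $i=0$ the $\psi_0$ term does not appear (the range of $\psi$ is $1 \leq i \leq d-1$), so the $i=0$ statement must be read off separately from $b_0 = \varphi_1/(\theta^*_1 - \theta^*_0)$ and $b^*_0 = \varphi_1/(\theta_1 - \theta_0)$ in Proposition~\ref{prop:cibiform}, which give $b_0(\theta^*_1 - \theta^*_0) = \varphi_1 = b^*_0(\theta_1 - \theta_0)$ directly. I expect the verification to be entirely routine once the self-duality of $\psi_i$ is invoked, and I would emphasize that this self-duality is the single fact that makes the symmetry between starred and nonstarred intersection numbers manifest. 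The proof therefore reduces to citing Proposition~\ref{prop:cibiform} and Corollary~\ref{cor:psiDown}, with a one-line separate check at $i=0$.
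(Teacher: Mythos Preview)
Your proposal is correct and follows essentially the same approach as the paper: the paper's proof simply reads ``Compare the formulas for $b_i$, $b^*_i$ in Proposition~\ref{prop:cibiform},'' which is exactly what you do in your final paragraph. Note that you need not re-invoke the self-duality of $\psi_i$ via Corollary~\ref{cor:psiDown}, since Proposition~\ref{prop:cibiform} already presents the same $\psi_i$ in both the $b_i$ and $b^*_i$ formulas; citing that proposition alone suffices.
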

\begin{proof} 
Compare the formulas for $b_i$, $b^*_i$ in
Proposition
\ref{prop:cibiform}.
\end{proof}

\begin{lemma}
\label{lem:cdual}
For $0 \leq i \leq d-1$,
\begin{align*}
&c_{i+1} \frac{
(\theta^*_{i}-\theta^*_d)
(\theta^*_{i}-\theta^*_{d-1})
\cdots (\theta^*_{i}-\theta^*_{i+1})}
{(\theta^*_{i+1}-\theta^*_d)
(\theta^*_{i+1}-\theta^*_{d-1})
\cdots (\theta^*_{i+1}-\theta^*_{i+2})}
\\
& \qquad \qquad =\quad c^*_{d-i} \frac{
(\theta_{d-i-1}-\theta_d)
(\theta_{d-i-1}-\theta_{d-1})
\cdots (\theta_{d-i-1}-\theta_{d-i})}
{(\theta_{d-i}-\theta_d)
(\theta_{d-i}-\theta_{d-1})
\cdots (\theta_{d-i}-\theta_{d-i+1})}.
\end{align*}
\end{lemma}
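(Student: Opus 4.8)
The plan is to mirror the proof of Lemma \ref{lem:bdual}: I will show that both sides of the asserted identity equal $\phi_{i+1}$, so that they equal each other. The key tool is Lemma \ref{lem:bbcc}, parts (ii) and (iv), which express $c_i$ and $c^*_i$ as $\phi$'s times ratios of values of the polynomials $\eta^*_j$ and $\eta_j$ from Definition \ref{def:taui}. The heart of the matter is to recognize the two bracketed products in the statement as exactly these ratios, after which everything cancels.

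For the left-hand side I would first use $\eta^*_j = (\lambda - \theta^*_d)(\lambda - \theta^*_{d-1}) \cdots (\lambda - \theta^*_{d-j+1})$ to identify the numerator $(\theta^*_i - \theta^*_d) \cdots (\theta^*_i - \theta^*_{i+1})$ as $\eta^*_{d-i}(\theta^*_i)$ and the denominator $(\theta^*_{i+1} - \theta^*_d)\cdots(\theta^*_{i+1} - \theta^*_{i+2})$ as $\eta^*_{d-i-1}(\theta^*_{i+1})$. Thus the left-hand side equals $c_{i+1}\,\eta^*_{d-i}(\theta^*_i)/\eta^*_{d-i-1}(\theta^*_{i+1})$. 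Applying Lemma \ref{lem:bbcc}(ii) at index $i+1$ gives $c_{i+1} = \phi_{i+1}\,\eta^*_{d-i-1}(\theta^*_{i+1})/\eta^*_{d-i}(\theta^*_i)$, so after cancellation the left-hand side collapses to $\phi_{i+1}$.

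For the right-hand side I would argue symmetrically with the unstarred polynomials $\eta_j = (\lambda - \theta_d)(\lambda - \theta_{d-1}) \cdots (\lambda - \theta_{d-j+1})$: the numerator $(\theta_{d-i-1} - \theta_d)\cdots(\theta_{d-i-1} - \theta_{d-i})$ is $\eta_{i+1}(\theta_{d-i-1})$ and the denominator $(\theta_{d-i} - \theta_d)\cdots(\theta_{d-i}-\theta_{d-i+1})$ is $\eta_i(\theta_{d-i})$, so the right-hand side equals $c^*_{d-i}\,\eta_{i+1}(\theta_{d-i-1})/\eta_i(\theta_{d-i})$. Lemma \ref{lem:bbcc}(iv) at index $d-i$ gives $c^*_{d-i} = \phi_{i+1}\,\eta_i(\theta_{d-i})/\eta_{i+1}(\theta_{d-i-1})$, and again the right-hand side collapses to $\phi_{i+1}$.

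The only real obstacle is index bookkeeping: one must track how the descending ordering in the definitions of $\eta_j$ and $\eta^*_j$ lines up with the ranges of the displayed products, and confirm that the index shifts $i \mapsto i+1$ and $i \mapsto d-i$ in Lemma \ref{lem:bbcc}(ii),(iv) produce precisely the factors appearing in the statement. Once these two identifications are verified, nothing remains beyond cancelling identical numerators against denominators, and the equality follows.
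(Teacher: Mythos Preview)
Your proposal is correct and takes essentially the same approach as the paper: the paper's proof simply states that each side equals $\phi_{i+1}$ by Lemma~\ref{lem:bbcc}(ii),(iv), and your write-up is a faithful expansion of that one-line argument, with the index bookkeeping spelled out correctly.
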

\begin{proof} Each side is equal to
$\phi_{i+1}$ by Lemma
\ref{lem:bbcc}(ii),(iv).
\end{proof}

\begin{lemma}
\label{lem:adual}
For $0 \leq i \leq d-1$,
\begin{align*}
\sum_{h=0}^i \frac{\theta_h - a_h}{\theta_i - \theta_{i+1}} &=
\sum_{h=0}^i \frac{\theta^*_h - a^*_h}{\theta^*_i - \theta^*_{i+1}},
\\
\sum_{h=0}^i \frac{\theta_{d-h} - a_h}{\theta_{d-i} - \theta_{d-i-1}} &=
\sum_{h=0}^i \frac{\theta^*_h - a^*_{d-h}}{\theta^*_i - \theta^*_{i+1}},
\\
\sum_{h=0}^i \frac{\theta_h - a_{d-h}}{\theta_i - \theta_{i+1}} &=
\sum_{h=0}^i \frac{\theta^*_{d-h} - a^*_h}{\theta^*_{d-i} - \theta^*_{d-i-1}},
\\
\sum_{h=0}^i \frac{\theta_{d-h} - a_{d-h}}{\theta_{d-i} - \theta_{d-i-1}} &=
\sum_{h=0}^i \frac{\theta^*_{d-h} - a^*_{d-h}}{\theta^*_{d-i} - \theta^*_{d-i-1}}.
\end{align*}
\end{lemma}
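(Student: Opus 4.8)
The plan is to prove the first identity directly from Lemma \ref{lem:sumaieiequalsvarphiS99}, and then obtain the remaining three for free by applying that first identity to the transformed Leonard systems $\Phi^\Downarrow$, $\Phi^\downarrow$, and $(\Phi^\downarrow)^\Downarrow$, reading off the relevant sequences from Proposition \ref{prop:ttaa}. This keeps the computation to a single case and exploits the symmetry already built into the $\Downarrow$, $\downarrow$, ${}^*$ machinery.

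For the first identity, fix $i$ with $0 \leq i \leq d-1$ and apply Lemma \ref{lem:sumaieiequalsvarphiS99} with index $j=i+1$. The first of its four expressions gives $\sum_{h=0}^{i}(\theta_h-a_h)=\varphi_{i+1}/(\theta^*_{i+1}-\theta^*_i)$, so the left-hand side equals $\varphi_{i+1}/[(\theta_i-\theta_{i+1})(\theta^*_{i+1}-\theta^*_i)]$. The third expression gives $\sum_{h=0}^{i}(\theta^*_h-a^*_h)=\varphi_{i+1}/(\theta_{i+1}-\theta_i)$, so the right-hand side equals $\varphi_{i+1}/[(\theta^*_i-\theta^*_{i+1})(\theta_{i+1}-\theta_i)]$. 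The two denominators agree after rearranging signs, so the two sides coincide. Thus the first identity is entirely carried by Lemma \ref{lem:sumaieiequalsvarphiS99}.

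For the remaining identities I would invoke Proposition \ref{prop:ttaa}. Applying the now-established first identity to $\Phi^\Downarrow$, whose eigenvalue, diagonal, dual eigenvalue, and dual diagonal sequences are $\{\theta_{d-i}\}$, $\{a_i\}$, $\{\theta^*_i\}$, $\{a^*_{d-i}\}$, reproduces the second identity verbatim, denominators included. Applying it to $\Phi^\downarrow$, with sequences $\{\theta_i\}$, $\{a_{d-i}\}$, $\{\theta^*_{d-i}\}$, $\{a^*_i\}$, yields the third identity. Finally, forming $(\Phi^\downarrow)^\Downarrow$, whose four sequences are $\{\theta_{d-i}\}$, $\{a_{d-i}\}$, $\{\theta^*_{d-i}\}$, $\{a^*_{d-i}\}$, and applying the first identity produces the fourth. (One could alternatively prove the second identity directly from Lemma \ref{lem:sumaieiequalsphiS99} in terms of $\phi_{i+1}$; the transform route is shorter.)

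The only delicate point is the bookkeeping: for each transform one must extract the correct quadruple of sequences from Proposition \ref{prop:ttaa} and track the sign of each denominator $\Theta_i-\Theta_{i+1}$ and $\Theta^*_i-\Theta^*_{i+1}$, so that the specialization of the first identity lands exactly on the target. I expect this index- and sign-matching to be the main—and essentially the only—place where care is needed, since all the analytic content is supplied by Lemma \ref{lem:sumaieiequalsvarphiS99}.
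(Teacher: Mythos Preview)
Your proof is correct and matches the paper's approach. The paper proves the first identity exactly as you do, showing both sides equal $-\varphi_{i+1}(\theta_{i+1}-\theta_i)^{-1}(\theta^*_{i+1}-\theta^*_i)^{-1}$ via Lemma~\ref{lem:sumaieiequalsvarphiS99}, and then says the remaining equations ``are similarly obtained''; your route through the relatives $\Phi^\Downarrow$, $\Phi^\downarrow$, $(\Phi^\downarrow)^\Downarrow$ together with Proposition~\ref{prop:ttaa} is a clean and explicit way to unpack that phrase.
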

\begin{proof}
The first equation holds, because each side
is equal to
$-\varphi_{i+1}(\theta_{i+1}-\theta_i)^{-1}(\theta^*_{i+1}-\theta^*_i)^{-1}$
by Lemma
\ref{lem:sumaieiequalsvarphiS99}. The remaining equations
are similarly obtained.
\end{proof}

\noindent We emphasize a special case of Lemma
\ref{lem:adual}.

\begin{lemma}
\label{lem:a0}
For $d\geq 1$,
\begin{align*}
&\frac{\theta_0-a_0}{\theta_0-\theta_1}=
\frac{\theta^*_0-a^*_0}{\theta^*_0-\theta^*_1},
\qquad \qquad
\frac{\theta_d-a_0}{\theta_{d}-\theta_{d-1}}=
\frac{\theta^*_0-a^*_d}{\theta^*_0-\theta^*_1},
\\
&\frac{\theta_0-a_d}{\theta_0-\theta_1}=
\frac{\theta^*_d-a^*_0}{\theta^*_{d}-\theta^*_{d-1}},
\qquad \qquad
\frac{\theta_d-a_d}{\theta_{d}-\theta_{d-1}}=
\frac{\theta^*_d-a^*_d}{\theta^*_{d}-\theta^*_{d-1}}.
\end{align*}
\end{lemma}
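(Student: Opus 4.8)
The plan is to recognize that Lemma \ref{lem:a0} is precisely the $i=0$ specialization of Lemma \ref{lem:adual}, so the entire argument reduces to substituting $i=0$ into the four displayed identities and observing that each summation collapses to a single term. First I would invoke Lemma \ref{lem:adual}, which holds for $0 \leq i \leq d-1$; since $d\geq 1$ the value $i=0$ is admissible, so all four equations are available in that case.

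Next I would check the four collapses one at a time. Setting $i=0$ in the first equation of Lemma \ref{lem:adual}, both sides are empty-plus-single-term sums over $h=0$, giving $(\theta_0-a_0)/(\theta_0-\theta_1) = (\theta^*_0-a^*_0)/(\theta^*_0-\theta^*_1)$, which is the first equation of Lemma \ref{lem:a0}. Setting $i=0$ in the second equation yields $(\theta_d-a_0)/(\theta_d-\theta_{d-1}) = (\theta^*_0-a^*_d)/(\theta^*_0-\theta^*_1)$, the second equation. The third and fourth equations of Lemma \ref{lem:adual} at $i=0$ produce, respectively, $(\theta_0-a_d)/(\theta_0-\theta_1) = (\theta^*_d-a^*_0)/(\theta^*_d-\theta^*_{d-1})$ and $(\theta_d-a_d)/(\theta_d-\theta_{d-1}) = (\theta^*_d-a^*_d)/(\theta^*_d-\theta^*_{d-1})$, matching the last two equations of Lemma \ref{lem:a0}.

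The only point requiring care, and really the sole ``obstacle,'' is bookkeeping: one must confirm that the index substitutions $d-h$ and $d-i$ in Lemma \ref{lem:adual} evaluate correctly at $i=0$ (so that $\theta_{d-0}=\theta_d$, $a^*_{d-0}=a^*_d$, $\theta^*_{d-0-1}=\theta^*_{d-1}$, and so on), and that the four equations of Lemma \ref{lem:adual} are paired with the four equations of Lemma \ref{lem:a0} in the intended order. Since no cancellation, rearrangement, or use of (PA1)--(PA5) beyond what already underlies Lemma \ref{lem:adual} is needed, the proof is immediate once the specialization is written out.

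\begin{proof}
Set $i=0$ in the four equations of Lemma \ref{lem:adual}.
\end{proof}
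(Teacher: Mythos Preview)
Your proposal is correct and matches the paper's proof exactly: the paper's entire proof of Lemma \ref{lem:a0} is the single sentence ``Set $i=0$ in Lemma \ref{lem:adual}.'' Your careful bookkeeping of the index substitutions confirms precisely what the paper leaves implicit.
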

\begin{proof} Set $i=0$ in Lemma
\ref{lem:adual}.
\end{proof}

\noindent  Motivated by Lemma
\ref{cor:iso1}, our next goal is to
explicitly write
the intersection numbers and dual intersection numbers
in terms of 
$\varphi_1$ and $\lbrace \theta_i \rbrace_{i=0}^{d}$,
 $\lbrace \theta^*_i \rbrace_{i=0}^{d}$.
To avoid trivialities we assume that $d\geq 2$.
We will use the following result,
which appears in \cite[Lemma~15.16]{cerzo} in the context of
distance-regular graphs.

\begin{lemma}
\label{lem:cerzo}
For $d\geq 2$, $\varphi_2$ is equal to each of
\begin{align}
&\varphi_1 \frac{\theta_0 + \theta_1-\theta_{d-1}-\theta_d}{\theta_0-\theta_d}
+ (\theta^*_0-\theta^*_1)(\theta_0+\theta_{1}-\theta_{d-1} - \theta_d)
+(\theta^*_2-\theta^*_0)(\theta_1-\theta_d),
\label{eq:var2}
\\
&\varphi_1 \frac{\theta^*_0 + \theta^*_1-\theta^*_{d-1}-\theta^*_d}{\theta^*_0-\theta^*_d}
+ (\theta_0-\theta_1)(\theta^*_0+\theta^*_{1}-\theta^*_{d-1} - \theta^*_d)
+(\theta_2-\theta_0)(\theta^*_1-\theta^*_d).
\label{eq:var2s}
\end{align}
\end{lemma}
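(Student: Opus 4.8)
The plan is to obtain the first expression (\ref{eq:var2}) directly from conditions (PA3) and (PA4)---which the parameter array of $\Phi$ satisfies because $\Phi$ is a Leonard system (Theorem \ref{thm:classification})---and then to deduce the second expression (\ref{eq:var2s}) by applying the first to the Leonard system $\Phi^*$.

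First I would evaluate (PA3) at $i=2$. The sum $\sum_{h=0}^{1}(\theta_h-\theta_{d-h})/(\theta_0-\theta_d)$ telescopes: its $h=0$ term is $1$ and its $h=1$ term is $(\theta_1-\theta_{d-1})/(\theta_0-\theta_d)$, so the sum equals $(\theta_0+\theta_1-\theta_{d-1}-\theta_d)/(\theta_0-\theta_d)$. This gives
\[
\varphi_2 = \phi_1\,\frac{\theta_0+\theta_1-\theta_{d-1}-\theta_d}{\theta_0-\theta_d} + (\theta^*_2-\theta^*_0)(\theta_1-\theta_d).
\]
Next I would eliminate $\phi_1$ using (PA4) at $i=1$, which reads $\phi_1=\varphi_1+(\theta^*_1-\theta^*_0)(\theta_d-\theta_0)$, equivalently $\phi_1=\varphi_1-(\theta^*_1-\theta^*_0)(\theta_0-\theta_d)$ (this relation also follows by comparing the two formulas for $a_0$ in Lemmas \ref{lem:dg1} and \ref{lem:dg1Down}). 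Substituting, the factor $\theta_0-\theta_d$ in $(\theta^*_1-\theta^*_0)(\theta_0-\theta_d)$ cancels the denominator, and collecting terms yields exactly (\ref{eq:var2}).

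For (\ref{eq:var2s}) I would apply the identity just established to $\Phi^*$. By Lemma \ref{prop:pacomp} the parameter array of $\Phi^*$ is obtained from that of $\Phi$ by interchanging $\{\theta_i\}_{i=0}^d$ with $\{\theta^*_i\}_{i=0}^d$, while the first split sequence of $\Phi^*$ is again $\{\varphi_i\}_{i=1}^d$; in particular the scalar $\varphi_2$ is the same for $\Phi$ and for $\Phi^*$. Hence (\ref{eq:var2}) applied to $\Phi^*$ is precisely (\ref{eq:var2s}), with every $\theta_i$ replaced by $\theta^*_i$ and vice versa.

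The whole argument is routine algebra; the only points requiring care are the bookkeeping of the telescoping sum, the sign in the expression for $\phi_1$, and correctly reading off the data for $\Phi^*$ from Lemma \ref{prop:pacomp}. I do not anticipate any genuine obstacle, and the duality step using $\Phi^*$ is what makes (\ref{eq:var2s}) essentially free once (\ref{eq:var2}) is in hand.
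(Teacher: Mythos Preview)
Your proposal is correct and follows essentially the same approach as the paper: evaluate (PA3) at $i=2$, eliminate $\phi_1$ via (PA4) at $i=1$, and then obtain (\ref{eq:var2s}) by passing to $\Phi^*$ using the invariance of $\varphi_1,\varphi_2$ under $\Phi\mapsto\Phi^*$ from Lemma~\ref{prop:pacomp}. You have simply written out the algebra in more detail than the paper does.
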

\begin{proof}
To get (\ref{eq:var2}), 
evaluate (PA3) at $i=2$ and simplify the result using
(PA4) at $i=1$.
To get (\ref{eq:var2s}) from
(\ref{eq:var2}),
note that each of $\varphi_1, \varphi_2$
is unchanged if we replace $\Phi$ by $\Phi^*$.
\end{proof}

\noindent  
The following result appears in
\cite[Theorem~8.1.1]{BCN} and
\cite[Theorem~15.18]{cerzo}, in the context of distance-regular graphs.

\begin{proposition} 
\label{prop:bici}
For $d\geq 2$,
\begin{align*}
b_0 &= \frac{\varphi_1}{\theta^*_1-\theta^*_0},
\\
b_i &= \frac{(\theta^*_0 - \theta^*_i)(\varphi_1 f^-_i + g^-_i)}{
(\theta^*_{i+1}-\theta^*_i)(\theta^*_{i+1}-\theta^*_{i-1})}
\qquad (1 \leq i \leq d-1),
\\
c_i &= \frac{(\theta^*_0 - \theta^*_i)(\varphi_1 f^+_i + g^+_i)}{
(\theta^*_{i-1}-\theta^*_i)(\theta^*_{i-1}-\theta^*_{i+1})}
\qquad (1 \leq i \leq d-1),
\\
c_d &= \frac{\varphi_1+ (\theta_0 - \theta_1)
(\theta^*_{0}-\theta^*_d)}{\theta^*_{d-1}-\theta^*_d}
\end{align*}
where
\begin{align*}
f^{\pm}_i &= 
\frac{\theta^*_1-\theta^*_{i\pm 1}}{\theta^*_0-\theta^*_i}
-
\frac{\theta^*_1-\theta^*_{d-1}}{\theta^*_0-\theta^*_d},
\\
g^{\pm}_i &=
(\theta_1-\theta_2)(\theta^*_i-\theta^*_d)-
(\theta_0-\theta_1)(\theta^*_{i\pm 1}-\theta^*_{d-1}).
\end{align*}
To get 
$\lbrace b^*_i\rbrace_{i=0}^{d-1}$ and
$\lbrace c^*_i\rbrace_{i=1}^d$, exchange 
the symbols $\theta$, $\theta^*$ everywhere in the above equations.
\end{proposition}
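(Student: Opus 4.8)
The plan is to deduce the formulas from Proposition \ref{prop:cibiform}, which already expresses each intersection number as $b_i=\frac{\varphi_{i+1}\psi_i(\theta^*_i-\theta^*_0)}{(\theta^*_{i+1}-\theta^*_i)(\theta^*_{i+1}-\theta^*_{i-1})}$ and similarly for $c_i$. Since the denominators there already coincide with those in the target statement, it suffices to match the numerators, i.e.\ to establish the identity $-\varphi_{i+1}\psi_i=\varphi_1 f^-_i+g^-_i$ for $1\le i\le d-1$, together with the dual identity for $c_i$ and the two endpoint evaluations. The endpoints are immediate: $b_0=\varphi_1/(\theta^*_1-\theta^*_0)$ is literally the first line of Proposition \ref{prop:cibiform}, and for $c_d=\phi_d/(\theta^*_{d-1}-\theta^*_d)$ one evaluates (PA4) at $i=d$, where the sum $\sum_{h=0}^{d-1}(\theta_h-\theta_{d-h})/(\theta_0-\theta_d)$ equals $1$ by (\ref{eq:varthetaprelimsS99}), giving $\phi_d=\varphi_1+(\theta_0-\theta_1)(\theta^*_0-\theta^*_d)$ as required.

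For the interior indices I would first convert $\varphi_{i+1}$ into a form that is affine in $\varphi_1$. Using (PA3) together with the relation $\varphi_1-\phi_1=(\theta^*_1-\theta^*_0)(\theta_0-\theta_d)$ (obtained by comparing the formulas for $a_0$ in Lemmas \ref{lem:dg1} and \ref{lem:dg1Down}), one writes $\varphi_{i+1}=\varphi_1 S_{i+1}+(\text{pure eigenvalue term})$, where $S_{i+1}=\sum_{h=0}^{i}(\theta_h-\theta_{d-h})/(\theta_0-\theta_d)$. Because $\psi_i$ is independent of $\varphi_1$, the product $\varphi_{i+1}\psi_i$ is then affine in $\varphi_1$, and matching the coefficient of $\varphi_1$ and the constant term against $f^-_i$ and $g^-_i$ splits the required identity into two statements purely about the eigenvalues. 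The key device that makes both tractable is that every quantity can be transported to a single sequence: by the duality of symmetric ratios (Lemma \ref{lem:thdual}) each summand of $S_{i+1}$ equals its starred counterpart, and by the $\Phi\mapsto\Phi^*$ invariance of $\psi_i$ exploited in Corollary \ref{cor:psiDown} the product $\psi_i$ may likewise be rewritten in the $\theta^*$-variables.

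Having reduced everything to identities in the single $\beta$-recurrent sequence $\lbrace\theta^*_i\rbrace_{i=0}^d$, I would verify them with the closed forms of Lemmas \ref{lem:symeigformulaS99} and \ref{lem:symeigvalsformulaS99}; for instance the coefficient-of-$\varphi_1$ identity becomes $\psi_i S_{i+1}=\frac{\theta^*_1-\theta^*_{d-1}}{\theta^*_0-\theta^*_d}-\frac{\theta^*_1-\theta^*_{i-1}}{\theta^*_0-\theta^*_i}$, which in the generic case $\beta\ne\pm2$ is a routine cancellation after substituting $\theta^*_i=\alpha_1+\alpha_2 q^i+\alpha_3 q^{-i}$ and using $\psi_i$ from Lemma \ref{prop:prodformn}(i). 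The constant-term identity is handled the same way, with $\varphi_2$ from Lemma \ref{lem:cerzo} serving as a convenient check at $i=1$. The $c_i$ formulas follow by the same scheme with $\eta^*$ in place of $\tau^*$ (Lemma \ref{lem:bbcc}(ii) and Corollary \ref{cor:psiDown}) and (PA4) in place of (PA3), while the starred numbers $b^*_i$, $c^*_i$ come from applying the whole argument to $\Phi^*$, i.e.\ by exchanging $\theta\leftrightarrow\theta^*$ throughout.

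The main obstacle is exactly this reduction to eigenvalue identities: although the denominators and the overall shape of the answer come for free from Proposition \ref{prop:cibiform}, the factors $\varphi_{i+1}$, $\psi_i$, and the partial sums $S_{i+1}$ each depend on the type of the recurrence (the four cases $\beta\ne\pm2$, $\beta=2$, $\beta=-2$, and $\beta=0$ in characteristic $2$), whereas the target expressions $f^-_i$, $g^-_i$ are manifestly case-free rational functions of the eigenvalues. Proving that the case-dependent pieces always recombine into these uniform expressions is where the genuine work lies; the symmetric-ratio formulas of Lemmas \ref{lem:symeigformulaS99} and \ref{lem:thdual} are precisely what absorb the case distinctions, so that the argument, though computational, never needs to branch in an essential way beyond substituting the relevant closed form.
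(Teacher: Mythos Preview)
Your approach is correct in outline but takes a considerably longer road than the paper's. You start from Proposition~\ref{prop:cibiform}, which already packages the answer in terms of $\psi_i$, and then set out to prove the numerator identity $-\varphi_{i+1}\psi_i=\varphi_1 f^-_i+g^-_i$ by splitting off the $\varphi_1$-coefficient and verifying each piece via the closed forms of Lemmas~\ref{lem:symeigformulaS99}, \ref{lem:symeigvalsformulaS99}, \ref{prop:prodformn}; as you yourself note, this drags in the case distinction on $\beta$. The paper instead bypasses $\psi_i$ entirely and works directly from the three-term recurrence~(\ref{eq:tijs}): evaluating it at $j=1$ and $j=2$ and eliminating $a_i$ via~(\ref{eq:crs}) yields a $2\times 2$ linear system in the unknowns $b_i,c_i$, whose solution involves only $\varphi_1$ and $\varphi_2$. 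One substitution of $\varphi_2$ from Lemma~\ref{lem:cerzo} (equation~(\ref{eq:var2s})) then gives the stated formulas with no case analysis at all. So your route recovers a uniform expression by detouring through non-uniform intermediate objects ($\psi_i$, the partial sums $S_{i+1}$) and then arguing that the case dependence cancels; the paper's route never leaves the uniform world, because its three inputs~(\ref{eq:tijs}), (\ref{eq:crs}), and Lemma~\ref{lem:cerzo} are already case-free. Your handling of the endpoints $b_0$, $c_d$ and of the starred numbers via $\Phi^*$ coincides with the paper's.
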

\begin{proof} To get $b_0$, set $i=0$ in
Lemma
\ref{lem:bbcc}(i).
To get $c_d$, set $i=d$ in
Lemma
\ref{lem:bbcc}(ii) and eliminate $\phi_d$ using
(PA4).
We now compute $b_i, c_i$ for $1 \leq i \leq d-1$.
Let $i$ be given. 
For the equations
(\ref{eq:tijs}) at $j=1$ and $j=2$,
eliminate $a_i$ using
(\ref{eq:crs}) to obtain a system of 
two linear equations in the
unknowns $b_i$, $c_i$. Using linear algebra we routinely
solve this system for $b_i$, $c_i$, and 
in the solution eliminate $\varphi_2$ using
(\ref{eq:var2s}).
We have obtained
$\lbrace b_i\rbrace_{i=0}^{d-1}$ and
$\lbrace c_i\rbrace_{i=1}^d$.
To obtain 
$\lbrace b^*_i\rbrace_{i=0}^{d-1}$ and
$\lbrace c^*_i\rbrace_{i=1}^d$, apply the above arguments to
$\Phi^*$.
\end{proof}

\noindent Our next goal is to give a variation on Proposition
\ref{prop:bici} that involves
$a_0$, $a^*_0$  and 
$c_1$, $c^*_1$.

\begin{lemma}
\label{lem:v2two}
For $d\geq 2$,
$\varphi_2$ is equal to each of 
\begin{align}
(c_1-a_0+ \theta_1)(\theta^*_2-\theta^*_0),
\qquad \qquad 
(c^*_1-a^*_0+ \theta^*_1)(\theta_2-\theta_0).
\label{eq:vp2twover}
\end{align}
\end{lemma}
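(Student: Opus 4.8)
The plan is to establish the first expression, $\varphi_2 = (c_1 - a_0 + \theta_1)(\theta^*_2 - \theta^*_0)$, directly from the corner relations of the tridiagonal representation of $A$ on a $\Phi$-standard basis, and then to obtain the second expression for free by applying the result to $\Phi^*$.

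First I would record the two equations governing the top-left corner: the relation $c_1 + a_1 + b_1 = \theta_0$ from (\ref{eq:crs}) at $i=1$, and the recurrence (\ref{eq:rec3}) at $i=1$, namely $c_1\theta^*_0 + a_1\theta^*_1 + b_1\theta^*_2 = \theta_1\theta^*_1 + a^*_0(\theta_0 - \theta_1)$. Solving the first for $a_1$ and substituting into the second makes the $\theta_0\theta^*_1$ terms cancel against each other, and after grouping I expect to land on the compact identity $c_1(\theta^*_0 - \theta^*_1) + b_1(\theta^*_2 - \theta^*_1) = (\theta_0 - \theta_1)(a^*_0 - \theta^*_1)$.

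The next step is to trade $b_1$ for $\varphi_2$. Lemma \ref{lem:bbcc}(i) at $i=1$ gives $b_1(\theta^*_2 - \theta^*_1) = \varphi_2 (\theta^*_1 - \theta^*_0)/(\theta^*_2 - \theta^*_0)$, and substituting this and then dividing through by the nonzero scalar $\theta^*_1 - \theta^*_0$ (nonzero since $A^*$ is multiplicity-free and $d \geq 2$) should produce $\varphi_2/(\theta^*_2 - \theta^*_0) - c_1 = (\theta_0 - \theta_1)(a^*_0 - \theta^*_1)/(\theta^*_1 - \theta^*_0)$.

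The one genuinely nontrivial step, and where I expect the main obstacle, is recognizing that this right-hand side collapses to $\theta_1 - a_0$, thereby converting the dual quantity $a^*_0$ into the quantity $a_0$ appearing in the statement. For this I would invoke Lemma \ref{lem:a0}, whose first identity rearranges to $(\theta_0 - a_0)(\theta^*_0 - \theta^*_1) = (\theta_0 - \theta_1)(\theta^*_0 - a^*_0)$; decomposing $a^*_0 - \theta^*_1 = (a^*_0 - \theta^*_0) + (\theta^*_0 - \theta^*_1)$ and substituting should leave exactly $\theta_1 - a_0$ after cancellation. That yields $\varphi_2/(\theta^*_2 - \theta^*_0) = c_1 - a_0 + \theta_1$, which is the first expression. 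Finally, since the first split sequence is unchanged under $*$ (so $\varphi_2$ is fixed) while Proposition \ref{prop:ttaa} sends $c_1, a_0, \theta_1, \theta^*_2 - \theta^*_0$ to $c^*_1, a^*_0, \theta^*_1, \theta_2 - \theta_0$, applying the formula already proved to $\Phi^*$ delivers the second expression.
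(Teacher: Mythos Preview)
Your proof is correct, but it takes a slightly longer route than the paper's. Both arguments use $c_1+a_1+b_1=\theta_0$ from (\ref{eq:crs}) and the formula $b_1=\varphi_2\,\tau^*_1(\theta^*_1)/\tau^*_2(\theta^*_2)$ from Lemma~\ref{lem:bbcc}(i); the divergence is in the third ingredient. The paper invokes (\ref{eq:hold1}) at $i=1$, namely $(\theta_0+\theta_1-a_0-a_1)/b_1=(\theta^*_2-\theta^*_0)/(\theta^*_1-\theta^*_0)$, which already carries $a_0$ and so leads in two lines to $c_1-a_0+\theta_1=\varphi_2/(\theta^*_2-\theta^*_0)$. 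You instead pull in (\ref{eq:rec3}) at $i=1$, which introduces $a^*_0$, and then spend an extra step converting $a^*_0$ back to $a_0$ via Lemma~\ref{lem:a0}. That conversion is valid and your algebra checks out, but it is a detour: Lemma~\ref{lem:frac}(i) already packages the information you need on the $a$-side rather than the $a^*$-side. Your handling of the second expression via $\Phi^*$ matches the paper's exactly.
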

\begin{proof} Set $i=1$ in 
(\ref{eq:crs}),
Lemma
\ref{lem:bbcc}(i),
(\ref{eq:hold1}). The resulting equations show  that $\varphi_2$
is equal to the expression on the left in
(\ref{eq:vp2twover}). 
Note that $\varphi_2$ is unchanged if we replace $\Phi$
by $\Phi^*$. Therefore
 $\varphi_2$
is equal to the expression on the right in
(\ref{eq:vp2twover}). 
\end{proof}

\noindent We clarify how
$c_1, a_0$ are related and how
$c^*_1, a^*_0$ are related.

\begin{lemma}
\label{lem:c1}
For $d\geq 2$ we have
\begin{align*}
c_1 &= (\theta_d-a_0)\frac{
(\theta^*_0-\theta^*_1)(\theta_1-\theta_{d-1})
-
(\theta^*_1-\theta^*_2)(\theta_0-\theta_d)
}
{(\theta^*_0-\theta^*_2)(\theta_0-\theta_d)},
\\
c^*_1 &= (\theta^*_d-a^*_0)\frac{
(\theta_0-\theta_1)(\theta^*_1-\theta^*_{d-1})
-
(\theta_1-\theta_2)(\theta^*_0-\theta^*_d)
}
{(\theta_0-\theta_2)(\theta^*_0-\theta^*_d)}.
\end{align*}
\end{lemma}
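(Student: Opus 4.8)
The plan is to extract $c_1$ from the two available expressions for $\varphi_2$ and then eliminate every split-sequence parameter in favor of eigenvalues, dual eigenvalues, and $a_0$.

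First I would start from Lemma \ref{lem:v2two}, which asserts $\varphi_2 = (c_1 - a_0 + \theta_1)(\theta^*_2 - \theta^*_0)$. Since $\theta^*_0 \neq \theta^*_2$ by (PA1), I can divide and solve for $c_1$, obtaining $c_1 = a_0 - \theta_1 + \varphi_2/(\theta^*_2 - \theta^*_0)$. This reduces the problem to computing $\varphi_2/(\theta^*_2 - \theta^*_0)$ in closed form.

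Next I would substitute the value of $\varphi_2$ given by (\ref{eq:var2}) in Lemma \ref{lem:cerzo}. The resulting expression still contains $\varphi_1$, which does not appear in the target formula, so the crucial step is to eliminate it. For this I would use Lemma \ref{lem:dg1}, whose formula $a_0 = \theta_0 + \varphi_1/(\theta^*_0 - \theta^*_1)$ rearranges to $\varphi_1 = (a_0 - \theta_0)(\theta^*_0 - \theta^*_1)$. After this substitution the combination $\varphi_1/(\theta_0 - \theta_d) + (\theta^*_0 - \theta^*_1)$ collapses to $(\theta^*_0 - \theta^*_1)(a_0 - \theta_d)/(\theta_0 - \theta_d)$, using $\theta_0 \neq \theta_d$ from (PA1), and the whole right-hand side then acquires an overall factor $(a_0 - \theta_d) = -(\theta_d - a_0)$, matching the sign convention of the target.

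The remaining work is to verify that the two numerators agree; writing $\theta_0 + \theta_1 - \theta_{d-1} - \theta_d = (\theta_0 - \theta_d) + (\theta_1 - \theta_{d-1})$ and combining the $(\theta_0 - \theta_d)$ terms via $(\theta^*_2 - \theta^*_0) + (\theta^*_0 - \theta^*_1) = -(\theta^*_1 - \theta^*_2)$ produces exactly the claimed numerator $(\theta^*_0 - \theta^*_1)(\theta_1 - \theta_{d-1}) - (\theta^*_1 - \theta^*_2)(\theta_0 - \theta_d)$. This short algebraic rearrangement is the only place where genuine computation occurs. Finally, the formula for $c^*_1$ follows by applying the entire argument to $\Phi^*$ and invoking Proposition \ref{prop:ttaa} to interchange starred and nonstarred data. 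The main obstacle, if any, is merely bookkeeping the signs of the differences $\theta_i - \theta_j$ versus $\theta_j - \theta_i$ while eliminating $\varphi_1$, since the target formula is written with the factor $(\theta_d - a_0)$ rather than $(a_0 - \theta_d)$.
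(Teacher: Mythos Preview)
Your proposal is correct and follows essentially the same approach as the paper: both use Lemma~\ref{lem:v2two} to relate $c_1$ and $\varphi_2$, invoke the relation between $a_0$ and $\varphi_1$ from Lemma~\ref{lem:dg1}, and evaluate via (\ref{eq:var2}) from Lemma~\ref{lem:cerzo}, then pass to $\Phi^*$ for $c^*_1$. The only cosmetic difference is that the paper phrases the middle step as ``express $a_0$ in terms of $\varphi_1$'' while you go the other direction, but this is the same relation used to the same end.
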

\begin{proof} To verify the first equation,
express the left-hand side in terms of $\varphi_2$  using
Lemma
\ref{lem:v2two}, and in the resulting equation
express $a_0$ in terms of $\varphi_1$ 
using 
Lemma
\ref{lem:dg1}.
Evaluate the result using
(\ref{eq:var2}).
We have verified the first equation.
To get the second equation, apply the
first equation to $\Phi^*$.
\end{proof}

\begin{proposition}
\label{prop:bcform}
For $d\geq 2$,
\begin{align*}
b_0 &= \frac{(\theta^*_0-a^*_0)(\theta_1-\theta_0)}
{\theta^*_1-\theta^*_0},
\\
b_i &=\frac{
c^*_1(\theta^*_0-\theta^*_i)(\theta_0-\theta_2)+
(\theta^*_i-a^*_0)
\bigl(
(\theta_1-\theta_2)(\theta^*_0-\theta^*_i)-
(\theta_0-\theta_1)(\theta^*_1-\theta^*_{i-1})
\bigr)
}
{
(\theta^*_{i+1}-\theta^*_i)(\theta^*_{i+1}-\theta^*_{i-1})},
\\
c_i &=\frac{
c^*_1(\theta^*_0-\theta^*_i)(\theta_0-\theta_2)+
(\theta^*_i-a^*_0)
\bigl(
(\theta_1-\theta_2)(\theta^*_0-\theta^*_i)-
(\theta_0-\theta_1)(\theta^*_1-\theta^*_{i+1})
\bigr)
}
{
(\theta^*_{i-1}-\theta^*_i)(\theta^*_{i-1}-\theta^*_{i+1})},
\\
c_d &=\frac{(\theta^*_d-a^*_0)(\theta_1-\theta_0)}
{\theta^*_{d-1}-\theta^*_{d}}.
\end{align*}
To get 
$\lbrace b^*_i\rbrace_{i=0}^{d-1}$ and
$\lbrace c^*_i\rbrace_{i=1}^d$, 
exchange starred and nonstarred symbols everywhere in the
above equations.
\end{proposition}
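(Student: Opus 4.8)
The plan is to follow the template of the proof of Proposition \ref{prop:bici}, but to retain $a^*_0$ and $c^*_1$ as parameters instead of collapsing everything down to $\varphi_1$. I would first dispose of the two boundary cases. Setting $i=0$ and $i=d$ in (\ref{eq:crs}) gives $b_0=\theta_0-a_0$ and $c_d=\theta_0-a_d$, exactly as in Lemma \ref{lem:solvebc}. Combining $b_0=\theta_0-a_0$ with the first identity of Lemma \ref{lem:a0}, and $c_d=\theta_0-a_d$ with the fourth identity of Lemma \ref{lem:a0}, immediately produces the stated closed forms $b_0=(\theta^*_0-a^*_0)(\theta_1-\theta_0)/(\theta^*_1-\theta^*_0)$ and $c_d=(\theta^*_d-a^*_0)(\theta_1-\theta_0)/(\theta^*_{d-1}-\theta^*_d)$ after clearing the sign in numerator and denominator.

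For the interior indices $1\le i\le d-1$ I would assemble the same two linear relations that drive Proposition \ref{prop:bici}. The first is (\ref{eq:rec3}); since (\ref{eq:rec3}) is (\ref{eq:tijs}) at $j=1$ rewritten via (\ref{eq:crs}), it already carries $a^*_0$ rather than $\varphi_1$. The second is (\ref{eq:tijs}) at $j=2$, which carries $\varphi_2$. Eliminating $a_i$ from both by means of $a_i=\theta_0-b_i-c_i$ yields a $2\times 2$ system in the unknowns $b_i,c_i$ whose coefficient matrix has determinant $(\theta^*_{i-1}-\theta^*_i)(\theta^*_{i+1}-\theta^*_i)(\theta^*_{i+1}-\theta^*_{i-1})$, nonzero by (PA1). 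Cramer's rule then expresses $b_i$ and $c_i$ in terms of $a^*_0$, $\varphi_2$, and the eigenvalues and dual eigenvalues.

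The final step is to remove $\varphi_2$: substitute $\varphi_2=(c^*_1-a^*_0+\theta^*_1)(\theta_2-\theta_0)$ from Lemma \ref{lem:v2two} into the Cramer solution. The summand of $\varphi_2$ proportional to $c^*_1$ produces the term $c^*_1(\theta^*_0-\theta^*_i)(\theta_0-\theta_2)$ in each numerator, while the summands proportional to $a^*_0$ and $\theta^*_1$ merge with the $a^*_0$-dependence already present from (\ref{eq:rec3}) to give the factor $(\theta^*_i-a^*_0)$ multiplying $(\theta_1-\theta_2)(\theta^*_0-\theta^*_i)-(\theta_0-\theta_1)(\theta^*_1-\theta^*_{i\mp 1})$. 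Simplifying gives the stated formulas for $b_i$ and $c_i$. The starred intersection numbers follow by applying the whole argument to $\Phi^*$, which interchanges starred and nonstarred symbols. The one genuinely laborious point, and the main obstacle, is the algebraic simplification after Cramer's rule and the $\varphi_2$-substitution: one must check that the numerators collapse to precisely the symmetric $(\theta^*_i-a^*_0)$-times-bracket form above, with the $\pm$ sign on $\theta^*_{i\mp 1}$ tracking $b_i$ versus $c_i$. This is routine but best verified by expanding both sides against the explicit quadratic $\tau^*_2(\lambda)=(\lambda-\theta^*_0)(\lambda-\theta^*_1)$ rather than by brute force.
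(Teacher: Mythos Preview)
Your proposal is correct and follows essentially the same route as the paper's proof: solve the $2\times 2$ system coming from (\ref{eq:tijs}) at $j=1,2$ after eliminating $a_i$ via (\ref{eq:crs}), then replace $\varphi_2$ by $(c^*_1-a^*_0+\theta^*_1)(\theta_2-\theta_0)$ from Lemma~\ref{lem:v2two}; the paper also substitutes $\varphi_1=(\theta^*_0-a^*_0)(\theta_1-\theta_0)$ explicitly, whereas you absorb that step by starting from (\ref{eq:rec3}). One small slip: for $c_d=\theta_0-a_d$ you need the \emph{third} identity of Lemma~\ref{lem:a0}, not the fourth.
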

\begin{proof} To get $b_0$,
set $i=0$ in Lemma \ref{lem:bbcc}(i) and evaluate
the result
using $\varphi_1=(\theta^*_0-a^*_0)(\theta_1-\theta_0)$.
To get $c_d$, set 
 $i=d$ in Lemma \ref{lem:bbcc}(ii) and evaluate
the result using
$\phi_d=(\theta^*_d-a^*_0)(\theta_1-\theta_0)$.
To obtain $b_i, c_i$ for $1 \leq i \leq d-1$,
we proceed as in the proof of Proposition
\ref{prop:bici}.
Let $i$ be given. 
For the equations
(\ref{eq:tijs}) at $j=1$ and $j=2$,
eliminate $a_i$ using
(\ref{eq:crs}) to obtain a system of 
two linear equations in the
unknowns $b_i$, $c_i$. Using linear algebra we routinely
solve this system for $b_i$, $c_i$, and 
in the solution eliminate $\varphi_1$, $\varphi_2$   
using $\varphi_1=(\theta^*_0-a^*_0)(\theta_1-\theta_0)$
and
$\varphi_2=(c^*_1-a^*_0+\theta^*_1)(\theta_2-\theta_0)$.
We have obtained
$\lbrace b_i\rbrace_{i=0}^{d-1}$ and
$\lbrace c_i\rbrace_{i=1}^d$.
To obtain 
$\lbrace b^*_i\rbrace_{i=0}^{d-1}$ and
$\lbrace c^*_i\rbrace_{i=1}^d$, apply the above arguments to
$\Phi^*$.
\end{proof}

\begin{note}\rm 
Lemma
\ref{lem:c1}
and Proposition 
\ref{prop:bcform} effectively give 
the intersection numbers (resp. dual intersection numbers)
in terms of
$a^*_0$ (resp. $a_0$)  and
$\lbrace \theta_i \rbrace_{i=0}^d$, 
$\lbrace \theta^*_i \rbrace_{i=0}^d$. The scalars
$a_0$ and $a^*_0$ are related by the first equation in
Lemma
\ref{lem:a0}.
\end{note}

\begin{note}\rm
There is a Leonard system attached to the Bose-Mesner
algebra 
of a $Q$-polynomial distance-regular graph;
see  
\cite[p.~260]{bannaiIto},
\cite{leonard}.
 For this
 Leonard system
$a_0=0$,
$a^*_0=0$
and 
$c_1=1$,
$c^*_1=1$.
\end{note}

\section{Appendix: Parameter arrays and intersection numbers}

The parameter arrays are listed in
\cite[Section~5]{ter2005}.
In this appendix we go through the list, and for each
parameter array we give the corresponding
intersection numbers and 
dual intersection numbers.

\begin{example} \rm ($q$-Racah). We have
\begin{align*}
\theta_i &= \theta_0 + h (1-q^i)(1-s q^{i+1}) q^{-i},
\\
\theta^*_i &= \theta^*_0 + h^* (1-q^i)(1-s^* q^{i+1}) q^{-i}
\end{align*}
\noindent for $0 \leq i \leq d$ and
\begin{align*}
\varphi_i &= h h^* q^{1-2i} (1-q^i)(1-q^{i-d-1})(1-r_1 q^i)(1-r_2 q^i),
\\
\phi_i &= h h^* q^{1-2i} (1-q^i)(1-q^{i-d-1})(r_1-s^* q^i)(r_2-s^* q^i)/s^*
\end{align*}
for $1 \leq i \leq d$, with 
$r_1 r_2 = s s^* q^{d+1}$.
We have
\begin{align*}
b_0 &= \frac{h(1-q^{-d})(1-r_1 q)(1-r_2q)}{1-s^*q^2},
\\
b_i &= \frac{h(1-q^{i-d})(1-s^*q^{i+1})(1-r_1 q^{i+1})(1-r_2q^{i+1})}
{(1-s^*q^{2i+1})(1-s^*q^{2i+2})}
\qquad \qquad (1 \leq i \leq d-1),
\\
c_i &= \frac{h(1-q^{i})(1-s^*q^{i+d+1})(r_1-s^* q^{i})(r_2-s^*q^{i})}
{s^*q^d(1-s^*q^{2i})(1-s^*q^{2i+1})}
\qquad \qquad (1 \leq i \leq d-1),
\\
c_d &= \frac{h (1-q^{d})(r_1-s^* q^d)(r_2-s^*q^d)}{s^*q^d(1-s^*q^{2d})}.
\end{align*}
To get
$\lbrace b^*_i\rbrace_{i=0}^{d-1}$
and $\lbrace c^*_i\rbrace_{i=1}^{d}$, in the above formulas
exchange 
$h \leftrightarrow h^*$,
$s \leftrightarrow s^*$
and preserve
$r_1$, $r_2$, $q$.
\end{example}

\begin{example} \rm ($q$-Hahn). We have 
\begin{align*}
\theta_i &= \theta_0 + h (1-q^i)q^{-i},
\\
\theta^*_i &= \theta^*_0 + h^* (1-q^i)(1-s^* q^{i+1}) q^{-i}
\end{align*}
\noindent for $0 \leq i \leq d$ and
\begin{align*}
\varphi_i &= h h^* q^{1-2i} (1-q^i)(1-q^{i-d-1})(1-r q^i),
\\
\phi_i &= -h h^* q^{1-i} (1-q^i)(1-q^{i-d-1})(r-s^* q^i)
\end{align*}
for $1 \leq i \leq d$.
We have
\begin{align*}
b_0 &=\frac{h(1-q^{-d})(1-rq)}{1-s^*q^2},
\\
b_i &= \frac{h(1-q^{i-d})(1-s^*q^{i+1})(1-r q^{i+1})}
{(1-s^*q^{2i+1})(1-s^*q^{2i+2})}
\qquad \qquad (0 \leq i \leq d-1),
\\
c_i &= \frac{-hq^{i-d}(1-q^{i})(1-s^*q^{i+d+1})(r-s^* q^{i})}
{(1-s^*q^{2i})(1-s^*q^{2i+1})}
\qquad \qquad (1 \leq i \leq d),
\\
c_d &= \frac{-h (1-q^{d})(r-s^* q^d)}{1-s^*q^{2d}}
\end{align*}
and
\begin{align*}
b^*_i &= h^*(1-q^{i-d})(1-r q^{i+1})
\qquad \qquad (0 \leq i \leq d-1),
\\
c^*_i &= h^*(1-q^{i})(qs^*-rq^{i-d})
\qquad \qquad (1 \leq i \leq d).
\end{align*}
\end{example}

\begin{example} \rm (Dual $q$-Hahn). We have 
\begin{align*}
\theta_i &= \theta_0 + h (1-q^i)(1-sq^{i+1})q^{-i},
\\
\theta^*_i &= \theta^*_0 + h^* (1-q^i) q^{-i}
\end{align*}
\noindent for $0 \leq i \leq d$ and
\begin{align*}
\varphi_i &= h h^* q^{1-2i} (1-q^i)(1-q^{i-d-1})(1-r q^i),
\\
\phi_i &= h h^* q^{d+2-2i} (1-q^i)(1-q^{i-d-1})(s-r q^{i-d-1})
\end{align*}
for $1 \leq i \leq d$.
We have
\begin{align*}
b_i &= h(1-q^{i-d})(1-r q^{i+1})
\qquad \qquad (0 \leq i \leq d-1),
\\
c_i &= h(1-q^{i})(qs-rq^{i-d})
\qquad \qquad (1 \leq i \leq d)
\end{align*}
\noindent
and
\begin{align*}
b^*_0 &= \frac{h^*(1-q^{-d})(1-r q)}{1-sq^2},
\\
b^*_i &= \frac{h^*(1-q^{i-d})(1-sq^{i+1})(1-r q^{i+1})}
{(1-sq^{2i+1})(1-sq^{2i+2})}
\qquad \qquad (1 \leq i \leq d-1),
\\
c^*_i &= \frac{-h^*q^{i-d}(1-q^{i})(1-sq^{i+d+1})(r-s q^{i})}
{(1-sq^{2i})(1-sq^{2i+1})}
\qquad \qquad (1 \leq i \leq d-1),
\\
c^*_d &= \frac{-h^* (1-q^{d})(r-s q^d)}{1-s q^{2d}}.
\end{align*}
\end{example}

\begin{example}
\rm
(quantum $q$-Krawtchouk). We have
\begin{align*}
\theta_i &= \theta_0 - sq (1-q^i),
\\
\theta^*_i &= \theta^*_0 + h^* (1-q^i) q^{-i}
\end{align*}
\noindent for $0 \leq i \leq d$ and
\begin{align*}
\varphi_i &= -r h^* q^{1-i} (1-q^i)(1-q^{i-d-1}),
\\
\phi_i &= h^* q^{d+2-2i} (1-q^i)(1-q^{i-d-1})(s-r q^{i-d-1})
\end{align*}
for $1 \leq i \leq d$.
We have
\begin{align*}
b_i &= -rq^{i+1}(1-q^{i-d})
\qquad \qquad (0 \leq i \leq d-1),
\\
c_i &= (1-q^{i})(qs-rq^{i-d})
\qquad \qquad (1 \leq i \leq d)
\end{align*}
\noindent
and
\begin{align*}
b^*_i &= \frac{h^*r (1-q^{i-d})}
{sq^{2i+1}}
\qquad \qquad (0 \leq i \leq d-1),
\\
c^*_i &= \frac{h^*(1-q^{i})(r-s q^{i})}
{sq^{2i}}
\qquad \qquad (1 \leq i \leq d).
\end{align*}
\end{example}

\begin{example}
\rm
($q$-Krawtchouk). We have
\begin{align*}
\theta_i &= \theta_0 + h (1-q^i)q^{-i},
\\
\theta^*_i &= \theta^*_0 + h^* (1-q^i)(1-s^*q^{i+1}) q^{-i}
\end{align*}
\noindent for $0 \leq i \leq d$ and
\begin{align*}
\varphi_i &= h h^* q^{1-2i} (1-q^i)(1-q^{i-d-1}),
\\
\phi_i &= h h^* s^* q (1-q^i)(1-q^{i-d-1})
\end{align*}
for $1 \leq i \leq d$.
We have
\begin{align*}
b_0 &= \frac{h(1-q^{-d})}{1-s^*q^2},
\\
b_i &= \frac{h (1-q^{i-d})(1-s^*q^{i+1})}{(1-s^*q^{2i+1})(1-s^*q^{2i+2})}
\qquad \qquad (1 \leq i \leq d-1),
\\
c_i &= \frac{h s^*q^{2i-d} (1-q^{i})(1-s^*q^{i+d+1})}{(1-s^*q^{2i})(1-s^*q^{2i+1})}
\qquad \qquad (1 \leq i \leq d-1),
\\
c_d &= \frac{h s^*q^{d} (1-q^{d})}{1-s^*q^{2d}}
\end{align*}
\noindent
and
\begin{align*}
b^*_i &= h^* (1-q^{i-d})
\qquad \qquad (0 \leq i \leq d-1),
\\
c^*_i &= h^*s^* q(1-q^{i})
\qquad \qquad (1 \leq i \leq d).
\end{align*}
\end{example}

\begin{example}
\rm
(affine $q$-Krawtchouk). We have
\begin{align*}
\theta_i &= \theta_0 + h (1-q^i)q^{-i},
\\
\theta^*_i &= \theta^*_0 + h^* (1-q^i) q^{-i}
\end{align*}
\noindent for $0 \leq i \leq d$ and
\begin{align*}
\varphi_i &= h h^* q^{1-2i} (1-q^i)(1-q^{i-d-1})(1-rq^i),
\\
\phi_i &= -h h^* r q^{1-i} (1-q^i)(1-q^{i-d-1})
\end{align*}
for $1 \leq i \leq d$.
We have
\begin{align*}
b_i &= h (1-q^{i-d})(1-rq^{i+1})
\qquad \qquad (0 \leq i \leq d-1),
\\
c_i &= -h r q^{i-d}(1-q^{i})
\qquad \qquad (1 \leq i \leq d).
\end{align*}
\noindent To get
$\lbrace b^*_i\rbrace_{i=0}^{d-1}$ and
$\lbrace c^*_i\rbrace_{i=1}^{d}$,
in the above formulas exchange 
$h \leftrightarrow h^*$
and preserve
$r$, $q$.
\end{example}

\begin{example}
\rm
(dual $q$-Krawtchouk). We have
\begin{align*}
\theta_i &= \theta_0 + h (1-q^i)(1-sq^{i+1}) q^{-i},
\\
\theta^*_i &= \theta^*_0 + h^* (1-q^i)q^{-i}
\end{align*}
\noindent for $0 \leq i \leq d$ and
\begin{align*}
\varphi_i &= h h^* q^{1-2i} (1-q^i)(1-q^{i-d-1}),
\\
\phi_i &= h h^* s q^{d+2-2i} (1-q^i)(1-q^{i-d-1})
\end{align*}
for $1 \leq i \leq d$.
We have
\begin{align*}
b_i &= h (1-q^{i-d})
\qquad \qquad (0 \leq i \leq d-1),
\\
c_i &= h s q(1-q^{i})
\qquad \qquad (1 \leq i \leq d)
\end{align*}
\noindent and
\begin{align*}
b^*_0 &= \frac{h^*(1-q^{-d})}{1-sq^2},
\\
b^*_i &= \frac{h^* (1-q^{i-d})(1-s q^{i+1})}{(1-sq^{2i+1})(1-sq^{2i+2})}
\qquad \qquad (1 \leq i \leq d-1),
\\
c^*_i &= \frac{h^* sq^{2i-d} (1-q^{i})(1-sq^{i+d+1})}{(1-sq^{2i})(1-sq^{2i+1})}
\qquad \qquad (1 \leq i \leq d-1),
\\
c^*_d &= \frac{h^* sq^{d} (1-q^{d})}{1-sq^{2d}}.
\end{align*}
\end{example}

\begin{example}
\rm
(Racah). We have
\begin{align*}
\theta_i &= \theta_0 + h i (i+1+s),
\\
\theta^*_i &= \theta^*_0+ h^* i (i+1+s^*)
\end{align*}
\noindent for $0 \leq i \leq d$ and
\begin{align*}
\varphi_i &= h h^* i (i-d-1) (i+r_1) (i+r_2),
\\
\phi_i &= h h^* i (i-d-1) (i+ s^*-r_1)(i+s^*-r_2)
\end{align*}
for $1 \leq i \leq d$, 
with $r_1+r_2= s+s^*+d+1$.
We have
\begin{align*}
b_0 &= \frac{
       -hd(1+r_1)(1+r_2)}
       {2+s^*},
\\
b_i &=\frac{
            h(i-d)(i+1+s^*)(i+1+r_1)(i+1+r_2)
	    }{
	    (2i+1+s^*)(2i+2+s^*)
	    }
\qquad \qquad (1 \leq i \leq d-1),
\\
c_i &= \frac{
                 hi(i+d+1+s^*)(i+s^*-r_1)(i+s^*-r_2)
		 }{
		 (2i+s^*)(2i+1+s^*)
		 }
\qquad \qquad (1 \leq i \leq d-1),
\\
c_d &=   \frac{hd(d+s^*-r_1)(d+s^*-r_2)}{2d+s^*}.
\end{align*}
\noindent To get
$\lbrace b^*_i\rbrace_{i=0}^{d-1}$ and
$\lbrace c^*_i\rbrace_{i=1}^{d}$,
in the above formulas exchange 
$h \leftrightarrow h^*$,
$s \leftrightarrow s^*$
and preserve
$r_1$, $r_2$.
\end{example}

\begin{example}
\rm
(Hahn). We have
\begin{align*}
\theta_i &= \theta_0 + s i,
\\
\theta^*_i &= \theta^*_0+ h^* i (i+1+s^*)
\end{align*}
\noindent for $0 \leq i \leq d$ and
\begin{align*}
\varphi_i &= h^* s i (i-d-1) (i+r),
\\
\phi_i &= -h^* s i (i-d-1) (i+ s^*-r)
\end{align*}
for $1 \leq i \leq d$. 
We have
\begin{align*}
b_0 &= \frac{
       -s d(1+r)}
       {2+s^*},
\\
b_i &=\frac{
            s(i-d)(i+1+s^*)(i+1+r)
	    }{
	    (2i+1+s^*)(2i+2+s^*)
	    }
\qquad \qquad (1 \leq i \leq d-1),
\\
c_i &= \frac{
                 -s i(i+d+1+s^*)(i+s^*-r)
		 }{
		 (2i+s^*)(2i+1+s^*)
		 }
\qquad \qquad (1 \leq i \leq d-1),
\\
c_d &=   \frac{-sd(d+s^*-r)}{2d+s^*}
\end{align*}
\noindent and
\begin{align*}
b^*_i &= h^* (i-d)(i+1+r)
\qquad \qquad (0 \leq i \leq d-1),
\\
c^*_i &= h^* i ( i-d-1-s^*+r)
\qquad \qquad (1 \leq i \leq d).
\end{align*}
\end{example}

\begin{example}
\rm
(dual Hahn). We have
\begin{align*}
\theta_i &= \theta_0+ h i (i+1+s),
\\
\theta^*_i &= \theta^*_0 + s^* i
\end{align*}
\noindent for $0 \leq i \leq d$ and
\begin{align*}
\varphi_i &= h s^* i (i-d-1) (i+r),
\\
\phi_i &= h s^* i (i-d-1) (i+ r-s-d-1)
\end{align*}
for $1 \leq i \leq d$. 
We have
\begin{align*}
b_i &= h (i-d)(i+1+r)
\qquad \qquad (0 \leq i \leq d-1),
\\
c_i &= h i ( i-d-1-s+r)
\qquad \qquad (1 \leq i \leq d)
\end{align*}
\noindent and
\begin{align*}
b^*_0 &= \frac{
       -s^* d(1+r)}
       {2+s},
\\
b^*_i &=\frac{
            s^*(i-d)(i+1+s)(i+1+r)
	    }{
	    (2i+1+s)(2i+2+s)
	    }
\qquad \qquad (1 \leq i \leq d-1),
\\
c^*_i &= \frac{
                 -s^* i(i+d+1+s)(i+s-r)
		 }{
		 (2i+s)(2i+1+s)
		 }
\qquad \qquad (1 \leq i \leq d-1),
\\
c^*_d &=   \frac{-s^*d(d+s-r)}{2d+s}.
\end{align*}
\end{example}

\begin{example}
\rm
(Krawtchouk). We have
\begin{align*}
\theta_i &= \theta_0+ s i,
\\
\theta^*_i &= \theta^*_0 + s^* i
\end{align*}
\noindent for $0 \leq i \leq d$ and
\begin{align*}
\varphi_i &= r i (i-d-1),
\\
\phi_i &= (r-s s^*) i (i-d-1)
\end{align*}
for $1 \leq i \leq d$. 
We have
\begin{align*}
b_i &= r(i-d)/s^*
\qquad \qquad (0 \leq i \leq d-1),
\\
c_i &= i ( r- s s^*)/s^*
\qquad \qquad (1 \leq i \leq d).
\end{align*}
To get
$\lbrace b^*_i\rbrace_{i=0}^{d-1}$
and $\lbrace c^*_i\rbrace_{i=1}^{d}$, in the above formulas
exchange 
$s \leftrightarrow s^*$
and preserve
$r$.
\end{example}

\begin{example} \rm (Bannai/Ito). We have
\begin{align*}
\theta_i &= \theta_0+h(s-1+(1-s+2i)(-1)^i),
\\
\theta^*_i &= \theta^*_0+h^*(s^*-1+(1-s^*+2i)(-1)^i)
\end{align*}
for $0 \leq i \leq d$ and
\begin{align*}
\varphi_i &= h h^* ((-1)^i r_2-2i-r_2)(2i+r_1-d-1+(-1)^{i+d}(r_1+d+1)),
\\
\phi_i &= h h^* (s^*+r_2+(-1)^i(2i-s^*-r_2))
\\ 
& \qquad \qquad \qquad \times\; (d+1-s^*-r_1+(-1)^{i+d}(2i-d-1-s^*-r_1))
\end{align*}
for $1 \leq i \leq d$, with
$r_1+r_2=-s-s^*+d+1$.
\noindent Note that
\begin{align*}
\theta_i &= \begin{cases}
\theta_0 + 2hi, 
     & {\mbox{\rm if $i$ even}}; \\
\theta_0 + 2h(s-i-1), 
     & {\mbox{\rm if $i$ odd}},
\end{cases}
\\
\theta^*_i &= \begin{cases}
\theta^*_0 + 2h^*i, 
     & {\mbox{\rm if $i$ even}}; \\
\theta_0 + 2h^*(s^*-i-1), 
     & {\mbox{\rm if $i$ odd}}
\end{cases}
\end{align*}
\noindent for $0 \leq i \leq d$ and
\begin{align*}
\varphi_i &= 
\begin{cases}
-4hh^*i(i+r_1),    & {\mbox{\rm if $i$ even, $d$ even}}; \\
-4hh^*(i-d-1)(i+r_2),    &{\mbox{\rm if $i$ odd, $d$ even}}; \\
-4hh^*i(i-d-1),    &{\mbox{\rm if $i$ even, $d$ odd}}; \\
-4hh^*(i+r_1)(i+r_2),    &{\mbox{\rm if $i$ odd, $d$ odd}}, 
\end{cases}
\\
\phi_i &= 
\begin{cases}
4hh^*i(i-s^*-r_1),    &{\mbox{\rm if $i$ even, $d$ even}}; \\
4hh^*(i-d-1)(i-s^*-r_2),    &{\mbox{\rm if $i$ odd, $d$ even}}; \\
-4hh^*i(i-d-1),    &{\mbox{\rm if $i$ even, $d$ odd}};\\
-4hh^*(i-s^*-r_1)(i-s^*-r_2) &{\mbox{\rm if $i$ odd, $d$ odd}} 
\end{cases}
\end{align*}
for $1 \leq i \leq d$.
We have
\begin{align*}
b_i &= h(2i+2+r_2-s^*+(-1)^i(r_2+s^*))
\\
& \qquad \times 
\frac{2i+r_1-d+1-(-1)^{i+d}(r_1+d+1)}
{2(2i+2-s^*)}
\qquad (0 \leq i \leq d-1),
\\
c_i &= -h(2i-r_2-s^* +(-1)^i(r_2+s^*))
\\
& \qquad \times 
\frac{2i-2s^*-r_1+d+1-(-1)^{i+d}(r_1+d+1)}
{2(2i-s^*)}
\qquad (1 \leq i \leq d).
\end{align*}
\noindent Note that
\begin{align*}
b_i &= 
\begin{cases}
\displaystyle{\frac{2h(i-d)(i+1+r_2)}{2i+2-s^*}},
& {\mbox{\rm if $i$ even, $d$ even}}; \\
\displaystyle{\frac{2h(i+1-s^*)(i+1+r_1)}{2i+2-s^*}},
&{\mbox{\rm if $i$ odd, $d$ even}}; \\
\displaystyle{\frac{2h(i+1+r_1)(i+1+r_2)}{2i+2-s^*}},
&{\mbox{\rm if $i$ even, $d$ odd}}; \\
\displaystyle{\frac{2h(i-d)(i+1-s^*)}{2i+2-s^*}},
&{\mbox{\rm if $i$ odd, $d$ odd}} 
\end{cases}
\end{align*}
for $0 \leq i \leq d-1$ and
\begin{align*}
c_i &= 
\begin{cases}
\displaystyle{\frac{-2hi(i-s^*-r_1)}{2i-s^*}},
& {\mbox{\rm if $i$ even, $d$ even}}; \\
\displaystyle{\frac{-2h(i+d+1-s^*)(i-s^*-r_2)}{2i-s^*}},
&{\mbox{\rm if $i$ odd, $d$ even}}; \\
\displaystyle{\frac{-2hi(i+d+1-s^*)}{2i-s^*}},
&{\mbox{\rm if $i$ even, $d$ odd}}; \\
\displaystyle{\frac{-2h(i-s^*-r_1)(i-s^*-r_2)}{2i-s^*}},
&{\mbox{\rm if $i$ odd, $d$ odd}} 
\end{cases}
\end{align*}
for $1 \leq i \leq d$.
\noindent To get
$\lbrace b^*_i\rbrace_{i=0}^{d-1}$ and
$\lbrace c^*_i\rbrace_{i=1}^{d}$,
in the above formulas exchange 
$h \leftrightarrow h^*$,
$s \leftrightarrow s^*$
and preserve
$r_1$, $r_2$, $q$.
\end{example}

\begin{example}
\rm (Orphan). Assume that ${\rm char}(\mathbb F)=2$ and $d=3$.
We have
\begin{align*}
&\theta_1 = \theta_0 + h(1+s), \quad \qquad 
\theta_2 = \theta_0 + h, \qquad \;
\theta_3 = \theta_0 + hs,
\\
&\theta^*_1 = \theta^*_0 + h^*(1+s^*),\qquad 
\theta^*_2 = \theta^*_0 + h^*,   \qquad 
\theta^*_3 = \theta^*_0 + h^*s^*
\end{align*}
\noindent and
\begin{align*}
&\varphi_1 = hh^* r, \quad \qquad \qquad \qquad 
\varphi_2 = hh^*, \qquad 
\varphi_3 = hh^* (r+s+s^*), 
\\
&\phi_1 = hh^* (r+s+ss^*), \qquad 
\phi_2 = hh^*, \qquad 
\phi_3 = hh^* (r+s^*+ss^*). 
\end{align*}
\noindent We have
\begin{align*}
&b_0 = \frac{hr}{1+s^*},
\qquad 
b_1 = \frac{h(1+s^*)}{s^*},
\qquad 
b_2 = \frac{h(r+s+ s^*)}{1+ s^*},
\\
&c_1 = \frac{h(r+s+s s^*)}{1+s^*},
\qquad 
c_2 = \frac{h(1+s^*)}{s^*},
\qquad 
c_3 = \frac{h(r+s^*+ ss^*)}{1+ s^*}.
\end{align*}
\noindent To get
$\lbrace b^*_i\rbrace_{i=0}^{2}$
and $\lbrace c^*_i\rbrace_{i=1}^3$, in the above formulas
exchange
$h\leftrightarrow h^*$,
$s\leftrightarrow s^*$ and preserve $r$.
\end{example}


\bigskip

\noindent Paul Terwilliger \hfil\break
\noindent Department of Mathematics \hfil\break
\noindent University of Wisconsin \hfil\break
\noindent 480 Lincoln Drive \hfil\break
\noindent Madison, WI 53706-1388 USA \hfil\break
\noindent email: {\tt terwilli@math.wisc.edu }\hfil\break

\end{document}